\def\derpar#1#2{\frac{\partial#1}{\partial#2}}
\def\R{\mathbb R}
\def\N{\mathbb N}
\def\C{\mathbb C}
\def\CD{{\rm CD}}
\def\Q{\mathbb Q}
\def\Z{\mathbb Z}
\def\P{\mathbb P}
\def\E{\mathbb E}
\def\X{\mathcal X}
\def\A{\mathbb A}
\def\G{\mathbb G}
\def\I{\mathbb I}
\def\HS{{\rm HS}}
\def\cI{({\rm I})}
\def\cII{({\rm II})}
\def\cIII{({\rm III})}
\def\S{\mathbb S}
\def\RP{\mathbb R \!\mathbb P}
\def\var{\varepsilon}
\def\pa{\partial}
\def\Om{\Omega}
\def\ov{\overline}
\def\cal{\mathcal}
\def\vphi{\varphi}
\def\hat{\widehat}
\def\tilde{\widetilde}
\def\tr{\mathop{{\rm tr}\,}}
\def\Var{\mathop{{\rm Var}\,}}
\def\Id{{\rm Id}\,}
\def\dps{\displaystyle}
\def\vol{{\rm vol}\,}
\def\<{\langle}
\def\>{\rangle}
\def\vect{{\rm Vect}}
\def\ddt#1{\left.\frac{d}{dt}\right|_{#1}}
\def\Ric{{\rm Ric}}
\def\law{{\rm law}}
\def\ddt#1{\left.\frac{d}{dt}\right|_{#1}}
\def\GGamma{\Gamma\Gamma}
\def\Lsharp{L_\#}
\def\Ksharp{K^\#}
\def\gtrsim{\ {\stackrel{>}{\sim}}} 
\newcommand{\bbnabla}{%
  \nabla\mkern-12mu\nabla%
}
\def\Cnk#1#2{\small \left(\!\begin{array}{c} #1 
                \\ #2 \end{array} \!\right)} 
\DeclareMathOperator*{\grad}{grad}
\def\med{\medskip}
\def\sm{\smallskip}
\def\bul{$\bullet$\ }
\def\begeq{\begin{equation}}
\def\endeq{\end{equation}}
\def\begar{\begin{eqnarray}}
\def\endar{\end{eqnarray}}
\def\begar*{\begin{eqnarray*}}
\def\endar*{\end{eqnarray*}}
\def\begal{\begin{align}}
\def\endal{\end{align}}
\def\begal*{\begin{align*}}
\def\endal*{\end{align*}}
\newtheorem{Thm}{Theorem}
\newtheorem{Lem}[Thm]{Lemma}
\newtheorem{Cor}[Thm]{Corollary}
\newtheorem{Prop}[Thm]{Proposition}
\newtheorem{Guess}[Thm]{Guess}
\numberwithin{equation}{section}
\numberwithin{Thm}{section}
\theoremstyle{definition}
\newtheorem{Def}[Thm]{Definition}
\newtheorem{Rk}[Thm]{Remark}
\newtheorem{Rks}[Thm]{Remarks}
\theoremstyle{remark}
\newtheorem{Ex}[Thm]{Example}
\newtheorem*{Thm*}{Theorem}
\newtheorem*{Lem*}{Lemma}
\newtheorem*{Conj*}{Conjecture}
\newtheorem*{Cor*}{Corollary}
\newtheorem*{Def*}{Definition}
\newtheorem*{Prop*}{Proposition}
\newtheorem*{Exo*}{Exercise}
\newtheorem*{Exs*}{Examples}
\newtheorem*{Ex*}{Example}
\newtheorem*{Rk*}{Remark}
\newtheorem*{Rks*}{Remarks}
\def\bibnotes{\medskip \noindent {\bf Bibliographical Notes }\sm \noindent}
\def\signcv{\vspace{1mm} \begin{center} {\sc C\'edric Villani\par\vspace{2mm}
Acad\'emie des Sciences\\
Universit\'e Claude Bernard Lyon I \\
Institut Camille Jordan\par
43 Bd du 11 Novembre 1918, 69100 Villeurbanne\par
FRANCE\par\vspace{3mm}
E-mail:} \tt{cv@cedricvillani.org} \end{center}}
\begin{document}

\title[Fisher information in kinetic theory]{Fisher information in kinetic theory}

\vspace*{-10mm}

\author{C. Villani}

\vspace*{-5mm}

\begin{abstract} These notes review the theory of Fisher information, especially its use in kinetic theory of gases and plasmas. The recent monotonicity theorem by Guillen--Silvestre for the Landau--Coulomb equation is put in perspective and generalised. Following my joint work with Imbert and Silvestre, it is proven that Fisher information is decaying along the spatially homogeneous Boltzmann equation, for all relevant interactions, and from this the once longstanding problem of regularity estimates for very singular collision kernels (very soft potentials) is solved.
\end{abstract}

\maketitle

\vspace*{-12mm}

\tableofcontents

\vspace*{-10mm}

{\bf Keywords:} Fisher information, information theory, Boltzmann equation, Landau equation, logarithmic Sobolev inequalities, regularity, equilibration. 

{\bf AMS Subject Classification:} 82C40, 94A17

\bigskip

These are the notes of the course which I gave at the summer school {\em Mathemata} in Chania, Crete in July 2024 as part of the Festum Pi 2024 festival.
My course was triggered by a spectacular theorem of Guillen \& Silvestre, which I wanted to present, expand and generalize. The core of these notes is based on my current research to extend this theorem, in collaboration with Cyril Imbert and Luis Silvestre. This course is also inspired by my long-term past collaborations in kinetic theory with Laurent Desvillettes, Giuseppe Toscani, Eric Carlen and Cl\'ement Mouhot; and by my long-term past interactions about functional inequalities with Michel Ledoux.
It is a pleasure to thank them all warmly, as well as the main organisers of the summer school,
Dionysios Dervis-Bournias and Bertrand Maury. These thanks extend to the talented students attending and participating in the course,
and to the Mediterranean Agronomic Institute of Chania (MAICH), which provided the inspiring location and outstanding support. My gratitude goes to St\'ephane Mischler for a very careful reading of these notes. Finally, part of this work was done as I was holding a joint chair between Universit\'e Claude Bernard Lyon 1 and Institut des hautes \'etudes scientifiques (IHES) in Bures-sur-Yvette, France, whose support is also gratefully acknowledged.
\med

In several respects these lectures are the sequel of my course at Institut Henri Poincar\'e in 2001,
{\em Entropy production and convergence to equilibrium} \cite{vill:ihp:01}, but
this is also my first research memoir after ten years of leave. So I consider it a key step bridging my
past and future research. Awkward as this may seem, on this occasion I consider it fair to thank the french politicians
(EM, PM, JLM) who ensured, willingly or unwillingly, my defeat at the 2022 French Parliamentary elections, 
even though by a statistically nonsignificant margin, making way for my comeback in research.
\med

This work is dedicated to the memory of the great mathematician Henry P. McKean,
visionary pioneer, among many other subjects, of Fisher information in kinetic theory.
Henry passed away in April 2024, as this work was in preparation.
Our encounter, organised by the late Paul Malliavin, remains a dear memory.

\section{Fisher information}
\label{sec:fisher}

\begin{Def} Let $(M,g,\nu)$ be a complete Riemannian manifold equipped with a Borel reference measure $\nu$.
Then for any Borel probability measure $\mu$ on $M$, define
\begeq \label{FI}
I_\nu(\mu) = 
\begin{cases}
\dps \int_M \frac{|\nabla f|^2}{f} \,d\nu \qquad \text{if $\mu = f\nu$},  \\[5mm]
+\infty \qquad \text{if $\mu$ is singular to $\nu$}.
\end{cases}\endeq
\end{Def}

This formally 1-homogeneous, convex, lower semi-continuous functional is always well-defined (by convexity),
even if $f$ is not differentiable. It has a long and rich history, appearing in particular in the following five
fields, roughly ranked by chronological order, all related to the combination of statistics and measurement (repeated measurement, or typical measurement).

\subsection{Statistics}

Fisher introduced the functional $I$ hundred years ago in relation to his notion of {\em efficient statistics}. I shall develop the idea a little bit,
both for historical reasons and for the intuition.

Suppose $X_1,\ldots, X_N,\ldots$ are independent identically distributed (i.i.d.) random variables with law $f\nu$ on some Polish space $\X$, where $f=f(x,\theta)$ is a parametric density with
parameter $\theta\in\Theta$. An estimator $\hat{\theta}$ is a map $\X^N\to \Theta$. The most famous such estimator
is the maximum likelihood estimate (MLE): $\theta$ such that the density of the observed sample is as large as possible;
in other words 
\[ \hat{\theta}_{\text{MLE}}(x_1,\ldots,x_N) \text{ maximizes } \quad  f(x_1,\ldots,x_N;\theta) = \prod_{i=1}^N f(x_i,\theta). \]
As $N\to\infty$ we expect $\hat{\theta}$ to converge almost surely (a.s.) to $\theta$ if $X_1,\ldots,X_N$ are drawn according to
parameter $\theta$; meaning that the estimator is asymptotically unbiased. 
But what about the fluctuation of $\hat{\theta}$ around $\theta$?

\begin{Prop}[Fisher's fluctuation estimate for MLE] \label{propIMLE}
Let $\hat{\theta}$ be an unbiased estimator valued in $\Theta$, which near $\theta$ is a differentiable manifold. Then under $f(x,\theta)$,
\[\text{As $N\to\infty$,}\quad \hat{\theta} (X_1,\ldots,X_N) \sim {\cal N} \left( \theta, \frac{\I_\theta^{-1}(f)}{N}\right), \]
meaning that $\sqrt{N}(\hat{\theta}-\theta) \sim {\cal N}(0,\I_\theta^{-1}(f))$, the centered normal (Gaussian) law whose
covariance matrix is the inverse of $\I$, where
\begeq\label{defIij}
\I_\theta(f)_{ij} = \int \frac{1}{f} \derpar{f}{\theta_i}\derpar{f}{\theta_j} (x,\theta)\, \nu(dx) 
= \int \derpar{}{\theta_i}(\log f)\, \derpar{}{\theta_j}(\log f)\,f(x,\theta)\,\nu(dx).
\endeq
In particular,
\begeq\label{Itens}
\I_\theta(f) = \E \Bigl[ \nabla_\theta \log f \otimes \nabla_\theta \log f\Bigr];
\endeq
\begeq\label{trI}
I_\theta(f) := \tr \I_\theta(f) = \int \frac{|\nabla_\theta f|^2}{f}\,\nu(dx)
\endeq
\end{Prop}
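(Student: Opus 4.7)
The plan is to establish the asymptotic normality by the classical score-and-delta-method argument. First I would introduce the score function $s(x,\theta):=\nabla_\theta \log f(x,\theta)$ and verify the two ``Bartlett identities'', obtained by differentiating the normalisation $\int f(x,\theta)\,\nu(dx)=1$ once and twice in $\theta$: under the law $f(\cdot,\theta)\,\nu$ one has
\[ \E_\theta[s(X,\theta)] = 0, \qquad \E_\theta[\nabla_\theta s(X,\theta)] \;=\; -\,\E_\theta\bigl[s(X,\theta)\otimes s(X,\theta)\bigr] \;=\; -\,\I_\theta(f). \]
The first identity (together with rewriting $\nabla_\theta f/f = \nabla_\theta \log f$) yields at once the equivalence of the two formulas in~\eqref{defIij}, the tensorial formulation~\eqref{Itens}, and the trace identity~\eqref{trI}. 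These manipulations rest on sufficient regularity of $\theta\mapsto f(x,\theta)$ and a domination allowing differentiation under the integral sign, which I would take as a standing hypothesis.

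Next I would exploit the critical-point equation $\sum_{i=1}^N s(X_i,\hat\theta)=0$ defining the MLE. Assuming consistency $\hat\theta\to\theta$ almost surely (to be justified separately), a first-order Taylor expansion of the score in $\theta$ around the true parameter value gives
\[ 0 \;=\; \frac{1}{\sqrt{N}}\sum_{i=1}^N s(X_i,\theta) \;+\; \Bigl(\frac{1}{N}\sum_{i=1}^N \nabla_\theta s(X_i,\theta)\Bigr)\,\sqrt{N}(\hat\theta - \theta) \;+\; o_{\P}(1). \]
The multivariate central limit theorem applied to the i.i.d.\ sequence $s(X_i,\theta)$ (mean zero by the first Bartlett identity, covariance $\I_\theta(f)$) yields $N^{-1/2}\sum_i s(X_i,\theta) \Rightarrow \mathcal{N}(0,\I_\theta(f))$. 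The strong law of large numbers applied to $\nabla_\theta s(X_i,\theta)$, combined with the second Bartlett identity, gives $N^{-1}\sum_i \nabla_\theta s(X_i,\theta) \to -\,\I_\theta(f)$ almost surely. Invoking Slutsky's lemma and inverting the (assumed nondegenerate) matrix $\I_\theta(f)$ yields
\[ \sqrt{N}(\hat\theta - \theta) \;\Rightarrow\; \I_\theta(f)^{-1}\cdot \mathcal{N}\bigl(0,\I_\theta(f)\bigr) \;=\; \mathcal{N}\bigl(0,\,\I_\theta(f)^{-1}\bigr), \]
which is the announced statement.

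The main obstacle is the consistency step $\hat\theta\to\theta$ almost surely; this does not follow from the likelihood equation alone, and the standard route passes through a uniform law of large numbers for $\theta \mapsto N^{-1}\sum_i \log f(X_i,\theta)$ together with Wald's identifiability condition and some compactness or tightness on $\Theta$. Once consistency is in hand, the linearisation is routine; the remaining technical points are the uniform integrability needed to control the Taylor remainder as $o_\P(1)$ and the interchanges of derivative and integral underlying the Bartlett identities. On a differentiable parameter manifold, one should finally check that the statement is chart-independent, which it is because both $\I_\theta(f)$ and the Gaussian limit transform covariantly under changes of local coordinates on $\Theta$ near $\theta$.
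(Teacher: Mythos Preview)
Your argument is correct and follows essentially the same route as the paper's sketch: establish the two Bartlett identities by differentiating the normalisation, Taylor expand the score equation $\sum_i s(X_i,\hat\theta)=0$ around the true $\theta$, then combine the CLT for the score (covariance $\I_\theta$) with the LLN for the empirical Hessian (limit $-\I_\theta$) to obtain $\sqrt{N}(\hat\theta-\theta)\Rightarrow\mathcal{N}(0,\I_\theta^{-1})$. Your presentation is in fact slightly more careful than the paper's, which simply assumes a uniform-in-$x$ Taylor remainder and does not isolate the consistency step or invoke Slutsky by name.
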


Before going further, note that if $f(x,\theta)=f(x-\theta)$ on $(\R^d, dx)$ (fixed profile, but centred at an unknown parameter),
then $I_\theta$ is just the standard Fisher information with respect to Lebesgue measure ($\nu = {\cal L}^d$ in \eqref{FI}), independently of $\theta$.

\begin{proof}[Sketch of proof of Proposition \ref{propIMLE}]
First note that 
\[\I_\theta(f) = - \int f \nabla^2\log f(x,\theta)\,\nu(dx).\]
(If $f(x,\theta) = f(x-\theta)$ this results from integration by parts, but the formula holds true
regardless of this assumption.)
Indeed,
\[ 0 = \derpar{}{\theta_i} \int f\, d\nu = \int \derpar{f}{\theta_i}\,d\nu = \int f\,\partial_{\theta_i}(\log f)\,d\nu. \]
So
\begin{align*}
0 = \derpar{}{\theta_j} \int f\,\partial_{\theta_i}(\log f)
& = \int \derpar{f}{\theta_j} \derpar{}{\theta_i}(\log f) + \int f\,\derpar{{}^2}{\theta_j\,\partial\theta_i} (\log f)\\
& = \int f\,\derpar{}{\theta_j}(\log f)\derpar{}{\theta_i}(\log f) + \int f\,\derpar{{}^2}{\theta_j\,\partial\theta_i} (\log f),
\end{align*}
as announced. Now, $\hat{\theta}_{{\rm MLE}}$ is obtained by solving the equation
\[ \forall k\in \{1,\ldots,d\},\qquad \derpar{}{\theta_k} \prod_i f(x_i,\theta) = 0,\]
where $d$ is the dimension of $\Theta$ (locally near $\theta$); by taking logarithm,
\begeq\label{defMLE}
\frac1{N} \sum_{i=1}^N \derpar{}{\theta_k} \bigl(\log f(x_i,\theta)\bigr) = 0. 
\endeq
Let $\ell(x,\theta') = \log f (x,\theta').$ Then
\[ \derpar{}{\theta_k} \ell(x,\theta') = \derpar{}{\theta_k} \ell(x,\theta)
+ \sum_{k,m} \derpar{{}^2}{\theta_k\,\partial \theta_m} \ell(x,\theta) (\theta'-\theta)_m
+  o(|\theta'-\theta|),\]
and for simplicity let us assume that the latter $o(|\theta'-\theta|)$ is uniform in $x$; so
\begeq\label{solog}
\frac1{N} \sum_i \partial_{\theta_k}\ell (x_i, \hat{\theta}) - \frac1{N} \sum_i \partial_{\theta_k}\ell(x_i,\theta)
= \frac1{N} \sum_{i,m} \derpar{{}^2}{\theta_k\,\partial\theta_m}\ell(x_i,\theta)\,(\hat{\theta}-\theta)_m
+ o(|\hat{\theta}-\theta|),
\endeq
where $\hat{\theta} = \theta(x_1,\ldots,x_N)$.
The first term on the left hand side of \eqref{solog} vanishes by \eqref{defMLE},
and by the law of large numbers and central limit theorem, the second term is distributed near
\[ -\E\, \partial_{\theta_k} \ell(X,\theta) = -  \int f(x,\theta)\, \partial_{\theta_k}\log f(x,\theta)\,\nu(dx) = 0,\]
with fluctuation $N^{-1/2} {\cal N}(0,C)$, where $C$ is the covariance matrix
\begeq\label{covmatrix}
C = \E \Bigl[ \nabla_\theta \log f(X,\theta) \otimes \nabla_\theta \log f(X,\theta)\Bigr] = \I_\theta(f).
\endeq
As for the first term in the right-hand side of \eqref{solog}, by the law of large numbers again it is approximately
\[\sum_m \left( \E\, \derpar{{}^2}{\theta_k\,\partial\theta_m} \ell(X,\theta) \right) (\hat{\theta}-\theta)_m = -\I_\theta(f)(\hat{\theta}-\theta)_k.\]
To summarise: Asymptotically, $-\sqrt{N} \I_\theta(f) (\hat{\theta}-\theta) \sim {\cal N}(0,\I_\theta)$,
so $\sqrt{N}(\hat\theta-\theta)\sim {\cal N}(0,\I_\theta^{-1})$.
\end{proof}

It turns out that this estimate is best possible, so that MLE is essentially ``the best'' estimator, one which fluctuates the least
around the mean. If the reader wonders why this estimator is not systematically used, the answer is easy: A major drawback of MLE is that in general it is so difficult to evaluate! In any case its asymptotic optimality is expressed by
the following uncertainty principle.
\begin{Prop}[Cramér--Rao inequality] \label{propCR}
If $\theta\in\R^d$, $\hat{\theta}:(\R^d)^N\to\R^d$, and $(X_i)_{i\in\N}$ are i.i.d. with law $f(x,\theta)\,\nu(dx)$, then
\[ \Var (\hat{\theta}) \geq \frac{ \Bigl( d + \nabla_\theta\cdot \bigl(\E \hat{\theta} - \theta\bigr)\Bigr)^2}{N\,I_\theta(f)} - \bigl| \E\hat{\theta}-\theta \bigr|^2.\]
\end{Prop}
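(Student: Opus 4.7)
The plan is to derive the inequality by applying Cauchy--Schwarz to the pairing between $\hat\theta - \theta$ and the score function $\nabla_\theta \log f_N$, where
\[
f_N(x_1,\ldots,x_N;\theta) \,:=\, \prod_{i=1}^N f(x_i,\theta)
\]
denotes the joint likelihood of the sample. The characteristic ``$-|\E\hat\theta-\theta|^2$'' on the right-hand side will enter via the Pythagorean decomposition of $\E|\hat\theta-\theta|^2$.

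Concretely, writing $b(\theta) := \E\hat\theta - \theta$ for the bias, I would first differentiate the componentwise identity $\int \hat\theta_k\, f_N \, d\nu^{\otimes N} = \theta_k + b_k(\theta)$ with respect to $\theta_k$ and sum over $k$ to obtain
\[
\E\bigl[\hat\theta \cdot \nabla_\theta \log f_N\bigr] \,=\, d + \nabla_\theta \cdot b.
\]
Since the score is centered, $\E[\nabla_\theta \log f_N] = 0$ (same one-line computation as in the proof of Proposition \ref{propIMLE}), one may subtract $\theta \cdot \E[\nabla_\theta \log f_N] = 0$ and then apply Cauchy--Schwarz, obtaining
\[
\bigl(d + \nabla_\theta \cdot b\bigr)^2 \,\leq\, \E|\hat\theta - \theta|^2 \cdot \E|\nabla_\theta \log f_N|^2.
\]
The second factor tensorises: since $\nabla_\theta \log f_N = \sum_i \nabla_\theta \log f(X_i,\theta)$ is a sum of independent centered random vectors, each with second moment $I_\theta(f)$ (by \eqref{trI}), we obtain $\E|\nabla_\theta \log f_N|^2 = N\, I_\theta(f)$. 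For the first factor, the Pythagorean identity gives $\E|\hat\theta - \theta|^2 = \Var(\hat\theta) + |b|^2$, the cross term $2\,b\cdot \E(\hat\theta - \E\hat\theta)$ vanishing by definition of the bias. Substituting both factors and rearranging produces the announced inequality.

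I do not anticipate any genuine obstacle: the argument is an elementary application of Cauchy--Schwarz. The only noteworthy point is that the ``$-|b|^2$'' term on the right-hand side arises precisely from using $\hat\theta - \theta$ (rather than the tighter $\hat\theta - \E\hat\theta$) inside Cauchy--Schwarz, a move one is forced to make if one does not wish to presuppose that $\hat\theta$ is unbiased. The only technical subtlety is a mild regularity hypothesis on $f(x,\theta)$ in $\theta$, ensuring the legitimacy of differentiating under the integral sign in the first step.
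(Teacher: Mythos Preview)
Your proof is correct and follows essentially the same route as the paper's: differentiate the identity for $\E\hat\theta_k$ in $\theta_k$, sum over $k$, apply Cauchy--Schwarz against the score, then use $I_\theta(f^N)=N I_\theta(f)$ and the bias--variance decomposition of $\E|\hat\theta-\theta|^2$. The only cosmetic difference is that you apply Cauchy--Schwarz once to the vector pairing $\E[(\hat\theta-\theta)\cdot\nabla_\theta\log f_N]$, whereas the paper applies it componentwise in $L^2$ and then once more on the sum over $k$; the resulting bound is identical.
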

In particular, if $\E\hat{\theta}- \theta = o(1/\sqrt{N})$ and $\nabla\cdot \E(\hat{\theta}-\theta)\longrightarrow 0$ as $N\to\infty$, then $\liminf N\Var(\hat{\theta})\geq d^2/I_\theta(f)$.

\begin{proof}[Sketch of proof of Proposition \ref{propCR}]
Write $f^N(x_1,\ldots,x_N;\theta) = f^N(x;\theta)=\prod_i f(x_i,\theta)$. The integral is with respect to $\nu^{\otimes N}$. Then for any $k \in \{1,\ldots,d\}$,
\begin{align*} 
\derpar{}{\theta_k} \int f^N(x;\theta) (\hat{\theta}-\theta)_k 
& = \int \partial_{\theta_k}f^N(x;\theta)\, (\hat{\theta}-\theta)_k - \int f^N(x;\theta)\\
& = \int \partial_{\theta_k}f^N(x;\theta)\, (\hat{\theta}-\theta)_k - 1.
\end{align*}
Summing up for $1\leq k\leq d$, we obtain
\begin{align*}
d + \nabla_\theta\cdot \E(\hat{\theta}-\theta)
 & = \sum_k \int \partial_{\theta_k} f^N(x;\theta) (\hat{\theta}-\theta)_k\\ 
 & = \sum_k \int f^N(x;\theta) \,\partial_{\theta_k} \log f^N(x,\theta)\, (\hat{\theta}-\theta)_k\\
& \!\!\!\!\!\!\!\!\!\!\!\! \leq \sum_k   \left(\int f^N(x;\theta) \bigl|\partial_{\theta_k} \log f^N(x;\theta)\bigr|^2 \right)^{1/2}
\left(\int f^N(x;\theta) (\hat{\theta}-\theta)_k^2\right)^{1/2}\\
&  \leq \left(\int f^N(x;\theta) \sum_k \bigl|\partial_{\theta_k} \log f^N(x;\theta)\bigr|^2\right)^{1/2} \left( \int f^N(x;\theta) \sum_k (\hat{\theta}-\theta)_k^2\right)^{1/2}\\
&  = I_\theta(f^N)^{1/2} \bigl(\E |\hat{\theta}-\theta|^2\bigr)^{1/2}.
\end{align*}
Then on the one hand,
\[ I_\theta(f^N) = N I_\theta(f);\]
this comes from the fact that $\log f^N = \sum \log f_j$, where $f_j=f(x_j;\theta)$ and in the Fisher information of $f^N$ the cross-products $\int f^N \nabla_\theta \log f_j \cdot \nabla_\theta \log f_\ell$
simplify into $\int f_j f_\ell \nabla_\theta\log f_j \cdot\nabla_\theta\log f_\ell = 
(\nabla_\theta\int f_j)\cdot (\nabla_\theta\int f_\ell) =0$.
On the other hand
\[ \E|\hat{\theta}-\theta|^2 = \Var \hat{\theta} + |\E\hat{\theta}-\theta|^2.\]
So in the end
\[ d+ \nabla_\theta\cdot \E (\hat{\theta}-\theta)
\leq \sqrt{N} I_\theta(f) \Bigl( \Var\hat{\theta} + |\E\hat{\theta}-\theta|^2\Bigr)^{1/2},\]
which leads to the desired conclusion.
\end{proof}

The meaning is the following, at least for an unbiased estimator: If we wish to estimate $\theta$ from $N$ independent observations, then however clever we are, we can be sure that fluctuations around the true value will be at least $d\,I_\theta(f)^{-1/2}/\sqrt{N}$.  The higher $I$, the most efficient the estimate may be.

While Fisher was a devout frequentist, this interpretation led Jeffreys to use the same functional in a Bayesian context. If $\Theta$, the set of parameters, is a differentiable manifold, let us bet that $\theta$ is all the more likely when it is easier to estimate. (This has a flavor of the maximum likelihood estimate, in which one bets that $\theta$ is all the more likely when it makes it easier to produce the observation.)
Then Jeffreys' procedure, similar to Riemannian geometry, is to use $\I_\theta(f)$ as a metric on $\Theta$, and the prior ${\cal J}(d\theta)$ will be the associated (diffeomorphism-invariant) volume: 
\[{\cal J}(d\theta_1\ldots d\theta_d) = \sqrt{\det(\I_\theta(f))}\,d\theta_1\ldots\,d\theta_d.\]
For example, if $f(x,\theta) = {\cal N}(\mu,\sigma^2)$ and\\

\bul $\sigma$ fixed, $\mu$ variable: ${\cal J}(d\mu) = d\mu$ (Lebesgue on $\R$);

\bul $\mu$ is fixed, $\sigma$ variable: ${\cal J}(d\sigma) = d\sigma/\sigma$.

Or,  if $f(x,\theta) = (\theta,1-\theta)$ is the Bernoulli trial, then ${\cal J}(d\theta) = d\theta/\sqrt{\theta(1-\theta)}$. (In this example the observation space is discrete, $\nu = \delta_0+\delta_1$, but the Fisher information makes sense as the parameter space is continuous.)

\subsection{Information theory}

Information theory is the science of complexity, signal processing and transmission. The most important pillar of that theory is the Boltzmann--Shannon information, introduced by Boltzmann in he nineteenth century to statistically found thermodynamics, and rediscovered by Shannon in his cult 1948 book on information theory.
\begin{Def} \label{HI} Let $(\X, d,\nu)$ be a complete, separable metric space equipped with a Borel reference measure. Then
\[
H_\nu(\mu) = 
\begin{cases}
\dps \int_\X f\log f\,d\nu \qquad \text{if $\mu = f\nu$},  \\[4mm]
+\infty \qquad \text{if $\mu$ is singular to $\nu$}.
\end{cases}\]
\end{Def}
Here $\log$ is the natural logarithm with base $e$; but other choices, like base 10, or more frequently base 2, are also used without any real trouble.

In the fifties, the functional $I$ became another hero of information theory, thanks primarily to {\bf de Bruijn's identity:} If the reference measure $\nu$ is Lebesgue on $\R^d$ and $\Delta$ is the usual Laplace operator, then
\begeq\label{debruijn}
\ddt{t=0} H\bigl( e^{t\Delta}\mu \bigr) = - I(\mu).
\endeq
More generally, if $\nu(dx) = e^{-V(x)}\,dx$ and ${\cal L}\mu = \Delta \mu + \nabla\cdot (\mu\nabla V)$ is the generator of the natural semigroup on probability measures having $\nu$ as an invariant measure, then
\begeq\label{dtHI}
\ddt{t=0} H_\nu \bigl( e^{t{\cal L}}\mu \bigr) = - I_\nu(\mu).
\endeq
Note that ${\cal L}(f\nu) = (Lf)\nu$ where $Lf= \Delta f - \nabla V\cdot\nabla f$ is the generator defined on functions (densities) rather than measures; going from one formula to the other is a popular game.

The intuition is as follows. In either Boltzmann's or Shannon's theory, $H$ measures how $\mu$ is exceptional and may carry significant information by itself: The higher $H$, the more the distribution is rare and precious. But any observation of $\mu$ will imply some dose of fluctuation. Adding a small Gaussian noise to the signal is a way to measure the size of these fluctuations. The volume of configurations is proportional to $e^{-N H_\nu(\mu)}$, and its logarithmic derivative along the heat flow is $N I_\nu(\mu)$: the higher $I$, the larger the increase. If Boltzmann's theory is about the volume in the space of microstates, then Fisher's information is about the surface.
The finite-dimensional analogue of the perturbation by small Gaussian noise would actually be the classical formula relating volume to surface via enlargement by a small ball:
\[ S(A) = \ddt{t=0} V(A+B_t),\]
where $B_t= B(0,t)$ is the ball of radius $t$ and the sum is in the sense of Minkowski.

By abuse of notation, I shall often write $I(f)$ for $I(f\,{\cal L}^d)$ (${\cal L}^d$= Lebesgue measure on $\R^d$) or $I(X)=I(\mu)$ where $X$ is a random variable with law $\mu$. 
The four most famous properties of $I$ in information theory are as follows; they all have counterparts for $H$.
\sm

{\bf (a) Scaling:} For all random variable $X$ and $\lambda>0$,
\[ I(\lambda X) = \frac1{\lambda^2}\, I(X);\]
equivalently, if $f_\lambda(x) = \lambda^{-d} f(x/\lambda)$, then
\[ I(f_\lambda) = \frac1{\lambda^2}\ I(f).\]
The entropic counterpart is 
\[ H(f_\lambda) = H(f) - d\log\lambda.\]
\sm

{\bf (b) Marginal superadditivity:} If $X_1,X_2$ are valued in $\R^{d_1},\R^{d_2}$ respectively, then
\[ I(X_1,X_2) \geq I(X_1) + I(X_2),\]
with equality when $X_1,X_2$ are independent. (There may be correlations between $X_1$ and $X_2$, making the joint observation easier to detect.) Equivalently, if $f(x_1,x_2)$ is a joint distribution,
\[ I(f) \geq I \left( \int f\,dx_2\right) + I \left(\int fdx_1\right).\]
The entropic counterpart is
 \[H(f) \geq H \left( \int f\,dx_2\right) + H \left(\int fdx_1\right).\]
\sm

{\bf (c) Gaussian minimisers:} Let $X\in\R^d$ be a random vector with finite variance, and let $G$ be a Gaussian vector with the same variance as $X$; then
\[ I(X) \geq I(G) = \frac{d^2}{\Var(G)}.\]
Alternatively, if $\gamma_\sigma(x) = (2\pi\sigma^2)^{-d/2} \exp(-|x|^2/2\sigma^2)$, and $\sigma$ is chosen to be the variance of $f$, which means
\[ \int f(x) |x-u|^2\,dx = \int \gamma_\sigma(x) |x|^2\,dx = d\,\sigma^2, \qquad u = \int f(x)\,x\,dx, \]
then
\[ I(f) \geq I(\gamma_\sigma) = \frac{d}{\sigma^2}.\]
Note that Gaussians also saturate the Cram\'er--Rao inequality.
The entropic counterpart is
\[ H(f) \geq H(\gamma_\sigma) = -\frac{d}{2} \log (2\pi e\sigma^2).\]
\sm

{\bf (d) Stam inequalities:} These are two functional inequalities expressing a subadditivity property; there is the Blachman--Stam inequality dealing with $I$, and the Shannon--Stam inequality dealing with $H$.

First, {\bf the Blachman--Stam inequality} states that: If $X$ and $Y$ are two independent random vectors in $\R^d$, then for all $\alpha \in [0,1]$,
\[ I(\sqrt{1-\alpha}\, X + \sqrt{\alpha}\, Y) \leq (1-\alpha)\, I(X) + \alpha\, I(Y).\]
Alternatively, for all $f,g,\alpha$,
\[ I\bigl (f_{\sqrt{1-\alpha}} \ast g_{\sqrt{\alpha}}\bigr) \leq (1-\alpha)\, I(f) + \alpha \, I(g).\]
By optimisation over $\alpha$ and the scaling property (b), there is an equivalent form for the Blachman--Stam inequality: For all $f,g$,
\[ \frac1{I(f\ast g)} \geq \frac1{I(f)} + \frac1{I(g)}.\]

Then, the entropic counterpart consists in either of the two forms of the {\bf Shannon--Stam inequality}: for all densities $f,g$ and $\alpha\in [0,1]$,
\[ H\bigl (f_{\sqrt{1-\alpha}} \ast g_{\sqrt{\alpha}}\bigr) \leq (1-\alpha)\, H(f) + \alpha \, H(g),\]
or equivalently for all probability densities $f,g$,
\[ \qquad {\cal N} (f\ast g) \geq {\cal N}(f) + {\cal N}(g), \qquad {\cal N}(f) = \frac{e^{-\frac2{d}H(f)}}{2\pi e};\]
here ${\cal N}$ is Shannon's {\bf entropy power} functional.
All these inequalities are saturated by Gaussian functions and then reduce to the additivity of the variance of independent random variables. There is a parallel between Shannon--Stam and Brunn--Minkowski inequalities, which was spectacularly explored by Dan Voiculescu in his theory of free probability.

Without providing here complete proofs of the above claims, three arguments will be worth sketching for future reference.

\begin{proof}[Gaussian minimisers: Sketch of proof]
Let $u$ and $\sigma$ be the mean and variance of $f$, and $g$ the Gaussian with same mean and variance. So
\[ \int f(x) \,x\,dx = u = \int g(x)\,x\,dx,\qquad \int f(x) | x-u|^2\,dx = d\,\sigma^2 = \int g(x) |x-u|^2\,dx,\]
\[ g(x) = \frac{e^{-|x-u|^2/(2\sigma^2)}}{(2\pi\sigma^2)^{d/2}}.\]
Then
\begin{align*}
0 & \leq \int f(x) \bigl| \nabla\log f - \nabla \log g \bigr|^2\,dx \\
& = \int f(x) |\nabla \log f|^2 + 2 \int f(x) \nabla \log f(x) \cdot \left(\frac{x-u}{\sigma^2}\right)\,dx
+ \int f(x) \frac{|x-u|^2}{\sigma^4}\,dx \\
& = I(f) + \frac{2}{\sigma^2} \int \nabla f(x)\cdot (x-u)\,dx + \frac1{\sigma^4} \int f(x) |x-u|^2\,dx \\
& = I(f) - \frac{2d}{\sigma^2} + \frac{d}{\sigma^2} = I(f) - \frac{d}{\sigma^2} = I(f)-I(g).
\end{align*}
\end{proof}

\begin{proof}[Blachman--Stam inequality: Sketch of proof]
Let $Q_\alpha(f,g) = f_{\sqrt{1-\alpha}}\ast g_{\sqrt{\alpha}}$. Then, distributing $\nabla$ on either function in the convolution product,
\begin{align*}
\nabla Q_\alpha(f,g) & = (1-\alpha) \, \nabla Q_\alpha(f,g) + \alpha \nabla Q_\alpha (f,g)\\
& = (1-\alpha)\,\nabla f_{\sqrt{1-\alpha}}\ast g_{\sqrt{\alpha}} + \alpha\, f_{\sqrt{1-\alpha}}\ast \nabla g_{\sqrt{\alpha}}\\
& = \int \left[ (1-\alpha)\, \frac{\nabla f_{\sqrt{1-\alpha}}(y)}{f_{\sqrt{1-\alpha}}(y)}
+ \alpha\, \frac{\nabla g_{\sqrt{\alpha}}(x-y)}{g_{\sqrt{\alpha}}(x-y)}\right]\,
f_{\sqrt{1-\alpha}}(y)\,g_{\sqrt{\alpha}}(x-y)\,dy
\end{align*}
(evaluated at $x$, of course). By Jensen's inequality, or Cauchy--Schwarz, for each $x\in\R^d$,
\begin{multline*}
\bigl| \nabla Q_\alpha(f,g)\bigr| (x) \leq
\left( \int \left| (1-\alpha)\,\frac{\nabla f_{\sqrt{1-\alpha}}(y)}{f_{\sqrt{1-\alpha}}(y)}
+ \alpha\, \frac{\nabla g_{\sqrt{\alpha}}(x-y)}{g_{\sqrt{\alpha}}(x-y)} \right|^2\,
f_{\sqrt{1-\alpha}}(y)\,g_{\sqrt{\alpha}}(x-y)\,dy \right)^{1/2}\\
\left(\int f_{\sqrt{1-\alpha}}(y)\,g_{\sqrt{\alpha}}(x-y)\,dy\right)^{1/2},
\end{multline*}
which is the same as
\[ \frac{\bigl| \nabla Q_\alpha(f,g)\bigr|^2}{Q_\alpha(f,g)}\, (x) \leq
\int \left|  (1-\alpha)\,\frac{\nabla f_{\sqrt{1-\alpha}}(y)}{f_{\sqrt{1-\alpha}(y)}}
+ \alpha\, \frac{\nabla g_{\sqrt{\alpha}}(x-y)}{g_{\sqrt{\alpha}}(x-y)} \right|^2\,
f_{\sqrt{1-\alpha}}(y)\,g_{\sqrt{\alpha}}(x-y)\,dy.\]
Upon integration,
\begin{align*}
I\bigl(Q_\alpha(f,g)\bigr) 
& \leq  \iint \left| (1-\alpha)\nabla\log f_{\sqrt{1-\alpha}}(y) + \alpha \nabla\log g_{\sqrt{\alpha}}(x-y)\right|^2\,
f_{\sqrt{1-\alpha}}(y)\,g_{\sqrt{\alpha}}(x-y)\,dx\,dy\\
& =  (1-\alpha)^2 \left( \int f_{\sqrt{1-\alpha}} |\nabla \log f_{\sqrt{1-\alpha}}|^2\right) \left(\int g_{\sqrt{\alpha}}\right)
  + \alpha^2 \left( \int f_{\sqrt{1-\alpha}}\right) \left( \int g_{\sqrt{\alpha}} |\nabla \log g_{\sqrt{\alpha}}|^2\right)\\
& \qquad\qquad + 2\,\alpha(1-\alpha) \left( \int f_{\sqrt{1-\alpha}}\nabla \log f_{\sqrt{1-\alpha}}\right)
\left(\int g_{\sqrt{\alpha}}\nabla\log g_{\sqrt{\alpha}}\right).
\end{align*}

The last term vanishes since $\int f \nabla\log f = \int \nabla f = 0$, and of course $\int g\nabla\log g=0$ as well (as in Cram\'er--Rao, the key is to apply Cauchy--Schwarz {\em before} expanding the Fisher information) and we are left with
\[ I\bigl(Q_\alpha(f,g)\bigr) \leq (1-\alpha)^2 I(f_{\sqrt{1-\alpha}}) + \alpha^2 I(g_{\sqrt{\alpha}}) = (1-\alpha) I(f) + \alpha I(g),\]
after a final application of the scaling property.
\end{proof}

\begin{proof}[Shannon--Stam inequality: Sketch of proof]
While the proof of Blachman--Stam involved symmetries and scalings, the proof of Shannon--Stam is completely different and invokes a semigroup argument. First note that the heat semigroup commutes with $Q_\alpha$: Indeed, $e^{t\Delta} f = \gamma_{\sqrt{2t}}\ast f$,
\begin{align*}
Q_\alpha\bigl(e^{t\Delta} f, e^{t\Delta}g\bigr) & = (\gamma_{\sqrt{2t}}\ast f)_{\sqrt{1-\alpha}} \ast (\gamma_{\sqrt{2t}}\ast g)_{\sqrt{\alpha}} \\
& = (\gamma_{\sqrt{(1-\alpha)2t}}\ast \gamma_{\sqrt{\alpha 2t}})\ast (f_{\sqrt{1-\alpha}}\ast g_{\sqrt{\alpha}}) = \gamma_{\sqrt{2t}}\ast Q_\alpha(f,g) = e^{t\Delta}Q_\alpha(f,g).
\end{align*}
(This is even easier to see with random variables.) In particular, by \eqref{debruijn},

\begin{multline*} -\frac{d}{dt} H \Bigl( Q_\alpha (e^{t\Delta} f, e^{t\Delta} g)\Bigr) = 
-\frac{d}{dt} H\bigl(e^{t\Delta} Q_\alpha(f,g)\bigr) = I \bigl(Q_\alpha(e^{t\Delta}f, e^{t\Delta}g)\bigr)\\
\leq (1-\alpha) I(e^{t\Delta}f) + \alpha I(e^{t\Delta} g) = - \frac{d}{dt} \Bigl[ (1-\alpha) H(e^{t\Delta}f) + \alpha H(e^{t\Delta}g)\Bigr].
\end{multline*}

The idea then is to integrate this inequality from $t=0$ to $t=\infty$.
However integration in $t$ is a bit tricky since $I(e^{t\Delta}f)$ is typically $O(1/t)$ as $t\to\infty$ along the heat flow, and likewise for the other terms. So it is better to replace the heat semigroup $e^{t\Delta}$ by its variant localised by rescaling, the semigroup $(S_t)_{t\geq 0}$ solving the linear drift-diffusion equation
\begeq\label{FP}
\derpar{f}{t} = \Delta f + \nabla\cdot (fx).
\endeq
Equation \eqref{FP} is known as the linear Fokker--Planck, or adjoint Ornstein--Uhlenbeck semigroup, and its behaviour as $t\to\infty$ is convergence to the Gaussian $\gamma=\gamma_1$. This semigroup preserves the class of Gaussian distributions and commutes with $Q_\alpha$. Then the inequality we obtained can be rewritten
\begeq\label{ddtStHI}
-\frac{d}{dt} H\bigl(Q_\alpha(S_tf,S_tg)\bigr) \leq 
-\frac{d}{dt} \Bigl[(1-\alpha)\, H(S_tf) + \alpha H(S_tg)\Bigr],
\endeq
and all three quantities $H(Q_\alpha (S_tf,S_tg)) = H(S_tQ_\alpha(f,g))$, $H(S_tf)$ and $H(S_tg)$ converge to $H(\gamma)$ as $t\to\infty$ (actually the convergence is exponentially fast), so there is no problem in integrating \eqref{ddtStHI} from $t=0$ to $\infty$, and the Shannon--Stam inequality follows. 
\end{proof}

Stam's inequalities are precious because they have the right scaling with respect to summation of independent variables (this is no accident, as the correct rescaling is the one which preserves the variance). For instance, if $X_1,X_2,\ldots$ are i.i.d. with finite variance, then $S_n=(X_1+\ldots+X_n)/\sqrt{n}$ satisfies 
\[ I(S_{2n})\leq I(S_n),\qquad H(S_{2n})\leq H(S_n).\]
In other words, $H$ and $I$ are Lyapunov functionals along the rescaled summation operation, at least along powers of 2 ($n=2^k$). Linnik and Barron and others used this and related estimates to get explicit rates of convergence for the central limit theorem. Even for the intuition it is precious, identifying the universal Gaussian fluctuation profile as a consequence of Stam's inequalities and the Gaussian minimisation property.

\subsection{Large deviations}

To present this line of thought it will be useful to compare the meanings of $H$ and $I$ in the spirit of large deviations. If Boltzmann's $H$ functional quantifies how rare a distribution function is and how difficult it is to realize, Fisher's $I$ functional is about how visible this distribution is and how difficult it is to measure. 

Boltzmann's historical justification of the $H$ function was the following. Take $N$ particles occupying $K$ possible states; if the frequencies $f_k = N_k/N$ are given ($N_k$ = number of particles in state $k$), then how many ways are there to realize them? The result is
\[ W_N(f) = \frac{N!}{(Nf_1)!\ldots (Nf_K)!}.\]
(Obviously each $f_k$ has to be an integer multiple of $1/N$ for the problem to have a solution.) Then
\[ \frac1{N} \log W_N(f) \xrightarrow[]{N\to\infty} - \sum_{k=1}^K f_k\log f_k. \]
This is Boltzmann's celebrated formula, $S=k\log W$, in its barest expression.
Here is a neat quantitative bound, recast by Cover and Thomas in the language of large deviations: 
If all particles are independent and drawn according to $\nu = (f_1,\ldots,f_K)$, and
$\hat{\mu} = (\hat{f}_1,\ldots,\hat{f}_K)$ is the collection of observed frequencies,
then $\hat{\mu}\longrightarrow \nu$ as $N\to\infty$ (law of large numbers) and for any $\var>0$,
\begeq\label{Pexp}
\P_{\nu^{\otimes N}} \bigl [ H_\nu(\hat{\mu})\geq \var\bigr]
\leq e^{-N \left[\var - (K-1)\frac{\log (N+1)}{N}\right]}.
\endeq
Note that $H_\nu(\mu)\geq 0$ if $\mu,\nu$ are both probability measures, since $H_\nu(\mu) = 
\int f\log f\,d\nu \geq (\int f\,d\nu)\log (\int f\,d\nu) = 0$.
Inequality \eqref{Pexp} is an instance of {\bf Sanov's theorem}, which states roughly speaking that
\[\P_{\nu^{\otimes N}} \bigl[\hat{\mu}^N \in {\cal O}\bigr]
\simeq \exp \left[ -N \inf_{\mu\in {\cal O}} H_\nu(\mu)\right]\qquad \text{as $N\to\infty$} \]
(large deviations of the empirical measure), under various assumptions and refinements.

A more precise formulation would be: If $(\varphi_k)_{k\in\N}$ is a dense sequence of observables (test functions in some well-chosen function space, for instance bounded and going to zero at infinity, or bounded Lipschitz), then
\begeq\label{rigSanov}
\lim_{K\to\infty} \ \lim_{\var\to 0} \ \lim_{N\to\infty}
-\frac1{N} \log \P_{\nu^{\otimes N}}\left[ \forall k\leq K,
\quad \left| \frac1{N} \sum_{n=1}^N \varphi_k(X_n) - \int \varphi\,d\mu\right| \leq \var\right] = H_\nu(\mu).
\endeq
(Actually one should write inequalities involving $\limsup$ and $\liminf$, but let me remain sloppy in this broad presentation.)

Now, what about the large deviation meaning of Fisher? One intuition is that fluctuation of particles states will inevitably blur the observation of the empirical measure, and Fisher's information gives a lower bound for that. Another intuition pertains to reconstruction of trajectories. To make a statement, we need a dynamical model, so let us assume that particles are subject to Gaussian fluctuations and feel a potential field $V$: this gives the standard Langevin stochastic differential equation
\begeq\label{SDE} \frac{dX}{dt} = \sqrt{2}\,\frac{dB}{dt} - \nabla V(X_t), \endeq
where $(B_t)$ is the standard Brownian motion and the equilibrium measure is $\nu(dx) = e^{-V(x)}\,dx/Z$, with $Z$ a normalising constant. The transition kernel would be something like 
\[ a_t(x,y) \simeq \frac{e^{-| y-(x-t\nabla V(x))|^2/4t}}{(4\pi t)^{d/2}}.\]
The likelihood of a time-discretised trajectory may be proportional to 
\[ f(x_0)\, a(x_0,x_1)\ldots a(x_{N-1},x_N)
= f(x_0) \prod e^{\log a(x_i,x_{i+1})},\]
so the log likelihood of that trajectory, after dividing by time, should be something like
\[ \frac1{N} \sum \frac{\Bigl| x_{i+1}- \bigl(x_i - \delta t \nabla V(x_i)\bigr)\Bigr|^2}{4\,\delta t}. \]

When we try to reconstruct $V$, we might assume $dX_t = \sqrt{2}\,dB_t + \xi(X_t)\,dt$, 
\[ \frac1{4N} \sum (\delta t) \left| \frac{x_{i+1}-x_i}{\delta t} + \nabla V(x_i)\right|^2
\simeq \frac1{4N} \sum \delta t \bigl | \xi(x_i) + \nabla V(x_i) \bigr|^2 + \quad \text{ fixed divergent Brownian part}\]
(the cross product involving $(B_{t_{i+1}}-B_{t_i})\cdot \nabla V(X_i)$ will disappear by martingale property).
All in all, the variational problem which emerges is
\[ \inf \left\{ \frac1{4T} \int_0^T f(t,x) \bigl| \xi(t,x) + \nabla V(x) \bigr|^2\,dx; \qquad \derpar{f}{t} = \Delta f - \nabla \cdot (\xi f)\right\}. \]
This tells us about the probability to observe a time-dependent profile $f(t,x)$ which, by some small probability event, would not be the solution of $\pa_t f = \Delta f +\nabla\cdot (f\nabla V)$. Let us particularise this to the static case. As $T\to\infty$ the average of particle properties along particle trajectories will certainly look like those of the equilibrium measure $e^{-V}$, but it may be that we are tricked by an exponentially small probability event and observe another profile $f_\infty$, then the infimum should be over $\xi$ such that
$0 = \Delta f_\infty - \nabla\cdot (\xi f_\infty) = \nabla\cdot (f_\infty (\nabla\log f_\infty-\xi))$. The infimum of $\int f_\infty |\xi+\nabla V|^2$
is obtained for $\xi=\nabla\log f_\infty$, and the value is $\int f_\infty |\nabla \log f_\infty + \nabla V|^2 = I_{e^{-V}}(f_\infty\,dx)$.
After this long chain of approximations and reductions, we finally arrived at a specific instance of the Donsker--Varadhan large deviation principle, which is parallel to Sanov's problem, but now the problem is to empirically estimate the equilibrium distribution not from independent samples but from trajectories: when particles satisfy the stochastic differential equation \eqref{SDE},
\[ \lim_{K\to\infty} \ \lim_{\var\to 0} \ \lim_{T\to\infty}
- \frac1{T} \log \P\left[\ \forall  k\leq K, \quad 
\left| \frac1{T} \int_0^T \varphi_k(X_t)\,dt - \int \varphi\,d\mu \right| \leq \var \right] = \frac14\, I_\nu(\mu). \]

To summarise, Fisher's information appears in large deviation estimates when the problem is the reconstruction of the density by the trajectorial empirical mean. Compare with Sanov's formula \eqref{rigSanov}.

\subsection{Logarithmic Sobolev inequalities}

Introduced by Leonard Gross as a substitute for Sobolev inequalities in infinite dimension, these inequalities read as follows: 
Given $(M,g,\nu)$ a complete Riemannian manifold with a reference measure $\nu$, for all $u:M\to \R$,
\[ \int_M |u|^2 \log |u|^2\, d\nu \leq
\frac{2}{K} \int_M |\nabla u|^2\,d\nu + \left( \int_M |u|^2\,d\nu\right) \log \left(\int_M |u|^2\,d\nu\right).\]
Or equivalently, if $P(M)$ stands for the space of Borel probability measures on $M$,
\begeq\label{LSIK} \forall \mu\in P(M),\qquad H_\nu(\mu) \leq \frac1{2K}\, I_\nu(\mu).
\qquad \text{(LSI($K$))}  \endeq
This property may be true or not (more rigorously, the optimal $K$ may be positive or~0), depending on $\nu$. If true, this imposes stringent localisation properties on $\nu$: at least Gaussian decay at $\infty$.
The archetype of such a property is the Stam--Gross logarithmic Sobolev inequality (first proven by Stam in an equivalent form, rediscovered by Gross) : If $\gamma$ is the standard Gaussian with identity covariance matrix on $\R^d$, then
\[ \forall \mu\in P(\R^d), \qquad H_\gamma(\mu) \leq \frac{I_\gamma(\mu)}{2}, \]
independently of the dimension $d$. 

Logarithmic Sobolev inequalities play tremendous service in the study of diffusion processes, large systems of particles, concentration of measure and isoperimetry, in particular. So it is a classical and important problem to know convenient criteria for them to hold. The most important of those is due to Bakry \& \'Emery: If $\nu(dx) = e^{-V(x)}\,\vol(dx)$ on $(M,g)$ and $\nabla^2V + \Ric \geq K g$ for some $K>0$, with $\vol$ being the Riemannian volume, $\nabla^2$ the Riemannian Hessian and $\Ric$ the Ricci curvature, then $\nu$ satisfies LSI($K$). Their argument uses a semigroup again: Identifying the density $f$ and the measure $f\,\vol$, writing as before $S_t$ for the semigroup solving $\pa_t f = \Delta f + \nabla\cdot (f\nabla V)$, they prove
\[ -\frac{d}{dt} I_\nu(S_t\mu) \geq 2 K\, I_\nu (S_t\mu) = - 2K \, \frac{d}{dt} H_\nu(S_t\mu),\]
and integration from $t=0$ to $\infty$ yields \eqref{LSIK}.

I shall come back in more detail to log Sobolev inequalities and to the Bakry--\'Emery argument later in this course.

\subsection{Optimal transport theory}

The Monge--Kantorovitch theory of optimal transport is about the most economical way to rearrange a mass distribution from initial to final state. At the end of the nineties, a dynamical version has been explored at length by Brenier--Benamou and Otto, in the formalism of fluid mechanics and Riemannian geometry. If an infinitesimal variation of measure, $\pa_t\mu$, is given, then the associated quadratic cost is
\[ \|\pa_t\mu\|^2 = \inf \left\{ \int |\xi|^2\,d\mu; \quad \pa_t\mu + \nabla\cdot(\mu\xi) = 0 \right\}. \]
This provides a formal Riemannian structure on $P(M)$, for which geodesics are dynamical realisations of the optimal transport problem with quadratic cost function. (If one wishes to draw an analogy with Jeffreys' prior, parameterize $\mu\simeq \mu_0$ by vector fields $\xi$, each vector field $\xi$ on the time interval $[0,\var]$ generates a flow $T_\var$ which pushes $\mu_0$ forward to $\mu$. Then take quotient to define a geometry on the space of probability measures.)

A major insight by Felix Otto is that this procedure identifies the heat flow (on probability measures) as the gradient flow for the $H$ functional. Moreover, it is not difficult to identify Fisher's information with the square norm of the gradient (in the sense of Riemannian geometry) of the $H$ functional, 
\[ I_\nu (\mu) = \bigl\| \grad H_\nu (\mu)\bigr\|^2.\]
This was the start of a series of works, in which I was strongly involved around the turn of the previous century, providing new interpretations and discoveries in the interplay between Riemannian geometry, entropy, Fisher information and optimal transport. For instance the HWI inequality reinforces Bakry--\'Emery's theorem as follows: If $\nu$ is a probability measure on $M$ and $K>0$ then
\begeq\label{HWI}
\nabla^2V + \Ric \geq K g \quad \Longrightarrow \forall \mu\in P(M),\quad 
H_\nu(\mu) \leq W_2(\mu,\nu) \sqrt{I_\nu(\mu)} - \frac{K}2\, W_2(\mu,\nu)^2,
\endeq
where $W_2$ is the $2$-Wasserstein distance,
\[ W_2(\mu,\nu) = \Bigl\{\inf \E d(X_0,X_1)^2 ; \quad \law(X_0)=\mu, \quad \law(X_1)=\nu \Bigr\}^{1/2}. \]
Note that by $ab - Ka^2/2 \leq b^2/(2K)$, the conclusion of \eqref{HWI} implies \eqref{LSIK}.
\medskip

\subsection{And then...}

After this overview, it is time to turn to the use of Fisher information in mathematical physics. There are at least two fields in which it plays an important role. One is quantum chemistry: if $\psi$ is a wave function, then $\rho = |\psi|^2$ is a probability density.
Writing $\psi = \sqrt{\rho}\,e^{i\varphi}$, we have 
\[ \int |\nabla\psi|^2 = \int \Bigl( |\nabla\sqrt{\rho}|^2 + \rho\, |\nabla\varphi|^2 \Bigr) = \frac{I(\rho)}{4} + \int \rho\, |\nabla\varphi|^2,\]
so that Fisher's information is naturally involved in the computation of the ``kinetic energy'' of the particle.

Another field of powerful application for the Fisher information is the kinetic theory of gases and plasmas, and this will be the focus of all the remaining of these notes. As we shall see, in particular, Fisher's information is monotonous (nonincreasing) along solutions of the most fundamental model in collisional kinetic theory, and this has important consequences.

\bibnotes

Some of the important founding papers are those by Fisher \cite{fisher:25}, Shannon and Weaver \cite{shannonweaver:book}, Jeffreys \cite{jeffreys:46}, Stam \cite{stam:59}, Linnik \cite{linnik:59}, Donsker--Varadhan \cite{donskvar:markov:75}, Gross \cite{gross:log:75}, Bakry--\'Emery \cite{bakem:hyperc:85}, Brenier--Benamou \cite{BB:semigeo:98}, Jordan--Kinderlehrer--Otto \cite{JKO:FP:98}, Otto \cite{otto:geom:01}, Otto--Villani \cite{OV:00}.

The time-honoured reference course on information theory is the book by Cover and Thomas \cite{CT:book}, containing itself hundreds of references. The discussion on large deviations and Boltzmann's formula there is illuminating. My version of the classical Cramér--Rao bound is a very slight generalisation of the classical bound, for coherence of the presentation.

The parallel between Brunn--Minkowski and information theory is analysed by Dembo, Cover and Thomas \cite{DCT:inf:91}. Free probability is exposed in Voiculescu's lecture notes \cite{voiculescu:stflour}. From there Voiculescu and Szarek \cite{szarekvoic:shannon:00} also devised a brilliant proof of Shannon--Stam resting on a large-dimensional analogue of Brunn--Minkowski (a proof which is considerably more complicated than the one by Stam, but very meaningful).

Explicit central limit theorems were developed by Barron \cite{barron:entropy:86} and further authors.

A classical course on large deviations is the book by Dembo and Zeitouni \cite{DZ:2}. Some quantitative variants of Sanov's theorem are in my work with Bolley and Guillin \cite{BGV:07}. The Donsker--Varadhan theorem can be found in various levels of generality \cite{rezakhanlou:LDP} but it is not so easy to find a neat reference for the instance presented here, and it would be good to have some quantitative estimates.

Logarithmic Sobolev inequalities are the object of many works, among which the synthesis book by the Toulouse research group \cite{toulouse:sobolog} and the one by Bakry--Gentil--Ledoux \cite{BGL:book}.

My own books can be consulted for optimal transport theory \cite{vill:TOT:03,vill:oldnew}. Chapter 21 in the latter reference is also largely devoted to logarithmic Sobolev inequalities.

An introduction to Fisher's information in variational problems related to quantum physics can be found in Lieb--Loss \cite{liebloss:book:2}.

\section{Core kinetic theory}

Classical kinetic theory was born from the heroic efforts of Maxwell and Boltzmann to statistically describe assemblies of particles through their time-dependent distribution function in position and velocity variables: $f=f(t,x,v)$ where, say, $t\geq 0$, $x\in \Om_x \subset\R^d$ or $x\in \R^d/\Z^d$, and $v\in\R^d$. If particles are subject to a macroscopic force field $F=F(t,x)$ (possibly induced or partially induced by the particles themselves) and interact through localised binary microscopic ``collisions'' then the model is the Boltzmann equation
\begeq\label{BE}
\derpar{f}{t} + v\cdot\nabla_x f + F\cdot\nabla_v f = Q(f,f)
\endeq
and Boltzmann's collision operator $Q$ reads
\begeq\label{Qff}
Q(f,f) = \int_{\R^d} \int_{\S^{d-1}} \bigl( f'f'_* - ff_*\bigr)\, B\,d\sigma\,dv_*, 
\endeq
where $f'=f(t,x,v')$ and so on, and
\begeq\label{v'v'star}
v'=\frac{v+v_*}2 + \frac{|v-v_*|}2\, \sigma, \qquad v'_* = \frac{v+v_*}2 - \frac{|v-v_*|}2 \,\sigma, 
\endeq
$B=B(v-v_*,\sigma)$ only depends on $|v-v_*|$ (relative velocity) and $\frac{v-v_*}{|v-v_*|}\cdot\sigma$ (cosine of the deviation angle). Think of $(v,v_*) \rightarrow (v',v'_*)$ as a sudden variation in the velocities of a pair of colliding particles:
Conservation of momentum induces $v+v_* = v'+v'_*$, conservation of energy induces $|v|^2+|v_*|^2 = |v'|^2 + |v'_*|^2$, these $d+1$ conservation laws leave room for $d-1$ parameters for solutions, and that is the role of $\sigma = (v'-v'_*)/|v'-v'_*|$, the direction of the relative postcollisional velocities.
Equation \eqref{BE} makes sense only with boundary conditions, but let me skip this issue here, or just think that $x$ lives in $\R^d/\Z^d$ so that there is no boundary in the physical domain.

In one of the greatest artworks of mathematical physics ever, Maxwell and Boltzmann set up the basis of classical kinetic theory in relation to the then-speculative atomistic theory. In particular they showed that
\medskip

{\bf (1)} Under an assumption of molecular chaos at initial time, and in a regime in which each particle collides at least a few times per unit of time, \eqref{BE} is a plausible model for the evolution of the density of particles;
\medskip

{\bf (2)} If $H(f) = \iint f\log f\,dx\,dv$ then in the absence of force and under appropriate boundary conditions, $dH/dt\leq 0$ (Boltzmann's $H$ Theorem, turning the Second Law of thermodynamics into a plausible {\em theorem} for this particular but fundamental system). Boltzmann's proof is a classical gem: applying the two basic ways to exchange variables, which are $(v,v') \longleftrightarrow (v_*,v'_*)$ and $(v,v_*,\sigma) \longleftrightarrow (v',v'_*,k)$ ($k=\frac{v-v_*}{|v-v_*|}$), using the additivity of the logarithm, the fact that the transport operator $v\cdot \nabla_x$ preserves all integral functions $\int C(f)\,dv\,dx$, the additivity property of the log and its increasing property,
\begin{align*}
H'(f)\cdot Q(f,f) &
= \iint \left( \iint (f'f'_* - ff_*)\, B\,dv_*\,d\sigma\right)(\log f+1)\,dv\,dx\\
& = \frac14 \iiiint (f'f'_* - ff_*) \bigl(\log f  + \log f_* - \log f'- \log f'_* \bigr)\, B\,dv\,dv_*\,d\sigma\,dx\\
& = -\frac14 \iiiint (f'f'_* - ff_*) \bigl (\log ff_* - \log f'f'_*\bigr)\, B\,dv\,dv_*\,d\sigma\,dx \leq 0.
\end{align*}
From this computation emerges {\bf Boltzmann's dissipation functional} (dissipation of $H$, or more rigorously its integrand in the $x$ variable)
\begeq\label{DB}
D_B(f) = \frac14 \iiint (f'f'_* - ff_*) \bigl(\log f'f'_* - \log ff_*\bigr)\,B\,dv\,dv_*\,d\sigma.
\endeq
This fundamental functional quantifies the strength of the dissipation process at work in \eqref{BE}.
\medskip

{\bf (3)} Equilibria for \eqref{BE}, in the absence of macroscopic force and except in presence of certain spatial symmetries, are of the form
\begeq\label{Mglo}
M(x,v) = M_{\rho u T}(v) = \rho\,\frac{e^{-|v-u|^2/(2T)}}{(2\pi T)^{d/2}},
\endeq
where $\rho\geq 0$, $u\in\R^d$, $T\geq 0$ are constants. In particular these equilibria are space-homogeneous. These Gaussian distributions are called Maxwellians in this context. (There are additional equilibria under certain symmetries, for instance axisymmetric domains with specular reflection of particles at the boundary.)
\medskip

{\bf (4)} As $t\to\infty$ solutions of \eqref{BE} converge to equilibrium.
\medskip

{\bf (5)} In a suitable regime of many collisions per unit of time, solutions of \eqref{BE} are well approximated by local equilibria $M_{\rho u T}(v)$, where now $\rho, u, T$ are functions of $t$ and $x$ and satisfy certain hydrodynamic equations for density, velocity and temperature. 
\medskip

In this way Maxwell and Boltzmann recovered and discovered some properties of fluids, sometimes consistent with hydrodynamics and sometimes not. The first two striking such discoveries by Maxwell were that the viscosity of a rarefied gas is independent of its density (something that he himself could not believe at first), and that near the boundary a gas may flow from low temperature to high temperature (Maxwell's paradoxical {\em thermal creep}). 
By all means the program  was a complete success, and a milestone in the discovery of atoms.

Still, even after 150 years and in spite of thousands of works, Boltzmann's equation retains some deep mathematical mysteries. Most famously, we are in want of a large time convergence proof from particle systems to \eqref{BE}. (Lanford's theorem, even after half a century of corrections and improvements, only works out for about a fraction of a collision time, which is of the order of $10^{-9} s$ under usual conditions for the air around us.) This and other works suggest that a theory of regular solutions is needed, but as of today the latter is nowhere to be seen, except in particular regimes, for instance very close to equilibrium.
Equation \eqref{BE} contains several major difficulties:

\bul the complexity of the operator $Q$;

\bul the degenerate nature of the equation, involving a conservative transport operator $v\cdot\nabla_x$ (or $v\cdot\nabla_x + F\cdot\nabla_v$) and a dissipation, but only in $v$ variable, the collision operator -- this is akin to a hypoelliptic or hypocoercive issue;

\bul the quadratic nature of $Q$, whose amplitude is proportional to $\rho^2$ ($\rho = \int f\,dv$ = spatial density), making a priori estimates a challenge;

\bul the singular nature of the hydrodynamic approximation, in a regime when $B$ becomes very large.
\medskip

In this dire but exciting situation, it is legitimate to study all difficulties in parallel. A good way to better understand the structure of $Q$ is to focus on the {\em spatially homogeneous Boltzmann equation},
\begeq\label{SHBE}
\derpar{f}{t} = Q(f,f),
\endeq
and this will be the main focus of this course.

\bibnotes

The founding papers of kinetic theory are those of Maxwell \cite{maxw:67} and Boltzmann \cite{boltz:weitere:72}. Maxwell's creep thermal effect is from \cite{maxw:79}. Boltzmann's book \cite{boltz:book} is a bright synthesis of the whole theory and had considerable influence, as shown for instance in Perrin's classic book {\em Les Atomes} \cite{perrin:book}. Albert Einstein, Erwin Schr\"odinger, Max Planck all built from Boltzmann's work to develop their theories of atomistic laws. The story is described in various books of history of science, like Lindley's \cite{lindley:book}.

Modern survey books on Boltzmann's theory from the past thirty years are those by Cercignani \cite{cer:book:88,cer:book:00}, Cercignani--Illner--Pulvirenti \cite{CIP:book:94}, Sone \cite{sone:book}, and my own review of collisional kinetic theory \cite{vill:handbook:02}. The most elaborate version of Lanford's theorem is the book by Bodineau, Saint-Raymond and Gallagher \cite{GSRT:N2B}. The recent PhD by Corentin Le Bihan provides a good review of the derivation problem \cite{lebihan:PhD}.

\section{Fisher information into kinetic theory} \label{secFIK}

Fisher information was imported into kinetic theory by Henry P. McKean in a remarkable paper of 1966. His motivation came from the problem raised by Kac on the speed of approach to equilibrium, starting with a one-dimensional caricature of the Boltzmann equation. The idea was sharp and clear: Draw a parallel between the convergence to equilibrium in Boltzmann's theory, and the central limit theorem in classical probability theory and statistics.
Repeated interactions tend to push the distribution closer to a Maxwellian (or Gaussian); by writing the solution as a superposition of contributions from all interaction histories, represented by interaction trees, show that most trees are ``deep'' and thus the corresponding terms are close to Maxwellian.

McKean borrowed from Linnik the use of Fisher's functional $I$, which had been instrumental in getting some results of explicit convergence for the central limit theorem. He worked on Kac's simplified equation and (a) proved that $I(f)$ is nonincreasing for this model, (b) established the first result of exponential convergence for the distribution function (not just for the solution of the linearised equation) as $t\to\infty$. Note that in dimension 1, the bound on $I$ is already a strong regularity estimate, implying H\"older continuity of $f$.

Since that time, McKean's program has been extended and refined through a number of tools -- functional inequalities,  probability metrics, entropy methods, Fourier transform, linearisation... Some notable authors in this program have been Arkeryd, Bobylev, Carlen, Carvalho, Cercignani, Dolera, Desvillettes, Gabetta, Loss, Regazzini, Tanaka and myself. In particular, in 2013 Dolera and Regazzini achieved a very close parallel between Boltzmann equilibration and central limit theorem, and established that the rate of exponential convergence of the spatially homogeneous Boltzmann equation with Maxwell kernel and cutoff (that is, when $B$ only depends on $k\cdot\sigma$ and $\int B\,d\sigma<\infty$) is given precisely by the spectral gap of the linearised equation. 
Eventually they did not use the Fisher information for that, relying rather on Fourier transform. But in the meantime, Fisher information had proven useful in a number of related problems in kinetic theory. Here are five examples:
\med

(a) {\em Equilibration for the Fokker--Planck equation:} Consider the basic linear Fokker--Planck equation
\[ \derpar{f}{t} = \Delta f + \nabla\cdot (fv),\qquad f=f(t,v),\qquad v\in\R^d.\]
Then by the Stam--Gross logarithmic Sobolev inequality
\[ -\frac{d}{dt} H_\gamma(f) = I_\gamma(f),\qquad
-\frac{d}{dt} I_\gamma(f) \geq 2 I_\gamma(f),\qquad H_\gamma(f) \leq \frac12 I_\gamma(f),\]
which readily implies the exponential convergence estimates
\[ H_\gamma(f(t)) \leq e^{-2t} H_\gamma(f_0),\qquad I_\gamma(f(t))\leq e^{-2t} I_\gamma(f_0).\]
Using classical inequalities in information theory, this provides a convergence of $f(t)$ to $\gamma$ like $O(e^{-t})$. Note: That rate of convergence is the same as in the linear theory (going through the spectral analysis of $-\Delta -v\cdot\nabla$ in $L^2(\gamma\,dv)$), but assumptions are much less stringent since the linear theory corresponds to $f/\gamma\in L^2(\gamma)$, or equivalently $\int e^{|v|^2/2} f(v)^2\,dv <\infty$, while here the only requirement is that $f_0=f_0(v)$ satisfying either $H_\gamma(f_0)<\infty$ or the stronger condition $I_\gamma(f_0)<\infty$; and one may even relax those conditions through a regularisation study.
\med

(b) {\em Entropy production estimates for Boltzmann's collision operator:} At the end of the nineties, Toscani and I established bounds of the form
\[ \forall f\qquad D_B(f) \geq K(B,\var,f)\, H_\gamma(f)^{1+\var},\]
where $\var\in (0,1)$ can be arbitrarily small 
($\var=0$ is impossible in general, as shown by counterexamples of Bobylev and Cercignani) and $K(B,\var,f)$ only depends on $B$ through a polynomial lower bound 
(like $B\geq a \min (|v-v_*|^\alpha, |v-v_*|^{-\alpha})$, $a,\alpha>0$) 
and on $f$ through regularity bounds (upper control of moments and Sobolev norms of $f$ of large enough order, lower bound on $f$ like $f\geq K_0\, e^{-A_0 |v|^{q_0}}$). The result does not use Fisher information but the proof does;
it involves a semigroup argument \`a la Stam, using the Fokker--Planck semigroup $(S_t)_{t\geq 0}$, and goes roughly like this:
\begin{align*}
D_B(f)  & \gtrsim \int_0^{+\infty} e^{-2\lambda t}\, D_L(S_tf)\,dt\\
&  \gtrsim \int_0^{+\infty} e^{-2\lambda t}\, I_\gamma(S_tf)\,dt\\
& \geq \int_0^{+\infty} I_\gamma\bigl( S_{(1+\lambda)t} f\bigr)\,dt = \frac{H_\gamma(f)}{1+\lambda},
\end{align*}
where $D_L$ is Landau's dissipation functional, which I will present in detail later in these notes.
\med

(c) {\em Inhomogeneous entropy estimates:} Around 2000, Desvillettes and I analysed the (hypocoercive) equilibration for the linear inhomogeneous kinetic Fokker--Planck equation with potential, white noise forcing and friction:
\begeq\label{kFP}
\derpar{f}{t} + v\cdot\nabla_x f - \nabla V(x)\cdot\nabla_v f
= \Delta_v f + \nabla_v\cdot (fv)\qquad f=f(t,x,v)\geq 0.
\endeq
Then the equilibrium is $f_\infty(x,v) = e^{-V(x)}\gamma(v)$ (assume that $V$ is normalised so that $\int e^{-V}=1$).  
We found out that the basic Gronwall-type inequality $dH_\gamma(f)/dt \leq -2 H_\gamma(f)$, typical of the spatially homogeneous regime, should be replaced by the more sophisticated system
\[ 
\begin{cases}
\dps -\frac{d}{dt} H_{f_\infty}(f) \geq H_{\rho\gamma} (f) \qquad \rho = \int f\,dv\\[5mm]
\dps \frac{d^2}{dt^2} H_{\rho\gamma}(f) \geq I_{e^{-V}}(\rho) 
- O \Bigl( \|f-f_\infty\|\, \|f-\rho\gamma\|\Bigr),
\end{cases}\]
leading to long-time estimates on how $f$ approaches $f_\infty$. (Here I am cheating a bit, but the spirit is correct.) This was later extended in my memoir on Hypocoercivity and other works.
\med

(d) {\em Entropic hypocoercive estimates:} About twenty years ago, I used a mixed $(x,v)$-Fisher information to establish the first explicit rates of exponential convergence to equilibrium for the kinetic Fokker--Planck equation in spaces of finite entropy : the functional was
\[ \tilde{I}(f) = \int f\, \Bigl\< A \nabla_{x,v} f, \nabla_{x,v} f\Bigr\> \,dx\,dv, \]
where $\nabla_{x,v} = (\nabla_x , \nabla_v)$ and 
\[ A = \left( \begin{matrix} a I & b I \\ bI & cI \end{matrix} \right),  \qquad
a>b>c, \quad b< \sqrt{ac},\]
$a,b,c$ being well-chosen. Coupled with global hypoelliptic regularisation estimates, this approach also led to equilibration results from measure initial data with enough moments. These results were later recovered and improved by Mouhot and collaborators.
\med

(e) {\em Refined notions of propagation of chaos}: In various works, Carrapataso, Fournier, Hauray, Mischler have shown how to use Fisher information bounds to refine the usual notion of chaos (which says that a sequence of symmetric probability measures $\mu^N$ in $N$ variables is close, in weak sense, to a tensor product) into the stronger notion of entropic chaos (which says, in addition, that there is convergence of the entropy). This is a kind of infinite-dimensional interpolation: convergence in weak sense and bound in Fisher sense imply convergence in entropy sense; the HWI inequality allows to do that when the reference measure is log concave.
\med

Still, a more basic development of McKean's paper was almost left aside as a curiosity: When does $I$ decrease along solutions of Boltzmann's equation?
The transport flow does not leave $I$ invariant (unlike $H$) so it seemed natural to focus on the effect of just $Q$, that is the spatially homogeneous case.
McKean seemed to think that the decreasing property of $I$ was a specific feature of dimension 1, but in 1992 Toscani generalised it to the 2-dimensional Boltzmann equation with Maxwell collisions; and in 1998, during my PhD I obtained the generalisation to all dimensions (a question asked by my advisor on the first day of my PhD!). I actually established the following analogues of Stam's inequalities, from which the decreasing property follows at once: If 
\[ Q^+(f,g) = \int_{\R^d} \int_{\S^{d-1}} f'g'_* b(k\cdot\sigma)\,d\sigma,\qquad
\int b(k\cdot\sigma)\,d\sigma =1,\quad b(k\cdot\sigma) = b(-k\cdot\sigma),\]
then for all distributions $f,g$ with finite energy,
\[ I\bigl( Q^+(f,g)\bigr) \leq \frac12 \bigl[ I(f) +I(g)\bigr], \qquad
H\bigl( Q^+(f,g)\bigr) \leq \frac12 \bigl[ H(f) +H(g)\bigr].\]
This seemed like a satisfactory answer and (contrary to Toscani) I did not expect $I$ to be decreasing for more general Boltzmann kernels. In the twenty-five years to follow, there was no extension of this result (save for an alternative proof by Matthes and Toscani of a particular calculation in my paper).

Before I turn to the rest of the story, let me mention that McKean's 1966 paper also contained some other questions or conjectures; some were disproved, but others are still standing, concerning either higher order functionals or higher order time derivations.

Another area of research opened in my PhD, on the other hand, was well identified as incomplete, and frustratingly so: the regularity of solutions for ``very soft'' potentials, that is, when the collision kernel has a strong singularity as $|v-v_*|\to 0$, say like $O(|v-v_*|^\gamma)$ for $\gamma<-2$ (here $\gamma$ is just a number, has nothing to do with the Gaussian distribution). In dimension 3 this corresponds to power law forces like $O(1/r^s)$ with $2<s\leq 7/3$. The most important motivation for such singularities is the Landau--Coulomb model, a limit case for Boltzmann's equation describing near encounters in plasma physics: then $\gamma=-3$. 
In such a singular regime, usual notions of weak solutions seemed untractable, and I had constructed a class of ``very weak' solutions using a time-integrated formulation and the functional $D_B$, or its Landau counterpart $D_L$; they were called $H$-solutions and the subject of my very first international seminar (Pavia, 1997).

Later it was found that when the kernel is {\em singular enough} in the angular variable, there is enough time-integrated regularity in the entropy production estimate to reinforce the notion of $H$-solutions in a more classical notion of weak solutions. It could also be shown, under the same assumption of angular singularity, that there is conditional regularity: If the solutions satisfy some integrability bounds ($L^p$ for $p$ large enough) then they are actually quite regular. But in the following years, nobody could establish whether there was actually regularity, or whether singularity could get in the way, despite enormous work by Golse, Gualdani, Imbert and Vasseur who identified more and more precise blow-up criteria.  

Before I go on with my story, let me describe more precisely the taxonomy of collision kernels and their associated equations. This will be the subject of the next section.

\bibnotes

The founding papers are those of Kac \cite{kac:foundations} and McKean \cite{mck:kac:65}.
The most accomplished parallel between central limit theorem and Boltzmann theory is achieved, for spatially homogeneous Maxwell collisions, by Dolera and Regazzini \cite{doleraregazzini:13}, which also includes a review of the field since McKean.

It took enormous effort before one could put Kac's program on a sound footing, estimating the propagation of chaos in such a way that the limit $N\to\infty$ and the limit $t\to\infty$ could commute; this was achieved by Mischler and Mouhot, building on a number of previous authors and ideas \cite{mischlermouhot:kac:13}.

Entropy production estimates for Boltzmann's collision operator are in my works \cite{TV:entropy:99}, jointly with Toscani, and \cite{vill:cer:03}. Before that, Carlen and Carvalho \cite{carlencarv:entropy:92,carlencarv:physic:94} had obtained the first entropy production estimates relating $D$ and $H$ (but with a much worse dependence), also using Fisher's information. Slightly before Carlen and Carvalho, there had been works by Desvillettes \cite{desv:entr:89,desv:eq:90} establishing stable entropy production bounds and convergence for the spatially homogeneous Boltzmann equation, albeit not quantitative.  

I worked with Desvillettes on inhomogeneous models, conditional to regularity estimates \cite{DV:FP:01,DV:boltz:05}. I wrote several lecture notes on this \cite{vill:bari,vill:icm,vill:ihp:01} and a memoir on hypocoercivity \cite{vill:hypoco}.

The combination of Fisher information bounds and chaos property was examined by Hauray and Mischler \cite{hauraymischler:chaos} and Carrapataso \cite{carrapataso:chaos}, and applied to mean-field limits of particle systems by Fournier, Hauray and Mischler \cite{FHM:chaos} (for the two-dimensional viscous vortex model) and by Carrapataso \cite{carrapataso:chaos} (for the Boltzmann equation).

Toscani \cite{tosc:linnik:92} proved decay of the Fisher information for the spatially homogeneous Boltzmann equation with Maxwellian molecules; this was in 1992 and revived the interest for the problem, showing that McKean's 1966 result was not limited to the one-dimensional case. A few years after Toscani, I established Stam's inequalities for the Boltzmann collision kernel in \cite{vill:fisher}, then Matthes and Toscani found an alternative road to a key lemma in \cite{matthestoscani:Bobvill}.

Several types of questions have been called ``McKean's conjectures'' after McKean's original paper \cite{mck:kac:65}. Some are about successive derivatives of the entropy along the Boltzmann flow: Is $H$ a convex function of time? or even a completely monotone function of time? The latter is known to be false even for the most simple models of Boltzmann equation, but the former might be true for the spatially homogeneous Boltzmann equation. Other conjectures are about the functionals obtained by successive derivation of the entropy and Fisher information along the heat flow: do they have alternate signs, are they optimised by Gaussians under constraint of fixed variance? Some comments are in my review \cite[Chapter 4, Section 4.3]{vill:handbook:02} and a much more up-to-date discussion, including recent results by Gao, Guo, Yuan, Wang, can be found in Ledoux \cite{ledoux:conjectures}.

I introduced $H$-solutions in \cite{vill:new:98} to handle the Cauchy problem for the spatially homogeneous Boltzmann equation with a strong singularity in the relative velocity; the refinement to time-integrated weak solutions was proven as a consequence of the fine study of entropy production which we did with Alexandre,  Desvillettes and Wennberg \cite{ADVW}. The control of the growth of moments for these solutions was done by Carlen, Carvalho and Lu \cite{CCL:soft:09}. The partial or conditional regularity of these solutions, in the particular Landau--Coulomb case was the subject of a series of works by Golse, Gualdani, Imbert, Vasseur, e.g. \cite{GGIV:landau, GIV:landau}.

In those works, entropy production is exploited to overcome the strong singularity of the collision kernel. Note that already at the end of the eighties, entropy production had played a key role in the inhomogeneous theory of weak solutions by DiPerna and Lions \cite{DPL:boltz:89}; but that was to overcome the different problem of the genuinely quadratic nature of the collision operator in the full space-inhomogeneous configuration.

\section{Taxonomy of Boltzmann collisions} \label{sectax}

Let me recollect notation. If $f:\R^d\to\R_+$ is a probability density, define $Q(f,f)$ as a function of $v$ by
\begeq\label{Qagain}
Q(f,f) = 
\int_{\R^d} \int_{\S^d} 
\bigl[ f(v')f(v'_*) - f(v) f(v_*)\bigr]\, B(v-v_*,\sigma)\,dv_*\,d\sigma,
\endeq
where $v,v_*,v',v'_*$ all belong in the {\em collision sphere} of diameter $[v,v_*]$, and $B(v-v_*,\sigma) = B(|v-v_*|,\cos\theta)$ (by abuse of notation I use the same symbol $B$ for both writings), and $\theta\in[0,\pi]$ is the angle between $k$ and $\sigma$, the unit vectors directing $v-v_*$ and $v'-v'_*$ respectively; see Fig.~\ref{figcollision}. The collision kernel is related to the {\bf cross-section} $\Sigma$ by the formula $B(z,\sigma) = |z| \Sigma(z,\sigma)$.

\begin{figure}
\caption{The collision sphere: a collision changes velocities $v,v_*$ into $v',v'_*$ and {\em vice versa}; the relative velocity axis is changed from $k$ to $\sigma$; and $\theta\in [0,\pi]$ is the deviation angle. It is often convenient to express $\sigma$ as a combination of $k$ and $\phi$, a $(d-2)$-dimensional unit vector in $k^\bot$.}
 \label{figcollision}
\def\svgwidth{0.5\textwidth}
\vspace*{-20mm}
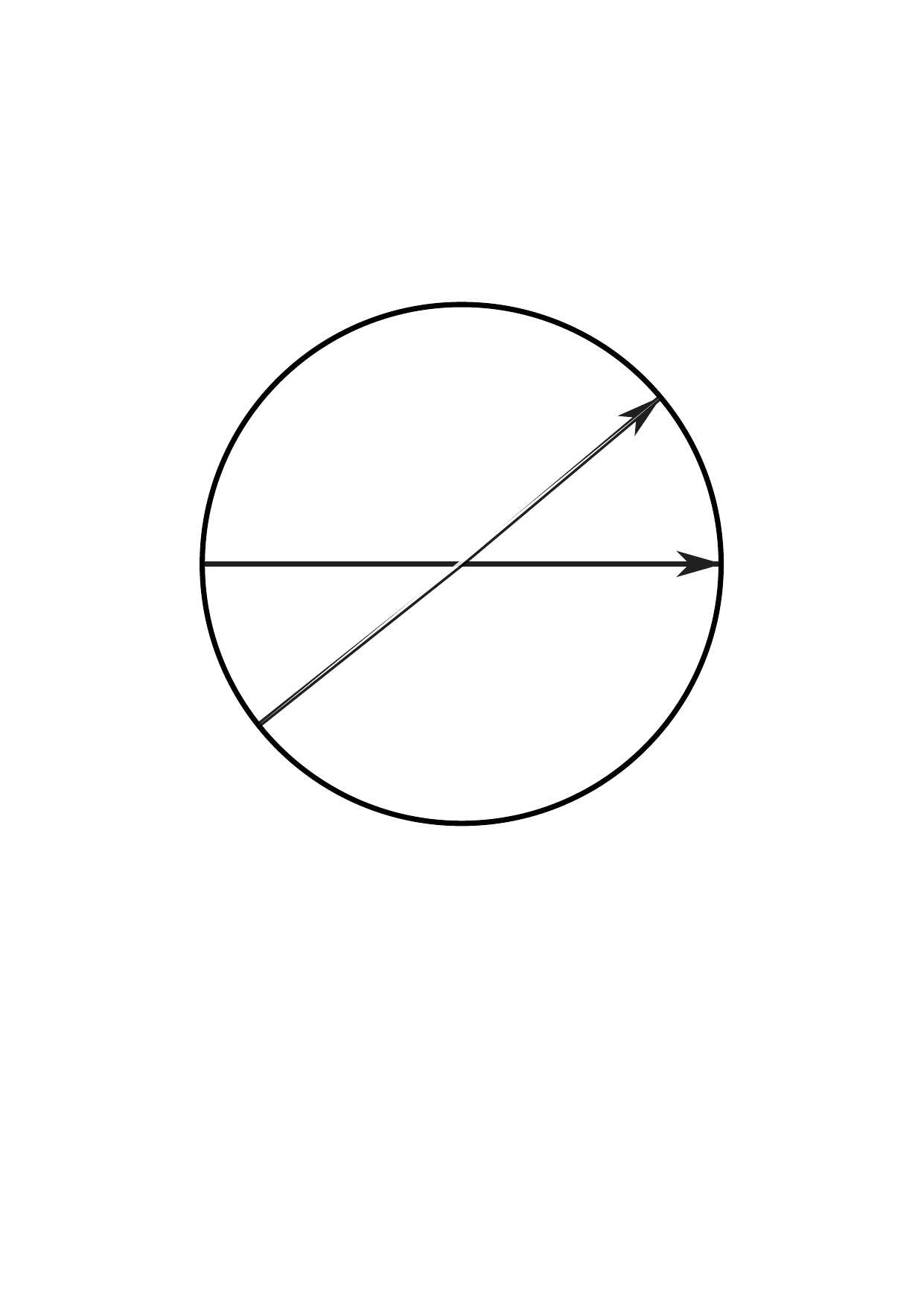
\vspace*{-25mm}
\end{figure}

For intermolecular forces deriving from a repulsive central potential $\psi$, Maxwell established the formula for the cross-section;  this amounts to solving a classical scattering problem with given relative velocity $z$ and impact parameter $p\geq 0$. Working in the referential of center of mass (fixed during the collision), the picture is as in Fig. \ref{figscat}.

\begin{figure} 
\caption{Maxwell's classical scattering problem: In the referential of the center of mass, with velocity $(v+v_*)/2$, one of the particles arrives from the right with asymptotic relative velocity $z/2=(v-v_*)/2$ and goes away to the left with asymptotic relative velocity $z'/2=(v'-v'_*)/2$, the impact parameter is $p$, the deviation angle is $\theta$.}
\label{figscat}
\def\svgwidth{0.5\textwidth}
\begingroup%
  \makeatletter%
  \providecommand\color[2][]{%
    \errmessage{(Inkscape) Color is used for the text in Inkscape, but the package 'color.sty' is not loaded}%
    \renewcommand\color[2][]{}%
  }%
  \providecommand\transparent[1]{%
    \errmessage{(Inkscape) Transparency is used (non-zero) for the text in Inkscape, but the package 'transparent.sty' is not loaded}%
    \renewcommand\transparent[1]{}%
  }%
  \providecommand\rotatebox[2]{#2}%
  \newcommand*\fsize{\dimexpr\f@size pt\relax}%
  \newcommand*\lineheight[1]{\fontsize{\fsize}{#1\fsize}\selectfont}%
  \ifx\svgwidth\undefined%
    \setlength{\unitlength}{595.27559055bp}%
    \ifx\svgscale\undefined%
      \relax%
    \else%
      \setlength{\unitlength}{\unitlength * \real{\svgscale}}%
    \fi%
  \else%
    \setlength{\unitlength}{\svgwidth}%
  \fi%
  \global\let\svgwidth\undefined%
  \global\let\svgscale\undefined%
  \makeatother%
  \begin{picture}(1,1.41428571)%
    \lineheight{1}%
    \setlength\tabcolsep{0pt}%
    \put(0,0){\includegraphics[width=\unitlength,page=1]{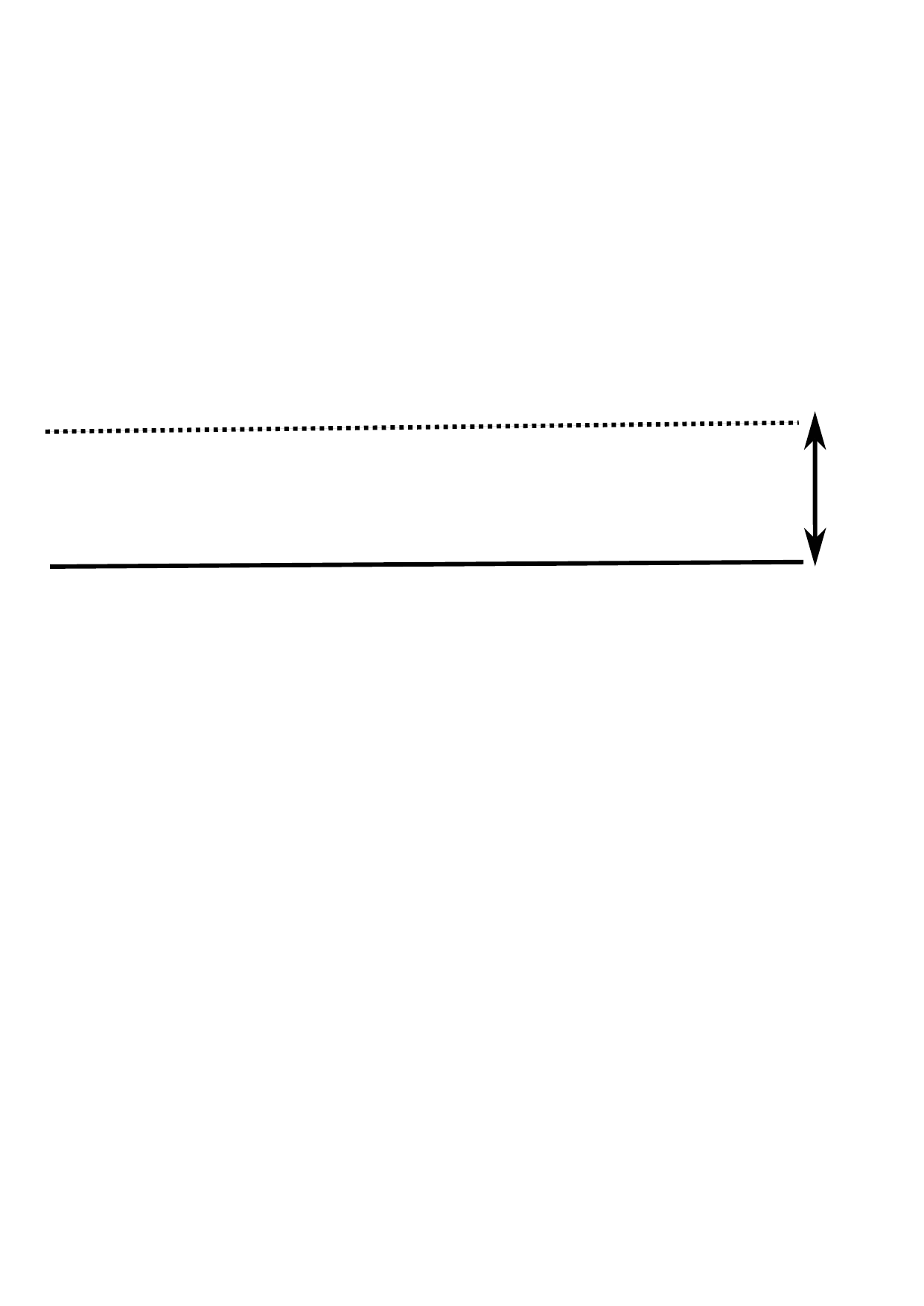}}%
    \put(0.9028539,0.89497495){\color[rgb]{0.01176471,0.01176471,0.01176471}\makebox(0,0)[lt]{\lineheight{1.89988542}\smash{\begin{tabular}[t]{l}$p$\end{tabular}}}}%
    \put(0,0){\includegraphics[width=\unitlength,page=2]{scattering.pdf}}%
    \put(0.70979258,0.91247184){\color[rgb]{0.00784314,0.00784314,0.00784314}\makebox(0,0)[lt]{\lineheight{1.89988542}\smash{\begin{tabular}[t]{l}$z/2$\end{tabular}}}}%
    \put(0,0){\includegraphics[width=\unitlength,page=3]{scattering.pdf}}%
    \put(0.10603479,1.3316802){\color[rgb]{0.00784314,0.00784314,0.00784314}\makebox(0,0)[lt]{\lineheight{1.89988542}\smash{\begin{tabular}[t]{l}$z'/2$\end{tabular}}}}%
    \put(0,0){\includegraphics[width=\unitlength,page=4]{scattering.pdf}}%
    \put(0.23266411,1.01661362){\color[rgb]{0.00784314,0.00784314,0.00784314}\makebox(0,0)[lt]{\lineheight{1.89988542}\smash{\begin{tabular}[t]{l}$\theta$\end{tabular}}}}%
  \end{picture}%
\endgroup%

\vspace*{-35mm}
\end{figure}

The deviation angle $\theta$ is computed from $p$ and $z$ as
\begeq\label{thetapz}
\theta(p,z) = \pi -  2 p \int_{r_0}^{\infty} \frac{dr/r^2}{\sqrt{1-\frac{p^2}{r^2}-\frac{4\psi(r)}{|z|^2}}},
\endeq
where $r_0$ is the positive root of
\begeq\label{defr0}
1-\frac{p^2}{r_0^2} -  \frac{4\psi(r_0)}{|z|^2} = 0.
\endeq
The interaction being effectively a 2-dimensional problem,  this formula is independent of $d\geq 2$. What does depend on dimension, though, is the volume of particles in phase space being prone to collide with deviation angle $\theta$ per unit of time. Assuming that there are no correlations between the distributions of $v$ and $v_*$, for given $z$ and $p$ this volume will be $f(v) f(v_*) |C_{z,p,dp}|\,dt$, where $C_{z,p,dp}$ is the cylinder of axis $z$, radius $p$ and with $dp$,  that is the set of all $tz +[p,p+dp]\phi$, with $0\leq t\leq 1$, and $\phi$ lying in the $(d-2)$-dimensional sphere unit orthogonal to $z$; see Fig.~\ref{figcylinder}.
\bigskip

\begin{figure} 
\caption{The cylinder of collisions: the lateral crust of the cylinder is where collision partners will be found with impact parameter $p$ (up to $dp$) and relative velocity $z$, during the time interval $[t,t+dt]$.}
\label{figcylinder}
\def\svgwidth{0.5\textwidth}
\vspace*{-20mm}
\begingroup%
  \makeatletter%
  \providecommand\color[2][]{%
    \errmessage{(Inkscape) Color is used for the text in Inkscape, but the package 'color.sty' is not loaded}%
    \renewcommand\color[2][]{}%
  }%
  \providecommand\transparent[1]{%
    \errmessage{(Inkscape) Transparency is used (non-zero) for the text in Inkscape, but the package 'transparent.sty' is not loaded}%
    \renewcommand\transparent[1]{}%
  }%
  \providecommand\rotatebox[2]{#2}%
  \newcommand*\fsize{\dimexpr\f@size pt\relax}%
  \newcommand*\lineheight[1]{\fontsize{\fsize}{#1\fsize}\selectfont}%
  \ifx\svgwidth\undefined%
    \setlength{\unitlength}{595.27559055bp}%
    \ifx\svgscale\undefined%
      \relax%
    \else%
      \setlength{\unitlength}{\unitlength * \real{\svgscale}}%
    \fi%
  \else%
    \setlength{\unitlength}{\svgwidth}%
  \fi%
  \global\let\svgwidth\undefined%
  \global\let\svgscale\undefined%
  \makeatother%
  \begin{picture}(1,1.41428571)%
    \lineheight{1}%
    \setlength\tabcolsep{0pt}%
    \put(0,0){\includegraphics[width=\unitlength,page=1]{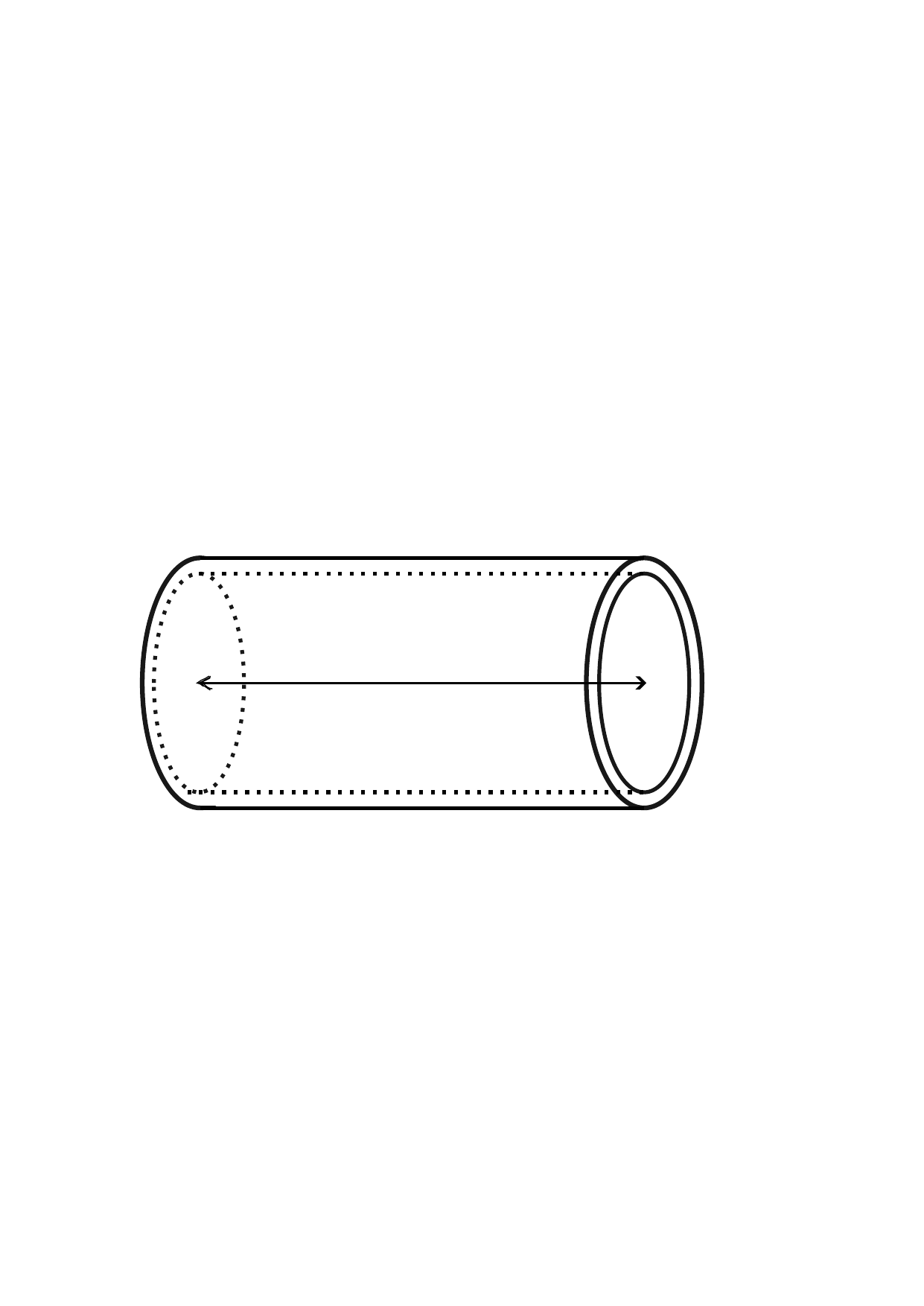}}%
    \put(0.40787935,0.71964395){\color[rgb]{0.01176471,0.01176471,0.01176471}\makebox(0,0)[lt]{\lineheight{1.89988542}\smash{\begin{tabular}[t]{l}$z$\end{tabular}}}}%
    \put(0.70505589,0.73218573){\color[rgb]{0.01176471,0.01176471,0.01176471}\makebox(0,0)[lt]{\lineheight{1.89988542}\smash{\begin{tabular}[t]{l}$p$\end{tabular}}}}%
    \put(0.74000716,0.79011456){\color[rgb]{0.01176471,0.01176471,0.01176471}\makebox(0,0)[lt]{\lineheight{1.89988542}\smash{\begin{tabular}[t]{l}$dp$\end{tabular}}}}%
    \put(0,0){\includegraphics[width=\unitlength,page=2]{cylindre.pdf}}%
  \end{picture}%
\endgroup%

\vspace*{-30mm}
\end{figure}

So $B(|z|,k\cdot\sigma)\,d\sigma = |C_{z,p,dp}|= p^{d-2} |\S^{d-2}|\,|z|\,dp\,d\phi$.
Since $d\sigma = |\S^{d-2}|\,\sin^{d-2}\theta\,d\theta\,d\phi$ (usual polar coordinates around $z$),
we end up with
\begeq\label{Bzcos}
B(|z|, \cos\theta) = \left(\frac{p}{\sin\theta}\right)^{d-2} \left|\frac{dp}{d\theta}\right|\, |z|.
\endeq

\noindent {\bf Main example: Inverse power laws.} Assume that the force is an inverse power law of exponent $s$, so $-\psi'(r) = (s-1)\psi_0/r^s$ for some $s>1$, or
\[ \psi(r) = \frac{\psi_0}{r^{s-1}}.\]
In practice $\psi_0$ will involve physical constants quantifying the strength of interaction.
Then 
\[ \theta\bigl( |z|^{-\frac{2}{s-1}}p, z\bigr) = \theta(p,1). \]
So for given $\theta$ the product $p|z|^{2/(s-1)}$ is independent of $|z|$. It follows that
$B(|z|,\cos\theta) = |z|^\gamma B(1,\cos\theta)$ with $\gamma = (s-(2d-1))/(s-1)$. Let us denote
\begeq\label{gammab}
B(v-v_*,\sigma) = |v-v_*|^\gamma b(\cos\theta),\qquad \gamma = \frac{s-(2d-1)}{s-1}.
\endeq
As for the function $b(\cos\theta)$, it is nonexplicit and in general involves transcendantal integrals. To study it more explicitly let us assume that the choice of units is such that
\[ \psi_0 = \frac14.\]
Writing $\theta(p)=\theta(p,1)$, we have
\begeq\label{thetap}
b(\cos\theta) = \left(\frac{p}{\sin\theta(p)}\right)^{d-2} \left| \frac{dp}{d\theta}\right|,
\quad  
\theta(p) = \pi -  2 p \int_{r_0}^{\infty} \frac{dr/r^2}{\sqrt{1-\frac{p^2}{r^2}-\frac{1}{r^{s-1}}}}.\endeq
There are two cases which lead to explicit computation: $s=2$, $s=3$.
For the particular exponent $s=2$ the computation can be carried out completely and yields, in dimension 3, {\bf Rutherford's cross-section formula}:
\begeq\label{rutherford} B_2(|z|, \cos\theta) = \frac1{16} \frac{|z|}{\left( |z| \sin \frac{\theta}{2}\right)^4}.
\endeq
More generally, in dimension~$d$,
\begeq\label{rutherford_d} B_2^{(d)}(|z|, \cos\theta) = \frac{|z|}{\left( 2 |z| \sin \frac{\theta}{2}\right)^{2(d-1)}}.
\endeq
For more general $s$ things remain implicit, but at least the behaviour for $\theta\to 0$ can be estimated, though. For this we need to recast the formula for $\theta$ in \eqref{thetap} as an integral with fixed boundaries. This computation will be important in the sequel, so let us take some care to perform this. 

A first possibility is to choose as new variable
\[ u = 1 - \frac{p^2}{r^2} - \frac{1}{r^{s-1}}, \]
and noting $p/r\simeq \sqrt{1-u}$ as $p\to\infty$ (i.e. $\theta\to 0$), we rewrite
\begin{align} \theta & = \pi - \int_0^1 \frac{\sqrt{1 + \frac{1}{p^2 r^{s-3}}}} {\left( 1+\frac{2(s-1)\psi_0}{p^2 r^{s-3}}\right)}
\, \frac{du}{\sqrt{u(1-u)}} \nonumber \\
& \simeq \frac{2\psi_0 (s-2)}{p^{s-1}} \int_0^1 (1-u)^{\frac{s-4}{2}}\, \frac{du}{\sqrt{u}} \qquad \text{as $p\to\infty$}. \label{integrals-4}
\end{align}
Thus
\[ p \simeq \left(\frac{C(s)\,\psi_0}{\theta}\right)^{\frac1{s-1}}  \text{as $\theta\to 0$},\]
and eventually
\begeq\label{bcostheta}
b(\cos\theta)\,\sin^{d-2}\theta \sim_{\theta\to 0} \left( \frac{\bigl(C(s)\,\psi_0)^{\frac{d-1}{s-1}}}{s-1}\right)\, \theta^{-(1+\nu)},
\qquad \nu = \frac{d-1}{s-1}.
\endeq
Here $\nu$ quantifies the degree of nonintegrability of the collision kernel in the $\theta$ variable. That formula converges for $s>2$; as $s\to 2$ the factor $s-2$ vanishes, but the integral \eqref{integrals-4} goes to $\infty$, so it is indeterminate.

Another change of variable is $y=r_0/r$. Then, using $1-p^2/r_0^2 = 1/r_0^{s-1}$, we obtain, as $p\to\infty$,
\begin{align*}
\theta(p) & = \pi - 2\int_0^1 \frac{dy}{\sqrt{\frac{r_0^2}{p^2} - y^2 - \frac{y^{s-1}}{r_0^{s-3}p^2}}}\\
& = 2 \int_0^1 \left( \frac1{\sqrt{1-y^2}}
- \frac1{\sqrt{\frac{r_0^2}{p^2} - y^2 - \frac{y^{s-1}}{r_0^{s-3} p^2}}}\right) \\
& \simeq \frac{r_0^2}{p^2}
\int_0^1 \frac1{(1-y^2)^{3/2}} 
\left( \frac1{r_0^{s-1}} - \frac{y^{s-1}}{r_0^{s-1}}\right)\,dy\\
& \simeq \frac1{r_0^{s-3}p^2} \int_0^1 \frac1{(1-y^2)^{3/2}} (1-y^{s-1})\,dy\\
& \simeq \left(\int_0^1 \frac1{(1-y^2)^{3/2}} (1-y^{s-1})\,dy\right) \frac1{p^{s-1}} \\
& = \left( \frac{\sqrt{\pi}\,\Gamma\left(\frac{s}{2}\right)}{\Gamma \left(\frac{s-1}{2}\right)}\right)\frac1{p^{s-1}}.
\end{align*}
That formula applies to the whole range $s>1$.

\begin{Rks}

\bul $\nu\to 0$ as $s\to\infty$, whatever the dimension, and likewise $\gamma\to 1$. This limit corresponds to the hard sphere kernel, which is better expressed in terms of $\omega$, the direction of the centers of mass of colliding particles (with obvious notation, $\omega =(x-x_*)/|x-x_*|$):
\[ \tilde{B}_{HS}(z,\omega) = |z\cdot \omega|\, 1_{k\cdot\omega <0}, \]
The directions $\sigma$ and $\omega$ are related through $|k\cdot\omega| = \sin (\theta/2)$,
and the Jacobian is $|d\sigma/d\omega| = (2\sin(\theta/2))^{d-2}$. So eventually, in dimension $d$,
\begeq\label{BHS} B^{(d)}_{HS} (z,\sigma) = \frac{|z|}{2^{d-2} \sin^{d-3}(\theta/2)}. \endeq
When $d=2$ the angular dependence of that kernel is proportional to $\sin(\theta/2)$ and when $d\geq 4$ to the singular function $(\sin(\theta/2))^{-(d-3)}$ (the singularity is integrable and may be seen as an artifact of the change of variables $\omega\to\sigma$). Thus in the limit $s\to\infty$, $B$ becomes integrable over $\S^{d-1}$ even though for any finite $s$ it is not. In the particular case $d=3$, Hard Spheres correspond to a kernel which is just proportional to the relative velocity ({\em constant cross-section}) $B(z,\sigma) = |z|/2$.

\bul Power law forces satisfy $\gamma+ 2\nu =1$. 

\bul Fortunately the Boltzmann equation still makes sense if $B$ is nonintegrable, provided that the cross-section for momentum transfer $M=M(z)$ is well defined:
\begeq\label{defM} M(z) = \int_{\S^{d-1}} B(z,\sigma)\, (1-k\cdot\sigma)\,d\sigma.  \endeq
In fact $M(z)|z|$ is the typical change of momentum in a collision of relative velocity $z$; by symmetry $M$ only depends on $|z|$ and I shall often write it as $M(|z|)$. Since $1-k\cdot\sigma \sim \theta^2/2$ as $\theta\to 0$, the finiteness of $M$ requires $\nu<2$. We shall see in the next section that $\nu=2$ can be handled as a limit case.

\bul Coulomb interaction in dimension $d$ is for $s=d-1$, so $\gamma = -d/(d-2)$, $\nu = (d-1)/(d-2)$. In particular the singularity in velocity for Coulomb interactions is $-3$ for $d=3$, $-2$ for $d=4$, $-5/3$ for $d=5$, and converges to $-1$ as $d\to\infty$. In other words,

-  for $d=2$, Boltzmann's equation definitely does not make sense for Coulomb potential;

- for $d=3$ it is borderline integrable in $\theta$ ($\nu=2$) and also borderline integrable in $z$ ($\gamma = -3$);

- for $d\geq 4$ it is well defined with $\gamma$ varying from $-2$ up to $-1$ and $\nu$ from $3/2$ down to $1$.
\end{Rks}
\med

More generally, as a reference family for collision kernels, we may consider the factorised kernels with exponents $\gamma,\nu$ with the following behaviour as $\theta\to 0$:
\begeq\label{collkfact}
B(v-v_*,\sigma) = |v-v_*|^\gamma \, b(\cos\theta),\qquad
b(\cos\theta)\,\sin^{d-2}\theta \sim_0 \beta_0\, \theta^{-(1+\nu)}\quad \text{for some $\beta_0>0$}.
\endeq
This defines a $(\gamma,\nu)$ parameter space, where $0\leq \nu\leq 2$ (let us agree that $\nu=0^-$ means cutoff and $\nu=2$ is the diffusive limit, explored in the next section) and $\gamma\geq -\min(4,d)$. For $\gamma<-4$ or $\gamma<-d$ there are great mathematical difficulties, and not much physical motivation. On the other hand, on physical grounds there is not much motivation for powers $\gamma>2$, neither for potential more singular than Coulomb, since the latter's slow decay is already quite a challenge. To summarise, the whole range $0\leq\nu\leq 2$ is interesting, but on $\gamma$ there is a tough mathematical restriction to $\gamma\geq-\min (4,d)$ and a natural physical restriction to $-d/(d-2) \leq\gamma\leq 2$. For inverse power laws this means: $s\geq 5/3$ if $d=1$, $s\geq 2$ if $d=3$, $s\geq d-1$ if $d\geq 4$.

Decades of research have explored the various regions in this diagram, first putting much emphasis on hard potentials with cutoff. See Fig.~\ref{figtaxonomy} for a synthetic presentation of that taxonomy.

\begin{Rk} In the presentation and in Fig.~\ref{figtaxonomy} I am simplifying a bit by pretending to forget that sometimes it is the behaviour of $B$ as $|z|\to\infty$ which matters, and sometimes the behaviour as $|z|\to 0$; to be very rigorous we could use a pair of exponents to allow for different behaviours in those regimes, but this is already subtle enough.
\end{Rk}
\bigskip

\begin{figure} 

\caption{This diagram presents the landscape of the various regimes for the Boltzmann equation, according to the behaviour of the cross section with respect to relative velocity ($\gamma$ = power law) and deviation angle ($\nu$ = inverse power law singularity). Read as follows.
HS= Hard spheres: $B=\sin^{(d-3)}(\theta/2)|z|$ ($\gamma=1$, $\nu=0^-$);
SHS= Super hard spheres: $B=|z|^2$ ($\gamma=2$, $\nu=0^-$);
MM=  Maxwellian molecules ($\gamma=0$);
HP= Hard potentials ($\gamma>0$);
SP= Soft potentials ($\gamma<0$);
VSP= Very soft potentials ($\gamma<-2$); 
(co)= cutoff ($\nu=0^-$);
(nco)= noncutoff ($0<\nu<2$);
(d)= diffusive ($\nu=2$); LE = Landau equation;
(PL)= (Inverse) Power Laws= the line $(\gamma +2\nu = 1)$. Coulomb interactions are distinguished members of this family; C3 is the Coulomb interaction for $d=3$ ($\gamma=-3, \nu =2$), C4 for $d=4$ and so on, accumulating in large dimension at C$\infty$ ($\gamma=-1,\nu=1$);
EG= Entropic gap, {\em i.e.} when $D_B(f)\geq K[H(f)-H(M)]$ for all $f$: above the line $(\gamma+\nu= 2)$;
SG= Spectral gap, {\em i.e.} when $-\<Lh,h\>_{L^2(M)} \geq K\, \|h-1\|^2_{L^2(M)}$ for all $h$: above the line $(\gamma+\nu = 0)$;
CR= Conditional regularity, {\em i.e.} when an a priori estimate in $L^p$ large enough actually implies that $f$ is a priori regular: above the line $(\gamma+\nu= -2)$, restricted to $\gamma\geq -d$; in the spatially homogeneous case away from equilibrium, the new estimates based on Fisher information monotonicity allow to prove that above this line (and with some structure conditions covering all natural cases of interest) one can work with smooth solutions, while below this line only $H$-solutions have been constructed, whose regularity is unknown.
}
\label{figtaxonomy}
\def\svgwidth{0.8\textwidth}
\vspace*{-30mm}
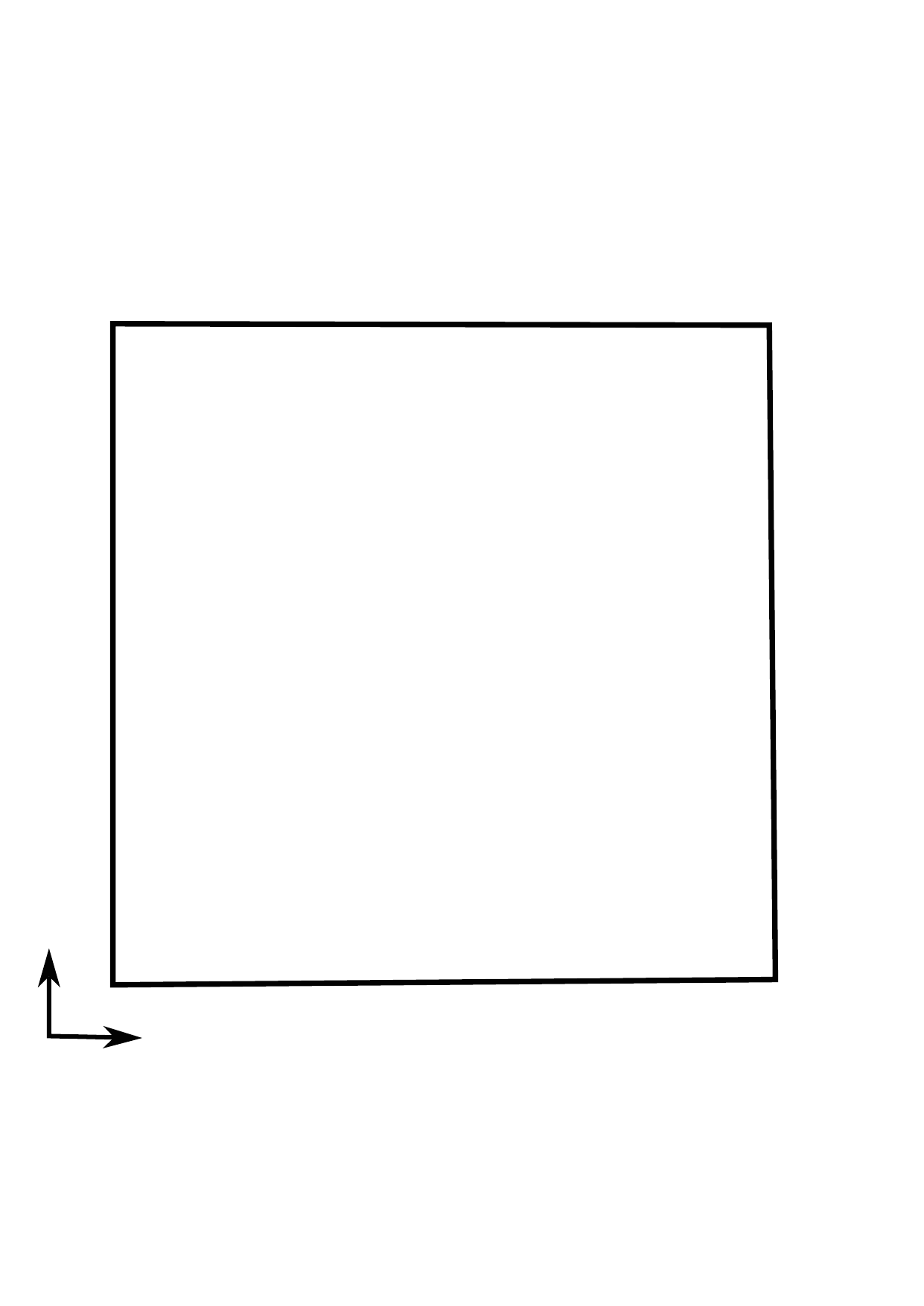
\vspace*{-35mm}
\end{figure}

\bibnotes

The scattering formulas \eqref{thetapz} are already in Maxwell's founding paper \cite{maxw:67} for $d=3$. Rutherford derived his cross-section formula \eqref{rutherford} as part of his epoch-making work on scattering of $\alpha$ particles \cite{rutherford:1911}, leading to the discovery of nuclei. It is not difficult to generalise his calculation to any dimension.

The first mathematical study of the Boltzmann equation as an evolution problem for the probability density is due to Carleman \cite{carleman} and focused on hard spheres under an assumption of spatial homogeneity. Grad initiated an ambitious program starting from the linearised theory and insisting on the integrability of Boltzmann's kernel, which is never satisfied for long-range interactions; so that assumption of integrability is often called ``Grad's angular cutoff assumption''. The classical study of hard potentials with cutoff ($\gamma>0$, integrable $b$) started with Arkeryd \cite{ark:I+II:72} (after a preliminary work by Povzner \cite{povz:65}) and was followed by Elmroth, Gustafsson, Wennberg, Desvillettes, Mischler, Mouhot and others. Soft potentials without cutoff ($s<5$ for $d=3$) were studied by Arkeryd \cite{ark:infini:81}, Goudon \cite{goud:graz:97} and I \cite{vill:new:98}, followed by a number of others. A review can be found in my survey \cite{vill:handbook:02}. That was more than twenty years ago and back then, hardly anything was known about very soft potentials, which is when $\gamma<-2$ ($2\leq s<7/3$ for $d=3$); but this part of the diagram will be the main focus in Sections \ref{secreg} and \ref{seceq}, see also the more complete bibliographical notes in these upcoming sections.

Maxwellian molecules ($s=5$ for $d=3$), or more generally Maxwellian collision kernels ($\gamma=0$) were noticed to have special properties, already by Maxwell himself. There is a long history of particular tools in this special case, involving in particular Kac \cite{kac:foundations}, McKean \cite{mck:kac:65} Tanaka \cite{tanaka:boltz:78}, Toscani \cite{tosc:linnik:92} and an enormous series of works by Bobylev, most of which is summarised in his review \cite{bob:theory:88}.

Super hard spheres have been used as a phenomenological model in kinetic theory, for modelling issues. From a theoretical point of view, they appear in my work with Toscani on the entropic gap \cite{TV:entropy:99,vill:cer:03}. The entropic gap for the Landau equation with Maxwell kernel ($\nu=2$, $\gamma=0$) was established in \cite{vill:FIL:00}. Counterexamples for hard spheres and Maxwellian molecules are due to Bobylev and Cercignani \cite{bobcer:entropy:99}. The fact that the condition $\gamma+\nu\geq 2$ defines the region of entropic gap is in unpublished notes of Desvillettes and Mouhot. The mere problem of entropic gap has been loosely posed by Cercignani in 1982 \cite{cer:Hthm:82} and has long been dubbed ``Cercignani conjecture''. 

It has long been known that the Boltzmann equation with cutoff has spectral gap for $\gamma\geq 0$ and not for $\gamma<0$ \cite{cafl:soft:80}. That the line $\gamma+\nu\geq 0$ is the correct separation for spectral gap without the cutoff assumption is due to Mouhot and Strain \cite{mouhot:sg,mouhotstrain:sg}.

\section{Asymptotics of grazing collisions} \label{secgrazing}

When plasma physics took off, it was important, both theoretically and practically, to model the correction to the Vlasov--Poisson system which is due to ``collisions'', or rather near encounters of electrons. Landau provided the desired recipe as early as 1936 by working out a suitable converging approximation from the diverging Boltzmann operator. Later, plasma specialists like Balescu and Bogoljubov proposed other, more subtle, collision operators, bypassing Boltzmann's formula and going back to the analysis of Coulomb particle systems; but Landau's approximation is still fine for the vast majority of purposes.

Naturally, we first focus on the Coulomb interaction in dimension $d=3$: the interaction potential is $\psi(r)=\psi_0/r$; fix $\psi_0>0$ such that the Rutherford cross section reads
\[ \frac{B_2(|z|,\cos\theta)}{|z|} = \frac1{\left( |z|\,\sin\frac{\theta}{2}\right)^{4}}.\]
For this kernel $B$ the cross-section for momentum transfer, $M$, is infinite, making the Boltzmann equation meaningless. This is due to the large tails of the Coulomb potential. A classical remedy is to tame $\psi$ at large distances, replacing $\psi$ by $\psi_\lambda(r) = \psi(r) e^{-r/\lambda}/r$, where $\lambda$ is Debye's ``screening length'. The resulting collision kernel does make sense, the price to pay is that now it is not explicit. A decent approximation is
\[ B_\lambda(|z|,\cos\theta) \simeq \frac{|z|}{\left( \left(|z|\,\sin\frac{\theta}{2}\right)^2 + \frac1{\lambda}\right)^2}. \]
Then
\[ M_\lambda (|z|) \simeq \frac{8\pi\,\log\lambda}{|z|^3} \left(1+O\left(\frac1{\lambda}\right)\right).\]
Of course this diverges as $\lambda\to\infty$, but ``not much' for practical purpose, since only logarithmically. The approximation prefactor is called the Coulomb logarithm. 

In the above, the kernel $B_\lambda/(8\pi\log \lambda)$ is an approximation, so to speak, of $|z|^{-3}\delta''_{\theta=0}$. This can be generalised into a procedure of concentrating the whole influence of $B$ on infinitesimally small deviation angles $\theta$, whose effect dwarves all contributions of $\theta\geq\theta_0$ for any fixed $\theta_0>0$. Starting from the nineties, this concentrating procedure was formalised in increasing generality, ending up with the following framework by Alexandre and myself. Handling the borderline singularity $\gamma=-d$ requires a new kernel, which will also turn out to be useful for other purpose.

\begin{Def}[Compensated adjoint Boltzmann kernel]
If $B=B(|z|,\cos\theta)$ is a given collision kernel, define $S=S(|z|)$ by
\begeq\label{eqdefS} S (z) = |\S^{d-2}| \int_0^\pi
\left[\frac1{\cos^{d}(\theta/2)}\,  B \left(\frac{|z|}{\cos(\theta/2)},\cos\theta\right) - B(|z|,\cos\theta)
\right]\,\sin^{d-2}\theta\,d\theta.
\endeq
\end{Def}

The meaning of this kernel is the following: $S\ast f = Q(f,1) = \iint (f'_*-f_*)\,B\,dv_*\,d\sigma$. For the next definition, the other meaningful kernel is the cross-section for momentum transfer $M$ defined in \eqref{defM}, or equivalently the momentum transfer kernel, $|z|M(|z|)$.

\begin{Def}[Asymptotics of grazing collisions, AGC]
A sequence $(B_n)_{n\in\N}$ of collision kernels concentrates on grazing collisions if

(a) The associated compensated adjoint and momentum transfer kernels, $S_n(|z|)$ and $|z|M_n(|z|)$, as measures on $\R^d$, are bounded in total variation on compact sets, uniformly in $n$;

(b) $\forall \theta_0>0$, $S_n^{\theta_0}(|z|)$ and $|z| M_n^{\theta_0}(|z|)$, defined in a similar way with $B_n$ replaced by $B_n\,1_{\theta\geq \theta_0}$, converge to as $n\to\infty$, locally uniformly in $\R^d\setminus \{0\}$;

(c) There is a measurable $M_\infty:\R_+\to\R_+$ such that $z\,M_n(|z|) \longrightarrow z M_\infty(|z|)$, locally weakly in the sense of measures.
\end{Def}

The short meaning is that $M_n\to M_\infty$ and only $\theta\simeq 0$ count. 
Recalling the definition of $M_n$, actually
\[ M_n = \int_{\S^{d-1}} (1-\cos\theta)\,B_n\,d\sigma \simeq 
\int_{\S^{d-1}} \frac{\theta^2}{2}\,B_n\,d\sigma \xrightarrow[AGC]{} M_\infty.\]
In the sequel of these notes I shall not worry about regularity issues and often omit the index $n$, using the symbols AGC to stand for this limit procedure.

\begin{Prop}[Landau approximation] \label{propLandau}
Under the asymptotics of grazing collisions,

(i) $Q  (f,f) \xrightarrow[AGC]{} Q_L(f,f)$ where
\begeq \label{QL} 
Q_L(f,f) = \nabla_v\cdot \left( \int_{\R^d} a(v-v_*)\, \bigl[ f(v_*)\nabla f(v) - f(v)\nabla f(v_*)\bigr]\,dv_* \right)
\endeq
is Landau's collision operator, and the matrix-valued function $a=a(z)$ is defined by
\begeq\label{aPi}
a(z) = \Psi(|z|)\,\Pi_{z^\bot},\qquad \Pi_{z^\bot} = I_d - \frac{z\otimes z}{|z|^2},
\qquad \Psi(|z|) = \frac{|z|^{2} M_\infty(|z|)}{4(d-1)};
\endeq

(ii) $D_B(f) \xrightarrow[AGC]{}  D_L(f,f)$, Landau's dissipation functional ($H$-dissipation, or entropy production, functional), defined by
\begeq\label{DL}
D_L(f) = \frac12 \iint_{\R^d\times\R^d} ff_* \Psi(|v-v_*|)\,
\Bigl| \Pi_{k^\bot} \bigl( \nabla \log f - (\nabla \log f)_*\bigr) \Bigr|^2\,dv\,dv_*,
\endeq
with the shorthands $f=f(v), f_*=f(v_*), k = \frac{v-v_*}{|v-v_*|}$.
\end{Prop}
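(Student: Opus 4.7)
The plan is to work with the weak (test-function) formulations of both $Q$ and $D_B$, Taylor expand in the small displacement $\delta = v'-v = \frac{|v-v_*|}{2}(\sigma-k)$, and show that the angular integrals of $\delta$ and $\delta\otimes\delta$ against $B\,d\sigma$ converge, under AGC, to the Landau coefficient $a(z)$ and its divergence. Using the two classical symmetries $(v,v_*)\leftrightarrow(v',v'_*)$ and $(v,v_*)\leftrightarrow(v_*,v)$, I write
\[
\int\varphi\,Q(f,f)\,dv = \tfrac12 \iiint ff_*\bigl(\varphi'+\varphi'_*-\varphi-\varphi_*\bigr)\,B\,d\sigma\,dv\,dv_*,
\]
and similarly
\[
D_B(f) = \tfrac14 \iiint (f'f'_*-ff_*)\,\log\frac{f'f'_*}{ff_*}\,B\,d\sigma\,dv\,dv_*.
\]
Since $v'_*-v_* = -\delta$, Taylor expansion to second order gives
\[
\varphi'+\varphi'_*-\varphi-\varphi_* = \delta\cdot(\nabla\varphi-\nabla\varphi_*) + \tfrac12\,\delta\otimes\delta : (\nabla^2\varphi+\nabla^2\varphi_*) + O(|\delta|^3),
\]
and an analogous expansion of $f'f'_*/ff_*$ around $1$ gives $(f'f'_*-ff_*)\log(f'f'_*/ff_*) = ff_*\bigl(\delta\cdot X\bigr)^2 + O(|\delta|^3)$, where $X = \nabla\log f - \nabla\log f_*$.

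Next I parameterise $\sigma = \cos\theta\,k + \sin\theta\,\phi$ with $\phi\in\S^{d-2}\cap k^\bot$, and compute
\[
\int_{\S^{d-1}} \delta\,B\,d\sigma = -\tfrac{1}{2}M(|z|)\,z,\qquad
\int_{\S^{d-1}} \delta\otimes\delta\,B\,d\sigma = |\S^{d-2}|\,\tfrac{|z|^2}{4}\Bigl[A_\parallel(z)\,k\otimes k + \tfrac{1}{d-1}A_\perp(z)\,\Pi_{k^\bot}\Bigr],
\]
with $A_\parallel = \int(\cos\theta-1)^2 B\sin^{d-2}\theta\,d\theta$ and $A_\perp = \int\sin^2\theta\,B\sin^{d-2}\theta\,d\theta$; the $\phi\otimes k$ and $k\otimes\phi$ cross terms vanish by odd $\phi$-symmetry, and the $k\otimes k$ piece has integrand of order $\theta^4 B$. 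Under AGC, $\theta^2 B\sin^{d-2}\theta$ integrates to $2M_\infty/|\S^{d-2}|$, whereas $\theta^4 B\sin^{d-2}\theta \to 0$ by concentration. Using $\sin^2\theta\sim 2(1-\cos\theta)$ at $\theta\to 0$, this gives
\[
\int \delta\,B\,d\sigma \longrightarrow -\tfrac{1}{2}M_\infty(|z|)\,z = 2\,\nabla\!\cdot a(z),\qquad
\int \delta\otimes\delta\,B\,d\sigma \longrightarrow \tfrac{|z|^2M_\infty(|z|)}{2(d-1)}\,\Pi_{k^\bot} = 2\,a(z),
\]
where the identity $\nabla\!\cdot a(z) = -\tfrac{1}{4}M_\infty(|z|)\,z$ follows from a direct calculation with $a_{ij}(z) = \Psi(|z|)(\delta_{ij} - z_iz_j/|z|^2)$ and $\Psi = |z|^2M_\infty/(4(d-1))$.

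Substituting these limits back into the weak form yields
\[
\int\varphi\,Q(f,f)\,dv \xrightarrow{\rm AGC} \iint ff_*\,\bigl(\nabla\!\cdot a\bigr)\cdot(\nabla\varphi-\nabla\varphi_*)\,dv\,dv_* + \tfrac{1}{2}\iint ff_*\,a : (\nabla^2\varphi+\nabla^2\varphi_*)\,dv\,dv_*,
\]
and a symmetrisation plus two integrations by parts (noting that $\nabla\!\cdot a$ is odd in $z$ while $a$ is even) identify the right-hand side exactly with $\int\varphi\,Q_L(f,f)\,dv$; this proves (i). For (ii), since $X^\top a(v-v_*) X = \Psi(|v-v_*|)|\Pi_{k^\bot}X|^2$, the leading-order approximation $\int(\delta\cdot X)^2 B\,d\sigma \to 2\,X^\top a X$ gives
\[
D_B(f) \xrightarrow{\rm AGC} \tfrac{1}{4}\iint ff_*\cdot 2\Psi(|v-v_*|)\,\bigl|\Pi_{k^\bot}(\nabla\log f-\nabla\log f_*)\bigr|^2\,dv\,dv_* = D_L(f),
\]
matching \eqref{DL}. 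The main technical obstacle is controlling the Taylor remainder and the $k\otimes k$ piece uniformly: the former involves $|\delta|^3 B = O(\theta^3) B$ integrands, which are squeezed between $\theta^2 B$ (controlled uniformly by the AGC bound on $M_n$) and $\theta^4 B$ (which tends to zero), so one concludes by the boundedness assumption (a) and the vanishing outside $\theta\simeq 0$ in (b), combined with local equi-integrability in $v_*$ provided by the momentum transfer bound. For the dissipation, one needs in addition a pointwise control on $f'f'_*/ff_*-1$ to justify replacing $(A-B)\log(A/B)$ by its quadratic approximation — this can be obtained by cutting off the angular integration at a scale $\theta_0\to 0$ and using the AGC smallness of the tail together with a Fatou/monotone convergence argument on the remainder, which is where regularity assumptions on $f$ would come in.
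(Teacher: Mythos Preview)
Your proposal is correct and follows essentially the same route as the paper's sketch: weak formulation, Taylor expansion in the displacement $\delta=\tfrac{|z|}{2}(\sigma-k)$, decomposition $\sigma=\cos\theta\,k+\sin\theta\,\phi$, averaging over $\phi\in\S^{d-2}_{k^\bot}$, and identification of the limits with the Landau coefficients. The only organisational difference is that the paper uses the one-sided weak form $\iint ff_*\bigl(\int B(h'-h)\,d\sigma\bigr)\,dv\,dv_*$ and matches the resulting four terms one by one against the expanded weak form of $Q_L$, whereas you use the fully symmetrised form and package the angular integrals as the two moment formulas $\int\delta\,B\,d\sigma\to 2\,\nabla\!\cdot a$ and $\int\delta\otimes\delta\,B\,d\sigma\to 2a$, then undo the symmetrisation at the end using the parity of $a$ and $\nabla\!\cdot a$ in $z$. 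Your packaging is arguably cleaner; the paper's version makes the appearance of the averaging Lemma~\ref{lemav} (which you use implicitly when you write $\int\phi\otimes\phi\,d\phi=\tfrac{|\S^{d-2}|}{d-1}\Pi_{k^\bot}$) more explicit. One small remark: your ``squeezed between $\theta^2 B$ and $\theta^4 B$'' heuristic for the $O(\theta^3)B$ remainder is not an argument by itself; the actual control is to split at $\theta_0$ and bound the grazing part by $\theta_0\int\theta^2 B\,d\sigma\le\theta_0\cdot 2M_n$, then let $\theta_0\to 0$ after $n\to\infty$, exactly as your final paragraph suggests for the dissipation.
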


The proof rests on the useful

\begin{Lem}[Averages on subspheres] \label{lemav}
Let $A$ be a $d\times d$ symmetric matrix, $k\in \S^{d-1}$ and $\S^{d-2}_{k^\bot}$ be the $(d-2)$ unit sphere in the plane $k^\bot$. Then 
\begeq\label{av1}
\frac1{|\S^{d-2}|} \int_{\S^{d-2}_{k^\bot}} \< A\phi,\phi\>\,d\phi = \frac{\tr (A \Pi_{k^\bot})}{d-1}.
\endeq
As a particular case, for any vector $\xi\in\R^d$,
\begeq\label{av2}
\frac1{|\S^{d-2}|} \int_{\S^{d-2}_{k^\bot}} (\xi\cdot\phi)^2\,d\phi = \frac{|\Pi_{k^\bot}\xi|^2}{d-1}.
\endeq
and for any two vectors $\xi,\eta\in R^d$,
\begeq\label{av3}
\frac1{|\S^{d-2}|} \int_{\S^{d-2}_{k^\bot}} (\xi\cdot\phi)\,(\eta\cdot\phi) \,d\phi = 
\frac{\< \Pi_{k^\bot}\xi, \Pi_{k^\bot}\eta\>}{d-1}.
\endeq
\end{Lem}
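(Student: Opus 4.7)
The plan is to reduce all three identities to a single basic fact about the second moment tensor of the uniform measure on a unit sphere. Everything hinges on computing
\[
T_{k} := \int_{\S^{d-2}_{k^\bot}} \phi\otimes\phi \, d\phi,
\]
viewed as a symmetric $d\times d$ matrix.

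First I would compute $T_k$. Since every $\phi$ in the domain lies in $k^\bot$, $T_k$ maps into $k^\bot$ and vanishes on $\R k$; hence $T_k = T_k \,\Pi_{k^\bot} = \Pi_{k^\bot} T_k \Pi_{k^\bot}$. On the $(d-1)$-dimensional subspace $k^\bot$ the measure is invariant under the orthogonal group $O(k^\bot)$, so by Schur's lemma the restriction of $T_k$ to $k^\bot$ must be a scalar multiple of the identity on $k^\bot$. Therefore $T_k = c\,\Pi_{k^\bot}$ for some constant $c$. Taking traces, $c\,(d-1) = \tr T_k = \int_{\S^{d-2}_{k^\bot}} |\phi|^2\,d\phi = |\S^{d-2}|$, so
\[
\frac1{|\S^{d-2}|}\int_{\S^{d-2}_{k^\bot}} \phi\otimes\phi \, d\phi \;=\; \frac{\Pi_{k^\bot}}{d-1}.
\]

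Now identities \eqref{av1}--\eqref{av3} are just contractions of this tensor identity. For \eqref{av1}, write $\langle A\phi,\phi\rangle = \tr(A\,\phi\otimes\phi)$, interchange integral and trace, and use idempotency/symmetry of $\Pi_{k^\bot}$ as needed. For \eqref{av3}, $(\xi\cdot\phi)(\eta\cdot\phi) = \langle(\xi\otimes\eta)\phi,\phi\rangle$ (applied to the symmetric part, which is all that contributes), so by \eqref{av1} the integral equals $\tr((\xi\otimes\eta)\Pi_{k^\bot})/(d-1) = \langle\Pi_{k^\bot}\xi,\eta\rangle/(d-1)$, and using $\Pi_{k^\bot}=\Pi_{k^\bot}^2=\Pi_{k^\bot}^T$ this is $\langle\Pi_{k^\bot}\xi,\Pi_{k^\bot}\eta\rangle/(d-1)$. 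The identity \eqref{av2} is then the diagonal case $\eta=\xi$ of \eqref{av3}.

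There is no real obstacle here: the only point to be a little careful about is that $T_k$, regarded as a matrix on the ambient space $\R^d$, has to be rewritten using the projector $\Pi_{k^\bot}$ rather than as a multiple of the identity (which only holds on the restricted subspace $k^\bot$). Once this is handled by the two observations $\mathrm{Im}(T_k)\subset k^\bot$ and $O(k^\bot)$-invariance, the three formulas drop out immediately by taking traces with $A$, $\xi\otimes\xi$, and the symmetric part of $\xi\otimes\eta$ respectively.
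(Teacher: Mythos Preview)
Your proof is correct. Both your argument and the paper's rest on the same principle --- rotational invariance on $k^\bot$ forces the second moment to be a multiple of the identity there --- but the executions differ. The paper diagonalises $\Pi_{k^\bot}A\Pi_{k^\bot}$ on $k^\bot$, reduces to the scalar integral $\frac{1}{|\S^{d-2}|}\int_{\S^{d-2}}(e\cdot\phi)^2\,d\phi$ for a single unit vector $e\in k^\bot$, and then evaluates that integral explicitly via polar coordinates and the Wallis recursion. Your route packages the same symmetry more abstractly: you compute the full second-moment tensor $T_k$ in one stroke by Schur's lemma plus a trace, and then read off all three identities as contractions. Your version is shorter and avoids the explicit Wallis computation; the paper's version is more hands-on and makes the constant $1/(d-1)$ emerge from a concrete integral. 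Either way the content is the same.
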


\begin{proof}[Proof of Lemma \ref{lemav}] $\<A\phi,\phi\> = \<\Pi_{k^\bot} A \Pi_{k^\bot}\phi,\phi\>$ and $\Pi_{k^\bot} A \Pi_{k^\bot}$ is a symmetric endomorphism of $k^\bot$, of the form $\sum \lambda_i e_i\otimes e_i$, where $(e_i)_{1\leq i\leq d-1}$ is an orthonormal basis of $k^\bot$. Then $\tr(A\Pi_{k^\bot}) = \sum \lambda_i$. So it suffices to check that if $e$ is a unit vector in $k^\bot$, then
\begeq\label{sufficeav}
\frac1{|\S^{d-2}|} \int_{\S^{d-2}} (e\cdot\phi)^2\,d\phi = \frac1{d-1}.
\endeq
This will also imply \eqref{av2} since $\xi\cdot\phi = (\Pi_{k^\bot}\xi)\cdot\phi
= |\Pi_{k^\bot}\xi|\,e\cdot\phi$, $e=(\Pi_{k^\bot}\xi)/|\Pi_{k^\bot\xi}|$; and \eqref{av3} will follow also by polarisation.

To prove \eqref{sufficeav}, write $\phi = (\cos\chi)e + (\sin\chi)\psi$, where
$\psi \in \S^{d-3}_{e^\bot}$ ($e^\bot$ is a subspace of $k^\bot$), so the left-hand side of \eqref{sufficeav} is
\[ \frac{\int_0^\pi \cos^2\chi\,\sin^{d-3}\chi\,d\chi}
{\int_0^\pi \sin^{d-3}\chi\,d\chi}
= \left(\frac1{d-2}\right) \frac{\int \sin^{d-1}\chi\,d\chi}{\int\sin^{d-3}\chi\,d\chi} = 
\left(\frac1{d-2}\right)\left(\frac{d-2}{d-1}\right) = \frac1{d-1},\]
thanks to the classical Wallis integral formula.
\end{proof}

\begin{proof}[Sketch of proof of Proposition \ref{propLandau}]
Depending on the needs, (i) can be proven either directly on the operator $Q$, or from its weak formulation $\int Qh$. I shall go for the latter. Testing $Q$ against a smooth $h(v)$ and using $(v,v_*,\sigma)\leftrightarrow (v',v'_*,k)$,
\begin{multline} \label{endupQB}
\int Q(f,f)\,h\,dv 
= \iiint B (f'f'_* - ff_*)\,h
= \iiint B ff_* (h'-h)\\
= \iint ff_* \left( \int B (h'-h)\,d\sigma\right)\,dv\,dv_*.
\end{multline}
On the other hand,
\begin{multline*}
\int Q_L(f,f)h = 
- \iint a(v-v_*) (\nabla-\nabla_*) ff_* \cdot \nabla h \\
= \iint ff_* (\nabla-\nabla_*)\cdot \bigl[ a(v-v_*)\nabla h\bigr]\,dv\,dv_*.
\end{multline*}
Next, using 
\[ (\nabla-\nabla_*) \cdot \Pi_{k^\bot} = - (\nabla-\nabla_*)(k\otimes k) = -2(d-1)\,k,\]
and $\nabla_*\nabla h=0$, $\Pi_{k^\bot}k=0$, $(\nabla-\nabla_*)\Psi(|v-v_*|)=0$,
we find
\begin{multline*}
(\nabla-\nabla_*)[a(v-v_*)\nabla h]   = (\nabla-\nabla_*)\cdot \bigl [\Psi(|v-v_*|)\,\Pi_{k^\bot} \nabla h\bigr ] \\
 =  \Psi (|v-v_*|)\,(\nabla-\nabla_*)\cdot \Pi_{k^\bot}\nabla h + \Psi(|v-v_*|)\,\Pi_{k^\bot}: \nabla^2 h.
\end{multline*} 
So
\begeq\label{endupQL}
\int Q_L(f,f)h = 
\iint ff_* \Psi \left( \Pi_{k^\bot}: \nabla^2h
- 2(d-1) \frac{(v-v_*)}{|v-v_*|^2}\cdot\nabla h\right)\,dv\,dv_*.
\endeq

Now for the Boltzmann term. Write
\begeq\label{37ter} v'-v = \frac{v_*-v}{2} + \frac{|v-v_*|}{2} \sigma = \frac{|v-v_*|}{2} (\sigma-k).\endeq
Decompose
\begeq\label{37bis} \sigma = (\cos\theta) k + (\sin\theta)\phi,\qquad \phi \in \S^{d-2}_{k^\bot},\endeq
\begeq\label{37} \sigma - k = (\cos\theta -1) k + (\sin\theta) \phi = \theta\phi + O(\theta^2).\endeq
So

\begin{align*}
h'-h 
& = \nabla h(v)\cdot (v'-v)
+ \frac12 \bigl\< \nabla^2h(v)\cdot v'-v,v'-v \bigr\> + o(|v'-v|^2) \\
& = -\frac{|v-v_*|}{2} k\cdot\nabla h\, (1-\cos\theta)
+ \frac{|v-v_*|}{2} \sin\theta\, (\phi\cdot \nabla h)\\
& \qquad\qquad\qquad\qquad + \frac{|v-v_*|^2}{8} \bigl\<\nabla^2h \cdot\phi,\phi\bigr\>\, \theta^2
+ o (|v-v_*|^2 \theta^2).
\end{align*}
This gives rise to four terms in \eqref{endupQB}, let us consider them separately and check that they match the corresponding integral terms in \eqref{endupQL}:
\begin{align*}
- \int_{\S^{d-1}} \frac{|v-v_*|}{2}
k\cdot \nabla h\,  (1-\cos\theta)\,B\,d\sigma
& = - \frac{|v-v_*|}{2} (k\cdot\nabla h)
\int_{\S^{d-1}}(1-\cos\theta)\,B\,d\sigma \\
& \longrightarrow - \frac{|v-v_*|}2 (k\cdot\nabla h) M_\infty
= - \frac{(v-v_*)}{2}\cdot\nabla h\, M_\infty,
\end{align*}

\[ \frac{|v-v_*|}{2} 
\int_{\S^{d-1}} \sin\theta\, (\phi\cdot \nabla h) B\,d\sigma = 0
\qquad\text{by symmetry $\phi\to -\phi$},\]

\begin{align*}
\int_{\S^{d-1}} \frac{|v-v_*|^2}{8} \bigl\< \nabla^2h \cdot\phi,\phi\bigr\> \,\theta^2\, B\,d\sigma
& = \frac{|v-v_*|^2}{4(d-1)}\tr \bigl(\nabla^2h\,\Pi_{k^\bot}\bigr) \int_{\S^{d-1}} \frac{\theta^2}{2}\,B\,d\sigma \\
& \longrightarrow \frac{|v-v_*|^2}{4(d-1)} M_\infty(|v-v_*|)\,\tr (\nabla^2h\,\Pi_{k^\bot}),
\end{align*}

\[ \int o(|v-v_*|^2\theta^2)\,B\,d\sigma \longrightarrow 0.\]
This concludes the proof of (i). As for (ii), first note that
\begin{align*}
D_L(f)
& = -\int \nabla\cdot \left( \int \Psi \Pi_{k^\bot} \bigl(f_*\nabla f- f(\nabla f)_*\bigr) \,dv_*\right) (\log f+1)\,dv \\
& = \iint ff_* \Psi \Pi_{k^\bot} \left(\frac{\nabla f}{f} - \Bigl(\frac{\nabla f}{f}\right)_*\Bigr)\cdot \frac{\nabla f}{f}\,dv\,dv_*\\
& = \frac12 \iint ff_*\Psi \Pi_{k^\bot} \left( \frac{\nabla f}{f} - \Bigl(\frac{\nabla f}{f}\Bigr)_*\right)\cdot
\left( \frac{\nabla f}{f} - \Bigl(\frac{\nabla f}{f}\Bigr)_*\right)\,dv\,dv_*\\
& = \frac12 \iint ff_* \Psi \, \left| \Pi_{k^\bot} \left( \frac{\nabla f}{f} - \Bigl(\frac{\nabla f}{f}\Bigr)_*\right)\right|^2\,dv\,dv_*.
\end{align*}
Next the infinitesimal velocity variations
\begeq\label{infv} v'_*-v_* =v-v' = - \frac{|v_*-v|}{2}\theta \phi + o (|v-v_*|\theta) \endeq
imply, by Taylor expansion of $f(v')$ around $f(v)$ and $f(v'_*)$ around $f(v_*)$,
\[ f'f'_* - ff_*= \frac{|v-v_*|}{2} \bigl( f_*\nabla f - f (\nabla f)_*\bigr)\cdot\theta\phi + o(|v-v_*| \theta). \]
As $b-a\to 0$ we have $(a-b)(\log a-\log b) \simeq (a-b)^2/a$, so
\begin{align*}
(f'f'_* - ff_*) (\log f'f'_* - \log ff_*)
&  = \frac{(f'f'_* - ff_*)^2}{ff_*} + o(|v-v_*|^2\theta^2) \\
& = \frac{|v-v_*|^2}{4} ff_* \left| \left[\frac{\nabla f}{f} - \Bigl(\frac{\nabla f}{f}\Bigr)_*\right]\cdot\phi \right|^2 \,\theta^2 + o (|v-v_*|^2 \theta^2).
\end{align*}
Thus $D_B(f)$ is
\begin{align*}
\frac14 & \iiint (f'f'_* - ff_*) (\log f'f'_* - \log ff_*)\,B\,d\sigma\,dv\,dv_* \\
& \simeq \frac14 \iint \frac{|v-v_*|^2}{4(d-1)}\, ff_* 
\left| \Pi_{k^\bot} \left( \frac{\nabla f}{f} - \Bigl(\frac{\nabla f}{f}\Bigr)_*\right) \right|^2\, 2 M_\infty\,dv\,dv_* \\
& = \frac12 \iint \Psi ff_* \left | \Pi_{k^\bot} \ \left( \frac{\nabla f}{f} - \Bigl(\frac{\nabla f}{f}\Bigr)_*\right) \right|^2\,dv\,dv_*
= D_L(f).
\end{align*}

\end{proof}

The classification of Boltzmann's kernels leads, via the AGC, to a similar taxonomy for Landau's equation: if $\Psi(z) = K |z|^{\gamma+2}$, one talks of hard potentials for $\gamma>0$, Maxwell kernel for $\gamma=0$, soft potentials for $\gamma<0$ and very soft potentials for $\gamma<-2$.

\bibnotes

The founding paper is Landau \cite{landau:Coulomb:36}. Background on plasma physics can be found in Landau--Lipschitz \cite{LL:kin:81} and Delcroix-Bers \cite{delcbers:book:94}. Detailed discussions of kinetic models for plasmas are in the books by Bogoljubov \cite{bogol:book} and Balescu \cite{balescu:charged:63}. The so-called Balescu--Lenard model (which should rather be called Bogoljubov--Balescu) appears in those works; it makes sense even without the Debye cut, but its popularity has remained limited, since its complexity is much higher and its usefulness only appears in very special cases.

Asymptotics of grazing collision was mathematically considered, with increasing generality, by Arsen'ev and Buryak \cite{arsenbur:landau:91}, Degond and Lucquin-Desreux \cite{DLD:FPL:92}, Desvillettes \cite{desv:landau:92}, Goudon \cite{goud:graz:97}, Villani \cite{vill:new:98}, Alexandre and Villani \cite{AV:landau:04}. The more recent of these works prove not only the convergence of the operator, but also the convergence of solutions, with increasing generality, until \cite{AV:landau:04} covers the most general mathematical setting that one may hope for: namely ``renormalised'' solutions in the sense of DiPerna--Lions, a concept of very weak solutions for inhomogeneous solutions of the Boltzmann equation \cite{DPL:boltz:89} and Landau equation \cite{lions:landau:94,vill:landau:96}. In \cite{AV:landau:04} one has also sought the most general conditions on the cross-section, not assuming any factorisation property for instance. Along the way, as there was progress in the understanding of weak solutions for the Boltzmann equation with nonintegrable collision kernels, the AGC served both as a motivation and a consistency check for any new such theory. Background on the physical meaning of the asymptotics is also included in \cite{AV:landau:04}.

Many authors have studied the properties of the Landau equation itself. For the purpose of this course, the most relevant works are those handling qualitative properties (smoothness, moments, lower bound, equilibration) in the spatially homogeneous situation: my study of the Maxwellian case ($\gamma=0)$ \cite{vill:maxw:98} and my study of hard potentials ($\gamma>0$) with Desvillettes \cite{DV:landau:1,DV:landau:2}. Then the special situation $d=3$, $\gamma=-3$ was considered with special care, but up to recently there were only partial or conditional results \cite{GGIV:landau,GIV:landau}.

There is another way from the Boltzmann to the Landau equation, distinct from the AGC, which goes via the study of fluctuations in the derivation from particle system; then $\Psi$ is always of Coulomb type (proportional to $1/|z|$ if $d=3$) and the Boltzmann collision kernel only enters via a multiplicative constant; see Spohn \cite{spohn:book} and references therein.

\section{The Guillen--Silvestre theorem}

Let us resume the story which I started at the end of Section \ref{sectax}. Around 1998 the picture was the following:

\bul Fisher information was proven decreasing for Maxwellian kernels ($\gamma=0$) and it seemed (at least to me) the best to hope for;

\bul the Landau--Coulomb equation ($d=3,\gamma=-3$) was resisting attempts to study its smoothness or singularity.  Actually there was, at first, reason to suspect spontaneous development of singularities. Indeed, for $\Psi(z) = 1/|z|$ we have a special form: writing $\ov{a} = a\ast f$,
\begin{align} \label{LCspecial}
\derpar{f}{t} & = \nabla\cdot (\ov{a}\nabla f- (\nabla\cdot \ov{a})f)\\
& \nonumber = \ov{a} : \nabla^2 f - (\nabla^2:\ov{a}) f \\
& \nonumber
= \ov{a}:\nabla^2 f + 8\pi f^2,
\end{align}
where I used $\nabla^2: a = - 8 \pi \delta_0$.
If $f$ is smooth, then at best $\ov{a}= a \ast f$ is positive, smooth, degenerate at infinity, so this equation does not seem more regularising than the model equation
\begeq\label{nlheat}
\derpar{f}{t} = \Delta f + 8 \pi f^2,
\endeq
which is known to generally blow up in finite time in $L^p$ for all $p> 3/2 $ in dimension $d=3$, even starting from smooth initial conditions.

So it was tempting to believe that there could be blow-up in such $L^p$ spaces, and still the solution would remain in $L\log L$ (by $H$-Theorem) or maybe in $L^p$ for some $p<3/2$; in this way weak solutions would still be well-defined but smooth solutions would break up. This was my initial guess, and that of my PhD advisor as well, as I was starting to work on the Landau equation. But on precisely this topic I remember a heated discussion in Oberwolfach with Sacha Bobylev, who criticized the whole idea of writing the equation in nonconservative form.

It turned out that Bobylev was right (as often): shortly after,  numerical simulations arrived -- it was the time at which the progress of computing power and numerical analysis opened the path to deterministic schemes for collisional kinetic equations, yielding much more convincing qualitative insight than probabilistic schemes -- and they showed clearly that the behaviour of \eqref{LCspecial} and that of \eqref{nlheat} were completely different. Refining the analysis and eager to test their numerical methods on various qualitative problems, Christophe Buet and St\'ephane Cordier observed in those days that (a) solutions of \eqref{LCspecial} do not seem to exhibit any degradation of smoothness, (b) Fisher information seems to be decreasing along solutions of \eqref{LCspecial} even it is far from the Maxwellian case, and (c) numerical schemes accurate enough to capture this monotonicity were also the more stable. This was intriguing and I mentioned this as food for thought in my habilitation dissertation. 

And then on this front nothing occurred for 25 years!

But after this quarter-of-Sleeping Beauty period, in 2023 Guillen and Silvestre proved:

\bul $I$ is nonincreasing along solutions of \eqref{LCspecial} in dimension $d=3$;

\bul As a consequence, \eqref{LCspecial} does not exhibit blowup, and in fact as soon as the initial condition $f_0=f_0(v)$ on $\R^d$ satisfies $I(f_0)< \infty$ one can solve the Cauchy problem with a smooth solution;

\bul The monotonicity property is true for a whole range of exponents $\gamma$, positive or negative (provided that the Cauchy problem makes sense, which is not clear for $\gamma< -3$ when $d= 3$, but that is a different story): they proved it for $|\gamma| \leq \sqrt{19}$, and since then that bound has been slightly improved to $\sqrt{22}$.
\sm

Observe that with this theorem at hand the figure of known parameters for the monotonicity of $I$ was now the union of two segments in Fig.~\ref{figtaxonomy}, one horizontal in the middle, and one vertical on the right, like a $\dashv$ symbol, calling for extension and clarification. 

Upon learning of this spectacular result, I first started to dig in the manuscript of Guillen and Silvestre, then I eventually resumed my old papers and worked out on the Boltzmann equation again, trying to improve the known results up to a point where the Guillen--Silvestre theorem would be included. At the same time, symmetrically, Imbert and Silvestre were working to generalise the Guillen--Silvestre approach to the Boltzmann equation. In the Summer of 2024 we joined forces and after a particularly fruitful season we obtained some quite general results covering a large part of the parameter space of collision kernels; this will be the central focus in the rest of these notes. Stated informally, our central result is

\begin{Thm} Fisher's information is nonincreasing along solutions of the spatially homogeneous Boltzmann equation, for the vast majority of kernels of interest, including all power law forces between Coulomb and hard spheres, for all dimensions.
\end{Thm}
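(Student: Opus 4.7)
The plan is to adapt Guillen--Silvestre's ``lifting'' strategy from the Landau operator to the full Boltzmann operator, while recycling ingredients from the earlier Maxwellian proof via Stam-type inequalities for $Q^+$. The central idea is to work on the doubled space $\R^{d}_{v}\times\R^{d}_{v_*}$ with $F(v,v_*)=f(v)f(v_*)$, for which $I(F)=2I(f)$, and to view each Boltzmann collision as an infinitesimal averaging over the collision sphere
\[
\S_{v,v_*} = \Bigl\{(v',v'_*)\ :\ v'+v'_*=v+v_*,\ |v'|^2+|v'_*|^2=|v|^2+|v_*|^2\Bigr\}.
\]
In this language, the Boltzmann flow on $F$ is generated by a (weighted) jump process on $\R^{2d}$ whose one-step transition averages $F$ on slices of $\S_{v,v_*}$, with weight $B$. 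Convexity and invariance properties of Fisher information under such averaging are what one needs to harvest.

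\textbf{Step 1: Differentiated identity.} First compute
\[
\frac{d}{dt}I(f) \;=\; -\int Q(f,f)\,\Bigl(\,|\nabla\log f|^2+2\,\Delta\log f\,\Bigr)\,dv,
\]
and symmetrize using the four Boltzmann invariances $(v,v_*)\!\leftrightarrow\!(v_*,v)$ and $(v,v_*,\sigma)\!\leftrightarrow\!(v',v'_*,k)$ to obtain a weak formulation
\[
\frac{d}{dt}I(f) \;=\; \tfrac{1}{4}\!\iiiint B\,(ff_*-f'f'_*)\,\bigl[\Phi(v)+\Phi(v_*)-\Phi(v')-\Phi(v'_*)\bigr]\,d\sigma\,dv\,dv_*,
\]
with $\Phi(w)=|\nabla\log f(w)|^2+2\Delta\log f(w)$. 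The target is to realize the integrand as a nonpositive quantity, mirroring Boltzmann's own $H$-theorem computation but at the level of $I$.

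\textbf{Step 2: Lifting and conditioning.} Next, reinterpret $\Phi(v)+\Phi(v_*)$ as an elliptic quantity attached to $F=f\otimes f$ on $\R^{2d}$: indeed
\[
|\nabla_{v,v_*}\log F|^2 = |\nabla\log f|^2(v)+|\nabla\log f|^2(v_*),\qquad \Delta_{v,v_*}\log F = \Delta\log f(v)+\Delta\log f(v_*).
\]
Thus the bracket in Step~1 is a discrete Bochner-type increment of $|\nabla\log F|^2+2\Delta\log F$ across the collision. The plan is to exhibit a ``lifted'' quadratic form $\mathcal{Q}[F]\ge 0$ on $\R^{2d}\times\R^{2d}$ such that, for each fixed $(v,v_*)$ and each $\sigma$, the bracket can be written as a spherical second difference of $\log F$ on $\S_{v,v_*}$ that integrates against $B\,(ff_*-f'f'_*)$ to a quantity dominated by the dissipation of $\mathcal{Q}[F]$ under the lifted dynamics. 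This is the abstract analogue of the Landau identity behind Proposition \ref{propLandau}(ii), with $\Pi_{k^\perp}$ replaced by a projector tied to the chord from $(v,v_*)$ to $(v',v'_*)$; passing $\sigma\to k$ formally recovers the Landau dissipation $D_L$, hence consistency with Guillen--Silvestre at the AGC limit.

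\textbf{Step 3: Pointwise positivity and exponent range.} The key algebraic step is then to check the pointwise positivity of the resulting integrand on $\S_{v,v_*}$, uniformly in $\theta\in[0,\pi]$. The method I plan to use is a one-parameter homotopy in $\theta$: parametrize the collision sphere by $\theta\in[0,\pi]$ as in \eqref{37bis}, check that the Landau case $\theta\to 0$ is controlled by Guillen--Silvestre's tensor inequality (which gives $|\gamma|\leq\sqrt{22}$), and then use Lemma \ref{lemav} together with a Cauchy--Schwarz step on the subsphere $\S^{d-2}_{k^\perp}$ to transport positivity through the whole range. The prefactor $|v-v_*|^\gamma$ enters only as a nonnegative weight; the angular singularity $b(\cos\theta)\sin^{d-2}\theta\sim\theta^{-(1+\nu)}$ of \eqref{bcostheta} is absorbed because the discrete second difference of $\Phi$ is $O(\theta^2)$, consistent with the asymptotic $(1-\cos\theta)\sim\theta^2/2$ used in the proof of Proposition \ref{propLandau}.

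\textbf{Main obstacle.} I expect the real difficulty to sit in Step 3, not in Step 1. For Landau the dissipation is a clean quadratic form in $\Pi_{k^\perp}(\nabla\log f-(\nabla\log f)_*)$, but for Boltzmann the two projectors $\Pi_{(v-v_*)^\perp}$ and $\Pi_{(v'-v'_*)^\perp}$ live in different hyperplanes, and the ``cross'' terms coming from $\Delta\log f$ at the four velocities need not have the sign of $ff_*-f'f'_*$ pointwise. Establishing the lifted positivity will probably require a carefully chosen modification of $\Phi$ (analogous to the tensor identities in Guillen--Silvestre) and a Bochner-type identity on $\S_{v,v_*}$, ultimately reducing the monotonicity to an explicit algebraic inequality in $d$ and $\gamma$ that can be verified on the physical range $-\min(4,d)\le \gamma\le 2$. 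The expected payoff is the assertion of the theorem: monotonicity of $I$ along \eqref{SHBE} throughout the region of Fig.~\ref{figtaxonomy} that includes Coulomb ($d=3,\gamma=-3$) and hard spheres.
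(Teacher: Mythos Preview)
Your proposal shares the correct opening move with the paper --- tensorise to $F=f\otimes f$ on $\R^{2d}$ and study the linear operator $\mathcal{B}$ acting on $F$ (your Step~2 is essentially Proposition~\ref{propdoubling}). But from there the strategy diverges and contains a genuine gap.

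The central error is your claim in Step~3 that ``the prefactor $|v-v_*|^\gamma$ enters only as a nonnegative weight.'' This is precisely where the difficulty lives. When you compute $\Gamma_{I,\mathcal{B}}(F)$ (Theorem~\ref{thmGamma}), the operator $\bbnabla$ hits $B$ and produces a term proportional to $\partial B/\partial|z|$; for $B=|v-v_*|^\gamma b(\cos\theta)$ this is $\gamma|v-v_*|^{\gamma-1}b(\cos\theta)$, and it generates the unsigned term
\[
\cIII = \iiint (f'f'_*-ff_*)\,k\cdot(\xi-\xi_*)\,\derpar{B}{|z|}\,d\sigma\,dv\,dv_*,
\]
which is exactly what distinguishes the non-Maxwellian case from the Maxwellian one. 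The whole problem is to dominate $\cIII$ by the nonnegative terms $\cI,\cII$; if $\gamma$ were merely a weight, Corollary~\ref{cormaxw} would already finish the job. Your ``homotopy in $\theta$ from the Landau endpoint'' does not address this: the Landau case is not an endpoint in $\theta$ for fixed $B$, it is a limit in which the \emph{kernel} concentrates (AGC), and there is no monotone interpolation in $\theta$ that transports the Guillen--Silvestre inequality across angles.

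What the paper actually does after the $\Gamma$-computation is quite different from your Step~3. It applies Cauchy--Schwarz to $\cIII$ (Proposition~\ref{propcriterion}), then performs the change of variables $(v,v_*,\sigma)\to(c,r,k,\sigma)$ to \emph{reduce the entire question to a functional inequality on $\S^{d-1}$ for even functions} (Theorem~\ref{thmcriterion}): one needs
\[
\int_{\S^{d-1}}\Gamma_\beta(\sqrt F)\ \leq\ \frac{1}{K_*}\int_{\S^{d-1}}F\,\Gamma_{1,\beta}(\log F),
\]
with $\ov\gamma\leq\sqrt{K_*}$. This is a \emph{nonlocal differential log-Sobolev inequality} on the projective space $\RP^{d-1}$, and its optimal constant determines the admissible range of $\gamma$. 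The paper then establishes this inequality by three independent mechanisms --- curvature (Section~\ref{seccurv}, giving $K_*\geq 4(d-2)$), positivity of $\beta$ (Theorem~\ref{thmIpos}), and heat-kernel representation of $\beta$ (Section~\ref{secheatk}, giving $K_*\geq 4d$) --- and checks numerically that their combination covers all power laws from Coulomb to hard spheres. None of this structure appears in your outline; in particular, the reduction to the sphere and the role of $L_*(\RP^{d-1})$ are the ideas you are missing.
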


\bibnotes

The description of singularities for the nonlinear heat equation with quadratic nonlinearity is a classical topic, addressed for instance by Zaag \cite{zaag:PhD}.

Deterministic numerical models for the Landau equation and the evaluation of the Fisher information were performed by Buet and Cordier \cite{buetcordier:conserv:98}.

Guillen and Silvestre proved their result in \cite{GS:landaufisher}. (Silvestre reminded me that, back in 2009, upon hearing him lecture in IAS Princeton about jump diffusion processes, I encouraged him to start working on the Boltzmann equation. In retrospect my encouragement was an investment with extraordinary payback.) Their method to handle differentiation of the Fisher information is based on a vector field formalism, contrary to my older approach in \cite{vill:fisher}, which is in the spirit of connections. (I prefer the latter but I am obviously biased.) Our joint work with Imbert and Silvestre is in \cite{ISV:fisher}.

\section{Tensorisation} \label{sectens}

As it models the effect of binary collisions, the Boltzmann operator takes the form of the marginal of a linear operator in the tensor product $f\otimes f$. Already Maxwell and Boltzmann put this to good use in their discussion of equilibria: The Gaussian arises here because it is the only tensor product with spherical symmetry. Now Guillen and Silvestre observed that this can also be explicitly useful for the computation of the time-evolution of the Fisher information.

\begin{Def}[joint linear Boltzmann operator] \label{defcalB}
Let ${\cal B}$ be the linear operator, acting on distributions $F=F(v,v_*)$ on $\R^{2d}$, by
\[
({\cal B}F)(v,v_*) = \int_{\S^{d-1}} \bigl[ F(v',v'_*)- F(v,v_*)\bigr]\,B(v-v_*,\sigma)\,d\sigma.
\]
\end{Def}
Obviously ${\cal B}$ still makes sense when applied to a function defined on the collision sphere with diameter $[v,v_*]$. Equally obviously,
\begeq\label{QintB}
Q(f,f) = \int_{\R^d} {\cal B}(f\otimes f)\,dv_*.
\endeq
I shall note $V=(v,v_*)$, $V'=(v',v'_*)$, $z(V)=v-v_*$.

\begin{Prop}\label{propdoubling}
For any probability distribution $f$ on $\R^d$,
\begeq\label{Idoubling}
I(f) =\frac12 I(f\otimes f),\qquad
I'(f)\,Q(f,f) = \frac12 I'(f\otimes f)\, {\cal B}(f\otimes f).
\endeq
\end{Prop}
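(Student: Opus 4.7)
My plan is to establish the first identity by a direct computation and then to read off the second one from a symmetry argument at the level of the doubled--variable functional. For $F = f\otimes f$ one has $\log F(v, v_*) = \log f(v) + \log f(v_*)$, so $\nabla_{v,v_*} \log F = (\nabla \log f(v), \nabla \log f(v_*))$ and $|\nabla \log F|^2 = |\nabla \log f(v)|^2 + |\nabla \log f(v_*)|^2$; integrating $F\,|\nabla \log F|^2$ and using $\int f = 1$ immediately yields $I(f\otimes f) = 2\,I(f)$, which is the first half of \eqref{Idoubling}.

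For the second half I would work with the explicit first variation
\[
I'(F)\cdot G \,=\, \int_{\R^{2d}} \bigl[\, 2\,\nabla \log F \cdot \nabla_{v,v_*} G \,-\, |\nabla \log F|^2\, G \,\bigr]\, dv\, dv_*,
\]
specialised to $F = f\otimes f$, where $\nabla \log F$ decouples into one--variable pieces. The crucial observation is that $\mathcal{B}(f\otimes f)$ is symmetric in $(v, v_*)$: the involution $(v,v_*,\sigma)\mapsto (v_*, v, -\sigma)$ preserves $|v - v_*|$ and $\cos\theta = k\cdot \sigma$ (hence fixes $B$) while swapping $v'$ with $v'_*$, so the bracket $f(v')f(v'_*) - f(v)f(v_*)$ is invariant. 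I would then exploit this symmetry: for any $G$ symmetric in $(v, v_*)$, renaming dummy variables in the $\nabla_{v_*}$ and $|\nabla \log f(v_*)|^2$ contributions collapses the formula above to
\[
I'(f\otimes f)\cdot G \,=\, 4 \int\!\!\int \nabla \log f(v) \cdot \nabla_v G\, dv\, dv_* \,-\, 2 \int\!\!\int |\nabla \log f(v)|^2\, G\, dv\, dv_*.
\]

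Pulling $\int dv_*$ past $\nabla_v$ and introducing the first marginal $\bar G(v) := \int G(v, v_*)\,dv_*$, I would recognise the right--hand side as $2\, I'(f)\cdot \bar G$. Taking $G = \mathcal{B}(f\otimes f)$, whose first marginal is exactly $Q(f,f)$ by \eqref{QintB}, gives $I'(f\otimes f)\cdot \mathcal{B}(f\otimes f) = 2\, I'(f)\cdot Q(f,f)$, which is the second half of \eqref{Idoubling}. The proposition is essentially a clean algebraic identity with no deep obstacle; what deserves a second look is precisely the collision--swap symmetry of $\mathcal{B}(f\otimes f)$ (the naïve alternative of realising $\partial_t(f\otimes f)$ via the non--symmetric field $Q(f,f)\otimes f + f\otimes Q(f,f)$ would not allow the same reduction), together with the routine commutation of $\nabla_v$ with the $v_*$--integration under the usual smoothness and decay assumptions on $f$.
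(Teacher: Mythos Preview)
Your proof is correct and follows essentially the same route as the paper: both arguments compute the first variation $I'(f\otimes f)\cdot {\cal B}(f\otimes f)$, invoke the symmetry of ${\cal B}(f\otimes f)$ in $(v,v_*)$ to collapse the $v_*$-contributions onto the $v$-contributions, and then marginalise to recover $I'(f)\cdot Q(f,f)$. The only cosmetic difference is that the paper writes the first variation as $\int G\,|\bbnabla\log F|^2 + 2\int F\,\bbnabla\log F\cdot\bbnabla(G/F)$ and uses the symmetry of ${\cal B}F/F$ together with the observation that $1/f_*$ commutes with $\nabla_v$, whereas you use the equivalent form $2\int\nabla\log F\cdot\nabla G - \int|\nabla\log F|^2 G$ and exploit the symmetry of $G$ directly; your bookkeeping is marginally cleaner.
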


Practical consequence:  To show that $dI/dt\leq C\,I$ along the Boltzmann equation ($C\in\R$) it is sufficient to prove it along the flow of the linear Boltzmann equation $\pa_t F = {\cal B}F$, at least when $F$ has the special form $f\otimes f$.

\begin{Rk} $e^{t{\cal B}}$ does not preserve the tensor product structure (it would need ${\cal B}(f\otimes f) = Q(f,f)\otimes f + f\otimes Q(f,f)$, a very special property wich probably forces either the interaction to be trivial, or $f$ to be Maxwellian). So $e^{tQ}$ and $e^{t{\cal B}}$ are not equivalent. This is what makes Property \ref{propdoubling} interesting.
\end{Rk}

\begin{proof}[Proof of Proposition \ref{propdoubling}] Let us write $\bbnabla=\nabla_V = [\nabla_v,\nabla_{v_*}]$. I use the same letter $I$ for $I(f) = \int f |\nabla \log f|^2$ and $I(F) = \iint F |\bbnabla \log F|^2$. 

The first equality in \eqref{Idoubling} is well-known: If $F=f\otimes f$ then $|\bbnabla \log F|^2 = | \nabla\log f|^2 + |(\nabla\log f)_*|^2$, so
$I(ff_*) = \iint ff_* |\nabla \log f|^2 + \iint ff_* |(\nabla \log f)_*|^2 = 2 \int f|\nabla \log f|^2$.

Next,
\begeq\label{firstI'Q} 
I'(f)\, Q(f,f) = \int Q(f,f) |\nabla\log f|^2 + 2 \int f \nabla \log f \cdot \nabla \left(\frac{Q(f,f)}{f}\right).
\endeq
On the other hand,
\begin{multline*}
I'(f\otimes f)\,{\cal B}(f\otimes f) 
= \int {\cal B}(f\otimes f)\, |\bbnabla \log f\otimes f|^2\,dV \\
+ 2 \int f\otimes f\, \bbnabla \log (f\otimes f)\cdot \bbnabla \left(\frac{{\cal B}(f\otimes f)}{f\otimes f}\right)\,dV 
\end{multline*}
\begin{multline*}
= \iiint (f'f'_* - ff_*) B \, \bigl( |\nabla \log f|^2 + |(\nabla \log f)_*|^2\bigr)\,d\sigma\,dv\,dv_*\\
+ 2 \iint ff_* \nabla \log f\cdot \nabla \left(\frac1{ff_*}
\int (f'f'_* - ff_*)\,B\,d\sigma\right)\,dv\,dv_*\\
+ 2 \iint ff_* (\nabla \log f)_*\cdot \nabla_* \left(\frac1{ff_*}
\int (f'f'_*-ff_*)\,B\,d\sigma\right)\,dv\,dv_*.
\end{multline*}
Obviously $(1/ff_*)\int (f'f'_*-ff_*)\,B\,d\sigma$ is symmetric in $(v,v_*)$ so the last two integrals are identical; and multiplication by $1/f_*$ commutes with $\nabla$, so eventually
\begin{align*}
 I'(f\otimes f){\cal B} (f\otimes f) & = 2 \iiint (f'f'_*-ff_*)\,B\,|\nabla \log f|^2\,d\sigma\,dv\,dv_*\\
&\qquad\qquad
+ 4 \iint ff_* \nabla \log f \cdot \nabla \left(\frac1{ff_*} \int (f'f'_*-ff_*)\,B\,d\sigma\right)\,dv\,dv_*\\
& = 2 \int \left(\iint (f'f'_*-ff_*)\,B\,d\sigma\,dv_*\right) |\nabla\log f|^2\,dv\\
&\qquad\qquad + 4 \iint f\nabla\log f\cdot \nabla\left(\frac1{f}\int (f'f'_*-ff_*)B\,d\sigma\right)\,dv\,dv_*.
\end{align*}
In the end, this is
\begin{multline*} 2 \int\left( \iint (f'f'_*-ff_*)\,B\,d\sigma\,dv_*\right) |\nabla\log f|^2\,dv\\
 + 4 \int f\nabla\log f\cdot \nabla \left(\frac1{f} \iint (f'f'_*-ff_*)\,B\,d\sigma\,dv_*\right)\,dv,
\end{multline*}
whence the result upon comparison with \eqref{firstI'Q}.
\end{proof}

\begin{Rk} The proof above would fail for any generic functional of the form $\int A(F,\bbnabla F)\,dV$. But it works for the particular choice $A(F,\bbnabla F)=|\bbnabla F|^2/F$. The same can be said of Boltzmann's entropy: only for $A(F) = F\log F$ can one show an interesting relation between $\int A(F)$ and $\int A(f)$.
Actually, it is easy to establish the entropic counterpart of Proposition \ref{propdoubling}:
\[ H(f\otimes f) = 2H(f),\qquad
H'(f\otimes f)\, {\cal B}(f\otimes f) = 2 H'(f)\, Q(f,f).\]
\end{Rk} 

\begin{Rk}\label{rkHThrevisited}
Let us recover the classical $H$-Theorem with the help of those identities:
\begin{multline*}
-H'(F)\,({\cal B}F) = - \int (\log F+1)\, {\cal B}F = - \iint (F'-F)\,B\,d\sigma (\log F+1)\,dV \\
= - \iint F (\log F'-\log F)\,B\,d\sigma\,dV = \iint F \log\frac{F}{F'}\,B\,d\sigma\,dV.
\end{multline*}
Now $C:(a,b)\longmapsto a\log (a/b)$ is jointly convex on $\R_+^2$ and $1$-homogeneous, so we conclude that $-H'(F)\cdot {\cal B}F\geq 0$ by invoking Jensen's inequality, in the form
\[ \int (F-G)\,d\nu = 0 \Longrightarrow \int C(F,G)\,d\nu \geq 0,\]
with $F=F(V)$, $G=G(V')$, $d\nu = B\,d\sigma\,dV$. (The homogeneity makes it unnecessary to impose $\int d\nu = 1$, and an approximation argument also covers the case when $\int d\nu = \infty$.)
Now, estimating $I'(F)\,({\cal B}F)$ will turn out to be much more complicated that $H'(F)\,({\cal B}F)$!
\end{Rk}

\bibnotes

The use of the tensor product structure of the Boltzmann collision operator appears already in Maxwell \cite{maxw:67}, Boltzmann \cite{boltz:book}, or in the treatment of Boltzmann's dissipation functional by Toscani and I \cite{TV:entropy:99,vill:cer:03}, but Guillen and Silvestre seem to be the first to have explicitly shown its full power \cite{GS:landaufisher}. 

McKean \cite{mck:kac:65} makes some remarks about the properties of those particular nonlinearities appearing in $H$ and $I$; actually in information theory one often starts {\em a priori} from desired identities under tensor product, to justify the introduction of $H$.
\cite{CT:book}.

\section{Playing $\Gamma$ calculus with Boltzmann and Fisher} \label{secGamma}

A cornerstone of the study of diffusion processes, everywhere present in the theory of log Sobolev inequalities, is the structure of the infinitesimal commutator between the diffusion under scrutiny and a local nonlinearity, typically the integrand of an integral functional of interest. It captures the ``genuinely dissipative part' in some sense. As the most classical example, starting from the quadratic functional $\int f^2$, given a linear operator $L$, define
\begeq\label{Gammaf} \Gamma(f,f) = \Gamma_1(f,f) = \frac12 \bigl( L f^2 - 2 f Lf\bigr). \endeq
Then
\[ - \left.\frac{d}{2\,dt}\right|_{t=0} \int f^2 = \int \Gamma(f,f),\]
but $\Gamma$ is a local quantity, which may be technically useful, and computationally more elementary, than its integral version. Notice, if $ Lf = \Delta f + \xi\cdot\nabla f$, for any vector field $\xi$, then $\Gamma(f,f) = |\nabla f|^2$. In certain circles, $\Gamma$ is called the {\em carr\'e du champ}. Then $\Gamma$ can be extended into a bilinear operator $\Gamma(f,g)$, and the procedure can be iterated, replacing $fg$ by $\Gamma(f,g)$:
\begeq\label{Gamma2f} 
\Gamma_2 (f,f) = \frac12\bigl( L \Gamma(f,f)- 2\Gamma(f,Lf)\bigr),
\endeq
whence
\[ -\left.\frac{d}{2\,dt} \right|_{t=0} \int \Gamma(f,f) = \int \Gamma_2(f,f).\]

\begin{Rk}
As another warmup, let us rewrite Remark \ref{rkHThrevisited} in that way. Introduce the functional $\Gamma_{H,{\cal B}}$ by
\[ \Gamma_{H,{\cal B}}(F) 
 = {\cal B} (F \log F) - (\log F+1)\, {\cal B}F\]
and then compute
\begin{align*} \Gamma_{H,{\cal B}}(F) 
& = {\cal B} (F \log F) - (\log F+1)\, {\cal B}F\\
& = \int \bigl(F'\log F'-F\log F\bigr)\,B\,d\sigma
- (\log F+1) \int (F'-F)\,B\,d\sigma \\
& = \int \left(F'\log\frac{F'}{F} - F'+F\right)\,B\,d\sigma\\
& = F \int \left(\frac{F'}{F} \log \frac{F'}{F} - \frac{F'}{F} + 1\right)\,B\,d\sigma \geq 0,
\end{align*}
since $x\log x - x + 1\geq 0$ for all $x$. This implies the $H$ Theorem. Note that this computation is in a way simpler than the usual one, and even Remark \ref{rkHThrevisited}, since (a) integration in $V$ was not used, (b) the pre-post-collisional change of variables is not explicitly used, it enters only via $\int {\cal B}G=0$ after the computation is performed, (c) neither is Jensen's inequality. Of course all three ingredients are implicit and useful for the $H$-Theorem itself, but in the $\Gamma$ computation the algebraic structure for the monotonicity of $H$ is reduced to its core.
\end{Rk}

In this section I shall consider as nonlinearity the integrand of Fisher's information, in the form ${\cal F}(F,\bbnabla F) = F|\bbnabla \log F|^2$, and take ${\cal B}$ as linear operator. As before, $\bbnabla = [\nabla,\nabla_*]$.

\begin{Def}[Gamma calculus for Fisher and Boltzmann] If $B$ is a Boltzmann kernel and ${\cal B}$ the associated joint linear operator as in Definition \ref{defcalB}, let $\Gamma_{I,{\cal B}}$ be defined by
\begeq\label{GIBF}
\Gamma_{I,{\cal B}}(F) = {\cal B} \bigl( F |\bbnabla \log F|^2\bigr)
- \left[ ({\cal B}F) |\bbnabla \log F|^2 + 2 \bbnabla F \cdot \bbnabla \left(\frac{{\cal B}F}{F}\right)\right].
\endeq
\end{Def}

(No factor $1/2$ here since ${\cal F}$ is $1$-homogeneous in $F$.) By construction, using $\int {\cal B} G =0$,
\begeq\label{GIBI'B}
\int \Gamma_{I,{\cal B}}(F)\,dV = - I'(F) ({\cal B}F).
\endeq
So in view of Proposition \ref{propdoubling}, any positivity estimate on $\Gamma_{I,{\cal B}}$ will imply a decay for $I$ along the Boltzmann equation; and will even more precise, as it will allow for multiplication by a function of $V$, for whatever purpose.

We go on with the computation of $\Gamma_{I,{\cal B}}(F)$. Obviously it is needed to compute $\bbnabla {\cal B}F$. I start with three computational lemmas, using the same formalism as in older work of mine on Fisher information and Boltzmann equation.

\begin{Lem} \label{lemBL1}
$\dps \bbnabla (F') = \A_{\sigma k} (\bbnabla F)'$
where, for any $k,\sigma$ in $\S^{d-1}$,
\[ \A_{\sigma k} = \frac12 \left[\begin{matrix} I + k\otimes\sigma & I -k\otimes\sigma\\
I - k\otimes\sigma & I + k\otimes\sigma \end{matrix}\right], \]
and by convention $k\otimes\sigma(x) = (x\cdot\sigma)k$.
\end{Lem}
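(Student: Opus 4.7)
The proof is a direct chain-rule computation on the collision map $(v,v_*)\mapsto(v',v'_*)$, so my plan is to compute the four blocks of the Jacobian and assemble them.

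First, I recall from \eqref{v'v'star} that
\[
v' = \tfrac{v+v_*}{2}+\tfrac{|v-v_*|}{2}\sigma,\qquad v'_* = \tfrac{v+v_*}{2}-\tfrac{|v-v_*|}{2}\sigma,
\]
so using $\nabla_v|v-v_*| = k$ and $\nabla_{v_*}|v-v_*|=-k$, componentwise
\[
\frac{\partial v'_i}{\partial v_j}=\tfrac12\delta_{ij}+\tfrac12\sigma_ik_j,\qquad
\frac{\partial v'_i}{\partial v_{*,j}}=\tfrac12\delta_{ij}-\tfrac12\sigma_ik_j,
\]
and symmetrically for $v'_*$ with the sign of $\sigma_ik_j$ flipped. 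With the convention $(k\otimes\sigma)_{ij}=k_i\sigma_j$ announced in the statement, the transposed Jacobians read
\[
\bigl(\partial_v v'\bigr)^T = \tfrac12(I+k\otimes\sigma),\qquad \bigl(\partial_v v'_*\bigr)^T = \tfrac12(I-k\otimes\sigma),
\]
and exchanging $v\leftrightarrow v_*$ flips the sign, giving $(\partial_{v_*}v')^T = \tfrac12(I-k\otimes\sigma)$ and $(\partial_{v_*}v'_*)^T=\tfrac12(I+k\otimes\sigma)$.

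Second, I apply the chain rule to $F'(v,v_*)=F(v'(v,v_*),v'_*(v,v_*))$:
\[
\nabla_v F' = (\partial_v v')^T (\nabla F)' + (\partial_v v'_*)^T (\nabla_* F)'
= \tfrac12(I+k\otimes\sigma)(\nabla F)' + \tfrac12(I-k\otimes\sigma)(\nabla_* F)',
\]
\[
\nabla_{v_*}F' = \tfrac12(I-k\otimes\sigma)(\nabla F)' + \tfrac12(I+k\otimes\sigma)(\nabla_* F)'.
\]
Stacking these two $d$-vectors into $\bbnabla F' = [\nabla_v F',\nabla_{v_*}F']$ yields exactly the block identity $\bbnabla(F') = \mathcal{A}_{\sigma k}(\bbnabla F)'$.

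There is no real obstacle here; the only pitfall is book-keeping. The conventions $\nabla F$ denotes the gradient in the \emph{first} slot of $F$ (not a $v$-gradient of the composed $F'$), and $(k\otimes\sigma)x=(x\cdot\sigma)k$ means the Jacobian matrix $\partial_v v'$ (whose $(i,j)$ entry is $\partial v'_i/\partial v_j$) equals $\tfrac12 I+\tfrac12\sigma\otimes k$, while its transpose --- which is what appears in the chain rule --- is $\tfrac12 I+\tfrac12 k\otimes\sigma$. Once this is fixed, the four blocks of $\mathcal{A}_{\sigma k}$ fall out immediately, and one may note as a sanity check that $\mathcal{A}_{\sigma k}\mathcal{A}_{k\sigma}=\mathrm{Id}$ (consistent with the involutive nature of the pre/post-collisional change of variables) and that $\mathcal{A}_{\sigma k}$ preserves the ``symmetric'' direction $(\xi,\xi)$ as it should, since $v+v_*=v'+v'_*$.
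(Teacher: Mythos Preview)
Your proof is correct and follows essentially the same route as the paper: a direct chain-rule computation of $\nabla_v(F')$ and $\nabla_{v_*}(F')$ via the Jacobian of $(v,v_*)\mapsto(v',v'_*)$, then assembly into the block matrix $\A_{\sigma k}$. Your version is slightly more explicit in the index bookkeeping (and the sanity checks at the end are a nice touch), but the argument is the same.
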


For the next one, recall that $B=B(|z|,\cos\theta)$.
\begin{Lem}\label{lemBL2}
$\dps \bbnabla B = [\nabla B, -\nabla B]$,
where
\[ \nabla B = \left(\derpar{B}{|z|}\right) k + \frac1{|z|} \left(\derpar{B}{(\cos\theta)}\right) \Pi_{k^{\bot}}\sigma.\]
\end{Lem}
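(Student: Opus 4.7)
The plan is a direct chain-rule computation. Since $B$ depends on $(v,v_*)$ only through the relative velocity $z = v - v_*$, the gradients with respect to $v$ and $v_*$ are negatives of each other. So the first step is to observe
\[ \bbnabla B = [\nabla_v B, \nabla_{v_*} B] = [\nabla_z B, -\nabla_z B], \]
and thereby reduce the lemma to computing a single gradient $\nabla B := \nabla_z B$ on $\R^d \setminus\{0\}$.

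Next, I would decompose $B$ through its natural intermediate variables. Write $r = |z|$ and $\mu = k\cdot\sigma = z\cdot\sigma/|z|$, so that $B(v-v_*,\sigma) = B(r,\mu)$ by the paper's abuse of notation. The chain rule gives
\[ \nabla_z B = \left(\frac{\partial B}{\partial r}\right) \nabla_z r + \left(\frac{\partial B}{\partial \mu}\right) \nabla_z \mu. \]
The two elementary gradients are standard: $\nabla_z r = z/|z| = k$, while
\[ \nabla_z \mu = \nabla_z \left(\frac{z\cdot\sigma}{|z|}\right) = \frac{\sigma}{|z|} - \frac{(z\cdot\sigma)\,z}{|z|^3} = \frac{\sigma - (k\cdot\sigma)k}{|z|} = \frac{\Pi_{k^\bot}\sigma}{|z|}, \]
using the definition $\Pi_{k^\bot} = I_d - k\otimes k$. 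Substituting these two expressions yields
\[ \nabla B = \left(\frac{\partial B}{\partial |z|}\right) k + \frac{1}{|z|} \left(\frac{\partial B}{\partial (\cos\theta)}\right) \Pi_{k^\bot}\sigma, \]
which is exactly the stated formula.

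There is no real obstacle here: the statement is a purely kinematic identity, and the only thing to be slightly careful about is book-keeping of what is being differentiated with respect to what. In particular, $\sigma$ is treated as an external parameter (we are not differentiating on the sphere), $k$ is a function of $z$ but only the $z$-dependence through $r$ and $\mu$ enters $B$, and the two terms in $\nabla B$ are orthogonal (the first along $k$, the second in $k^\bot$), which is a useful consistency check and will matter when this decomposition is fed into the computation of $\Gamma_{I,\mathcal B}(F)$ in the next step of the paper.
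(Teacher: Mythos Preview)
Your proof is correct and follows exactly the same approach as the paper's: chain rule through the two intermediate variables $|z|$ and $\cos\theta = (v-v_*)\cdot\sigma/|v-v_*|$, followed by the observation that $\nabla_{v_*}B = -\nabla_v B$ since $B$ depends only on $v-v_*$. Your write-up is slightly more explicit about the computation of $\nabla_z(k\cdot\sigma)$, but the content is identical.
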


\begin{Lem}\label{lemBL3}
For any differentiable functions $g:\R^d\to \R$, $b:(-1,1)\to\R$, any $k\in\S^{d-1}$,
\[ \int_{\S^{d-1}}b'(k\cdot\sigma) g(\sigma) \bigl(\Pi_{k^\bot}\sigma\bigr)\,d\sigma
= \int b(k\cdot\sigma) M_{\sigma k}\nabla g(\sigma)\,d\sigma,\]
where $\dps M_{\sigma k}(x) = (k\cdot\sigma) x - (x\cdot k)\sigma$.
\end{Lem}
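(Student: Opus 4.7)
The identity is a vector-valued equation in $\R^d$, so I would first pair both sides with an arbitrary fixed $e\in\R^d$ to reduce it to the scalar statement
\[
\int_{\S^{d-1}} b'(k\cdot\sigma)\,g(\sigma)\,\bigl(e\cdot\Pi_{k^\bot}\sigma\bigr)\,d\sigma
= \int_{\S^{d-1}} b(k\cdot\sigma)\,\bigl(e\cdot M_{\sigma k}\nabla g(\sigma)\bigr)\,d\sigma.
\]
Since $e$ is arbitrary, recovering the vector identity at the end is automatic, so the task reduces to an integration by parts on the sphere, for which there is no boundary term since $\S^{d-1}$ is closed.

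The key observation is that $\Pi_{k^\bot}\sigma$ and $M_{\sigma k}$ are precisely the dual structures attached to the vector field
\[
X_e(\sigma) := (k\cdot\sigma)\,e - (e\cdot\sigma)\,k.
\]
This field is tangent to $\S^{d-1}$ (one checks $X_e\cdot\sigma=0$) and equals $A\sigma$ with $A = e\otimes k - k\otimes e$ antisymmetric, so $X_e$ generates the one-parameter group of rotations in the $(e,k)$-plane. These rotations are isometries of $\S^{d-1}$ and preserve the uniform measure on it, so differentiating the rotation-invariance of the integral at $t=0$ yields
\[
\int_{\S^{d-1}} X_e(h)\,d\sigma = 0 \qquad \text{for every smooth } h:\S^{d-1}\to\R.
\]

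I would then apply this to $h(\sigma)=b(k\cdot\sigma)\,g(\sigma)$ and expand by the Leibniz rule. A short computation gives
\[
X_e\bigl(b(k\cdot\sigma)\bigr) = b'(k\cdot\sigma)\,(X_e\cdot k) = -b'(k\cdot\sigma)\,\bigl(e\cdot\Pi_{k^\bot}\sigma\bigr),
\qquad
X_e(g) = X_e\cdot\nabla g = e\cdot M_{\sigma k}\nabla g,
\]
the first identity using $X_e\cdot k = (k\cdot\sigma)(e\cdot k)-(e\cdot\sigma)$, the second being the very definition of $M_{\sigma k}$ tested against $e$. Substituting into $\int X_e(h)\,d\sigma=0$ and rearranging produces exactly the scalar identity above.

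The main obstacle is purely algebraic: spotting the Killing field $X_e$ whose natural pairings reproduce the two tensorial objects $\Pi_{k^\bot}\sigma$ and $M_{\sigma k}\nabla g$. Once that field is identified, no analytic difficulty remains, for both integrands are continuous on the compact sphere under the stated differentiability hypotheses, so the rotational invariance argument applies without further justification.
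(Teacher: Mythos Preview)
Your proof is correct, and genuinely more elegant than the paper's. The paper proceeds by first deriving, via a radial cutoff argument, the scalar integration-by-parts formula $\int_{\S^{d-1}}\nabla u\,d\sigma=(d-1)\int_{\S^{d-1}}u\,\sigma\,d\sigma$ together with several corollaries; it then rewrites $\Pi_{k^\bot}\sigma=(\Pi_{\sigma^\bot}k\cdot k)\sigma-(k\cdot\sigma)\Pi_{\sigma^\bot}k$, splits the left-hand integral accordingly, introduces the auxiliary function $b_1(s)=sb(s)$, applies the integration-by-parts formulas to each piece, and assembles the result after about a page of computation. Your route avoids all of this by recognising that the rotational Killing field $X_e(\sigma)=(k\cdot\sigma)e-(e\cdot\sigma)k$ simultaneously produces $-e\cdot\Pi_{k^\bot}\sigma$ when acting on $b(k\cdot\sigma)$ and $e\cdot M_{\sigma k}\nabla g$ when acting on $g$; the vanishing of $\int X_e(bg)$ is then immediate from measure invariance. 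In fact the author remarks in the bibliographical notes that ``there is certainly an even more synthetic formulation in the language of intrinsic differential geometry'' --- your argument is precisely that formulation.
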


(I will write indifferently $M_{\sigma k}(x)$ or $M_{\sigma k}x$.) The proofs of these lemmas will be given after the main results of this section, namely the computation of $\bbnabla {\cal B}F$ and that of $\Gamma_{I,{\cal B}}(F)$.

\begin{Prop}\label{propnablaBF}
\[ \bbnabla {\cal B}F = \int_{\S^{d-1}} \bigl[
\G_{\sigma k}(\bbnabla F)' - \bbnabla F \bigr]\,B\,d\sigma
+ \left( \int_{\S^{d-1}}(F'-F)\,\derpar{B}{|z|}\,d\sigma\right) [k,-k],\]
where
\begeq\label{Gsk} \G_{\sigma k} = \frac12 \left[ \begin{matrix}
I + P_{\sigma k} & I - P_{\sigma k} \\ I-P_{\sigma k} & I+P_{\sigma k} \end{matrix}\right] 
\endeq
and $P_{\sigma k} = k\cdot \sigma + k\otimes \sigma - \sigma\otimes k$, or explicitly
\begeq\label{Psk}
P_{\sigma k} (x) = (k\cdot\sigma)x + (\sigma\cdot x) k - (x\cdot k)\sigma .
\endeq
\end{Prop}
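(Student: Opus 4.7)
The plan is to differentiate $\mathcal{B}F$ directly, then reshape the result using the three preliminary lemmas. Writing $\mathcal{B}F(V) = \int F(V')\,B\,d\sigma - F(V)\int B\,d\sigma$ and noting that $\bbnabla F(V)$ is independent of $\sigma$, a first step yields
\[
  \bbnabla \mathcal{B}F
  = \int_{\S^{d-1}}\bigl[\bbnabla(F(V'))-\bbnabla F\bigr] B\,d\sigma
  \;+\; \int_{\S^{d-1}}(F'-F)\,\bbnabla B\,d\sigma.
\]
Lemma \ref{lemBL1} instantly converts $\bbnabla(F(V'))$ into $\A_{\sigma k}(\bbnabla F)'$, so the first integral already has the shape of the target expression but with the wrong matrix $\A_{\sigma k}$ in place of $\G_{\sigma k}$. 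The task then reduces to showing that the second integral contributes exactly the radial term $\bigl(\int(F'-F)(\partial B/\partial|z|)\,d\sigma\bigr)[k,-k]$, together with the missing defect $\G_{\sigma k}-\A_{\sigma k}$ inside the $B\,d\sigma$ integral.

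Next I would decompose $\bbnabla B$ by Lemma \ref{lemBL2} into its radial part $(\partial B/\partial|z|)[k,-k]$ and its angular part $(1/|z|)(\partial B/\partial\cos\theta)[\Pi_{k^\bot}\sigma,-\Pi_{k^\bot}\sigma]$. The radial part produces exactly the standalone term in the statement. For the angular part, Lemma \ref{lemBL3} applied with $g\equiv 1$ kills the $F$-contribution (since $\nabla 1=0$), while Lemma \ref{lemBL3} with $g(\sigma)=F(V'(\sigma))$ handles the $F'$-contribution. Because $V'(\sigma)=\bigl((v+v_*)/2+(|z|/2)\sigma,\,(v+v_*)/2-(|z|/2)\sigma\bigr)$ depends linearly on $\sigma$, the chain rule gives
\[ \nabla_\sigma F(V'(\sigma)) = \tfrac{|z|}{2}\bigl[(\nabla_v F)'-(\nabla_{v_*}F)'\bigr], \]
so the $1/|z|$ prefactor cancels and the angular part contributes $\tfrac12 \int B\,M_{\sigma k}\bigl[(\nabla_v F)'-(\nabla_{v_*}F)'\bigr]\,d\sigma$ to the first component of $\bbnabla\mathcal{B}F$, and minus the same to the second.

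A purely algebraic recombination then finishes the proof. Adding the angular contribution above to the first row of $\A_{\sigma k}(\bbnabla F)'$ yields
\[ \tfrac12\bigl[(I+k\otimes\sigma+M_{\sigma k})(\nabla_v F)' + (I-k\otimes\sigma-M_{\sigma k})(\nabla_{v_*}F)'\bigr]. \]
Unpacking $M_{\sigma k}(x)=(k\cdot\sigma)x-(x\cdot k)\sigma$ from Lemma \ref{lemBL3} and comparing with \eqref{Psk}, the pivotal identity
\[ k\otimes\sigma + M_{\sigma k} = (k\cdot\sigma)I + k\otimes\sigma - \sigma\otimes k = P_{\sigma k} \]
turns this expression into the first row of $\G_{\sigma k}(\bbnabla F)'$ as in \eqref{Gsk}; the second row follows identically, with the sign flip supplied by the $[\Pi_{k^\bot}\sigma,-\Pi_{k^\bot}\sigma]$ structure of Lemma \ref{lemBL2}. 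The main obstacle is the angular integration by parts itself: both the chain rule through $\sigma\mapsto V'(\sigma)$ and the recognition of $P_{\sigma k}$ as the combination $k\otimes\sigma + M_{\sigma k}$ must be tracked carefully, but once this bookkeeping is clean the identification $\A_{\sigma k}+(\text{IBP contribution})=\G_{\sigma k}$ is immediate.
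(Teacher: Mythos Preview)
Your proposal is correct and follows essentially the same route as the paper: split $\bbnabla\mathcal{B}F$ into the two integrals, use Lemma~\ref{lemBL1} on the first, decompose $\bbnabla B$ via Lemma~\ref{lemBL2}, apply Lemma~\ref{lemBL3} to the angular part with $g(\sigma)=F(V'(\sigma))$ and the chain rule $\nabla_\sigma F' = \tfrac{|z|}{2}[(\nabla_v F)'-(\nabla_{v_*}F)']$, then recombine. Your explicit identification of the algebraic identity $k\otimes\sigma + M_{\sigma k} = P_{\sigma k}$ is in fact a cleaner articulation of the recombination step than the paper's, which instead writes out all the terms of $\nabla(\mathcal{B}F)$ longhand before recognising the pattern.
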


\begin{Thm} \label{thmGamma}
\begin{multline}
\label{fmlGamma}
\Gamma_{I,{\cal B}}(F) = 
\int_{S^{d-1}} F'\Bigl\< \C_{\sigma k} \bigl[ \bbnabla \log F, (\bbnabla \log F)'\bigr],
\bigl[ \bbnabla \log F, (\bbnabla \log F)'\bigr]\Bigr\>\,B\,d\sigma \\
- 2 \left(\int (F'-F)\,\derpar{B}{|z|}\,d\sigma\right) k\cdot z(\bbnabla\log F),
\end{multline}
where $z(v,v_*) = v-v_*$ and for any $k,\sigma$ in $\S^{d-1}$, $\C_{\sigma k}$ is the $(4d)\times (4d)$ linear operator defined by
\[ \C_{\sigma k} = 
\left[\begin{matrix} I & \G_{\sigma k} \\ \G_{k \sigma} & I \end{matrix} \right]= 
\left[ \begin{matrix} \left(\begin{matrix} I & 0 \\ 0 & I\end{matrix} \right) &
\frac12 \left( \begin{matrix} I+P_{\sigma k} & I-P_{\sigma k} \\
I -P_{\sigma k} & I+P_{\sigma k}\end{matrix}\right) \\ \\
\frac12 \left( \begin{matrix} I+P_{k \sigma} & I-P_{k \sigma} \\
I -P_{k \sigma} & I+P_{k \sigma}\end{matrix}\right) &
\left(\begin{matrix} I & 0 \\ 0 & I\end{matrix} \right) \end{matrix} \right].\]

In particular, with the notation
\[ \xi = -\nabla\log f, \qquad z = v-v_*, \qquad k = \frac{v-v_*}{|v-v_*|},\qquad
|y-x|_{k,\sigma}^2 = |x|^2 + |y|^2 - 2 (P_{k\sigma}x)\cdot y,\]
we have the following formula when $F=f\otimes f$:
\begin{align} \label{Gammaff}
\Gamma_{I,{\cal B}}(f\otimes f)
& = \frac12 \int f'f'_*\, \bigl| (\xi'+\xi'_*) - (\xi+\xi_*)\bigr|^2\,B\,d\sigma\\
& \nonumber + \frac12 \int f'f'_*\, \bigl| (\xi'-\xi'_*) - (\xi-\xi_*)\bigr|^2_{k,\sigma}\,B\,d\sigma\\
& \nonumber + 2 \int (f'f'_* - ff_*)\,k\cdot (\xi-\xi_*)\,\derpar{B}{|z|}\,d\sigma.
\end{align}
\end{Thm}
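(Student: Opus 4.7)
The strategy is to unpack the definition \eqref{GIBF}, rewrite everything in terms of $\bbnabla\log F$, invoke Proposition~\ref{propnablaBF} to expand $\bbnabla\mathcal{B}F$, and then identify a blockwise quadratic form. Specialising to $F=f\otimes f$ and passing to sum--difference coordinates then yields \eqref{Gammaff}.

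First, using $\bbnabla F = F\,\bbnabla\log F$ and the quotient rule,
\[
2\,\bbnabla F\cdot\bbnabla\bigl(\mathcal{B}F/F\bigr)
= 2\,\bbnabla\log F\cdot\bbnabla\mathcal{B}F - 2\,(\mathcal{B}F)\,|\bbnabla\log F|^2,
\]
so that \eqref{GIBF} rearranges to
\[
\Gamma_{I,\mathcal{B}}(F) = \mathcal{B}\bigl(F|\bbnabla\log F|^2\bigr)
+ (\mathcal{B}F)\,|\bbnabla\log F|^2 - 2\,\bbnabla\log F\cdot\bbnabla\mathcal{B}F.
\]
Plugging Proposition~\ref{propnablaBF} into this expression, the ``$\partial_{|z|}B$'' piece of $\bbnabla\mathcal{B}F$ produces directly the last line of \eqref{fmlGamma}, since $\bbnabla\log F\cdot[k,-k] = k\cdot(\nabla-\nabla_*)\log F = k\cdot z(\bbnabla\log F)$. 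The remaining three contributions to $\Gamma_{I,\mathcal{B}}(F)$ all live under a common $\sigma$-integral with density $B\,d\sigma$, and their combined integrand is
\[
F'|(\bbnabla\log F)'|^2 - F|\bbnabla\log F|^2 + (F'-F)|\bbnabla\log F|^2
- 2\,\bbnabla\log F\cdot\bigl[\G_{\sigma k}(\bbnabla F)' - \bbnabla F\bigr].
\]
The identities $\bbnabla\log F\cdot\bbnabla F = F|\bbnabla\log F|^2$ and $(\bbnabla F)' = F'(\bbnabla\log F)'$ produce massive cancellations, and the integrand collapses to
\[
F'\Bigl[\,|u|^2+|u'|^2 - 2\,u\cdot\G_{\sigma k}u'\,\Bigr],\qquad u:=\bbnabla\log F,\ u':=(\bbnabla\log F)'.
\]
Since $\G_{\sigma k}^T = \G_{k\sigma}$ (a direct consequence of $P_{\sigma k}^T = P_{k\sigma}$ read off \eqref{Psk}), this quadratic form is precisely the one associated with the $4d\times 4d$ block operator $\C_{\sigma k}$ announced in the theorem.

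It remains to specialise to $F=f\otimes f$, where $u=(-\xi,-\xi_*)$, $u'=(-\xi',-\xi'_*)$ and $F'=f'f'_*$. Expanding $u\cdot\G_{\sigma k}u'$ via the four blocks of \eqref{Gsk}, the identity blocks pair $(\xi+\xi_*)$ with $(\xi'+\xi'_*)$, while the $P_{\sigma k}$ blocks regroup, via $P_{\sigma k}^T = P_{k\sigma}$, into a single $P_{k\sigma}(\xi-\xi_*)\cdot(\xi'-\xi'_*)$ term; concretely,
\[
2\,u\cdot\G_{\sigma k}u' = (\xi+\xi_*)\cdot(\xi'+\xi'_*) + P_{k\sigma}(\xi-\xi_*)\cdot(\xi'-\xi'_*).
\]
Using the parallelogram identity to split $|u|^2+|u'|^2$ into equal sum- and difference-of-velocities contributions and recombining with this cross term packages the integrand as $\tfrac12|(\xi'+\xi'_*)-(\xi+\xi_*)|^2 + \tfrac12|(\xi'-\xi'_*)-(\xi-\xi_*)|^2_{k,\sigma}$, which is \eqref{Gammaff}. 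The main obstacle is careful bookkeeping around the non-symmetric operator $P_{\sigma k}$ and the sign convention $\xi=-\nabla\log f$; beyond that, no further analytic input is needed, all the heavy lifting having been done in Lemmas~\ref{lemBL1}--\ref{lemBL3} and Proposition~\ref{propnablaBF}.
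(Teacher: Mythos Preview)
Your proof is correct and follows essentially the same route as the paper's. The only organizational difference is that you first simplify $2\,\bbnabla F\cdot\bbnabla(\mathcal{B}F/F)$ via the quotient rule to reach the cleaner intermediate form $\mathcal{B}(F|\bbnabla\log F|^2) + (\mathcal{B}F)|\bbnabla\log F|^2 - 2\,\bbnabla\log F\cdot\bbnabla\mathcal{B}F$ before invoking Proposition~\ref{propnablaBF}, whereas the paper keeps the three terms separate and expands each one; and for the tensor-product specialisation you use the parallelogram identity explicitly where the paper expands all four blocks of $\G_{\sigma k}$ by hand --- but the algebra and the logical dependencies are identical.
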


\begin{Cor}\label{cormaxw} $\Gamma_{I,{\cal B}}(f\otimes f)\geq 0$ if $B$ does not depend on $|z|$ (Maxwellian kernel), in particular $I$ is nonincreasing along the spatially homogeneous Boltzmann equation with such a kernel.
\end{Cor}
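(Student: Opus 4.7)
The plan is to read off the corollary from the explicit formula \eqref{Gammaff} in Theorem \ref{thmGamma} by checking that each of the three terms there is nonnegative (or zero) under the Maxwellian assumption, and then to convert the pointwise inequality on $\Gamma_{I,{\cal B}}(f\otimes f)$ into the monotonicity of $I$ via the tensorisation identity of Proposition \ref{propdoubling}.

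First I would dispatch the third term of \eqref{Gammaff}. Under the Maxwellian assumption $B=B(\cos\theta)$ is independent of $|z|$, so $\partial B/\partial |z|\equiv 0$ and the last integral in \eqref{Gammaff} vanishes identically. The first term of \eqref{Gammaff} is then manifestly nonnegative, being the integral of $f'f'_*\,|(\xi'+\xi'_*)-(\xi+\xi_*)|^2\,B$ against a nonnegative measure.

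The only substantive step is the nonnegativity of the second term, which amounts to showing that the quadratic form $(x,y)\mapsto |x|^2+|y|^2-2(P_{k\sigma}x)\cdot y$ on $\R^d\times\R^d$ is nonnegative. I would establish this by checking that the linear map $P_{k\sigma}$ defined in \eqref{Psk} has operator norm at most one. Writing $\sigma=(\cos\theta)k+(\sin\theta)\phi$ with $\phi\in k^\bot\cap\S^{d-1}$, one decomposes $\R^d = \mathrm{span}(k,\phi)\oplus \mathrm{span}(k,\phi)^\bot$ and computes directly that $P_{k\sigma}$ acts as the planar rotation of angle $\theta$ on $\mathrm{span}(k,\phi)$ and as multiplication by $\cos\theta$ on the orthogonal complement (on the complement, $k\cdot x=\sigma\cdot x=0$, so $P_{k\sigma}x=(k\cdot\sigma)x$). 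In particular $\|P_{k\sigma}\|_{\mathrm{op}}\le 1$, and Cauchy--Schwarz yields $(P_{k\sigma}x)\cdot y \le |x|\,|y|\le (|x|^2+|y|^2)/2$, so $|y-x|^2_{k,\sigma}\ge 0$ pointwise.

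Combining the three steps gives $\Gamma_{I,{\cal B}}(f\otimes f)\ge 0$ pointwise on $\R^{2d}$. Integrating in $V=(v,v_*)$ and using \eqref{GIBI'B} yields $-I'(f\otimes f)\cdot{\cal B}(f\otimes f)\ge 0$, and then the second identity in Proposition \ref{propdoubling} gives $-I'(f)\cdot Q(f,f)\ge 0$. Along the spatially homogeneous Boltzmann equation $\partial_t f = Q(f,f)$ we therefore have $\tfrac{d}{dt} I(f) = I'(f)\cdot Q(f,f)\le 0$, which is the decay claim. The main (and really only) obstacle is the operator-norm bound on $P_{k\sigma}$, and since $P_{k\sigma}$ turns out to be a genuine orthogonal transformation on the plane spanned by $k$ and $\sigma$ (with a contractive factor $\cos\theta$ transverse to it), this step is essentially algebraic once the right decomposition is chosen.
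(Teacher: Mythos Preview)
Your proof is correct and follows the same route as the paper: the corollary is read off directly from formula \eqref{Gammaff}, with the only nontrivial point being the nonnegativity of $|y-x|^2_{k,\sigma}$, which reduces to $\|P_{k\sigma}\|_{\mathrm{op}}\le 1$. Your decomposition of $P_{k\sigma}$ as a rotation on $\mathrm{span}(k,\sigma)$ and multiplication by $\cos\theta$ on the orthogonal complement is exactly the content of Proposition~\ref{propPsk}(v)--(vii), which the paper uses (via (x)) for the same purpose.
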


Some words of explanation about the notation $|y-x|_{k,\sigma}^2$. One way to think of $P_{k\sigma}$ is a recipe to map $k^\bot=T_k\S^{d-1}$ onto $\sigma^\bot=T_\sigma\S^{d-1}$ (a connection, in pedantic words, although much simpler than the Riemannian one; see Figure \ref{figPks}). This allows to compare vectors which are tangent to the sphere (e.g. gradients) defined at $k$ and $\sigma$ respectively. Note carefully that when I write $|y-x|_{k,\sigma}^2$, there is no such thing as $y-x$, it is the whole expression $|y-x|_{k,\sigma}^2$ which makes sense.

\begin{Ex} When $d=2$, then under the identification $\S^1= 2\pi\R/\Z$, $|y-x|^2_{k,\sigma}$ is just the usual squared norm $|y-x|^2$ in $\R$.
\end{Ex}

\begin{Ex} When $d=3$, we may use spherical coordinates $(\alpha,\phi)$ (with the notation $\alpha$ rather than $\theta$ to avoid confusion with the deviation angle); if $k$ has coordinates $(\alpha,\phi)$ and $\sigma$ has coordinates $(\alpha',\phi')$, then
\begin{multline*} |\nabla f(\sigma)-\nabla f(k)|_{k,\sigma}^2 = 
(\partial_\alpha f)^2 + (\partial_\phi f)^2 + (\partial_\alpha f)'^{2} + (\partial_\phi f)'^2\\
- 2 \cos(\phi-\phi') (\pa_\alpha f) (\pa_\alpha f)' - 2 \bigl (\sin\alpha\sin\alpha'+ \cos\alpha\cos\alpha'\cos(\phi-\phi')\bigr) (\pa_\phi f)(\pa_\phi f)' \\
+ 2 \sin(\phi'-\phi) \bigl ( \cos\alpha'(\pa_\alpha f)(\pa_\phi f)' - \cos\alpha (\pa_\alpha f)'(\pa_\phi f)\bigr ).
\end{multline*}
So already in dimension 3 this explicit formula becomes cumbersome and it will be more convenient, in practice, to use the abstract, intrinsic expression.
\end{Ex}

\bigskip

\begin{figure}
\caption{The operator $P_{k\sigma}$ sends $k$ onto $\sigma$, and the tangent plane $k^\bot$ onto the tangent plane $\sigma^\bot$, thereby allowing to compare vectors tangent to the sphere at $k$ and $\sigma$ respectively.}
\label{figPks}
\def\svgwidth{0.5\textwidth}
\begingroup%
  \makeatletter%
  \providecommand\color[2][]{%
    \errmessage{(Inkscape) Color is used for the text in Inkscape, but the package 'color.sty' is not loaded}%
    \renewcommand\color[2][]{}%
  }%
  \providecommand\transparent[1]{%
    \errmessage{(Inkscape) Transparency is used (non-zero) for the text in Inkscape, but the package 'transparent.sty' is not loaded}%
    \renewcommand\transparent[1]{}%
  }%
  \providecommand\rotatebox[2]{#2}%
  \newcommand*\fsize{\dimexpr\f@size pt\relax}%
  \newcommand*\lineheight[1]{\fontsize{\fsize}{#1\fsize}\selectfont}%
  \ifx\svgwidth\undefined%
    \setlength{\unitlength}{595.27559055bp}%
    \ifx\svgscale\undefined%
      \relax%
    \else%
      \setlength{\unitlength}{\unitlength * \real{\svgscale}}%
    \fi%
  \else%
    \setlength{\unitlength}{\svgwidth}%
  \fi%
  \global\let\svgwidth\undefined%
  \global\let\svgscale\undefined%
  \makeatother%
  \begin{picture}(1,1.41428571)%
    \lineheight{1}%
    \setlength\tabcolsep{0pt}%
    \put(0,0){\includegraphics[width=\unitlength,page=1]{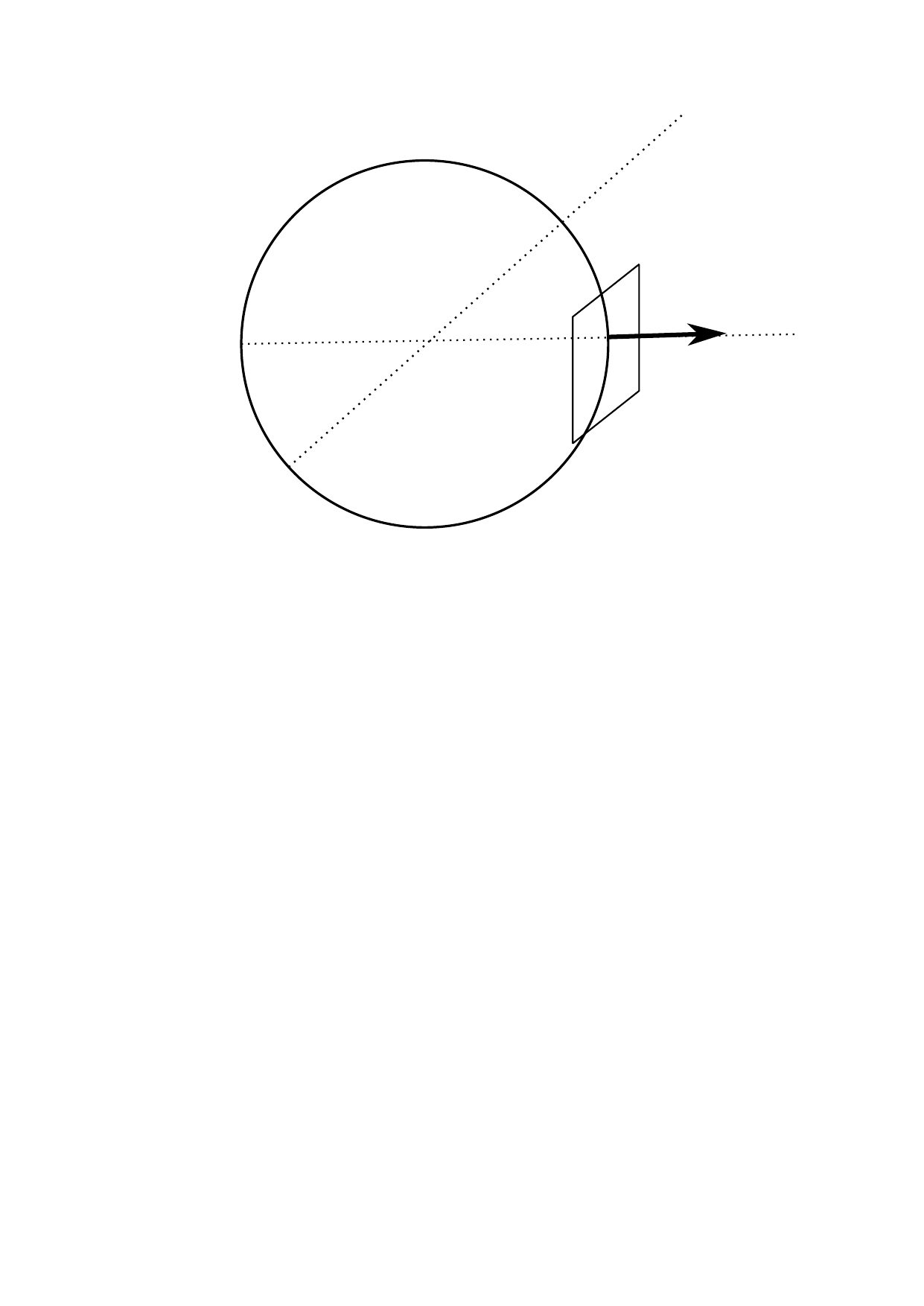}}%
    \put(0.72282896,1.01312078){\color[rgb]{0,0,0}\makebox(0,0)[lt]{\lineheight{1.89988983}\smash{\begin{tabular}[t]{l}$k$\end{tabular}}}}%
    \put(0.61350919,1.22717852){\color[rgb]{0,0,0}\makebox(0,0)[lt]{\lineheight{1.89988983}\smash{\begin{tabular}[t]{l}$\sigma$\end{tabular}}}}%
    \put(0.83839162,1.19164088){\color[rgb]{0,0,0}\makebox(0,0)[lt]{\lineheight{1.89988971}\smash{\begin{tabular}[t]{l}$P_{k\sigma}$\end{tabular}}}}%
    \put(0,0){\includegraphics[width=\unitlength,page=2]{Pks.pdf}}%
  \end{picture}%
\endgroup%

\vspace*{-60mm}
\end{figure}
\bigskip

In the end of this section, I shall provide a list of useful properties for the operators $P_{\sigma k}$ (Proposition \ref{propPsk}). For the moment let me note that
\begeq\label{v2ksigma} 
\bigl| (v'-v'_*) - (v-v_*) \bigr|_{k,\sigma}^2 = \bigl( |v'-v'_*| - |v-v_*| \bigr)^2.
\endeq
(Check that identity directly or apply Proposition \ref{propPsk}.)

\begin{Rk}
All in all, \eqref{Gammaff} has a rather neat formal structure:

\bul The first two terms in the right-hand side of \eqref{Gammaff} are related to the two conservation laws of elastic collisions: conservation of momentum and conservation of energy. Indeed, they both take the form $\int f'f'_*\, C_i(\xi,\xi_*,\xi',\xi'_*)\,d\sigma$, $i=1,2$, where $C_1 = |(\xi'+\xi'_*)- (\xi+\xi_*)|^2$ and $C_2 = |(\xi'-\xi'_*)-(\xi-\xi_*)|^2_{k,\sigma}$ (the latter expression depends also on $k,\sigma$) and conservation of momentum reads $C_1(v,v_*,v',v'_*)=0$ while in view of \eqref{v2ksigma} conservation of energy reads $C_2(v,v_*,v',v'_*)=0$. In particular, if $f$ were Maxwellian, then $\xi= av+b$ for some $a>0$, $b\in\R^d$, and the integrands in those integral expressions would both vanish.

\bul The last term in the right-hand side of \eqref{Gammaff} combines the non-Maxwellianity of $f$ (that is, by how much $f'f'_*-ff_*$ is nonzero) and the non-Maxwellianity of $B$ (that is, by how much $B$ depends on $|z|$).
\end{Rk}

Now let us prove the basic computational lemmas.

\begin{proof}[Proof of Lemma \ref{lemBL1}]
By definition, 
\[ F'= F\left( \frac{v+v_*}2 + \frac{|v-v_*|}2 \sigma, \frac{v+v_*}2 - \frac{|v-v_*|}2\sigma\right),\]
so with $\nabla=\nabla_v$, $\nabla_*=\nabla_{v_*}$ we have
\[ \nabla (F') = \frac12\bigl(\nabla F + k (\sigma\cdot\nabla F)\bigr) (V')
+ \frac12\bigl(\nabla_*F - k(\sigma\cdot\nabla_* F)\bigr) (V'),\]
and symmetrically
\[ \nabla_* (F') = \frac12 \bigl(\nabla F - k (\sigma\cdot\nabla F)\bigr)  (V')
+ \frac12\bigl(\nabla_*F + k(\sigma\cdot\nabla_* F)\bigr)(V').\]
\end{proof}

\begin{proof}[Proof of Lemma \ref{lemBL2}]
By assumption,
\[ B = B\left( |v-v_*|, \frac{v-v_*}{|v-v_*|}\cdot\sigma\right),\]
so writing this as a function of two variables, $B(|z|,\cos\theta)$, and using chain rule,
\[ \nabla B = \frac{v-v_*}{|v-v_*|}\,\derpar{B}{|z|}
+ \frac1{|v-v_*|}\left(\sigma - \left(\frac{v-v_*}{|v-v_*|}\cdot\sigma\right) \frac{v-v_*}{|v-v_*|} \right)
\derpar{B}{(\cos\theta)},\]
and $\nabla_* B = - \nabla B$.
\end{proof}

\begin{proof}[Proof of Lemma \ref{lemBL3}]
First, let $u:\R^d\to\R$ and $\varphi=\varphi(r)$ concentrating on $r=1$ in such a way that $\int \varphi(|x|)\,dx=1$. By polar change of coordinates $x=r\sigma$,
\begin{align*}
\int \nabla u(x)\,\varphi(|x|)\,dx 
& = \iint \nabla u(r\sigma)\,\varphi(r)\,r^{d-1}\,dr\,d\sigma \\
& \simeq \left(\int \nabla u(\sigma)\,d\sigma\right)\left(\int\varphi(r)\,r^{d-1}\,dr\right).
\end{align*}
On the other hand, $\nabla(u\varphi) = \nabla u(x)\,\varphi(r)+ u\sigma\varphi'(r)$,
so
\begin{align*}
\int \nabla u(x)\,\varphi(|x|)\,dx
& = - \iint u(r\sigma) \sigma\varphi'(r)\,r^{d-1}\,dr\,d\sigma\\
& \simeq -\left( \int u(\sigma)\,\sigma\,d\sigma\right) \left(\int \varphi'(r)\,r^{d-1}\,dr\right)\\
& = \left(\int u(\sigma)\,\sigma\,d\sigma\right) (d-1) \left(\int \varphi(r)\,r^{d-2}\,dr\right)\\
& \simeq  \left(\int u(\sigma)\,\sigma\,d\sigma\right) (d-1) \left(\int \varphi(r)\,r^{d-1}\,dr\right),
\end{align*}
where I used again integration by parts and the concentration of $\varphi$ on $(r=1)$.
The conclusion of this first step is
\begeq\label{intnablau}
\int_{\S^{d-1}} \nabla u(\sigma)\,d\sigma = (d-1) \int_{\S^{d-1}} u(\sigma)\,\sigma\,d\sigma.
\endeq

This is true for any $u:\R^d\to\R$, so also for any vector-valued function $\xi:\R^d\to\R^d$, so
\begeq\label{intnablaxi}
\int_{\S^{d-1}} \nabla\xi(\sigma)\,d\sigma = (d-1) \int_{\S^{d-1}} \xi(\sigma)\otimes \sigma\,d\sigma.
\endeq
As corollaries, here are some formulas of integration by parts on the sphere:
For any functions $u$, $v$ and vector field $\xi$ defined in a neigborhood of $\S^{d-1}$,
for any $k\in\S^{d-1}$,
\begeq\label{ippS1}
 \int_{\S^{d-1}} u(\sigma)\,\nabla v(\sigma)\,d\sigma
 = - \int_{\S^{d-1}} \nabla u(\sigma) \, v(\sigma)\,d\sigma
 + (d-1) \int_{\S^{d-1}} u(\sigma)\,v(\sigma)\,\sigma\,d\sigma;
 \endeq
 \begeq\label{ippS2}
 \int_{\S^{d-1}} u(\sigma)\, [k\cdot\nabla v(\sigma)]\,d\sigma
 = - \int_{\S^{d-1}} k\cdot\nabla u(\sigma)\,v(\sigma)\,d\sigma 
 + (d-1) \int_{\S^{d-1}} u(\sigma)\,v(\sigma)\,(k\cdot\sigma)\,d\sigma
 \endeq
 \begeq\label{ippS3}
 \int_{\S^{d-1}} \xi(\sigma)\,[k\cdot\nabla v(\sigma)]\,d\sigma
 = -\int_{\S^{d-1}}[k\cdot\nabla\xi(\sigma)]\,v(\sigma)\,d\sigma
 +(d-1) \int_{\S^{d-1}} \xi(\sigma)\,v(\sigma)\,(k\cdot\sigma)\,d\sigma.
 \endeq
 (Recall $(k\cdot\nabla u)_i = (k\cdot\nabla)u_i$ by convention.)
 The next preliminary is the elementary formula
 \begeq\label{pisigma}
 \Pi_{k^\bot}\sigma = (\Pi_{\sigma^\bot}k\cdot k)\sigma - (k\cdot\sigma) \Pi_{\sigma^\bot} k,
 \endeq
 indeed the right-hand side is $[k-(k\cdot\sigma)\sigma]\cdot k \sigma - (k\cdot\sigma)(k-(k\cdot\sigma)\sigma) = \sigma- (k\cdot\sigma)^2 \sigma - (k\cdot\sigma)k + (k\cdot\sigma)^2\sigma
 = \sigma - (\sigma\cdot k)k$.
 
 It follows from \eqref{pisigma} that
 \begin{multline}\label{lhsbks}
 \int_{\S^{d-1}} b'(k\cdot\sigma) \Pi_{k^\bot} \sigma\,g(\sigma)\,d\sigma
 = \int b'(k\cdot\sigma) \bigl(\Pi_{\sigma^\bot}k\cdot k\bigr)\,\sigma\,g(\sigma)\,d\sigma \\
 - \int (k\cdot\sigma)\,b'(k\cdot\sigma)\, \Pi_{\sigma^\bot}k\,g(\sigma)\,d\sigma.
 \end{multline}
To transform the first integral on the right hand side, choose $\xi(x) = \frac{x}{|x|}g(x)$, $v(x) = b(k\cdot\frac{x}{|x|})$, apply \eqref{ippS3} and $(k\cdot\nabla g) \sigma = (\sigma\otimes \nabla g(\sigma))k$ to obtain
\begeq\label{afirstrhs}
-\int \bigl(\Pi_{\sigma^\bot}k \,g(\sigma) + (\sigma\otimes\nabla g(\sigma)\bigr)\,k\, b(k\cdot\sigma)\,d\sigma
+ (d-1) \int g(\sigma) b(k\cdot\sigma)\,(k\cdot\sigma)\,\sigma\,d\sigma.
\endeq
The second integral on the right hand side of \eqref{lhsbks} is handled by setting $b_1(s) = s b(s)$, so $sb'(s) = b'_1(s) -b(s)$, and letting $v(x) = b_1(k\cdot\frac{x}{|x|})$, so $\nabla v(x) = b'_1(k\cdot \frac{x}{|x|}) \Pi_{\sigma^\bot} k$; then letting $u(x)=g(x)$ and applying \eqref{ippS2}:
\begin{multline*}
\int (k\cdot\sigma) b'(k\cdot\sigma)\,\Pi_{\sigma^\bot}k\,g(\sigma)\,d\sigma 
 = \int \bigl[b'_1(k\cdot\sigma) - b(k\cdot\sigma)\bigr]\Pi_{\sigma^\bot}k\,g(\sigma)\,d\sigma\\
 = - \int \nabla g(\sigma)\,b_1(k\cdot\sigma)\,d\sigma
+ (d-1) \int b_1(k\cdot\sigma)\,g(\sigma)\,\sigma\,d\sigma
- \int b(k\cdot\sigma)\Pi_{\sigma^\bot} k\,g(\sigma)\,d\sigma\\
= - \int (k\cdot\sigma) b(k\cdot\sigma)\,\nabla g(\sigma)\,d\sigma
+ (d-1) \int b(k\cdot\sigma)(k\cdot\sigma)\,g(\sigma)\,\sigma\,d\sigma
- \int b(k\cdot\sigma) \Pi_{\sigma^\bot}k\,g(\sigma)\,d\sigma.
\end{multline*}
Subtracting this from \eqref{afirstrhs}, one sees that \eqref{lhsbks} is equal to
\[ \int_{\S^{d-1}} \bigl[ (k\cdot\sigma)\nabla g(\sigma) - (k\cdot\nabla g(\sigma))\sigma \bigr]\,b(k\cdot\sigma)\,d\sigma, \]
as announced in Lemma \ref{lemBL3}.
\end{proof}
 
\begin{proof}[Proof of Proposition \ref{propnablaBF}] 
By Lemma \ref{lemBL1},
\begin{align*}
\bbnabla ({\cal B}F) 
& = \bbnabla \int_{\S^{d-1}}(F'-F)\,B\,d\sigma \\
& = \int \bigl[ \bbnabla (F') - \bbnabla F\bigr]\,B\,d\sigma 
+ \int (F'-F)\,\bbnabla B\,d\sigma \\
& = \int \bigl[ \A_{\sigma k}(\bbnabla F)'-\bbnabla F\bigr]\,B\,d\sigma
+ \int (F'-F)\, [\nabla B, -\nabla B]\,d\sigma.
\end{align*}
By Lemmas \ref{lemBL2} and \ref{lemBL3},
\begin{align*}
\int (F'-F)\,\nabla B\,d\sigma 
& = \int (F'-F)\,\left( \derpar{B}{|z|}\, k + \frac1{|z|}\derpar{B}{(\cos\theta)}\,\Pi_{k^\bot}\sigma\right)\,d\sigma\\
& = \left(\int (F'-F)\,\derpar{B}{|z|}\,d\sigma \right)k 
+ \frac1{|z|} \int M_{\sigma k}\nabla_\sigma (F'-F)\,B\,d\sigma.
\end{align*}
The second term involves $\nabla_\sigma (F') = (|v-v_*|/2) (\nabla F - \nabla_* F)'$ (here $\sigma$ lives in $\R^d$), so
\[ \int (F'-F)\,\nabla B\,d\sigma
= \left(\int (F'-F)\,\derpar{B}{|z|}\,d\sigma \right)k 
+ \frac12 \int \bigl[ M_{\sigma k}(\nabla F)'- M_{\sigma k}(\nabla_*F)'\bigr]\,B\,d\sigma.\]
All in all, 
\begin{multline*}
\nabla ({\cal B}F)
= \int \Bigl[ \frac12 (\nabla F)'+ \frac12 \sigma\cdot (\nabla F)'k+ \frac12 (\nabla_*F)'
- \frac12\sigma\cdot(\nabla_*F)'k
+ \frac12 (k\cdot\sigma)(\nabla F)'- \frac12 k\cdot(\nabla F)'\sigma \\ 
- \frac12 (k\cdot\sigma) (\nabla_*F)' + \frac12 k\cdot(\nabla_*F)\,\sigma - \nabla F \Bigr]\,B\,d\sigma
+ \left( \int (F'-F)\,\derpar{B}{|z|}\,d\sigma\right)k,
\end{multline*}
and by symmetry there is a similar formula for $\nabla_* ({\cal B}F)$, whence the result.
\end{proof}

\begin{proof}[Proof of Theorem \ref{thmGamma}]
Let us compute one after the other the various terms appearing in $\Gamma_{I,{\cal B}}$:

\[ {\cal B} \bigl( F|\bbnabla \log F|^2\bigr)
= \int \Bigl[ \bigl(F |\bbnabla \log F|^2\bigr)'- \bigl(F |\bbnabla \log F|^2\bigr)\Bigr]\,B\,d\sigma;\]

\[ ({\cal B} F) |\bbnabla \log F|^2
= \int \Bigl( F'|\bbnabla \log F|^2 - F | \bbnabla \log F|^2\Bigr)\,B\,d\sigma;\]

\begin{multline*}
2F \bbnabla \log F\cdot\bbnabla \left(\frac{{\cal B}F}{F}\right) 
= 2 \bbnabla\log F \cdot\bbnabla {\cal B}F
- 2 F \bbnabla \log F\cdot \left({\cal B}F\,\frac{\bbnabla F}{F^2}\right)\\
= 2 \int \bbnabla \log F\cdot \bigl[ \G_{\sigma k} (\bbnabla F)'- \bbnabla F\bigr]\,B\,d\sigma 
- 2 \int \bbnabla \log F \cdot (F'-F) \frac{\bbnabla F}{F}\, B\,d\sigma\\
\qquad\qquad\qquad\qquad\qquad\qquad
+ 2 \left(\int (F'-F) \derpar{B}{|z|}\,d\sigma\right)\, [k,-k]\,\cdot\bbnabla \log F \\
= 2 \int F'\bbnabla \log F \cdot\G_{\sigma k}(\bbnabla \log F)'B\,d\sigma
- 2 \int F'|\bbnabla \log F|^2 B\,d\sigma \\
\qquad\qquad\qquad\qquad
+ 2 \left(\int (F'-F)\,\derpar{B}{|z|}\,d\sigma\right)\,[k,-k] \cdot \bbnabla \log F.
\end{multline*}
All in all,
\begin{multline*}
\Gamma_{I,{\cal B}}(F) = 
\int F'
\Bigl( |\bbnabla\log F|^2+ |(\bbnabla\log F)'|^2 - 2 (\bbnabla\log F)\cdot \G_{\sigma k} (\bbnabla\log F)'
\Bigr)\,B\,d\sigma \\
- 2 \left(\int (F'-F)\,\derpar{B}{|z|}\,d\sigma\right)\, [k,-k]\cdot\bbnabla \log F,
\end{multline*}
which is the same as \eqref{fmlGamma} upon noting that $(P_{\sigma k})^* = P_{k\sigma}$ (Proposition \ref{propPsk}(iii) below). 

Now let us turn to \eqref{Gammaff}. Plug $F=f\otimes f=ff_*$ into \eqref{fmlGamma}. Then
\[ \bigl[ \bbnabla\log F, (\bbnabla \log F)\bigr]
= - \bigl[ \xi, \xi_*, \xi', \xi'_*\bigr].\]
Then the integrand in the integral expression of $\Gamma$ will be $f'f'_*B$ multiplied by
\begin{multline*}
|\xi|^2 + |\xi_*|^2 + |\xi'|^2 + |\xi'_*|^2
- \bigl\< (I+P_{k\sigma})\xi,\xi'\bigr\> - \bigl\< (I-P_{k\sigma})\xi,\xi'_*\bigr\>
- \bigl\< (I-P_{k\sigma})\xi_*,\xi'\bigr\> - \bigl\< (I+P_{k\sigma})\xi_*,\xi'_*\bigr\> \\
= |\xi|^2 + |\xi_*|^2 + |\xi'|^2 + |\xi'_*|^2
- \bigl( \xi'\cdot\xi + \xi\cdot\xi'_* + \xi_*\cdot\xi'+ \xi_*\cdot\xi'_*\bigr)
- P_{k\sigma}\xi\cdot\xi' + P_{k\sigma}\xi\cdot\xi'_* + P_{k\sigma}\xi_*\cdot\xi'
- P_{k\sigma}\xi_*\cdot\xi'_*\\
= \frac12 \Bigl| (\xi'+\xi'_*) - (\xi+\xi_*)\Bigr|^2
+\frac12 \Bigl( |\xi'-\xi'_*|^2 + |\xi-\xi_*|^2 - 2 P_{k\sigma}(\xi-\xi_*)\cdot (\xi'-\xi'_*)\Bigr),
\end{multline*}
which yields the desired expression.
\end{proof}

I end up this section with the promised list of properties for $P_{\sigma k}$. (Actually for future use it will be more convenient to swap $\sigma$ and $k$.)

\begin{Prop} \label{propPsk}
For any $k,\sigma$ in $\S^{d-1}$, let $P_{k\sigma}(x) = (k\cdot\sigma)x + (x\cdot k)\sigma - (\sigma\cdot x)k$, or equivalently $P_{k\sigma} = (k\cdot\sigma) I  + \sigma\otimes k - k\otimes \sigma$. Then

(0) $P_{(-k) \sigma} = P_{k (-\sigma)} = - P_{k \sigma}$;

(i) $P_{k k} = \Id$;

(ii) $P_{k\sigma} k = \sigma$, \quad $P_{k\sigma}(k^\bot) \subset \sigma^\bot$;

(iii) $P_{k\sigma}^* = P_{\sigma k}$;

(iv) $(P_{k\sigma} \sigma)\cdot k = 2 (k\cdot\sigma)^2 -1 = (P_{\sigma k}k)\cdot\sigma$;

(v) If $k\neq \pm\sigma$ then the restriction of $P_{k\sigma}$ to $\vect (k,\sigma)$, the plane generated by $k$ and $\sigma$, is the rotation sending $k$ onto $\sigma$, and its restriction to $\vect(k,\sigma)^\bot$ is the multiplication by $k\cdot\sigma$;
\sm

\noindent Moreover, for all $x\in\R^d$, 

(vi) If $\Pi_{\{k,\sigma\}}^\bot$ stands for the orthogonal projection on $\vect(k,\sigma)^\bot$, then
\begin{align*} |x|^2 - |P_{k\sigma}x|^2 
& = \bigl[1-(k\cdot\sigma)^2\bigr] \bigl| \Pi_{\{k,\sigma\}}^\bot x\bigr|^2\\
& = \bigl(1-(k\cdot\sigma)^2\bigr)|x|^2 - (x\cdot k)^2 - (x\cdot\sigma)^2
+ 2 (k\cdot\sigma) (x\cdot k) (x\cdot\sigma)\\
& = |\Pi_{k^\bot}x|^2 - |P_{k\sigma} \Pi_{k^\bot}x|^2, 
\end{align*}

(vii) $|P_{k\sigma} x| \leq |x|$ with equality only if $x\in\vect(k,\sigma)$ or $k=\pm\sigma$;

(viii) $P_{k\sigma} x = 0$ if and only if $(k,\sigma,x)$ are pairwise orthogonal;

(ix) $\sigma\cdot (P_{k\sigma} x) = k\cdot x$;
\sm

(x) In addition, for all $x,y \in \R^d$ and with the notation $|y-x|_{k,\sigma}^2 = |x|^2 + |y|^2 - 2 (P_{k \sigma}x)\cdot y$,\\
$|y-x|_{k,\sigma}^2 = \bigl| \Pi_{\sigma^\bot}y - \Pi_{k^\bot}x\bigr|_{k,\sigma}^2
+ (y\cdot\sigma-x\cdot k)^2$,\\ 
$\qquad \dps |y-x|_{k,\sigma}^2 = 
( |x|^2 - |P_{k\sigma}x|^2) + |y-P_{k\sigma}x|^2 \geq |y-P_{k\sigma}x|^2,$\\
$\qquad|y-P_{k\sigma}x|^2 = \bigl| \Pi_{\sigma^\bot}y - P_{k\sigma} \Pi_{k^\bot}x\bigr|^2 + (y\cdot \sigma-x\cdot k)^2$,\\
$\qquad\dps |y-x|_{k,\sigma}^2 = \bigl| \Pi_{\sigma^\bot}y - P_{k\sigma} \Pi_{k^\bot}x\bigr|^2 + 
( |x|^2 - |P_{k\sigma}x|^2) + (y\cdot \sigma-x\cdot k)^2$,\\
$\qquad \dps |y-x|_{k,\sigma}^2 = |x-y|_{\sigma,k}^2$.
\end{Prop}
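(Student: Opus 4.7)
The strategy is the same for the whole proposition: everything follows by direct algebraic manipulation from the defining formula $P_{k\sigma}x = (k\cdot\sigma)x + (x\cdot k)\sigma - (x\cdot\sigma)k$, together with the decomposition $\R^d = \vect(k,\sigma)\oplus \vect(k,\sigma)^\bot$ whenever $k\neq\pm\sigma$. The first block (0)--(iv), (ix) is obtained by unfolding the definition and reading off inner products; for instance (iii) is the symmetry of $(k\cdot\sigma)(x\cdot y) + (x\cdot k)(y\cdot\sigma) - (x\cdot\sigma)(y\cdot k)$ in $(x,y)$, and (ix) follows from $\sigma\cdot \sigma = 1$ while the $(k\cdot\sigma)(\sigma\cdot x)$ terms cancel. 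These are one-line verifications that I would simply list.

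The structural content is item (v), which I would establish first by working in an orthonormal basis of $\vect(k,\sigma)$: pick $e_1 = k$ and, assuming $k\neq \pm\sigma$, let $e_2 = (\sigma-(k\cdot\sigma)k)/\sqrt{1-(k\cdot\sigma)^2}$. A short computation gives $P_{k\sigma}k = \sigma$ by (ii), and $P_{k\sigma}\sigma = 2(k\cdot\sigma)\sigma - k$, which I would rewrite in the $(e_1,e_2)$-basis to recognise the matrix $\begin{pmatrix}\cos\theta & -\sin\theta \\ \sin\theta & \cos\theta\end{pmatrix}$ with $\cos\theta = k\cdot\sigma$; hence $P_{k\sigma}|_{\vect(k,\sigma)}$ is the rotation from $k$ to $\sigma$. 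On the orthogonal complement, if $x\perp k$ and $x\perp \sigma$, then directly $P_{k\sigma}x=(k\cdot\sigma)x$.

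Items (vi), (vii), (viii) are then immediate consequences of (v) by orthogonal decomposition: on $\vect(k,\sigma)$ the operator is an isometry, and on the complement it is scalar multiplication by $k\cdot\sigma$, so $|x|^2-|P_{k\sigma}x|^2 = (1-(k\cdot\sigma)^2)|\Pi_{\{k,\sigma\}^\bot}x|^2$, whence nonnegativity (giving (vii)), the equality case, and the vanishing criterion (viii). For the alternative expression of $|x|^2-|P_{k\sigma}x|^2$ in coordinates, I would compute $|\Pi_{\{k,\sigma\}^\bot}x|^2$ by subtracting the projection onto the orthonormal basis $(e_1,e_2)$ constructed above; expanding produces the stated quadratic form $(1-(k\cdot\sigma)^2)|x|^2 - (x\cdot k)^2 - (x\cdot \sigma)^2 + 2(k\cdot\sigma)(x\cdot k)(x\cdot\sigma)$. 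The third formula, involving $\Pi_{k^\bot}x$, follows by applying the already-proved identity to $\Pi_{k^\bot}x$ in place of $x$: the terms involving $(x\cdot k)$ drop out, and the remainder simplifies to the same symmetric expression.

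Finally, the block (x) is a set of Pythagorean identities for $|y-x|_{k,\sigma}^2 = |x|^2 + |y|^2 - 2(P_{k\sigma}x)\cdot y$. The uniform plan is to split $x = \Pi_{k^\bot}x + (x\cdot k)k$ and $y = \Pi_{\sigma^\bot}y + (y\cdot\sigma)\sigma$, and use (ii) plus (ix) to note that $P_{k\sigma}\Pi_{k^\bot}x \in \sigma^\bot$ while $P_{k\sigma}((x\cdot k)k) = (x\cdot k)\sigma$. This orthogonal splitting turns the cross-term $(P_{k\sigma}x)\cdot y$ into the sum $(P_{k\sigma}\Pi_{k^\bot}x)\cdot\Pi_{\sigma^\bot}y + (x\cdot k)(y\cdot\sigma)$ with no mixed contributions, and all four displayed identities fall out by rearranging. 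The last one, $|y-x|_{k,\sigma}^2 = |x-y|_{\sigma,k}^2$, is just the adjointness (iii). There is no real obstacle here---the only risk is bookkeeping fatigue---so I would write out (v) carefully, then treat (vi)--(viii) and (x) essentially in parallel, citing (ii), (iii), (ix) at each step.
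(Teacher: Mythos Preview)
Your proposal is correct and follows essentially the same route as the paper: direct verification of (0)--(iv), (ix) from the definition, the structural item (v) via an orthonormal basis of $\vect(k,\sigma)$ (the paper picks $k$ and its $\pi/2$-rotate in that plane, you pick $k$ and the Gram--Schmidt complement $(\sigma-(k\cdot\sigma)k)/\sqrt{1-(k\cdot\sigma)^2}$, which is the same vector), and then (vi)--(viii), (x) via the orthogonal splittings $x=\Pi_{k^\bot}x+(x\cdot k)k$, $y=\Pi_{\sigma^\bot}y+(y\cdot\sigma)\sigma$ combined with (ii), (iii), (ix). The only cosmetic difference is that the paper computes $P_{k\sigma}$ on the orthonormal frame vector directly, while you first compute $P_{k\sigma}\sigma=2(k\cdot\sigma)\sigma-k$ and deduce the action on $e_2$ by linearity; either way one reads off the rotation matrix.
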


\begin{proof}[Proof of Proposition \ref{propPsk}]
(0), (i), (ii), (iii), (iv) are immediate. 

To prove (v), let $k^\bot$ and $\sigma^\bot$ be obtained from $k$ and $\sigma$ by the action of the rotation of angle $\pi/2$ in $P=\vect(k,\sigma)$. (Choose an arbitrary orientation of $P$.) Then $x=x_P + x_\bot$, where $x_P$ is the projection of $x$ onto $P$ and $x_\bot$ on its orthogonal, so that $x_\bot \bot k,\sigma,k^\bot$. Then we have the orthogonal decomposition
\[ x = x_\bot + (x\cdot k)k + (x\cdot k^\bot)k^\bot,
\qquad |x|^2 = |x_\bot|^2 + (x\cdot k)^2 + (x\cdot k^\bot)^2,\]
and
\begin{align*} 
P_{k\sigma} x 
& = P_{k\sigma}x_\bot + (x\cdot k) P_{k\sigma} k + (x\cdot k^\bot) P_{k\sigma} k^\bot\\
& = (k\cdot\sigma) x_\bot + (x\cdot k)\sigma + (x\cdot k^\bot)P_{k\sigma}k^\bot\\
& = (k\cdot\sigma)x_\bot + (x\cdot k)\sigma
+ (x\cdot k^\bot) \bigl[ (k\cdot \sigma)k^\bot - (\sigma\cdot k^\bot)k\bigr]\\
& = (k\cdot\sigma) x_\bot + (x\cdot k)\sigma + (x\cdot k^\bot)\sigma^\bot.
\end{align*}

This implies (v), the first equality of (vi), (vii) and (viii). As for the second equality of (vi), write
\begin{multline*}
|x|^2 - |P_{k\sigma}x|^2
= |x|^2 - \Bigl( (k\cdot\sigma)^2 |x|^2 + (k\cdot x)^2 +(\sigma\cdot x)^2 
+ 2 (k\cdot\sigma) (x\cdot k) (\sigma\cdot x) \\ 
\qquad\qquad\qquad\qquad\qquad - 2 (k\cdot \sigma) (k\cdot x) (\sigma\cdot x)
- 2 (k\cdot\sigma) (\sigma\cdot x)(x\cdot k) \Bigr)\\
= |x|^2 - \Bigl( (k\cdot\sigma)^2|x|^2 + (k\cdot x)^2 + (\sigma\cdot x)^2- 2 (k\cdot\sigma)(\sigma\cdot x) (x\cdot k)\Bigr).
\end{multline*}
Then to prove the third (and last) equality of (vi), insert the decomposition $x = \Pi_{k^\bot}x + (k\cdot x)k$ into the previous formula for $|x|^2 - |P_{k\sigma}x|^2$ and see that it simplifies into $(1-(k\cdot\sigma)^2)|\Pi_{k^\bot}x|^2- ((\Pi_{k^\bot}x)\cdot\sigma)^2$, which is the same as $|\Pi_{k^\bot}x|^2 - |P_{k\sigma}\Pi_{k^\bot}x|^2$.

Identity (ix) is immediate. It remains to prove (x).
Start from
\begeq\label{yyxx} y = \Pi_{\sigma^\bot} y + (\sigma\cdot y) \sigma,\qquad 
x = \Pi_{k^\bot}x + (k\cdot x)k,
\endeq
and deduce
\begin{multline} \label{intermx}
|y-x|_{k,\sigma}^2= |x|^2 + |y|^2 - 2 P_{k\sigma}x \cdot y \\
= |\Pi_{k^\bot}x|^2 + (x\cdot k)^2 + |\Pi_{\sigma^\bot}y|^2 + (y\cdot \sigma)^2
- 2 (P_{k\sigma}\Pi_{k^\bot}x)\cdot\Pi_{\sigma^\bot}y 
- 2 (P_{k\sigma}k)\cdot (x\cdot k) \Pi_{\sigma^\bot}y
\\
- 2 P_{k\sigma} (\Pi_{k^\bot}x)\cdot (y\cdot \sigma)\sigma
- 2 P_{k\sigma}k \cdot (x\cdot k) (y\cdot \sigma)\sigma.
\end{multline}
Now, 
\[ (P_{k\sigma} k)\cdot (x\cdot k) \Pi_{\sigma^\bot}y = (x\cdot k) \sigma\cdot\Pi_{\sigma^\bot} y = 0,\]
\[ P_{k\sigma} (\Pi_{k^\bot}x)\cdot (y\cdot \sigma) \sigma = (y\cdot\sigma) (\Pi_{k^\bot}x)\cdot P_{\sigma k} \sigma = (y\cdot\sigma) (\Pi_{k^\bot}x)\cdot k =0,\]
\[ P_{k\sigma} k\cdot(x\cdot k) (y\cdot\sigma) \sigma = (x\cdot k) (y\cdot \sigma).\]
All of this reduces \eqref{intermx} to
\begin{multline*} 
|\Pi_{k^\bot}x|^2 + |\Pi_{\sigma^\bot}y|^2 - 2 (P_{k\sigma} \Pi_{k^\bot} x)\cdot \Pi_{\sigma^\bot} y
+ (x\cdot k)^2 + (y\cdot \sigma)^2 - 2 (x\cdot k) (y\cdot \sigma)\\
 =
\bigl|\Pi_{k^\bot}x - \Pi_{\sigma^\bot}y\bigr|_{k,\sigma}^2 + (x\cdot k-y\cdot\sigma)^2,
\end{multline*}
which yields the first part of (x).

The second part of (x) is immediate from algebra and (vii).

For the third part of (x), using \eqref{yyxx} again and (ii),
\begin{align*}
|y-P_{k\sigma}x|^2
& = \Bigl| \bigl( \Pi_{\sigma^\bot}y + (y\cdot\sigma)\sigma\bigr)
- P_{k\sigma} \bigl(\Pi_{k^\bot} x + (x\cdot k)k\bigr)\Bigr|^2 \\
& = \Bigl| \bigl( \Pi_{\sigma^\bot}y- P_{k\sigma}\Pi_{k^\bot}x\bigr) + (y\cdot \sigma-x\cdot k)\sigma\Bigr|^2\\
& = \bigl| \Pi_{\sigma^\bot} y - P_{k\sigma} \Pi_{k^\bot}x\bigr|^2
+ (y\cdot\sigma - x\cdot k)^2.
\end{align*}
Then, starting again from
\[ |y-P_{k\sigma} x|^2 = |y-x|^2_{k,\sigma} - \bigl( |x|^2 - |P_{k\sigma}x|^2\bigr) \]
and likewise
\[ \bigl| \Pi_{\sigma^\bot}y - P_{k\sigma}\Pi_{k^\bot} x\bigr|^2 
= \bigl| \Pi_{\sigma^\bot} y - \Pi_{k^\bot}x\bigr|^2_{k,\sigma} 
- \bigl( |\Pi_{k^\bot}x|^2 - |P_{k\sigma}\Pi_{k^\bot}x|^2\bigr),\] 
we see that the first and third parts of (x) are equivalent provided that
\begeq\label{xPksx}
|x|^2 - |P_{k\sigma}x|^2 = |\Pi_{k^\bot}x|^2 - \bigl|P_{k\sigma}\Pi_{k^\bot}x\bigr|^2.
\endeq
Here is a direct proof of \eqref{xPksx}: Write $x=\Pi_{k^\bot}x + (x\cdot k)k$, then
\begin{align*}
|x|^2 - |P_{k\sigma}x|^2
& = | \Pi_{k^\bot}x|^2 + (x\cdot k)^2 - |P_{k\sigma} \Pi_{k^\bot}x|^2
- |(x\cdot k) P_{k\sigma}k|^2
- 2 (k\cdot\sigma) (P_{k\sigma}\Pi_{k^\bot}x)\cdot (P_{k\sigma}k)\\
& = |\Pi_{k^\bot}x|^2 + (x\cdot k)^2 - |P_{k\sigma}\Pi_{k^\bot}x|^2 - (x\cdot k)^2
- 2 (k\cdot \sigma) (P_{k\sigma}\Pi_{k^\bot}x)\cdot \sigma\\
& = |\Pi_{k^\bot}x|^2 - |P_{k\sigma}\Pi_{k^\bot}x|^2
- 2(k\cdot\sigma) (\Pi_{k^\bot}x\cdot P_{\sigma k}\sigma)
\end{align*}
and the last term is 0 since $P_{\sigma k}\sigma =k$. This concludes the proof of \eqref{xPksx} and the but-to-last bit of (x) as well. The final bit of (x) follows at once from (iii).
\end{proof}

\bibnotes

The {\em carr\'e du champ it\'er\'e} $\Gamma_2$ appears in Bakry--\'Emery \cite{bakem:hyperc:85} and many further works by these authors and the research community which has developed from there. As for the {\em carr\'e du champ} itself, it is present in innumerable works.

In the context of kinetic theory, the local nature of $\Gamma$ it is exploited in \cite{vill:cer:03} to establish an entropic gap.

Lemmas \ref{lemBL1} to \ref{lemBL3} and Proposition \ref{propnablaBF} are reformulations of my older work on the Fisher information for Boltzmann equation \cite{vill:fisher}, while Corollary \ref{cormaxw} captures the main result of that work. The proof of Lemma \ref{lemBL3} given here is more intrinsic than in \cite{vill:fisher}, but there is certainly an even more synthetic formulation in the language of intrinsic differential geometry. Of course the main novelty in this section is the more precise calculation leading to Theorem \ref{thmGamma}, allowing $B$ to depend on $|z|$.

Expressions like $(F+G) | \nabla\log F - \nabla\log G|^2$, underlying the estimates in this section, also play a key role in my older estimates, with Toscani, of Boltzmann's dissipation functional \cite{TV:entropy:99,vill:cer:03}. This coincidence is not surprising: As noticed in \cite[Section 8]{TV:entropy:99}, for Maxwellian kernels the dissipation of $D_B$ by the Fokker--Planck equation coincides, by commutation of the flows, with the dissipation of $I$ by the Boltzmann equation.

\section{Qualitative discussion} \label{secqualit}

The intricate nature of $\Gamma_{I,{\cal B}}$ is the result of the interaction of three ``geometries': tensorisation, collision spheres, and Fisher information. Integration, Proposition \ref{propdoubling} and Theorem \ref{thmGamma} yield
\[ -I'(f)\cdot Q(f,f) = \frac12 \int \Gamma_{I,{\cal B}}(f\otimes f)\,dv\,dv_* 
= \cI + \cII + \cIII, \]
where
\begeq\label{c1}
\cI = \frac14 \iiint f'f'_* \bigl| (\xi'+\xi'_*) - (\xi+\xi_*)\bigr|^2\, B\,d\sigma\,dv\,dv_*
\endeq
\begeq\label{c2}
\cII = \frac14 \iiint f'f'_* \bigl| (\xi'-\xi'_*) - (\xi-\xi_*)\bigr|^2_{k,\sigma}\,B\,d\sigma\,dv\,dv_*
\endeq
\begin{align} \label{c3}
\cIII & = \iiint (f'f'_*- ff_*) \, k\cdot(\xi-\xi_*) \,\derpar{B}{|z|} \,d\sigma\,dv\,dv_* \\
& \nonumber = \frac12 \iiint (f'f'_*- ff_*) \bigl[ k\cdot(\xi-\xi_*)-\sigma\cdot(\xi'-\xi'_*)\bigr]
\,\derpar{B}{|z|}\,d\sigma\,dv\,dv_*.
\end{align}
By symmetry, in $\cI$ and $\cII$ one may replace $f'f'_*$ by $ff_*$ or $(f'f'_*+ff_*)/2$. In a way $\cI$ is associated with momentum conservation ($(v'+v'_*) - (v+v_*)=0$), $\cII$ with energy conservation, keeping collisions on the sphere ($|v'-v'_*| = |v-v_*|$ implies $|(v'-v'_*)-(v-v_*)|_{k,\sigma}^2 =0$).

Now I will choose to write $\cII$ with $ff_*$ in the integrand, and decompose, according to Proposition \ref{propPsk}(x),
\[ |y-x|^2_{k,\sigma} =
 \bigl| \Pi_{\sigma^\bot}y - P_{k\sigma}\Pi_{k^\bot}x\bigr|^2
+ \bigl( |x|^2 - |P_{k\sigma}x|^2\bigr)
+ (y\cdot\sigma - x\cdot k)^2 \]
with $y= (\xi'-\xi'_*)$ and $x=(\xi-\xi_*)$.
This yields
\[ \cII = \cII_1 + \cII_2 + \cII_3,\]
where
\begeq\label{cII1}
\cII_1 = \frac14 \iiint ff_* \Bigl| \Pi_{\sigma^\bot} (\xi'-\xi'_*) - P_{k\sigma} \Pi_{k^\bot}(\xi-\xi_*)\Bigr|^2\,B\,d\sigma\,dv\,dv_*
\endeq
\begeq\label{cII2}
\cII_2 = \frac14 \iiint ff_* \Bigl( |\xi-\xi_*|^2 - |P_{k\sigma}(\xi-\xi_*)|^2 \Bigr)\,B\,d\sigma\,dv\,dv_*
\endeq
\begeq\label{cII3}
\cII_3 = \frac14 \iiint ff_* \Bigl [ (\xi'-\xi'_*)\cdot\sigma - (\xi-\xi_*)\cdot k \Bigr]^2 \, B\,d\sigma\,dv\,dv_*.
\endeq
To summarise: four nonnegative terms, $\cI$, $\cII_1$, $\cII_2$, $\cII_3$, and one unsigned term $\cIII$. The monotonicity property (or not) of $I$ will depend on the possibility to control the unsigned term by the nonnegative ones. Among the latter, $\cII_2$ may be simplified using the following lemma.

\begin{Lem} \label{lemcurv}
If $X \in\R^d$ and $k\in \S^{d-1}$ are given, then
\[ \int_{\S^{d-1}} \bigl( |X|^2 - |P_{k\sigma}X|^2\bigr)\,B\,d\sigma
= \left(\frac{d-2}{d-1}\right) \left( \int_{\S^{d-1}} \bigl[1-(k\cdot\sigma)^2\bigr]\,B\,d\sigma\right) \bigl|\Pi_{k^\bot}X\bigr|^2.\]
\end{Lem}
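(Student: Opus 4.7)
The plan is to reduce the identity to a one-line consequence of Proposition \ref{propPsk}(vi) combined with the averaging formula \eqref{av2} of Lemma \ref{lemav}.

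First I would rewrite the integrand using the second form in Proposition \ref{propPsk}(vi):
\[ |X|^2 - |P_{k\sigma} X|^2 = \bigl(1-(k\cdot\sigma)^2\bigr)\bigl|\Pi_{\{k,\sigma\}}^\bot X\bigr|^2. \]
Decompose $X = (X\cdot k)k + \Pi_{k^\bot}X$. Since $k \in \vect(k,\sigma)$, its contribution is killed by $\Pi_{\{k,\sigma\}}^\bot$, so $\Pi_{\{k,\sigma\}}^\bot X = \Pi_{\{k,\sigma\}}^\bot \Pi_{k^\bot}X$. This already shows that the integrand depends on $X$ only through $\Pi_{k^\bot}X$, matching the right-hand side.

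Next I would parametrise $\sigma = (\cos\theta)\,k + (\sin\theta)\,\phi$ with $\phi\in\S^{d-2}_{k^\bot}$, so that $\vect(k,\sigma)=\vect(k,\phi)$ and, for any $Y\in k^\bot$, $|\Pi_{\{k,\sigma\}}^\bot Y|^2 = |Y|^2 - (Y\cdot\phi)^2$. Applied to $Y=\Pi_{k^\bot}X$ and recalling that $B$ depends on $\sigma$ only through $k\cdot\sigma$ (hence is independent of $\phi$), the factor $(1-(k\cdot\sigma)^2)B = \sin^2\theta\,B$ comes out of the $\phi$-integration. What remains is
\[ \int_{\S^{d-2}_{k^\bot}}\!\!\bigl( |\Pi_{k^\bot}X|^2 - (\Pi_{k^\bot}X\cdot\phi)^2\bigr)\,d\phi. \]

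Now I would invoke the averaging formula \eqref{av2}: since $\Pi_{k^\bot}X \in k^\bot$, one has $\int_{\S^{d-2}_{k^\bot}} (\Pi_{k^\bot}X\cdot\phi)^2\,d\phi = |\S^{d-2}|\,|\Pi_{k^\bot}X|^2/(d-1)$. Combined with $\int_{\S^{d-2}_{k^\bot}} d\phi = |\S^{d-2}|$, the bracket becomes $\frac{d-2}{d-1}|\S^{d-2}|\,|\Pi_{k^\bot}X|^2$. Refolding $|\S^{d-2}|\sin^{d-2}\theta\,d\theta\,d\phi = d\sigma$ gives exactly the claimed identity.

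There is no real obstacle here: the statement is essentially a Pythagorean/averaging computation, and the only subtlety is to notice at the outset that $k$ drops out of $\Pi_{\{k,\sigma\}}^\bot X$, so that the answer factors through $|\Pi_{k^\bot}X|^2$.
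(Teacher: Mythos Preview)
Your proof is correct and follows essentially the same route as the paper's: rewrite $|X|^2-|P_{k\sigma}X|^2$ via Proposition \ref{propPsk}(vi), parametrise $\sigma=(\cos\theta)k+(\sin\theta)\phi$, and apply the averaging Lemma \ref{lemav}. The only difference is that you invoke the first (geometric) form of (vi), which immediately factors out $[1-(k\cdot\sigma)^2]$ and reduces the $\phi$-average to a single application of \eqref{av2}, whereas the paper starts from the second (expanded) form and tracks the cross-terms $(k\cdot X)(\sigma\cdot X)$ by hand before simplifying; your variant is a bit cleaner but the ingredients are identical.
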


\begin{Rk} 
$|\int [1-(k\cdot\sigma)^2]\,B\,d\sigma \leq 2 \int (1-k\cdot\sigma)\,B\,d\sigma = 2 M(z)$ is finite even when $B$ is nonintegrable.
\end{Rk}

\begin{proof}[Proof of Lemma \ref{lemcurv}] Write $\cos\theta = k\cdot\sigma$.
By Proposition \ref{propPsk}(vi),
\begin{align*}
|X|^2 - |P_{k\sigma}X|^2 
& = \bigl[ 1-(k\cdot\sigma)^2\bigr] |X|^2 - (k\cdot X)^2 - (\sigma\cdot X)^2
+ 2 (k\cdot X) (\sigma\cdot X)(k\cdot\sigma)\\
& = \sin^2\theta |X|^2 - (k\cdot X)^2 - (\sigma\cdot X)^2 + 2 \cos\theta (k\cdot X)(\sigma\cdot X).
\end{align*}
Decompose $\sigma = (\cos\theta)k + (\sin\theta)\phi$, $\phi\in \S^{d-2}_{k^\bot}$. Then
\begin{multline*} |X|^2 - |P_{k\sigma}X|^2 \\
= \sin^2 \theta |X|^2 - (k\cdot X)^2
- \cos^2\theta (k\cdot X)^2 - \sin^2 \theta (\phi\cdot X)^2 - 2 (\cos\theta)(\sin\theta) (k\cdot X)(\phi\cdot X)\\
+ 2 \cos^2 \theta (k\cdot X)^2 + 2\cos\theta \sin\theta (k\cdot X) (\phi\cdot X).
\end{multline*}
By Lemma \ref{lemav},
\begin{multline*} \frac1{|\S^{d-2}|} 
\int_{S^{d-2}_{k^\bot}} \bigl( |X|^2 - |P_{k\sigma} X|^2\bigr)\,d\phi \\
= \sin^2\theta |X|^2 - (k\cdot X)^2 - \cos^2\theta (k\cdot X)^2 - \frac{\sin^2\theta}{d-1} |\Pi_{k^\bot}X|^2
+ 2\cos^2\theta (k\cdot X)^2.
\end{multline*}
(Note that $\int \phi\cdot X\,d\phi = 0$ by symmetry.) All in all, using again $|X|^2 = (k\cdot X)^2 + |\Pi_{k^\bot}X|^2$,
\begin{align*} & \frac1{|\S^{d-2}|} 
\int_{S^{d-2}_{k^\bot}} \bigl( |X|^2 - |P_{k\sigma} X|^2\bigr)\,d\phi \\
& = (k\cdot X)^2 \bigl(\sin^2\theta-1+\cos^2\theta\bigr) + | \Pi_{k^\bot}X|^2 \left(\sin^2\theta - \frac{\sin^2\theta}{d-1}\right) \\
& = \left(\frac{d-2}{d-1}\right) \sin^2\theta\, |\Pi_{k^\bot}X|^2,
\end{align*}
and the result follows.
\end{proof}

Lemma \ref{lemcurv} implies the following simplified formula for $\cII_2$:
\begeq\label{cII2'}
\cII_2 = \frac14\left(\frac{d-2}{d-1}\right)
\iint ff_*\, \Sigma(|v-v_*|)\, \left| \Pi_{k^\bot} \left(\frac{\nabla f}{f} - \Bigl(\frac{\nabla f}{f}\Bigr)_*\right) \right|^2\,dv\,dv_*,
\endeq
where
\begeq\label{Sigma}
\Sigma(|z|) = \int_{\S^{d-1}} \sin^2\theta\,B(z,\sigma)\,d\sigma
= \int_{\S^{d-1}} [1-(k\cdot\sigma)^2]\,B(z,\sigma)\,d\sigma.
\endeq
This formula resembles Landau's dissipation functional, and suggests that a better intuition for the various terms in $-I'(f)\,Q(f,f)$ will be obtained by first going to the asymptotics of grazing collisions. By Proposition \ref{propLandau} this will also amount to considering the problem of monotonicity of Fisher information along the spatially homogeneous Landau equation. In the Proposition below,
$\|M\|_{\HS}^2 = \tr (M^*M)$ will stand for the Hilbert--Schmidt square norm of a matrix $M$ (nothing to do with Hard Spheres...).

\begin{Prop}[Asymptotics of grazing collisions for the Fisher information dissipation]
\label{propAGCFI}
With the notation
\[  \xi= -\nabla \log f,\qquad A = - \nabla^2\log f, \qquad \Psi = \frac{M_\infty(v-v_*)\, |v-v_*|^2}{4(d-1)}\]
as in Proposition \ref{propLandau}, one has the following limits in the AGC:
\[ \cI \xrightarrow[AGC]{} \frac12 \iint ff_* \Psi\, \bigl\| (A-A_*)\Pi_{k^\bot}\bigr\|_{\HS}^2\,dv\,dv_*\]
\[ \cII_1 \xrightarrow[AGC]{} \frac12\iint ff_* \Psi\,
\left\|\left( (A+A_*) - 2k\cdot \frac{(\xi-\xi_*)}{|v-v_*|} I\right)\,\Pi_{k^\bot} \right\|_{\HS}^2\,dv\,dv_*\]
\[ \cII_2 \xrightarrow[AGC]{} 2(d-2) \iint ff_* \frac{\Psi\,}{|v-v_*|^2}\, \bigl| \Pi_{k^\bot} (\xi-\xi_*)\bigr|^2\,dv\,dv_*\]
\[ \cII_3 \xrightarrow[AGC]{} \frac12\iint ff_* \Psi\,
\left| \Pi_{k^\bot} \left( (A+A_*)k + 2 \frac{\xi-\xi_*}{|v-v_*|}\right)\right|^2\,dv\,dv_*\]
\[ \cIII \xrightarrow[AGC]{} - \iint ff_* \left(\Psi'- \frac{2\Psi}{|v-v_*|}\right)
\left\< \Pi_{k^\bot}(\xi-\xi_*),\Pi_{k^\bot} \left[ (A+A_*)k + 2\frac{(\xi-\xi_*)}{|v-v_*|}\right]\right\>\,dv\,dv_*.
\]
(where $\Psi'$ is just the usual derivative of the function $\Psi(r)$ applied to $r=|v-v_*|$).
\end{Prop}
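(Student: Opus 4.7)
The plan is to expand each integrand in powers of the deviation angle $\theta$ and pass to the AGC limit using the $\theta^2$-moments of $B$. Parameterise $\sigma = (\cos\theta)k + (\sin\theta)\phi$ with $\phi\in\S^{d-2}_{k^\bot}$, so that $v'-v = \tfrac{|v-v_*|}{2}\theta\phi + O(\theta^2)$. Taylor-expanding $\xi = -\nabla\log f$ (recall $A = \nabla\xi$) yields
\[ \xi'-\xi = A(v'-v)+O(\theta^2),\qquad \xi'_*-\xi_* = -A_*(v'-v)+O(\theta^2),\]
and $\log(f'f'_*/ff_*) = -\tfrac{|v-v_*|}{2}\theta\,\phi\cdot(\xi-\xi_*)+O(\theta^2)$, whence $f'f'_*-ff_* = ff_*\log(f'f'_*/ff_*)+O(\theta^2)$. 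Two AGC moments are needed,
\[ \int_{\S^{d-1}}\theta^2\, B\,d\sigma \xrightarrow[AGC]{} 2M_\infty(|z|),\qquad \int_{\S^{d-1}}\theta^2 \,\partial_{|z|}B\,d\sigma \xrightarrow[AGC]{} 2M_\infty'(|z|),\]
and the angular averages collapse by Lemma \ref{lemav}: $\int_{\S^{d-2}_{k^\bot}}(a\cdot\phi)(b\cdot\phi)\,d\phi = \tfrac{|\S^{d-2}|}{d-1}\<\Pi_{k^\bot}a,\Pi_{k^\bot}b\>$. The prefactors $\tfrac{|v-v_*|^2 M_\infty}{4(d-1)} = \Psi$ and, by direct computation, $\tfrac{|v-v_*|^2 M_\infty'}{4(d-1)} = \Psi' - \tfrac{2\Psi}{|v-v_*|}$ emerge automatically.

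\textbf{Terms $\cI$, $\cII_2$, $\cII_3$.} For $\cI$, substitute $(\xi'+\xi'_*)-(\xi+\xi_*) \simeq \tfrac{|v-v_*|}{2}\theta(A-A_*)\phi$, square, and $\phi$-average using $\int|(A-A_*)\phi|^2\,d\phi = \tfrac{|\S^{d-2}|}{d-1}\|(A-A_*)\Pi_{k^\bot}\|_{\HS}^2$; then integrate in $\theta$. The limit of $\cII_2$ is read off the closed form \eqref{cII2'}: $\Sigma(|z|) = \int (1-\cos\theta)(1+\cos\theta)B\,d\sigma \to 2M_\infty$ because $1+\cos\theta\to 2$ on the AGC support, and one substitutes $M_\infty = 4(d-1)\Psi/|v-v_*|^2$. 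For $\cII_3$, the leading expansion
\[ \sigma\cdot(\xi'-\xi'_*)-k\cdot(\xi-\xi_*) = \theta\,\phi\cdot\Pi_{k^\bot}\!\Bigl[\tfrac{|v-v_*|}{2}(A+A_*)k+(\xi-\xi_*)\Bigr]+O(\theta^2) \]
feeds into the square and produces, after $\phi$- and $\theta$-integration, the stated limit $\tfrac12\iint ff_*\Psi\bigl|\Pi_{k^\bot}[(A+A_*)k+2(\xi-\xi_*)/|v-v_*|]\bigr|^2\,dv\,dv_*$.

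\textbf{Term $\cII_1$.} Expanding $\Pi_{\sigma^\bot}$ and $P_{k\sigma}$ separately is cumbersome since both carry $O(\theta)$ corrections. It is cleaner to use $\cII_1 = \cII - \cII_2 - \cII_3$ and compute $\cII$ directly. From Proposition \ref{propPsk}(x), $|y-x|_{k,\sigma}^2 = |y-P_{k\sigma}x|^2+(|x|^2-|P_{k\sigma}x|^2)$; with $x=\xi-\xi_*$ and $y=\xi'-\xi'_*$, one substitutes the expansion $y-P_{k\sigma}x = (y-x) + (I-P_{k\sigma})x = \tfrac{|v-v_*|}{2}\theta(A+A_*)\phi - \theta[(\phi\cdot x)k - (k\cdot x)\phi]+O(\theta^2)$, squares, averages in $\phi$ (again by Lemma \ref{lemav}), and integrates in $\theta$. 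After subtraction of the limits of $\cII_2$ and $\cII_3$ the cross-terms between $(A+A_*)$ and $(\xi-\xi_*)$ combine, via the identities $\tr(\Pi_{k^\bot})=d-1$ and the polarisation of Lemma \ref{lemav}, into the complete-square Hilbert--Schmidt norm $\tfrac12\iint ff_*\Psi\,\|((A+A_*)-2k\cdot(\xi-\xi_*)/|v-v_*|\,I)\,\Pi_{k^\bot}\|_{\HS}^2$. The algebra is lengthy but mechanical.

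\textbf{Term $\cIII$ and the main obstacle.} Using the symmetrised form in \eqref{c3}, both factors are $O(\theta)$ with leading coefficients
\[ f'f'_*-ff_* \simeq -ff_*\tfrac{|v-v_*|}{2}\theta\,\phi\cdot(\xi-\xi_*),\quad k\cdot(\xi-\xi_*)-\sigma\cdot(\xi'-\xi'_*) \simeq -\theta\,\phi\cdot\Pi_{k^\bot}\!\bigl[\tfrac{|v-v_*|}{2}(A+A_*)k+(\xi-\xi_*)\bigr].\]
Multiplying, $\phi$-averaging via Lemma \ref{lemav} and integrating against $\theta^2\partial_{|z|}B\,d\sigma$ yield the inner product $\<\Pi_{k^\bot}(\xi-\xi_*), \Pi_{k^\bot}[(A+A_*)k+2(\xi-\xi_*)/|v-v_*|]\>$ weighted by $\Psi'-2\Psi/|v-v_*|$, up to sign. \emph{The main obstacle} is the justification of the second AGC moment $\int \theta^2\partial_{|z|}B\,d\sigma \to 2M_\infty'$: the AGC definition in Section \ref{secgrazing} controls $M_n$ but not directly $\partial_{|z|}B_n$. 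For factorised families $B_n(|z|,\cos\theta) = |z|^\gamma b_n(\cos\theta)$ this reduces to the original AGC moment since $\partial_{|z|}B_n = (\gamma/|z|)B_n$; more generally one must either assume such regularity of the family $(B_n)$ or pass the $|z|$-derivative through the limit by a $|z|$-integration by parts. One also checks, by the usual truncation $\theta\geq\theta_0$ followed by $\theta_0\to 0$, that $O(\theta^3)$ Taylor remainders and first-order-in-$\theta$ contributions odd under $\phi\to-\phi$ do not contribute.
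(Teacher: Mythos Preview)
Your proposal is correct and follows essentially the same route as the paper: Taylor-expand each integrand to order $\theta^2$, average in $\phi$ via Lemma~\ref{lemav}, and pass to the AGC limit using the $\theta^2$-moment of $B$. The only organisational difference is in $\cII_1$: the paper expands $\Pi_{\sigma^\bot}(\xi'-\xi'_*) - P_{k\sigma}\Pi_{k^\bot}(\xi-\xi_*)$ directly (its equations \eqref{xixixi1}--\eqref{Psxixi2}), obtaining the complete square $\|C\Pi_{k^\bot}\|_{\HS}^2$ with $C=(A+A_*)-2k\cdot(\xi-\xi_*)/|v-v_*|\,I$ in one stroke, whereas you compute $\cII_1+\cII_3$ through $|y-P_{k\sigma}x|^2$ and subtract the already-known limit of $\cII_3$. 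Both work; the paper's direct route avoids the ``lengthy but mechanical'' recombination you mention.

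Your flagged ``main obstacle'' on $\int\theta^2\,\partial_{|z|}B\,d\sigma\to 2M_\infty'$ is apt: the paper handles it exactly as you suggest, by formally commuting $\partial_{|z|}$ with the AGC limit (last display of the proof), consistent with the disclaimer earlier in Section~\ref{secgrazing} that regularity issues are not addressed. So this is not a gap peculiar to your argument but a feature of the sketch itself.
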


Before giving the proof, here are some comments on these various terms. At several places, one computes $\|M\Pi_{k^\bot}\|_\HS^2 = \tr (\Pi_{k^\bot} M^*M \Pi_{k^\bot})$, in particular with $M=A+A_*$; note, this is larger than $\|\Pi_{k^\bot} M\Pi_{k^\bot}\|_{HS}^2$, the Hilbert--Schmidt norm of the operator induced on $k^\bot$ by $M$ (because $A+A_*$ acting on $k^\bot$ in general has a component in the $k$ direction). Terms $\cI$ and $\cII_1$ take the form of an integrated squared $\log$ Hessian: roughly speaking they look like $\int \|(\nabla^2-\nabla^2_*) \log (ff_*)\|^2_\HS \, ff_*\,dv\,dv_*$ and $\int \|(\nabla^2+\nabla^2_*) \log (ff_*)\|^2_\HS\,ff_*\,dv\,dv_*$, respectively; some kind of (weighted) tensorised version of the familiar term $\int \|\nabla^2 \log f/f_\infty \|^2_\HS\,f$ which appears in the theory of log Sobolev inequalities. And pursuing this analogy, the term $\cII_2$, which looks something like $(d-2)\int |(\nabla-\nabla_*)\log (ff_*)|^2\,ff_*\,dv\,dv_*$, is akin to a {\em curvature} term, something like a tensorised version of $(d-2) \int f |\nabla\log f|^2\,dv$, which is exactly the first order term popping out in Bakry--\'Emery's famous computation of the derivative of the Fisher information along the Fokker--Planck equation. (Note that $d-2$ is exactly the value of the Ricci curvature on the sphere, up to multiplication by the metric of course.) Next, to understand $\cII_3$, consider the following: If $ff_*$ were intrinsincally defined on the collision sphere (not on the whole space), then $(\xi-\xi_*)\cdot k=0$ would hold throughout the whole sphere, and differentiation would yield
\[ \Pi_{k^\bot} (\nabla-\nabla_*) \bigl( (\xi-\xi_*)\cdot k \bigr) = 0, \]
but this is the same as
\[ \Pi_{k^\bot} \left( (A+A_*)k + 2 \frac{\xi-\xi_*}{|v-v_*|}\right) = 0.\]
In other words, $\cII_3$ is a kind of {\em extrinsic term} measuring the extent to which $ff_*$ fails to be preserved along the collision sphere. Finally $\cIII$ is the only ``bad'' term in the monotonicity property, and in it one can appreciate the effect of the non-Maxwellian nature of the interaction (if $\Psi$ is a power law $|z|^{\gamma+2}$ then $|\Psi'-2\Psi/|z||$ is just $|\gamma| |z|^{\gamma+1}$, hence vanishes only for $\gamma=0$), and the amount of variation of $ff_*$ along the collision spheres (recall that a tensor product invariant along collision spheres is a Maxwellian equilibrium).

This intuition behind the various terms in \eqref{fmlGamma} is the main outcome of this section. But also we can be a straightforward derivation of a suboptimal version of Guillen--Silvestre theorem.

\begin{Thm}[A rough version of the Guillen--Silvestre theorem] \label{thmGS0}
Assume $|r\Psi'(r)/\Psi(r) -2| \leq 2\sqrt{d-2}$; then the Fisher information is monotonous along solutions of the spatially homogeneous Landau equation with function $\Psi$.
\end{Thm}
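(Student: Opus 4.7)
The plan is to start from the asymptotics-of-grazing-collisions formula of Proposition~\ref{propAGCFI}, which (integrating the $\Gamma$ identity of Theorem~\ref{thmGamma} and invoking the doubling of Proposition~\ref{propdoubling}, then passing to the AGC limit) yields $-I'(f)\cdot Q_L(f,f)=\cI+\cII_1+\cII_2+\cII_3+\cIII$. The first four terms are weighted integrals of a pointwise square against the nonnegative weight $ff_*\Psi$, hence are manifestly $\ge 0$; only $\cIII$ is a priori unsigned. The theorem therefore reduces to the pointwise-in-$(v,v_*)$ bound $|\cIII\text{-integrand}|\le\cII_2\text{-integrand}+\cII_3\text{-integrand}$, after which $-I'(f)\cdot Q_L(f,f)\ge\cI+\cII_1\ge 0$.

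I would then run a Cauchy--Schwarz--weighted-AM--GM estimate on $\cIII$. Set $r:=|v-v_*|$, $X:=\Pi_{k^\bot}(\xi-\xi_*)$ and $Y:=\Pi_{k^\bot}\bigl[(A+A_*)k+2(\xi-\xi_*)/r\bigr]$. Cauchy--Schwarz in $\R^d$ gives $|\langle X,Y\rangle|\le |X|\,|Y|$, and weighted AM--GM with any $\lambda(r)>0$ yields $|X|\,|Y|\le\tfrac{\lambda(r)}{2}|X|^2+\tfrac{1}{2\lambda(r)}|Y|^2$. Multiplying by the coefficient $|\Psi'(r)-2\Psi(r)/r|$ appearing in $\cIII$ and comparing with the integrands $2(d-2)\Psi(r)/r^2\cdot|X|^2$ of $\cII_2$ and $\tfrac{1}{2}\Psi(r)|Y|^2$ of $\cII_3$, the pointwise bound is equivalent to the compatibility system $\lambda(r)\,|r\Psi'-2\Psi|\le 4(d-2)\Psi/r$ and $|r\Psi'-2\Psi|\le\lambda(r)\,r\,\Psi$ on the single scalar $\lambda(r)>0$.

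Such a $\lambda(r)$ exists exactly when the product of those two inequalities, $|r\Psi'(r)-2\Psi(r)|^2\le 4(d-2)\Psi(r)^2$, holds, i.e.\ precisely when $|r\Psi'(r)/\Psi(r)-2|\le 2\sqrt{d-2}$, which is the standing hypothesis. A clean explicit choice is $\lambda(r)=2\sqrt{d-2}/r$ (pleasantly independent of $\Psi$ itself), which saturates the compatibility bound and satisfies both inequalities simultaneously. Integrating the resulting pointwise estimate over $(v,v_*)$ gives $|\cIII|\le\cII_2+\cII_3$, and hence $I$ is nonincreasing along the Landau flow with function $\Psi$.

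The main obstacle is analytic rather than algebraic: the computations above presuppose that $f$ is smooth and strictly positive, so that $\xi=-\nabla\log f$ and $A=-\nabla^2\log f$ make classical sense and each of $\cI,\cII_1,\cII_2,\cII_3,\cIII$ is an absolutely convergent integral. Justifying this rigorously is a matter of regularising $f_0$ (so that $I(f_0)<\infty$ propagates to a smooth positive solution) and/or truncating $\Psi$ near $0$ and $\infty$, running the argument on the regularised problem, and passing to the limit using the lower semicontinuity of $I$; this is standard for a ``rough'' version but accounts for most of the actual work. One may also note that for the power-law $\Psi(r)=Cr^{\gamma+2}$ the hypothesis reads $|\gamma|\le 2\sqrt{d-2}$, which covers moderately soft potentials but misses the Landau--Coulomb case in dimension three ($\gamma=-3$, $2\sqrt{d-2}=2$), explaining the need later for sharper estimates that also exploit cancellations inside $\cI$ and $\cII_1$ to reach the full Guillen--Silvestre theorem.
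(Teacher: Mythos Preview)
Your proof is correct and follows essentially the same route as the paper: bound $|\cIII|$ by $\cII_2+\cII_3$ via Cauchy--Schwarz and weighted AM--GM on the pair $X=\Pi_{k^\bot}(\xi-\xi_*)$, $Y=\Pi_{k^\bot}\bigl[(A+A_*)k+2(\xi-\xi_*)/|v-v_*|\bigr]$, leaving $\cI+\cII_1\ge 0$ untouched. The paper's argument differs only cosmetically: it first replaces $|\Psi'-2\Psi/r|$ by $\ov{\gamma}\,\Psi/r$ and then applies the unweighted $ab\le\tfrac12 a^2+\tfrac12 b^2$, landing directly on $\cII_3+\tfrac{\ov{\gamma}^2}{4(d-2)}\cII_2$; your version keeps the exact coefficient and introduces a free parameter $\lambda(r)$, which is a slightly cleaner way to see why the condition $|r\Psi'/\Psi-2|\le 2\sqrt{d-2}$ is exactly what is needed.
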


\begin{proof}[Proof of Theorem \ref{thmGS0}]
Let $\ov{\gamma}$ be an upper bound for $|r\Psi'(r)/\Psi(r)-2|$. Then from the asymptotic expressions in Proposition \ref{propAGCFI}
\begin{align*}
| \cIII | & \leq 
\ov{\gamma} \iint ff_* \,\frac{\Psi(|v-v_*|)}{|v-v_*|}
\, \bigl| \Pi_{k^\bot} (\xi-\xi_*) \bigr|\, \Bigl| \Pi_{k^\bot} \Bigl[ (A+A_*)k + 2 \frac{\xi-\xi_*}{|v-v_*|} \Bigr]\Bigr|\,dv\,dv_* \\
& \leq \frac12 \iint ff_*  \Psi \left|\Pi_{k^\bot} \Bigl[ (A+A_*)k + 2 \frac{\xi-\xi_*}{|v-v_*|} \Bigr]\right|^2\,dv\,dv_* \\
& \qquad\qquad\qquad   + \frac{(\ov{\gamma})^2}{2}
\iint ff_*\,\frac{\Psi}{|v-v_*|^2}\, \bigl| \Pi_{k^\bot}(\xi-\xi_*)\bigr|^2\,dv\,dv_*\\
& = \cII_3 + \frac{(\ov{\gamma})^2}{4(d-2)} \cII_2.
\end{align*}
So if $\ov{\gamma}\leq 2\sqrt{d-2}$, this is bounded above by $\cII_3+\cII_2$, and decay follows.
\end{proof}

\begin{Rk} In dimensions 6 and higher this covers already all mathematical and physical cases in which one would reasonably be interested. If one restricts to potentials decaying faster than $d$-dimensional Coulomb, as I chose to do, this even covers them for $d=4,5$. But it misses the range $\gamma=[-3,-2)$ in dimension $d=3$ (which is crucial since it contains the true Landau--Coulomb equation) and it says nothing about $d=2$. It is clear from the proof that to improve this theorem one will need to use the Hessian terms.
\end{Rk}

It remains to sketch the proof of Proposition \ref{propAGCFI}, which is a tricky exercise in Taylor expansions, based on repeated use of \eqref{infv}.

\begin{proof}[Sketch of proof of Proposition \ref{propAGCFI}]
First with $\cI$. From \eqref{37ter} and \eqref{37},
\[ \xi'-\xi = \nabla\xi (v)(v'-v) + o(\theta |v-v_*|)
= \frac{|v-v_*|}2 \nabla\xi(v) \cdot \theta\phi + o (\theta |v-v_*|)\]
and likewise
\[ \xi'_* - \xi_* = - \frac{|v-v_*|}2 \nabla\xi(v_*)\cdot\theta\phi + o(\theta |v-v_*|),\]
so that
\[ \bigl| (\xi'+\xi'_*) - (\xi+\xi_*)\bigr|^2
= \frac{|v-v_*|^2}4 \bigl| (A-A_*) \phi \bigr|^2 \theta^2 + o(\theta^2 |v-v_*|^2).\]
In the AGC, using Lemma \ref{lemav},
\[ \int \bigl| (\xi'+\xi'_*) - (\xi+\xi_*)\bigr|^2\,B\,d\sigma
\longrightarrow \frac{|v-v_*|^2}{4} \frac1{d-1} \tr \bigl[ (A-A_*)\Pi_{k^\bot}\bigr]^2 (2M_\infty),\]
thus
\[ \frac14 \int \bigl| (\xi'+\xi'_*) - (\xi+\xi_*)\bigr|^2\,B\,d\sigma 
\longrightarrow \frac12\Psi(|v-v_*|)\, \bigl\| (A-A_*)\Pi_{k^\bot} \bigr\|_\HS^2.\]

Now for $\cII$. From
\[ \xi'-\xi'_* = (\xi-\xi_*) + \frac{|v-v_*|}2 (A+A_*)\theta\phi + o(\theta |v-v_*|)\]
one obtains, recalling \eqref{37},
\begin{multline} \label{xixixi1} (\xi'-\xi'_*) - P_{k\sigma}(\xi-\xi_*) \\ = 
(1-k\cdot\sigma)(\xi-\xi_*) -k\cdot (\xi-\xi_*)\sigma- \sigma\cdot (\xi-\xi_*) k  \\
\qquad\qquad\qquad\qquad\qquad 
+ \frac{|v-v_*|}2 (A+A_*)\theta\phi + o(\theta |v-v_*|)\\
\qquad\qquad =
 (1-k\cdot\sigma)(\xi-\xi_*)- \bigl[ k\cdot(\xi-\xi_*) (\sigma-k) + k\cdot (\xi-\xi_*)k \bigr]
+ \bigl[ (\sigma-k)\cdot (\xi-\xi_*)k + k\cdot(\xi-\xi_*)k\bigr] \\
\qquad\qquad\qquad\qquad\qquad 
+ \frac{|v-v_*|}2 (A+A_*)\theta\phi + o(\theta |v-v_*|)\\
= -k\cdot (\xi-\xi_*)\theta\phi + \theta\phi\cdot (\xi-\xi_*) k +\frac{|v-v_*|}2 (A+A_*)\phi\theta
+ o(\theta |v-v_*|).
\end{multline}
On the other hand, using \eqref{37bis},
\begin{multline} \label{sxxkxx}
\sigma\cdot (\xi'-\xi'_*) - k\cdot (\xi-\xi_*) \\
= \bigl[ (\cos\theta)k + (\sin\theta)\phi\bigr] \cdot 
\Bigl[ (\xi-\xi_*) + \frac{|v-v_*|}2 (A+A_*) \theta \phi
+ o ( \theta |v-v_*|)\Bigr] - k\cdot (\xi-\xi_*) \\
= - (1-\cos\theta)\, k\cdot (\xi-\xi_*) +\theta\phi\cdot (\xi-\xi_*)
+ \frac{|v-v_*|}2 \Bigl( \theta^2 \< (A+A_*)\phi,\phi\> +
\theta \< (A+A_*)\phi,k\>\Bigr) + o(\theta |v-v_*|)\\
= \frac{|v-v_*|}2 \theta \left( \<(A+A_*)\phi,k\> + 2 \left\< \phi, \frac{\xi-\xi_*}{|v-v_*|} \right\> \right) 
+ o(\theta |v-v_*|),
\end{multline}
combining this with $\sigma = k + O(\theta)$ it follows that
\[ \bigl[\sigma\cdot (\xi'-\xi'_*) - k\cdot (\xi-\xi_*)\bigr]\sigma
= \frac{|v-v_*|}2 \theta \left( \<(A+A_*)\phi,k\> + 2 \left\< \phi, \frac{\xi-\xi_*}{|v-v_*|} \right\> \right) k
+ o(\theta |v-v_*|).\]
But the left-hand side is also $\Pi_\sigma (\xi'-\xi'_*) - P_{k\sigma} \Pi_k(\xi-\xi_*)$.
Subtracting this expression from \eqref{xixixi1} one obtains
\begin{multline} \label{PsbPksxi}
\Pi_{\sigma^\bot} (\xi'-\xi'_*) - P_{k\sigma}\Pi_{k^\bot}(\xi-\xi_*) \\
= \frac{|v-v_*|}2 \theta
\left[ \left( (A+A_*) - 2 k\cdot \frac{(\xi-\xi_*)}{|v-v_*|} I \right) \phi 
- \<(A+A_*)\phi, k\> k\right] + o(\theta |v-v_*|).
\end{multline}
Since $\phi\cdot k=0$, 
\begin{multline} \label{Psxixi2} 
\Bigl| \Pi_{\sigma^\bot} (\xi'-\xi'_*) - P_{k\sigma}\Pi_{k^\bot}(\xi-\xi_*)\Bigr|^2 \\
= \frac{|v-v_*|^2\theta^2}4 
\left( \left| 
 \left( (A+A_*) - 2 k\cdot \frac{(\xi-\xi_*)}{|v-v_*|} I \right) \phi \right|^2 
+ \<(A+A_*)\phi, k\>^2 \right) + o(|v-v_*|\theta) .
\end{multline}
Let $C = (A+A_*)-2k\cdot (\xi-\xi_*)/|v-v_*|$. Note that $\<(A+A_*)\phi,k\> = \<C\phi,k\>$.
Applying Lemma \ref{lemav} to \eqref{Psxixi2},
\begin{align*} &  \frac1{|\S^{d-2}|} \int_{S^{d-2}_{k^\bot}}
\Bigl| \Pi_{\sigma^\bot} (\xi'-\xi'_*) - P_{k\sigma}\Pi_{k^\bot}(\xi-\xi_*)\Bigr|^2\,d\phi \\
& = \frac1{d-1} \| \Pi_{k^\bot}C\Pi_{k^\bot}\|_\HS^2 + \|\Pi_{k^\bot} (Ck)\|^2 + o(\theta^2 |v-v_*|^2)\\
& = \frac1{d-1} \|\Pi_{k^\bot} C\|_\HS^2\ +o(\theta^2 |v-v_*|^2) .
\end{align*}
From this the asymptotic behaviour of $\cII_1$ follows at once.

Handling $\cII_2$: just start from \eqref{Sigma}, note that in the AGC, $\sin^2\theta = (1+\cos\theta)(1-\cos\theta) \simeq 2(1-\cos\theta)$, and apply Lemma \ref{lemcurv}.

Now for $\cII_3$: Starting again from \eqref{sxxkxx},
\[ \Bigl[ \sigma\cdot (\xi'-\xi'_*) - k\cdot (\xi-\xi_*)\Bigr]^2
= \frac{|v-v_*|^2}{4}\theta^2 \left[ \left( (A+A_*)k + 2 \frac{\xi-\xi_*}{|v-v_*|}\right)\cdot\phi\right]^2
+ o(\theta^2 |v-v_*|^2),\]
so by Lemma \ref{lemav} again,
\begin{multline*} \frac1{|\S^{d-2}|}
\int_{\S^{d-2}_{k^\bot}} 
\bigl[\sigma\cdot (\xi'-\xi'_*)-k\cdot (\xi-\xi_*)\bigr]^2\,d\phi \\
= \frac{|v-v_*|^2\theta^2}{4(d-1)}
\left| \Pi_{k^\bot} \left( (A+A_*)k + 2\frac{\xi-\xi_*}{|v-v_*|}\right) \right|^2 + o(\theta^2 |v-v_*|^2),
\end{multline*}
and the asymptotics of $\cII_3$ follows.

Finally $\cIII$. Combine \eqref{sxxkxx} with 
\begin{align*} f'f'_* - ff_* & = \frac{|v-v_*|^2}2 \bigl(f_* \nabla f - f(\nabla f)_*\bigr) \cdot \theta\phi + o(|v-v_*| \theta)\\
& = \frac{|v-v_*|^2}2 (\nabla-\nabla_*) (ff_*) \cdot \theta\phi + o(|v-v_*| \theta),
\end{align*}
to get
\begin{align*}
& \frac1{|\S^{d-2}|} \int_{\S^{d-2}_{k^\bot}}
(f'f'_* - ff_*)\, \bigl[ k\cdot (\xi-\xi_*) - \sigma\cdot (\xi'-\xi'_*)\bigr]\,d\phi + o(|v-v_*|^2 \theta^2)\\
= & \frac{|v-v_*|^2}{4 |\S^{d-2}|} \int_{\S^{d-2}_{k^\bot}}
\bigl( (\nabla-\nabla_*)(ff_*) \cdot\phi \bigr) \left( \left[ (A+A_*)k + 2 \frac{(\xi-\xi_*)}{|v-v_*|} \right]\cdot\phi\right)\,d\phi + o(|v-v_*|^2 \theta^2)\\
= & \frac{|v-v_*|^2}{4 (d-1)}
\Pi_{k^\bot} \bigl( (\nabla-\nabla_*) (ff_*)\bigr) \cdot \Pi_{k^\bot} \left[ (A+A_*)k + 2 \frac{(\xi-\xi_*)}{|v-v_*|} \right] + o(|v-v_*|^2\theta^2)\\
= & \frac{|v-v_*|^2}{4 (d-1)} ff_*
\Pi_{k^\bot} \bigl( (\nabla-\nabla_*) (\log ff_*)\bigr) \cdot \Pi_{k^\bot} \left[ (A+A_*)k + 2 \frac{(\xi-\xi_*)}{|v-v_*|} \right] + o(|v-v_*|^2\theta^2).
\end{align*}

To conclude, it only remains to relate $\pa_{|z|}B$ and $\pa_{|z|}\Psi$ in the AGC. For this,
\begin{multline*} \lim_{AGC} \frac{|z|^2}{4(d-1)} \int_{\S^{d-1}}
\left(\derpar{B}{|z|}\right)\frac{\theta^2}{2}\,d\sigma
= \frac{|z|^2}{4(d-1)} \derpar{}{|z|} \left( \lim_{AGC} \int_{\S^{d-1}} B \frac{\theta^2}{2}\,d\sigma\right)\\
= |z|^2 \derpar{}{|z|} \left(\frac{\Psi(|z|)}{|z|^2}\right) 
= \Psi'(|z|) - 2 \frac{\Psi(|z|)}{|z|}.
\end{multline*}
\end{proof}

\bibnotes 

I prepared the conjunction of AGC and Fisher information monotonicity for the purpose of this course. Precursors are the study of the AGC {\em per se} \cite{AV:landau:04}, the relation between Boltzmann and Landau dissipation functionals \cite{TV:entropy:99,vill:cer:03}, and of course the Guillen--Silvestre theorem \cite{GS:landaufisher}.

\section{Reduction to the sphere} \label{seccrit}

Section \ref{secGamma} has revealed how to compute $I'(f)\,Q(f,f)$. In this section we shall see that the ``bad'' term is controlled by the ``good'' terms as soon as a certain functional inequality holds for even functions on the sphere. I shall start with an intermediate result.

\begin{Prop} \label{propcriterion} If, for all probability distributions $f$ on $\R^d$,
\begeq\label{criterionC}
\iiint \frac{(f'f'_*-ff_*)^2}{ff_*+f'f'_*}\,\frac1{B} \left(\derpar{B}{|z|}\right)^2\,d\sigma\,dv\,dv_*
\leq \frac12 \iiint ff_* \Bigl | \Pi_{\sigma^\bot} (\xi'-\xi'_*) - \Pi_{k^\bot}(\xi-\xi_*) \Bigr|_{k,\sigma}^2\,B\,d\sigma\,dv\,dv_*,
\endeq
then $I$ is nonincreasing along solutions of the spatially homogeneous Boltzmann equation with kernel $B$.
\end{Prop}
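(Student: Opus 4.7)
\textbf{Proof proposal for Proposition \ref{propcriterion}.}

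The plan is to start from the decomposition
\[ -I'(f)\cdot Q(f,f) = \cI + \cII_1 + \cII_2 + \cII_3 + \cIII \]
of Section~\ref{secqualit} (which combines Theorem~\ref{thmGamma} with Proposition~\ref{propdoubling}). Since $\cI,\cII_1,\cII_2,\cII_3\geq 0$, only $\cIII$ is unsigned; the goal is to show $|\cIII|\leq \cII_1+\cII_2+\cII_3$ under the hypothesis, which yields $-I'(f)\cdot Q(f,f)\geq \cI\geq 0$, and hence $\tfrac{d}{dt}I(f)\leq 0$ since $\partial_t f = Q(f,f)$ along the spatially homogeneous flow.

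First I would recast the right-hand side of \eqref{criterionC} in terms of $\cII_1$ and $\cII_2$. With $x=\Pi_{k^\bot}(\xi-\xi_*)$ and $y=\Pi_{\sigma^\bot}(\xi'-\xi'_*)$, the second identity of Proposition~\ref{propPsk}(x) gives
\[ |y-x|_{k,\sigma}^2 = \bigl(|x|^2-|P_{k\sigma}x|^2\bigr) + |y-P_{k\sigma}x|^2, \]
and the third identity of Proposition~\ref{propPsk}(vi) ensures $|x|^2-|P_{k\sigma}x|^2 = |\xi-\xi_*|^2-|P_{k\sigma}(\xi-\xi_*)|^2$. Multiplying by $\tfrac12 ff_* B$, integrating, and comparing with \eqref{cII1}--\eqref{cII2}, one sees that the right-hand side of \eqref{criterionC} is exactly $2\cII_1+2\cII_2$.

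Next I would estimate $\cIII$ using its symmetric form \eqref{c3} and Cauchy--Schwarz with the weight $\sqrt{(ff_*+f'f'_*)B}$:
\[ |\cIII| \leq \tfrac12\,\Lambda(f)^{1/2}\left(\iiint (ff_*+f'f'_*)\bigl[k\cdot(\xi-\xi_*)-\sigma\cdot(\xi'-\xi'_*)\bigr]^2\,B\,d\sigma\,dv\,dv_*\right)^{1/2}, \]
where $\Lambda(f)$ denotes the left-hand side of \eqref{criterionC}. The pre/post-collisional symmetry $(v,v_*,\sigma)\leftrightarrow (v',v'_*,k)$ preserves $B$ (since $|v-v_*|=|v'-v'_*|$ and $k\cdot\sigma$ is intrinsic to the pair) and simply swaps the two terms inside the bracket, whose square is unaffected; hence by \eqref{cII3} the integral in parenthesis equals $8\cII_3$. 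The arithmetic--geometric mean inequality then gives $|\cIII|\leq \sqrt{2\Lambda(f)\cII_3}\leq \tfrac12\Lambda(f)+\cII_3$, and the hypothesis $\Lambda(f)\leq 2(\cII_1+\cII_2)$ from the previous step produces $|\cIII|\leq \cII_1+\cII_2+\cII_3=\cII$, as required.

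The genuinely substantive point is the algebraic identification of the first step: the apparently mysterious shape of \eqref{criterionC}, in particular the weight $ff_* + f'f'_*$ in the denominator, is dictated by the requirement that the Cauchy--Schwarz/AM--GM split in the second step be saturated at the $\tfrac12$-$\tfrac12$ balance, so that the two resulting numerical constants match $\cII_1+\cII_2$ and $\cII_3$ simultaneously with no slack. Everything else is a mechanical calculation.
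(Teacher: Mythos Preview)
Your proof is correct and follows essentially the same route as the paper's: Cauchy--Schwarz on $\cIII$ with the weight $\sqrt{(ff_*+f'f'_*)B}$, identification of the resulting quadratic factor as $8\cII_3$ via the pre/post-collisional change of variables, AM--GM to split into $\cII_3+\tfrac12\Lambda(f)$, and finally the algebraic identity (from Proposition~\ref{propPsk}(vi) and (x)) showing that the right-hand side of \eqref{criterionC} equals $2(\cII_1+\cII_2)$. The paper performs these same steps in the opposite order (Cauchy--Schwarz first, then invokes the hypothesis, then does the algebraic identification) and cites the penultimate identity of (x) rather than the second, but these choices collapse to the same computation once one notes that $\Pi_{k^\bot}x=x$ and $\Pi_{\sigma^\bot}y=y$ for your particular $x,y$.
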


\begin{proof}[Proof of Proposition \ref{propcriterion}]
Start again from the results of Sections \ref{secGamma} and \ref{secqualit}; the issue is to control $\cIII$ in \eqref{c3} by $\cI+\cII$ in \eqref{c1}--\eqref{c2}. It turns out that $\cII$ will suffice. (I do not know whether a more general result can be obtained by exploiting \cI.) By Cauchy--Schwarz,
\begin{align*}
 |\cIII | & \leq \frac12
\left( \iiint (f'f'_*+ff_*) \bigl[ \sigma\cdot (\xi'-\xi'_*) - k\cdot (\xi-\xi_*)\bigr]^2\, B\,d\sigma\,dv\,dv_*\right)^{1/2}\\
& \qquad\qquad\qquad\qquad\qquad\qquad
\left(\iiint \frac{(f'f'_*-ff_*)^2}{ff_* + f'f'_*}\, \frac1{B} \left(\derpar{B}{|z|}\right)^2\,d\sigma\,dv\,dv_*\right)^{1/2}\\
& = \frac12 (8 \cII_3)^{1/2} 
\left(\iiint \frac{(f'f'_*-ff_*)^2}{ff_* + f'f'_*}\, \frac1{B} \left(\derpar{B}{|z|}\right)^2\,d\sigma\,dv\,dv_*\right)^{1/2}\\
& \leq \cII_3 + \frac12 \iiint \frac{(f'f'_*-ff_*)^2}{ff_* + f'f'_*}\, \frac1{B} \left(\derpar{B}{|z|}\right)^2\,d\sigma\,dv\,dv_*.
\end{align*}
By applying \eqref{criterionC},
\[ |\cIII | \leq \cII_3 + \frac14  \iiint ff_* \Bigl | \Pi_{\sigma^\bot} (\xi'-\xi'_*) - \Pi_{k^\bot}(\xi-\xi_*) \Bigr|_{k,\sigma}^2\,B\,d\sigma\,dv\,dv_*.\] 
By the but-to-last formula of Proposition \ref{propPsk}(x) and the last bit of Proposition \ref{propPsk}(vi), 
\[ |\Pi_{\sigma^\bot}(\xi'-\xi'_*) - \Pi_{k^\bot}(\xi-\xi_*)|_{k,\sigma}^2 = 
|\Pi_{\sigma^\bot}(\xi'-\xi'_*) - P_{k\sigma}\Pi_{k^\bot} (\xi-\xi_*)|^2
+ \bigl( |\xi-\xi_*|^2 - |P_{k\sigma}(\xi-\xi_*)|^2\bigr). \]
Thus the latter integral coincides with $\cII_1+\cII_2$. Thus $|\cIII|\leq \cII$, so $\Gamma_{I,{\cal B}}\geq 0$ and the proof is complete.
\end{proof}

To go from \eqref{criterionC} to a more reduced integral criterion on the sphere, let us perform a change of variables
\[ (v,v_*,\sigma) \longrightarrow \left(\frac{v+v_*}{2}, |v-v_*|, \frac{v-v_*}{|v-v_*|},\sigma\right)
= (c,r,k,\sigma).\]
Then
\[ dv\,dv_*\,d\sigma = | \S^{d-1}| \,r^{d-1} \,dc\,dr\,dk\,d\sigma.\]
For each $c,r$ write
\[ F_{c,r}(\sigma) = f\left( c + \frac{r}{2} \sigma\right) f\left(c - \frac{r}{2} \sigma \right),\]
note that $F_{c,r}$ is an even function of $\sigma$, in the sense that $F_{c,r}(-\sigma) = F_{c,r}(\sigma)$. Then the left hand side in \eqref{criterionC} is
\begeq\label{critClhs} |\S^{d-1}|
\iiiint \frac{[F_{c,r}(\sigma)-F_{c,r}(k)]^2}{F_{c,r}(k) + F_{c,r}(\sigma)}\,
\frac1{B} \left(\derpar{B}{|z|}\right)^2(r,k\cdot\sigma)\, r^{d-1}\,dc\,dr\,dk\,d\sigma.
\endeq

On the other hand, treating $\sigma$ as an independent variable in $\R^d$,
\[ \nabla_\sigma \log F_{c,r}(\sigma) = \frac{r}{2} \left[ (\nabla\log f) \left(c + \frac{r}{2}\sigma\right)
- (\nabla\log f) \left(c-\frac{r}{2}\sigma\right) \right] = - \frac{r}{2} (\xi'-\xi'_*).\]
So the integral in the right-hand side of \eqref{criterionC} is
\begeq\label{critCrhs}
\frac{|\S^{d-1}|}{2} \iiiint F_{c,r}(k) \frac{4}{r^2}
\Bigl| \Pi_{\sigma^\bot}\nabla\log F_{c,r}(\sigma) - \Pi_{k^\bot} \nabla\log F_{c,r}(k)\Bigr|^2_{k,\sigma}\,
B(r,k\cdot\sigma)\,r^{d-1}\,dc\,dr\,dk\,d\sigma.
\endeq

Requiring the integrand in \eqref{critClhs} to be bounded above by the integrand in \eqref{critCrhs} {\em for all $c,r$}, we are left with the following criterion: For all $r,c$,
\begin{multline*}
\iint_{\S^{d-1}\times\S^{d-1}} \frac{[F(c,r)(\sigma)-F_{c,r}(k)]^2}{F_{c,r}(k)+F_{c,r}(\sigma)}\, \frac1{B} \left(\derpar{B}{|z|}\right)^2 (r,k\cdot\sigma)\,dk\,d\sigma \\
\leq 
\frac{2}{r^2}
\iint F_{c,r}(k) \Bigl| \Pi_{\sigma^\bot}\nabla\log F_{c,r}(\sigma) - \Pi_{k^\bot} \nabla\log F_{c,r}(k)\Bigr|^2_{k,\sigma}\,
B(r,k\cdot\sigma)\,r^{d-1}\,dk\,d\sigma.
\end{multline*}
The integral in the right-hand side involves the gradient $\nabla F(\sigma)$ only via its projection on the tangent space $\sigma^\bot$; so we might as well assume that $F_{c,r}$ is defined intrinsically on the sphere. The inequality is also translation-invariant in $c$. Eventually we arrive at the sufficient compact condition: For all even functions $F:\S^{d-1}\to\R_+$,
\begin{multline*} \iint_{\S^{d-1}\times\S^{d-1}} \frac{[F(\sigma)-F(k)]^2}{F(k)+F(\sigma)}\, \frac1{B} \left(\derpar{B}{|z|}\right)^2 (r,k\cdot\sigma)\,dk\,d\sigma \\
\leq \frac2{r^2}
\iint_{\S^{d-1}\times\S^{d-1}} F(k) \Bigl| \nabla\log F(\sigma) - \nabla\log F(k)\Bigr|^2_{k,\sigma}\,
B(r,k\cdot\sigma)\,r^{d-1}\,dk\,d\sigma.
\end{multline*}
To turn this into a more tractable criterion, we may use two simplifications: 

\bul Assume that $|\pa_{|z|}B(r,k\cdot\sigma)| \leq \ov{\gamma}(r) B(r,k\cdot\sigma)/r$; then $\ov{\gamma}(r)$ is something like the maximum singularity/growth admissible in $r$; and of course, we may simplify even further by taking $\ov{\gamma}$ to be constant;

\bul Replace the nonlinearity $(b-a)^2/(a+b)$ by $(\sqrt{b}-\sqrt{a})^2$, in view of
\[ \frac{(F(\sigma) - F(k))^2}{F(k)+F(\sigma)} \leq 2 \bigl( \sqrt{F(\sigma)}- \sqrt{F(k)} \bigr)^2.\]

Let us summarise all the above discussion in the next

\begin{Thm} \label{thmcriterion}
Let $B=B(|z|,\cos\theta)$ be a collision kernel. Let
\begeq\label{ovgammaB} 
\ov{\gamma}(B,r) = \sup_{-1\leq\cos\theta\leq 1} \left( \frac{|z|}{B}\, \derpar{B}{|z|}\right) (r,\cos\theta),
\qquad \ov{\gamma}(B) = \sup_{r\geq 0} \ov{\gamma}(B,r)
\endeq
Let us write
\[ \beta_r(\cos\theta) = B(r,\cos\theta),\]
and say that $\beta:[-1,1]\to\R_+$ is a section of $B$ if there is $r\geq 0$ such that $\beta(\cos\theta) = B(r,\cos\theta)$ for all $\theta$.
Then a sufficient condition for $I$ to be nonincreasing along solutions of the Boltzmann equation with kernel $B$ is the following: For all even $F:\S^{d-1}\to\R_+$, for all $r\geq 0$,
\begin{multline}\label{critfinal1}
\iint_{\S^{d-1}\times\S^{d-1}}
\frac{(F(\sigma)-F(k))^2}{F(k)+F(\sigma)}\,\beta_r(k\cdot\sigma)\,dk\,d\sigma\\
\leq \frac{2}{\ov{\gamma}(B,r)^2} \iint_{\S^{d-1}\times\S^{d-1}}
F(k) \,\Bigl| \nabla\log F(\sigma) - \nabla \log F(k)\Bigr|_{k,\sigma}^2\,\beta_r(k\cdot\sigma)\,dk\,d\sigma.
\end{multline}
This is true in particular if:
For all even $F:\S^{d-1}\to\R_+$, for all $r\geq 0$,
\begin{multline}\label{critfinal2}
\iint_{\S^{d-1}\times\S^{d-1}}
\bigl(\sqrt{F}(\sigma) - \sqrt{F}(k) \bigr)^2 \,\beta_r(k\cdot\sigma)\,dk\,d\sigma\\
\leq \frac{1}{\ov{\gamma}(B,r)^2} \iint_{\S^{d-1}\times\S^{d-1}}
F(k)\, \Bigl| \nabla\log F(\sigma) - \nabla \log F(k)\Bigr|_{k,\sigma}^2\,\beta_r(k\cdot\sigma)\,dk\,d\sigma.
\end{multline}
The latter is true in particular if: For all even $F:\S^{d-1}\to\R_+$, for all sections $\beta$ of $B$,
\begin{multline}\label{critfinal3}
\iint_{\S^{d-1}\times\S^{d-1}}
\bigl(\sqrt{F}(\sigma) - \sqrt{F}(k) \bigr)^2 \,\beta (k\cdot\sigma)\,dk\,d\sigma\\
\leq \frac{1}{\ov{\gamma}(B)^2} \iint_{\S^{d-1}\times\S^{d-1}}
F(k)\, \Bigl| \nabla\log F(\sigma) - \nabla \log F(k)\Bigr|_{k,\sigma}^2\,\beta(k\cdot\sigma)\,dk\,d\sigma.
\end{multline}
\end{Thm}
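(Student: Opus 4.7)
The plan is to deduce Theorem \ref{thmcriterion} from Proposition \ref{propcriterion} by a change of variables followed by two elementary pointwise inequalities; no further structural identity beyond what Section \ref{secGamma} gave is needed. First I would switch to the ``collision coordinates'' $(c,r,k) = ((v+v_*)/2,\,|v-v_*|,\,(v-v_*)/|v-v_*|)$, for which $dv\,dv_*\,d\sigma = |\S^{d-1}|\,r^{d-1}\,dc\,dr\,dk\,d\sigma$. Setting
\[ F_{c,r}(\sigma) \;=\; f\!\left(c+\tfrac{r}{2}\sigma\right) f\!\left(c-\tfrac{r}{2}\sigma\right), \]
one checks that $F_{c,r}$ is an even function of $\sigma\in\S^{d-1}$, that $ff_* = F_{c,r}(k)$ and $f'f'_* = F_{c,r}(\sigma)$, and that the chain rule gives $\nabla_\sigma \log F_{c,r}(\sigma) = -\tfrac{r}{2}(\xi'-\xi'_*)$, together with the analogous formula at $\sigma=k$ yielding $-\tfrac{r}{2}(\xi-\xi_*)$.

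Next I would rewrite both sides of \eqref{criterionC} in these variables. The left-hand side acquires a factor $(1/B)(\pa_{|z|}B)^2$ acting on $(F_{c,r}(\sigma)-F_{c,r}(k))^2/(F_{c,r}(k)+F_{c,r}(\sigma))$, while the right-hand side, up to the factor $4/r^2$ coming from the gradient computation (to be absorbed with $r^{d-1}$), becomes the integral against $B$ of $F_{c,r}(k)\,|\Pi_{\sigma^\bot}\nabla \log F_{c,r}(\sigma)-\Pi_{k^\bot}\nabla\log F_{c,r}(k)|_{k,\sigma}^2$. Since only the tangential components of the gradients appear, I may regard $F_{c,r}$ intrinsically as a function on $\S^{d-1}$. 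A sufficient condition to guarantee the required integrated inequality for every admissible $f$ is then to impose the spherical inequality pointwise in $(c,r)$; translation invariance eliminates $c$, and evenness is inherited from $F_{c,r}$. Bounding $|\pa_{|z|}B(r,\cos\theta)| \leq \ov{\gamma}(B,r)\,B/r$ via the definition \eqref{ovgammaB} turns the coefficient $(\pa_{|z|}B)^2/B$ into $\ov{\gamma}(B,r)^2 B/r^2$, which after cancellations produces exactly the prefactor $2/\ov{\gamma}(B,r)^2$ in \eqref{critfinal1}.

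The last two implications are elementary. The pointwise bound $(b-a)^2/(a+b) \leq 2(\sqrt{b}-\sqrt{a})^2$, obtained by factoring $b-a=(\sqrt{b}-\sqrt{a})(\sqrt{b}+\sqrt{a})$ and using $(\sqrt{b}+\sqrt{a})^2 \leq 2(a+b)$, weakens \eqref{critfinal1} to \eqref{critfinal2}. Taking the supremum $\ov{\gamma}(B) = \sup_r \ov{\gamma}(B,r)$ and noting that each $\beta_r = B(r,\cdot)$ ranges over the sections of $B$ finally gives \eqref{critfinal3}. The main subtlety (rather than a true obstacle) is the passage from the integrated criterion \eqref{criterionC} in Proposition \ref{propcriterion} to a pointwise-in-$(c,r)$ criterion: this step looks wasteful but is not, because by choosing $f$ to concentrate near two narrow clouds around $c\pm r\sigma/2$, the induced profile $F_{c,r}$ on the corresponding collision sphere can be made to approximate essentially any even nonnegative function; hence the pointwise spherical condition is in fact close to being necessary once a uniform-in-$r$ guarantee is wanted.
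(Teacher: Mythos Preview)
Your proposal is correct and follows essentially the same route as the paper: starting from Proposition~\ref{propcriterion}, passing to collision coordinates $(c,r,k,\sigma)$, introducing the even spherical profile $F_{c,r}$, using the chain rule to express the right-hand side in terms of the intrinsic spherical gradient, imposing the inequality pointwise in $(c,r)$, bounding $(\pa_{|z|}B)^2/B$ by $\ov{\gamma}(B,r)^2 B/r^2$, and then invoking $(b-a)^2/(a+b)\leq 2(\sqrt{b}-\sqrt{a})^2$ and the supremum over $r$. Your closing remark on near-necessity of the pointwise criterion is a nice heuristic addition not present in the paper's argument.
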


\begin{Rk} Inequalities \eqref{critfinal1}--\eqref{critfinal3} look familiar -- control of an integral difference by an integral gradient, like in Poincar\'e or log Sobolev -- and at the same time weird -- a nonlocal nonlinearity with gradients is unusual, and we have a coupling by $\beta$ instead of a reference measure. Most importantly and possibly most disturbingly, the {\em value} of the optimal constant in these inequalities will determine the range of exponents covered by the theorem.
\end{Rk}

In the sequel I will focus on the simplest inequality \eqref{critfinal3}, and the question is to find conditions under which it may hold, or not. But first of all, in view of the turn taken by the discussion, it will be good to devote some time to equations on the sphere.

\bibnotes

Theorem \ref{thmcriterion} is proven by Imbert, Silvestre and myself \cite{ISV:fisher}.

\section{Games of Boltzmann and Laplace on the sphere} \label{secBLsphere}

Let $\beta=\beta(\cos\theta)\geq 0$ be an angular collision kernel, satisfying the minimum integrability condition 
\begeq\label{minintb}
\mu_\beta = \int_{\S^{d-1}} (1-k\cdot\sigma)\, \beta(k\cdot\sigma)\,d\sigma <\infty.
\endeq
For $f:\S^{d-1}\to\R$, define
\begeq\label{Lbeta}
L_\beta f(k) = \int_{\S^{d-1}} [f(\sigma)-f(k)]\,\beta(k\cdot\sigma)\,d\sigma. 
\endeq
This is a linear Boltzmann operator on $L^1(\S^{d-1})$, a kind of diffusion on the sphere. It is actually the same object that I denoted by ${\cal B}$ in Section \ref{sectens} but let us keep notation different since the latter was acting on functions of $(v,v_*)$ and the current one on functions of just one variable. The operator $L_\beta$ acts on all $L^p(\S^{d-1})$, in particular on $L^2$. Thanks to the exchange $k\cdot\sigma$,
\begin{align*}
\<L_\beta f, g\>_{L^2(\S^{d-1})}
& = \iint [f(\sigma) -f(k)]\, g(k)\,\beta(k\cdot\sigma)\,d\sigma\,dk\\
& = -\frac12 \iint [f(\sigma)-f(k)]\, [g(\sigma)-g(k) ]\,\beta(k\cdot\sigma)\,d\sigma\,dk.
\end{align*}
So $L_\beta$ is self-adjoint and nonpositive,
\begeq\label{Lbetasa}
- \< L_\beta f, f\>_{L^2} = \frac12 \iint_{\S^{d-1}\times\S^{d-1}} [f(\sigma)-f(k)]^2\,\beta(k\cdot\sigma)\,dk\,d\sigma.
\endeq
Further, along $e^{tL_\beta}$ all convex integrals decay: if $C$ is any differentiable convex function, bounded below by a linear function, then ${\cal C}(f) = \int C(f)\,d\sigma$ satisfies
\begin{align*} {\cal C}(f)\cdot L_\beta f & = \iint_{\S^{d-1}\times \S^{d-1}} C'(f(k)) [ f(\sigma) - f(k)]\,\beta(k\cdot\sigma) \, dk\,d\sigma \\
& = -\frac12 \iint_{\S^{d-1}\times \S^{d-1}} \Bigl( C'(f(\sigma)) - C'(f(k))\Bigr)  \bigl( f(\sigma) - f(k) \bigr) \leq 0.
\end{align*}

Of course the most famous dissipation on the sphere is the heat equation, whose generator, the Laplace--Beltrami operator $\Delta$, may be introduced in several ways. For instance, $\Delta f$ at $x$ is the usual Laplacian of $f(x/|x|)$, evaluated at $x$. Or $\Delta = \sum_{ij} (x_i\pa_j - x_j\pa_i)^2/2$ (an average of ``pure'' second derivatives along rotations in the planes $(i,j)$, thus yielding $\Delta$ in any dimension from $\Delta$ in dimension~1). In any case, the explicit expression in coordinates is
\begin{align*}
\Delta f & = \sum_{ij} (\delta_{ij}-x_i x_j) \derpar{{}^2f}{x_i\,\pa x_j} - (d-1)\, x\cdot\nabla f \\
& = \tr (\Pi_{x^\bot} \nabla^2 f) - (d-1)\, x\cdot\nabla f,
\end{align*}
where $\nabla^2$ and $\nabla$ are usual differential operators in $\R^d$, and $f$ is extended outside $\S^{d-1}$ in any way. Then $\Delta$ is also self-adjoint and
\[ - \int_{\S^{d-1}} (\Delta f) g\,d\sigma = \int_{\S^{d-1}} \nabla f\cdot \nabla g\,d\sigma. \]
In the sequel, $\nabla$ and $\nabla^2$ will be the intrinsic gradient and Hessian on the sphere: when $f$ is 0-homogeneous ($f=f(x/|x|)$) they coincide with the usual expressions in $\R^d$; when defined in an intrinsic way, $\nabla f(\sigma)$ is always orthogonal to $\sigma$ and $\nabla^2 f$ is a symmetric operator on $\sigma^\bot$.

The usual $\Gamma$ procedure yields
\begin{Prop} \label{propGammaDelta}
If $\Delta$ is the Laplace operator on $\S^{d-1}$ then \eqref{Gammaf} and \eqref{Gamma2f} reduce to the following expressions, for any $f:\S^{d-1}\to\R$:
\[ \Gamma(f)=\Gamma(f,f) = |\nabla f|^2,\]
which will be denoted also $\Gamma_1(f)$ or $\Gamma_1(f,f)$; and
\[ \Gamma_2(f)=\Gamma_2 (f,f) = \Delta \frac{|\nabla f|^2}{2} - \nabla f \cdot\nabla \Delta f 
= \|\nabla^2 f\|_\HS^2 + (d-2) |\nabla f|^2\]
(a particular case of Bochner's formula).
\end{Prop}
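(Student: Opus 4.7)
The plan is to verify the two identities by first computing $\Gamma_1$ directly from the Leibniz rule, and then reading $\Gamma_2$ off the classical Bochner--Weitzenböck formula, with the Ricci contribution on $\S^{d-1}$ supplying the constant $d-2$.

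\textbf{Identity for $\Gamma_1$.} Starting from \eqref{Gammaf}, the Leibniz rule for the Hessian on any Riemannian manifold gives $\nabla^2(f^2) = 2f\,\nabla^2 f + 2\,\nabla f \otimes \nabla f$; taking traces, $\Delta(f^2) = 2 f\,\Delta f + 2|\nabla f|^2$. Substitution into $\Gamma_1(f,f) = \tfrac{1}{2}(\Delta f^2 - 2 f \Delta f)$ yields $\Gamma_1(f,f) = |\nabla f|^2$, which is the first claim. Polarising gives $\Gamma_1(f,g) = \nabla f \cdot \nabla g$, which I will need in the next step.

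\textbf{First form of $\Gamma_2$.} Inserting $\Gamma_1(f,g)=\nabla f\cdot\nabla g$ into the definition \eqref{Gamma2f} with $L=\Delta$ yields
$$\Gamma_2(f,f) = \tfrac{1}{2}\bigl(\Delta |\nabla f|^2 - 2\,\nabla f \cdot \nabla \Delta f\bigr) = \Delta \tfrac{|\nabla f|^2}{2} - \nabla f \cdot \nabla \Delta f,$$
which is the first of the two announced expressions for $\Gamma_2$.

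\textbf{Second form of $\Gamma_2$ via Bochner.} On any Riemannian manifold $(M,g)$ the Bochner--Weitzenböck identity reads
$$\tfrac{1}{2}\Delta |\nabla f|^2 = \|\nabla^2 f\|_{\HS}^2 + \nabla f \cdot \nabla \Delta f + \Ric(\nabla f, \nabla f).$$
Subtracting $\nabla f \cdot \nabla \Delta f$ gives $\Gamma_2(f,f) = \|\nabla^2 f\|_\HS^2 + \Ric(\nabla f,\nabla f)$. For $M=\S^{d-1}$, the sectional curvature is identically $1$, so the Ricci tensor equals $(d-2)\,g$ and thus $\Ric(\nabla f, \nabla f) = (d-2)|\nabla f|^2$. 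This produces the formula $\Gamma_2(f,f) = \|\nabla^2 f\|_\HS^2 + (d-2)|\nabla f|^2$.

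\textbf{Main obstacle.} The only genuinely non-algebraic input is the Bochner formula together with the fact that $\Ric_{\S^{d-1}} = (d-2)\,g$. If one prefers to avoid invoking Riemannian geometry as a black box, an alternative is to work from the coordinate expression $\Delta f = \tr(\Pi_{x^\bot}\nabla^2 f) - (d-1)\,x\cdot\nabla f$ given just above the proposition: differentiate $|\nabla f|^2/2$ twice in $\R^d$, restrict to $\S^{d-1}$, and collect terms. The constant $d-2$ then appears as $(d-1) - 1$ from the interplay between the drift coefficient $d-1$ in $\Delta$ and the first-order error terms arising from commuting $\nabla$ and the tangential projection. This road is longer but strictly elementary.
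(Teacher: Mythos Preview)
Your proof is correct and follows exactly the route the paper indicates: the paper does not give a detailed proof of this proposition but simply flags it as ``a particular case of Bochner's formula,'' and you have spelled out precisely that argument---Leibniz rule for $\Gamma_1$, then Bochner--Weitzenb\"ock with $\Ric_{\S^{d-1}}=(d-2)g$ for $\Gamma_2$. Nothing to add.
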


Note carefully, the expressions above only apply to a function defined on $\S^{d-1}$; if $f$ is defined on, say, $\R^d$ then for instance $\nabla f(\sigma)$ should be replaced by $\Pi_{\sigma^\bot} \nabla f(\sigma)$, the orthogonal component of the gradient. 

It turns out that $\Delta$ is part of the Boltzmann family on the sphere; not surprisingly, its limit in the asymptotics of grazing collisions. Let us convene that a sequence $(\beta_n)_{n\in\N}$ concentrates on grazing collisions if $\mu_n = \int (1-k\cdot\sigma) \beta_n(k\cdot\sigma)\,d\sigma$ has a finite limit $\mu>0$, while for any $\theta_0>0$, $\int 1_{k\cdot\sigma \leq \cos\theta_0} \beta_n(k\cdot\sigma)\,d\sigma \longrightarrow 0$. (Here $k$ is any fixed unit vector.) Then

\begin{Prop}\label{propBD}
(i) For any two kernels $\beta_0$ and $\beta_1$, $[L_{\beta_0}, L_{\beta_1}]=0$;

(ii) In the AGC, if $\int (1-k\cdot\sigma) \beta(k\cdot\sigma) \longrightarrow \mu >0$, then
\[ L_\beta \longrightarrow \left(\frac{\mu}{d-1}\right)\Delta;\]

(iii) For any kernel $\beta$, $[L_\beta,\Delta]=0$.
\end{Prop}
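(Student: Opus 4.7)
For (i), my plan is to exploit rotation invariance. Since $\beta(k\cdot\sigma)$ depends only on the inner product, the change of variables $\sigma\mapsto R\sigma$ shows that $L_\beta$ commutes with the natural action of $SO(d)$ on $L^2(\S^{d-1})$. The spherical-harmonic decomposition $L^2(\S^{d-1})=\bigoplus_{\ell\geq 0}\mathcal{H}_\ell$ is precisely the decomposition into irreducible $SO(d)$-representations, so by Schur's lemma $L_\beta$ acts on each $\mathcal{H}_\ell$ by a scalar $\lambda_\ell(\beta)$ (which could be made explicit by the Funk--Hecke formula, though we do not need the value). Two operators simultaneously diagonal in a common basis commute, giving (i); in fact the whole family $\{L_\beta\}_\beta$ is a commutative algebra.

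For (ii), I would do a second-order Taylor expansion controlled by Lemma~\ref{lemav}. Fix $k\in\S^{d-1}$ and a smooth $f$, and extend $f$ as a $0$-homogeneous function on $\R^d\setminus\{0\}$, so that $k\cdot\nabla f(k)=0$ and $\Delta f(k)=\tr(\Pi_{k^\bot}\nabla^2 f(k))$ by the identity displayed at the beginning of Section~\ref{secBLsphere}. Parametrise $\sigma=(\cos\theta)k+(\sin\theta)\phi$ with $\phi\in\S^{d-2}_{k^\bot}$ and $d\sigma=\sin^{d-2}\theta\,d\theta\,d\phi$. Since $k\cdot\nabla f=0$ at $k$, Taylor expansion gives
\[
f(\sigma)-f(k)=\sin\theta\,(\phi\cdot\nabla f)+\frac{\sin^2\theta}{2}\bigl\langle\nabla^2 f\,\phi,\phi\bigr\rangle+O(\theta^3).
\]
Integration over $\phi$ kills the first term by the symmetry $\phi\mapsto-\phi$, while Lemma~\ref{lemav} turns the second into $|\S^{d-2}|\sin^2\theta\,\Delta f(k)/(2(d-1))$. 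Writing $\sin^2\theta/2=(1-\cos\theta)(1+\cos\theta)/2$ and using the AGC hypothesis $|\S^{d-2}|\int_0^\pi(1-\cos\theta)\beta\sin^{d-2}\theta\,d\theta\to\mu$, the prefactor tends to $\mu/(d-1)$; the cubic remainder is dominated by $\theta\cdot(1-\cos\theta)\beta\sin^{d-2}\theta$ and vanishes in the AGC by a standard $\theta_0$-splitting (mass concentrates near $\theta=0$, and on $\theta\geq\theta_0$ the total integral of $\beta\sin^{d-2}\theta$ tends to $0$). This delivers $L_\beta f(k)\to(\mu/(d-1))\Delta f(k)$.

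For (iii), the fastest route is again Schur: $\Delta$ is also $SO(d)$-invariant and acts on $\mathcal{H}_\ell$ as the scalar $-\ell(\ell+d-2)$, so $\Delta$ and every $L_\beta$ share the spherical-harmonic eigenbasis and commute. Alternatively, (iii) follows directly from (i) and (ii): for any sequence $(\beta_n)$ concentrating on grazing collisions with $\mu_n\to\mu>0$, each $L_{\beta_n}$ commutes with $L_\beta$ by (i), and passing to the AGC limit using (ii) on the dense class of spherical harmonics yields $[L_\beta,\Delta]=0$. The only real piece of work is step (ii); its only subtleties are uniform control of the cubic remainder in the AGC limit and checking $\tr(\Pi_{k^\bot}\nabla^2 f)=\Delta_{\S^{d-1}}f$ for the $0$-homogeneous extension, both of which are routine.
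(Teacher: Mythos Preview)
Your argument is correct. For (ii) you do essentially what the paper does: Taylor expand $f(\sigma)-f(k)$ to second order, kill the odd-in-$\phi$ term by symmetry, and convert $\langle\nabla^2 f\,\phi,\phi\rangle$ into $\tr(\Pi_{k^\bot}\nabla^2 f)/(d-1)$ via Lemma~\ref{lemav}. The only cosmetic difference is that you fix the $0$-homogeneous extension up front so that $k\cdot\nabla f(k)=0$ and the Laplace--Beltrami formula reduces to $\Delta f=\tr(\Pi_{k^\bot}\nabla^2 f)$; the paper instead keeps a general extension and carries the term $-k\cdot\nabla f(k)\,\mu$ through, matching it against the $-(d-1)x\cdot\nabla f$ piece of $\Delta$. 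Both are fine.

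For (i) you take a genuinely different route. The paper proves commutation by brute force: it expands $L_{\beta_1}L_{\beta_0}f(\ell)$ into four triple integrals over $(k,\sigma)\in\S^{d-1}\times\S^{d-1}$ and, for each, exhibits an explicit isometry of the sphere (the ``Bobylev trick'': a reflection swapping two of the three points $\ell,k,\sigma$) that exchanges the roles of $\beta_0$ and $\beta_1$. Your Schur argument is more conceptual and shorter: once you observe that $L_\beta$ commutes with the $SO(d)$-action, irreducibility of the $\mathcal{H}_\ell$ forces $L_\beta$ to be scalar on each, and commutation is immediate. The paper's computation has the advantage of being completely elementary and self-contained (no representation theory invoked), and it makes the mechanism visible at the level of integrals; your approach has the advantage of explaining \emph{why} the commutation holds and of yielding the spectral decomposition of $L_\beta$ for free---which the paper in fact establishes separately in the next section (Theorem~\ref{thmspectrum}), and whose bibliographical notes explicitly acknowledge the representation-theoretic viewpoint you are using. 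For (iii) both you and the paper pass to the limit from (i) via (ii); your alternative direct Schur argument is also fine.
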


\begin{proof}[Proof of Proposition \ref{propBD}]
We start with (ii), which follows the same pattern as the AGC for Boltzmann, using $k\cdot\sigma=\cos\theta$, and the identities \eqref{37bis} and \eqref{37}. As $\theta\simeq 0$,
\begin{multline*}
f(\sigma) - f(k) \simeq (\sigma-k) \cdot \nabla f(k) 
+ \frac12 \nabla^2 f(k)\cdot (\sigma-k,\sigma -k) \\
 = \left(-\frac{\theta^2}{2} k + \theta\phi\right)\cdot \nabla f(k)
+ \frac12 \bigl\< \nabla^2 f(k)\cdot\phi,\phi\bigr\> \theta^2 + o(\theta^2).
\end{multline*}
So from Lemma \ref{lemav},
\begin{align*} \int [f(\sigma) -f(k)]\, \beta(k\cdot\sigma)\,d\sigma
& = - k\cdot \nabla f(k) \int \frac{\theta^2}{2} \,\beta(k\cdot\sigma)\,d\sigma
+ \left( \int \phi\,\beta(k\cdot\sigma)\,d\sigma\right)\cdot\nabla f(k)\\
& \qquad\qquad\qquad\qquad\qquad\qquad
+ \int \<\nabla^2 f(k)\cdot\phi,\phi\>\,\frac{\theta^2}{2}\,\beta(k\cdot\sigma)\,d\sigma + o(1)\\
& = -k\cdot\nabla f(k)\,\mu + 0 + \frac1{d-1} \tr \bigl(\nabla^2 f(k)\,\Pi_{k^\bot}\bigr)\,\mu + o(1).
\end{align*}

Of course (iii) will result from (i) and (ii). So it only remains to check (i). Let us assume that $\int \beta_0$, $\int \beta_1$ are finite; the general case will follow by approximation. Then, for any $\ell\in\S^{d-1}$,
\begin{multline*}
L_{\beta_1} L_{\beta_0} f(\ell) 
= \int \Bigl[ (L_{\beta_0}f)(k) - (L_{\beta_0}f)(\ell)\Bigr]\,\beta_1(\ell\cdot k)\,dk \\
= \iint f(\sigma)\,\beta_0(k\cdot\sigma)\,\beta_1(\ell\cdot k)\,dk\,d\sigma
- \iint f(k) \,\beta_0(k\cdot\sigma)\,\beta_1(\ell\cdot k)\,dk\,d\sigma \\
- \iint f(\sigma)\,\beta_0(\ell\cdot\sigma)\,\beta_1(\ell\cdot k)\,dk\,d\sigma
+ \iint f(\ell) \,\beta_0(\ell\cdot\sigma)\,\beta_1(\ell\cdot k)\,dk\,d\sigma.
\end{multline*}
Let us try to exchange $\beta_0$ and $\beta_1$ in the latter expression made up of four integrals.

The last of the four integrals is the easiest: exchanging $k$ and $\sigma$,
\begin{align*}
f(\ell) \iint \beta_0(\ell\cdot\sigma)\,\beta_1(\ell\cdot k)\,dk\,d\sigma
& = f(\ell) \iint \beta_0(\ell\cdot k)\,\beta_1(\ell\cdot\sigma)\,dk\,d\sigma\\
& = f(\ell) \iint \beta_1(\ell\cdot\sigma)\,\beta_0(\ell\cdot k)\,dk\,d\sigma.
\end{align*}

Now turn to the first of the four integrals. Here is the trick (which I learnt from Sacha Bobylev) to use in such a situation: For any fixed $\sigma$ we may find a change of variables $k\to \tilde{k}$ by an isometry of $\S^{d-1}$ which exchanges $\sigma$ and $\ell$. Then
\begin{align*}
\int \beta_0(k\cdot\sigma)\,\beta_1(\ell\cdot k)\,dk
& = \int \beta_0(\tilde{k}\cdot\sigma)\,\beta_1(\ell\cdot\tilde{k})\,dk\\
& = \int \beta_0(k\cdot \ell)\,\beta_1(\sigma\cdot k)\,dk,
\end{align*}
whence
\[ \iint f(\sigma)\,\beta_0(k\cdot\sigma)\,\beta_1(\ell\cdot k)\,dk\,d\sigma 
= \iint f(\sigma) \,\beta_1(k\cdot\sigma)\,\beta_0(\ell\cdot k).\]

Finallly the same trick will apply to {\em the sum} of the two central integrals:
\begin{align*}
& \iint f(k)\,\beta_0(k\cdot\sigma)\,\beta_1(\ell\cdot k)\,dk\,d\sigma 
& + \iint f(\sigma) \,\beta_0(\ell\cdot\sigma)\,\beta_1(\ell\cdot k)\,dk\,d\sigma\\
& = \iint f(k) \,\beta_0(\ell\cdot\sigma)\,\beta_1(\ell\cdot k)\,dk\,d\sigma
& + \iint f(\sigma)\,\beta_0(\ell\cdot\sigma)\,\beta_1(\sigma\cdot k)\,dk\,d\sigma\\
& \text{(change of variable: $\sigma$, exchange: $k$ \& $\ell$)}
&\text{(c.o.v. $k$, exch. $\ell$ \& $\sigma$)}\\
& = \iint f(\sigma)\,\beta_0(\ell\cdot k)\,\beta_1(\ell\cdot\sigma)\,dk\,d\sigma
& + \iint f(k)\,\beta_0(\ell\cdot k)\,\beta_1(\sigma\cdot k)\,dk\,d\sigma.\\
& \text{(exchanging $k$ and $\sigma$)} & \text{(idem)}
\end{align*}
\end{proof}

Now let us go on with the $\Gamma$ formalism for $L_\beta$ and $\Delta$:

\begin{Def}[Gamma commutators involving linear Boltzmann] \label{defcomLB}
Given a Boltzmann kernel $\beta$, let
\[ \Gamma_\beta(f,f) = \frac12 \Bigl( L_\beta (f^2) - 2 f\, L_\beta f\Bigr); \]
\[ \Gamma_{1,\beta}(f,f) = \frac12 \Bigl[ L_\beta\,\Gamma_1(f,f) - 2 \Gamma_1 (f,L_\beta f)\Bigr].\]
\end{Def}

It turns out that by commutation of $L_\beta$ and $\Delta$, it does not matter in which order we let the two operators act: also
\[ \Gamma_{1,\beta}(f,f) = \frac12 \Bigl[ \Delta\,\Gamma_\beta(f,f) - 2 \Gamma_\beta (f,\Delta f)\Bigr].\]
Explicit computations yield
\begeq\label{Gammabeta}
\Gamma_\beta (f,f) (k) = \frac12 \int_{\S^{d-1}}
(f(\sigma)-f(k))^2\,\beta(k\cdot\sigma)\,d\sigma
\endeq
\begeq\label{Gamma1beta}
\Gamma_{1,\beta}(f,f)(k) = \frac12 
\int_{\S^{d-1}} \bigl|\nabla f(\sigma)-\nabla f(k)\bigr|_{k,\sigma}^2\,\beta(k\cdot\sigma)\,d\sigma,
\endeq
where as before $|y-x|_{k,\sigma}^2 = |x|^2 + |y|^2 - 2 (P_{k\sigma}x)\cdot y$ and 
$P_{k\sigma}$ is defined as in Proposition \ref{propPsk}. Notice: since $\nabla f(\sigma)\cdot\sigma =0$, here $M_{\sigma k} \nabla f (\sigma) = P_{\sigma k}\nabla f(\sigma)$.

Eventually there is also a nice formula for $\nabla L_\beta$:
\begeq\label{nablaLbeta}
\nabla L_\beta (f) = \int_{\S^{d-1}} \bigl[ P_{\sigma k} \nabla f(\sigma) - \nabla f(k)\bigr]\,\beta(k\cdot\sigma)\,d\sigma.
\endeq
Further, in the AGC, as a consequence of the commutation,
\begeq\label{Gamma1b2}
\Gamma_{1,\beta}(f,f) \xrightarrow[AGC]{} \frac{\mu}{d-1} \,\Gamma_2(f,f).
\endeq
And of course, we may also study the evolution of the Fisher information $I$ under the Boltzmann flow and define as in \eqref{GIBF}
\begeq\label{GIBIF'} \Gamma_{I,L_\beta} (f) = L_\beta \bigl( f |\nabla \log f|^2\bigr)
- \left[ (L_\beta f) |\nabla \log f|^2 + 2 \nabla f \cdot \nabla \left(\frac{L_\beta f}{f}\right)\right],
\endeq
so that
\begeq\label{-I'Lf}
-I'(f)\cdot L_\beta f = \int_{\S^{d-1}} \Gamma_{I,L_\beta}(f);
\endeq
and we find, as in Section \ref{secGamma} and with the natural simplifications,
\begeq\label{GILBF} 
\Gamma_{I,L_\beta}(f)\,(k) = \int_{\S^{d-1}} f(\sigma) \bigl| \nabla\log f(\sigma)-\nabla\log f(k)\bigr|^2_{k,\sigma}\,\beta(k\cdot\sigma)\,d\sigma
\endeq
a limit case being
\begeq\label{GIDF} 
\Gamma_{I,\Delta}(f) = 2 f\, \Gamma_2(\log f).
\endeq

A final subject we should explore is, quite naturally in this linear context, the spectrum. This will deserve a section on its own.

\bibnotes

Linear Boltzmann equations are classical in neutronics \cite{cer:book:88}; but also, interacting particle systems are naturally described by a linear Boltzmann equation in a phase space for several particles, shaped by the conservation laws. Kac \cite{kac:foundations} took the jist of this approach by letting his ``particles'' be described by a point in a high-dimensional sphere. But in this section the sphere may be low-dimensional, and as we have seen the linear Boltzmann equation on this space can help understanding the nonlinear Boltzmann equation.

I am not aware of any reference where the subject of this section is explicitly treated, but at least part of it must be folklore. I learnt the important trick for the commutation of the linear Boltzmann equations in Bobylev's survey paper on the Boltzmann equation with Maxwellian kernels \cite{bob:theory:88}, he used this to prove the commutation between (nonlinear) spatially homogeneous Maxwellian Boltzmann and Fokker--Planck semigroups. This is a particular case of much more general theorems used in representation theory and spectral analysis.

\section{Apparition spectrale}

Eigenfunctions of the Laplace--Beltrami operator constitute an orthonormal basis of $L^2(\S^{d-1})$, generalising to higher dimensions the decomposition of $L^2(\S^1)$ in Fourier series. These functions are the spherical harmonics on $\S^{d-1}$ and they have been studied and classified for more than two centuries, especially on $\S^2$. There are several ways to introduce them, one is that spherical harmonics are restrictions to $\S^{d-1}$ of homogeneous polynomials. By definition, a polynomial $P:\R^d\to\R$ is $\ell$-homogeneous ($\ell\in\N_0$) if $P(\lambda x) = \lambda^\ell P(x)$ for all $x\in\R^d$ and $\lambda>0$. Then the restriction of $P$ to $\S^{d-1}$ satisfies 
\[ \Delta P = \lambda\,P \qquad \lambda = \ell (\ell+d-2).\]
(Think that $\Delta |x|^\ell = \ell (\ell +d-2)$, even though that is a mnemo rather than an example, since the restriction of $|x|^\ell$ is trivial.) So the eigenfunctions of $-\Delta$ are $0, d-1, 2d, 3(d+1)$, etc.

Spherical harmonics have ``explicit' expressions in terms of special functions: as we see in Wikipedia (why not), they can be written $Y_{\ell,m}^{(d)}(\theta,\phi)$, where $\ell \in\N_0$, $m$ is a $(d-2)$-index, $m=(m_1,\ldots,m_{d-2})$, $|m_1|\leq m_2\leq \ldots\leq m_{d-2}\leq \ell$, and $x$ is represented by its $(d-1)$-dimensional spherical coordinates: $\theta\in [0,\pi]$, $\phi\in \S^{d-2}$ through $x= (\cos\theta) e + (\sin\theta)\phi$, $e$ fixed arbitrarily, $\phi \in S^{d-2}_{e^\bot}$, and recursively $\phi =\phi_{d-2}$,
$\phi_j = (\cos\psi_j)e_j + (\sin\psi_j)\phi_{j-1}$ with $e_j$ fixed arbitrarily and $\phi_{j-1}\in \S^{j-1}_{e_j^\bot}$, $0\leq \psi_j\leq\pi$, and in the end $\phi$ is identified with its coordinates $(\psi_1,\ldots,\psi_{d-2})$. Then, in complex notation,
\[ Y_{\ell,m}(\theta,\phi) = \frac{e^{i m_1 \psi_1}}{\sqrt{2\pi}}
\ov{P}_{m_2}^{m_1, (2)} (\psi_2) \ov{P}_{m_3}^{m_2, (3)}(\psi_3)\ldots \ov{P}_{\ell}^{m_{d-2},(d-1)}(\theta),\]
where 
\[ P_L^{\ell, (j)}(\psi) = \sqrt{ \left(\frac{2 L + j-1}{2}\right) \frac{(L+\ell+j-2)!}{(L-\ell)!}}\,
(\sin^{\frac{2-j}{2}}\psi)\, P_{L+\frac{j-2}{2}}^{-\left(\ell+\frac{j-2}2\right)} (\cos\psi),\]
and $P_\lambda^\mu$ is Legendre's special function, one of the solutions of the second-order real singular differential linear equation
\[ (1-x^2) y''(x) - 2 x y'(x) + \left( \lambda(\lambda+1) - \frac{\mu^2}{1-x^2}\right) y =0\]
(itself a particular instance of the hypergeometric function); then the associated eigenvalue is $\lambda_\ell^{(d)} = \ell (\ell+d-2)$. (Phew!)

I will not need these explicit formulae, and for the sequel the spherical harmonics of given degree $\ell$ may be ordered in an arbitrary way. But the following facts about spherical harmonics $Y_{\ell,m}^{(d)}$ in dimension $d\geq 2$ will be useful. (The case $\ell=0$ is omitted: obviously, the eigenspace is just made of constant functions. Note, I write $\N = \{1,2,3,\ldots\}$.)

\begin{Prop}[Spherical harmonics on $\S^{d-1}$] \label{propspher}
(i) For each $\ell\in\N$, the spherical harmonics $(Y_{\ell,m}^{(d)})_{1\leq m\leq N(d,\ell)}$ constitute an orthonormal basis of $H_\ell$, the eigenspace of $-\Delta$ associated with the eigenvalue $\lambda_\ell = \ell(\ell+d-2)$, with multiplicity
\[ N(d,\ell) = \left(\frac{2\ell+d-2}{\ell}\right) \, \Cnk{\ell+d-3}{\ell-1};\]

(ii) For any $\ell\in\N$ there is a unique harmonic $\ell$-homogeneous polynomial $P_\ell = P_\ell^{(d)}$ (sometimes denoted ${}_{d-1}\!P_\ell^{0}$) on $[-1,1]$ such that for all $k,\sigma$ in $\S^{d-1}$,
\[ P_\ell (k\cdot\sigma) = \frac{|\S^{d-1}|}{N(d,\ell)}
\sum_{m=1}^{N(d,\ell)} Y_{\ell,m} (k) Y_{\ell,m}(\sigma).\]
This is the {\em addition formula}, and $P_\ell$ is the Legendre polynomial of order $\ell$ in dimension $d$. Legendre polynomials contain most of the relevant information about spherical harmonics. In particular, the space of spherical harmonics of order $\ell$ which admit an axis of symmetry $e\in \S^{d-1}$ is a one-dimensional space generated by $\sigma\longmapsto P_\ell(\sigma\cdot e)$. Similarly, there are vectors $(\eta_m)_{1\leq m \leq N(d,\ell)}$ such that for any $Y\in H_\ell$ there are coefficients $(a_m)_{1\leq m\leq N(d,\ell)}$ such that one may write
$Y(\sigma) = \sum_{m=1}^{N(d,\ell)} a_m\,P_\ell(\sigma\cdot\eta_m)$.

(iii) Legendre polynomials $P_\ell^{(d)}$ are even for even $\ell$ and odd for odd $\ell$; they satisfy $P_\ell(1)=1$ and $|P_\ell(x)|\leq 1$ for all $x\in [-1,1]$. Moreover they satisfy the following four important identities:

\bul Rodrigues formula:
\[ P_\ell(x) = \frac{(-1)^\ell}{2^\ell \left(\ell + \frac{d-3}{2}\right)_\ell} (1-x^2)^{-\left(\frac{d-3}{2}\right)}
\frac{d}{dx}^\ell\! (1-x^2)^{\ell+\frac{d-3}{2}},\]
where $a_\ell = a (a-1)\ldots (a-\ell+1)$.

\bul Hypergeometric differential equation:
\begeq
\label{hypergeom} (1-x^2) P''_\ell(x) - (d-1) xP'_\ell(x)  + \ell (\ell+d-2)\,P_\ell(x) = 0.
\endeq

\bul Recursion formula: $P_0(x)=1$, $P_1(x)=x$, and
\[ (\ell+d-2) P_{\ell+1}(x) - (2\ell + d-2) x P_\ell (x) + \ell P_{\ell-1}(x)=0.\]

\bul Representation by axisymmetric averaging:
\begeq\label{Pl2} P_\ell^{(2)} (x) = \Re (x+i\sqrt{1-x^2})^\ell
\endeq
and, for $d\geq 3$,
\begeq\label{Plaxid} P_\ell (x) = \frac{|\S^{d-3}|}{|\S^{d-2}|}
\int_{-1}^{1} \Bigl( x + i\,s\, \sqrt{1-x^2} \Bigl)^\ell (1-s^2)^{(d-4)/2}\,ds.
\endeq
\end{Prop}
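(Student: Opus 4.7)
The plan is to proceed by the classical harmonic polynomial approach. Let $\mathcal{P}_\ell$ denote the space of $\ell$-homogeneous polynomials on $\R^d$ and $\mathcal{H}_\ell\subset\mathcal{P}_\ell$ its subspace of harmonic polynomials. First I would establish the algebraic decomposition $\mathcal{P}_\ell = \mathcal{H}_\ell \oplus |x|^2\,\mathcal{P}_{\ell-2}$ (multiplication by $|x|^2$ is injective on $\mathcal{P}_{\ell-2}$, and a short dimension/duality argument identifies the orthogonal complement). Iterating and subtracting dimensions yields
\[
\dim \mathcal{H}_\ell = \dim \mathcal{P}_\ell - \dim \mathcal{P}_{\ell-2} = \Cnk{\ell+d-1}{d-1}-\Cnk{\ell+d-3}{d-1},
\]
which rewrites as the stated $N(d,\ell)$. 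Then, decomposing $\Delta_{\R^d}$ in polar coordinates as $\partial_r^2+\frac{d-1}{r}\partial_r+\frac{1}{r^2}\Delta_{\S^{d-1}}$ and applying it to $r^\ell Y(\sigma)$ shows that $P\in\mathcal{H}_\ell$ restricts to an eigenfunction of $-\Delta_{\S^{d-1}}$ with eigenvalue $\ell(\ell+d-2)$. Self-adjointness gives orthogonality of distinct eigenspaces, and Stone--Weierstrass combined with the above decomposition gives density of $\bigoplus_\ell \mathcal{H}_\ell|_{\S^{d-1}}$ in $C(\S^{d-1})$ and hence in $L^2$; this proves (i).

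For (ii), the key object is the reproducing kernel $K_\ell(k,\sigma)=\sum_m Y_{\ell,m}(k)Y_{\ell,m}(\sigma)$ of $H_\ell$ in $L^2(\S^{d-1})$. Since any orthonormal basis of $H_\ell$ is transported to another one by rotations, $K_\ell$ is invariant under simultaneous action of $O(d)$, hence depends only on $k\cdot\sigma$; call this function $\tilde P_\ell$. For fixed $k$, $\sigma\mapsto K_\ell(k,\sigma)\in H_\ell$, so $\tilde P_\ell$ is the restriction of a harmonic $\ell$-homogeneous polynomial in one variable along $k$, i.e.\ a polynomial of degree $\ell$. Evaluating at $k=\sigma$ and using rotational invariance of $K_\ell(k,k)$ together with orthonormality gives $K_\ell(k,k)=N(d,\ell)/|\S^{d-1}|$, so the normalisation $P_\ell(1)=1$ is achieved precisely by the prefactor $|\S^{d-1}|/N(d,\ell)$. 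The one-dimensionality of the subspace of $H_\ell$ invariant under rotations fixing a given axis $e$ (proved by restricting an axisymmetric harmonic $\ell$-homogeneous polynomial to a half-line through $e$) then identifies it as $\R\cdot P_\ell(\sigma\cdot e)$, giving the last assertion of (ii).

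For (iii), parity $P_\ell(-x)=(-1)^\ell P_\ell(x)$ is immediate from homogeneity (the components of $\mathcal{H}_\ell$ have parity $(-1)^\ell$), and the bound $|P_\ell(x)|\leq 1=P_\ell(1)$ follows from Cauchy--Schwarz applied to the addition formula: $|K_\ell(k,\sigma)|^2\leq K_\ell(k,k)K_\ell(\sigma,\sigma)$. The hypergeometric ODE \eqref{hypergeom} is obtained by writing the axisymmetric eigenfunction $P_\ell(\cos\theta)$ in polar angle $\theta$: the Laplace--Beltrami operator on axisymmetric functions reduces to $(\sin^{d-2}\theta)^{-1}\partial_\theta(\sin^{d-2}\theta\,\partial_\theta)$, the change $x=\cos\theta$ produces \eqref{hypergeom}, and the normalisation $P_\ell(1)=1$ picks out the unique regular solution. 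From \eqref{hypergeom} one extracts Rodrigues' formula by a standard Sturm--Liouville/Frobenius computation (checking directly that the proposed expression solves the ODE and takes value $1$ at $x=1$), and the three-term recursion by writing $xP_\ell$ in the harmonic-polynomial decomposition of $\mathcal{P}_{\ell+1}$ (the product lies in $\mathcal{H}_{\ell+1}\oplus|x|^2\mathcal{H}_{\ell-1}|_{\S^{d-1}} = \mathcal{H}_{\ell+1}|_{\S^{d-1}} \oplus \mathcal{H}_{\ell-1}|_{\S^{d-1}}$, and axisymmetry reduces the identification of coefficients to matching values at $x=1$).

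For the integral representations \eqref{Pl2}--\eqref{Plaxid}, the strategy is to verify that the right-hand side meets the three characterising conditions of a zonal harmonic from (ii): it is the restriction of a harmonic $\ell$-homogeneous polynomial (harmonicity of $(x\cdot e+is\,(x\cdot f))^\ell$ for any unit $f\perp e$ since $1^2+(is)^2=1-s^2$ need not vanish, but integrating over $s$ or $f$ across the orthogonal complement restores harmonicity via an averaging identity), is axisymmetric about $e$, and equals $1$ at $x=\cos\theta=1$; for the latter the Wallis-type normalisation constant $|\S^{d-3}|/|\S^{d-2}|$ is precisely what makes $\int_{-1}^1(1-s^2)^{(d-4)/2}\,ds$ yield $1$. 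The main obstacle throughout is the careful bookkeeping of the normalising constants in (ii) and (iii) and the verification of harmonicity of the kernel in \eqref{Plaxid}; every other ingredient is either an algebraic dimension count, a rotational invariance argument, or a standard ODE manipulation for which no new input beyond classical harmonic analysis is required.
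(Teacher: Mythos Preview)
The paper does not prove this proposition at all: it is stated as a classical compendium and the bibliographical notes simply refer the reader to Frye--Efthimiou for a full treatment. So there is no ``paper's own proof'' to compare against; your sketch is precisely the standard route those references take (decomposition $\mathcal{P}_\ell=\mathcal{H}_\ell\oplus|x|^2\mathcal{P}_{\ell-2}$, reproducing kernel for the addition formula, zonal uniqueness, ODE in the polar angle), and it is correct in outline.

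One point deserves tightening. Your discussion of \eqref{Plaxid} is slightly backwards: the function $(x\cdot e+is\,x\cdot f)^\ell$ is \emph{not} harmonic for generic $s$, and ``averaging restores harmonicity'' is the wrong slogan. The clean argument, which the paper's Remark~\ref{rkleg} alludes to, is to start from the genuinely harmonic polynomial $(x_1+ix_2)^\ell$ on $\R^d$ (harmonic because $e_1+ie_2$ is an isotropic complex vector) and average it over rotations fixing $e_1$. On the sphere this becomes an average over $\omega\in\S^{d-2}$ of $(\cos\theta+i\sin\theta\,\omega_1)^\ell$, and the marginal law of $\omega_1$ on $[-1,1]$ is exactly $|\S^{d-3}|/|\S^{d-2}|\,(1-s^2)^{(d-4)/2}\,ds$, producing \eqref{Plaxid} directly. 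Harmonicity is then inherited, not recovered.
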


\begin{Ex} \label{exPol}
For $d=2$, an element in $\S^1$ can be identified to an angle $\theta\in \R/(2\pi\Z)$; then $\Delta$ is the usual operator $d^2/d\theta^2$, the eigenspace $H_\ell$ is $2$-dimensional for any $\ell\in\N$ and an orthonormal basis is ($\pi^{-1/2} \cos (n\theta)$, $\pi^{-1/2}\sin(n\theta)$); then the addition formula turns into
\[ P_\ell (\cos (\theta'-\theta)) = \cos (\ell\theta)\cos(\ell\theta') + \sin(\ell\theta)\sin (\ell\theta'),\]
which by the usual addition formula for trigonometric functions is just $\cos(\ell(\theta'-\theta))$; so $P_\ell$ is just the Tchebyshev polynomial defined by the identity
\[ P_\ell(\cos\alpha) = \cos(\ell \alpha).\]
In complex notation, we immediately see that $P_\ell(\cos\alpha) = \Re e^{i\ell\alpha} 
= \Re (\cos\alpha + \sqrt{1-\cos^2\alpha})^\ell$, which amounts to \eqref{Pl2}.
\end{Ex}

\begin{Rk}\label{rkleg}
Formula \eqref{Plaxid} shows that, in some sense, all $P_\ell$ can be obtained from the 2-dimensional case; indeed, this formula amounts to writing a harmonic polynomial in two variables, say $x_1,x_2$, and symmetrize it around the axis $e_1$. Note that the integral in \eqref{Plaxid} is real by symmetry $y\to -y$ and the operation applied to the polynomial is indeed an averaging, since
\[ \int_{-1}^1 {\sqrt{1-s^2}\,}^{d-4}\,ds = \int_0^\pi (\sin\theta)^{d-3}\,d\theta = \frac{|\S^{d-2}|}{|\S^{d-3}|}.\]
\end{Rk}

Legendre polynomials have since long been used to describe the spectrum of the linearised Boltzmann equation on $\R^d$. But they will also be useful to describe the spectrum of the linear Boltzmann equation on $\S^{d-1}$.

\begin{Thm}[Spectrum of the linear Boltzmann equation] \label{thmspectrum}
If a collisional kernel $\beta$ is given on $\S^{d-1}$, then the linear operator $-L_\beta$ has eigenvalues $(\lambda_\ell)_{\ell\in\N}$, and for each $\ell\in\N$, $H_\ell$ is an invariant subspace of dimension $N(d,\ell)$ associated with $\nu_\ell$, where
\begin{align} \label{nuell}
\nu_\ell & = \int_{\S^{d-1}} \bigl[ 1-P_{\ell}^{(d)}(k\cdot\sigma)\bigr]\,\beta(k\cdot\sigma)\,d\sigma \\
& \nonumber 
= |\S^{d-2}| \int_0^\pi \bigl[ 1-P_\ell^{(d)}(\cos\theta)\bigr]\,\beta(\cos\theta)\,\sin^{d-2}\theta\,d\theta.
\end{align}
\end{Thm}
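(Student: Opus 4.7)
The plan is to combine the commutation $[L_\beta,\Delta]=0$ from Proposition~\ref{propBD}(iii) with the structure of spherical harmonics, and to compute the eigenvalue on each $H_\ell$ via the addition formula (this is essentially the Funk--Hecke theorem in disguise). First I would note that, since $L_\beta$ commutes with $\Delta$, each eigenspace $H_\ell$ of $-\Delta$ is invariant under $L_\beta$; this reduces the problem to identifying the action of $L_\beta$ on $H_\ell$. I would then split $L_\beta f = T_\beta f - c_\beta f$, where $T_\beta f(k)=\int_{\S^{d-1}}\beta(k\cdot\sigma)f(\sigma)\,d\sigma$ and $c_\beta=\int_{\S^{d-1}}\beta(k\cdot\sigma)\,d\sigma$ (independent of $k$ by isotropy); it then suffices to show that $T_\beta$ acts as a scalar $\mu_\ell$ on each $H_\ell$.

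The key step is the following. For fixed $k$, the function $\sigma\mapsto\beta(k\cdot\sigma)$ is axisymmetric about $k$, so its spherical harmonic expansion retains, at each degree $\ell'$, only the one-dimensional axisymmetric subspace of $H_{\ell'}$, which by Proposition~\ref{propspher}(ii) is spanned by $\sigma\mapsto P_{\ell'}(k\cdot\sigma)$. Writing $\beta(k\cdot\sigma)=\sum_{\ell'\geq 0}b_{\ell'}P_{\ell'}(k\cdot\sigma)$, plugging this into $T_\beta Y_{\ell,m}(k)$, and using the addition formula together with orthonormality of $\{Y_{\ell',m'}\}$, a direct computation yields $T_\beta Y_{\ell,m}(k)=\mu_\ell Y_{\ell,m}(k)$ for some scalar $\mu_\ell$ (of the form $|\S^{d-1}|b_\ell/N(d,\ell)$). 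To identify $\mu_\ell$ intrinsically I would apply $T_\beta$ to the axisymmetric harmonic $\sigma\mapsto P_\ell(e\cdot\sigma)\in H_\ell$ (for a fixed $e\in\S^{d-1}$), evaluate at $k=e$, and use $P_\ell(1)=1$ from Proposition~\ref{propspher}(iii) to read off
\[
\mu_\ell \;=\; \int_{\S^{d-1}}\beta(e\cdot\sigma)\,P_\ell(e\cdot\sigma)\,d\sigma.
\]

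Subtracting, $-L_\beta Y_{\ell,m} = (c_\beta-\mu_\ell)Y_{\ell,m} = \nu_\ell Y_{\ell,m}$ with $\nu_\ell = \int_{\S^{d-1}}[1-P_\ell(k\cdot\sigma)]\,\beta(k\cdot\sigma)\,d\sigma$ (using $P_0\equiv 1$ to absorb $c_\beta$); polar coordinates $d\sigma=\sin^{d-2}\theta\,d\theta\,d\phi$ with $\phi\in\S^{d-2}_{e^\bot}$ then give the $\theta$-integral formulation, and the dimension count $\dim H_\ell = N(d,\ell)$ comes from Proposition~\ref{propspher}(i). The main technical obstacle is justifying the Legendre expansion of $\beta$ under the mere hypothesis~\eqref{minintb}: the integrals defining $\nu_\ell$ themselves converge unconditionally because $1-P_\ell(\cos\theta)=O(\theta^2)$ near $\theta=0$ while $|1-P_\ell|\leq 2$ globally by Proposition~\ref{propspher}(iii), but to interchange sum and integral rigorously I would first truncate $\beta$ near $\theta=0$ (and at $\theta=\pi$ if needed), run the argument for each truncated integrable kernel $\beta_n$ whose Legendre series is unproblematic, and pass to the limit by dominated convergence using $1-k\cdot\sigma$ as an integrable majorant.
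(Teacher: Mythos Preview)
Your proposal is correct and amounts to the classical Funk--Hecke argument: expand the zonal kernel $\beta(k\cdot\sigma)$ in Legendre polynomials, invoke the addition formula together with orthonormality of the $Y_{\ell,m}$ to see that the convolution operator $T_\beta$ acts as a scalar on each $H_\ell$, and then read off that scalar by testing on the axisymmetric harmonic $P_\ell(e\cdot\,\cdot)$ at $k=e$. Your handling of the possibly non-integrable case by truncation and dominated convergence (using $1-P_\ell(\cos\theta)=O(\theta^2)$ near $\theta=0$ and $|1-P_\ell|\leq 2$ globally) is exactly the right remedy for the formal splitting $L_\beta=T_\beta-c_\beta I$, which otherwise makes no sense when $\int\beta=\infty$.

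The paper organizes the computation differently. Instead of expanding $\beta$, it first uses an isometry argument (in the spirit of the commutation trick from Proposition~\ref{propBD}) to show directly that $-\langle L_\beta Y,Y\rangle$ is independent of the choice of normalized $Y\in H_\ell$; then it disintegrates $L_\beta=\int_0^\pi L^\theta\,\beta(\cos\theta)\sin^{d-2}\theta\,d\theta$ into single-angle operators $L^\theta f(k)=\int_{\S^{d-2}_{k^\bot}}[f(\exp_k\theta\phi)-f(k)]\,d\phi$, and computes the eigenvalue $\nu_\ell(\theta)=|\S^{d-2}|(1-P_\ell(\cos\theta))$ of each $L^\theta$ by integrating the addition formula over $k$ and $\phi$. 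Your route is more algebraic and closer to textbook spherical harmonic analysis; the paper's is more geometric, and the single-angle decomposition has the minor practical advantage that each $L^\theta$ is already a bounded operator, so the non-integrability of $\beta$ never forces a formal splitting in the first place.
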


\begin{Rk} It may be that some eigenvalues $\nu_\ell$ coincide, depending on the choice of $\beta$; so the multiplicity may not be exactly $N(d,\ell)$.
\end{Rk}

\begin{proof}[Proof of Theorem \ref{thmspectrum}]
Let me skip the technical issue of domain of definition (handled by many authors in the much more tricky case of $\R^d$) and jump directly to the calculation of the spectrum. Since $\Delta$ and $L_\beta$ commute, we know that the basis of spherical harmonics $(Y_{\ell,m})$ is a basis of diagonalisation for $L_\beta$. The question is to find the associated eigenvalues 
\[ \nu(Y) = -\< L_\beta Y,Y\> = \frac12 \iint \bigl[ Y(\sigma)-Y(k)\bigr]^2\,\beta(k\cdot\sigma)\,dk\,d\sigma.\]

A particular choice for $Y$ is $Y(k)=c\, P_\ell(k\cdot\eta)$ for some $\eta\in\S^{d-1}$, where $c$ is a normalisation constant. If $\eta$ and $\zeta$ are two such unit vectors, again using an isometry of $\S^{d-1}$ which exchanges $\eta$ and $\zeta$, we find
\[ \frac12 \iint \bigl[ P_\ell(\sigma\cdot\eta)-Y(k\cdot\eta)\bigr]^2\,\beta(k\cdot\sigma)\,dk\,d\sigma 
= \frac12 \iint \bigl[ P_\ell(\sigma\cdot\zeta)-Y(k\cdot\zeta)\bigr]^2\,\beta(k\cdot\sigma)\,dk\,d\sigma;\]
in other words the eigenvalue $\nu$ associated with $P_\ell(k\cdot\eta)$ is independent of the choice of $\eta$. Since all spherical harmonics $Y$ are linear combinations of such elementary functions, we deduce that the eigenvalue $\nu(Y)$ is independent of $Y$ and only depends on $\ell$. It remains to compute this number.

Consider the elementary Boltzmann equation operator $L^\theta$ where $\beta$ is concentrated at a single deviation angle $\theta$: 
\[ L^\theta f(k) = \int_{\S^{d-1}} [f(\sigma)-f(k)]\,1_{k\cdot\sigma = \cos\theta}\,d\sigma.\]
In this way
\begeq\label{Lbetatheta} L_\beta = \int_0^\pi L^\theta \,\beta(\cos\theta)\, \sin^{d-2}\theta\,d\theta. \endeq

Each $\sigma\in\S^{d-1}$ such that $k\cdot\sigma =\cos\theta$ is the endpoint of a unit-speed geodesic $\gamma:[0,\theta]\to\S^{d-1}$ such that $\gamma(0)=k$, $\dot{\gamma}(0)=\phi\in \S^{d-2}_{k^\bot}$. In other words, $\sigma = \exp_k (\theta\phi)$. Thus
\[ L^\theta f(k) = \int_{\S^{d-2}_{k^\bot}} \Bigl[ f\bigl(\exp_{k}\theta\phi \bigr) - f(k)\Bigr]\,d\phi.\]
Again $L^\theta$ commutes with $\Delta$, and the eigenvalues are given by the numbers
\[ \nu_\ell(\theta) = -\< L_\theta Y,Y\>,\]
where $Y$ is any normalised spherical harmonic in $H_\ell$. Then
\begin{align*} 
\nu_\ell(\theta) & = \int_{\S^{d-1}} Y(k) \left(\int_{\S_{k^\bot}^{d-2}}
\bigl[ Y(k) - Y(\exp_k \theta\phi)\bigr]\,d\phi\right) dk\\
& = \int_{\S^{d-1}} Y(k)^2\, |\S^{d-2}_{k^\bot}|\,dk - \int_{\S^{d-1}} \int_{\S^{d-2}_{k^\bot}} Y(k)\, Y(\exp_k\theta\phi)\,d\phi\,dk \\
& = |\S^{d-2}| - \int_{\S^{d-1}} \int_{\S^{d-2}_{k^\bot}} Y(k)\, Y(\exp_k\theta\phi)\,d\phi\,dk,
\end{align*}
and as a consequence $\iint Y(k) Y(\exp_k\theta\phi)\,d\phi\,dk$ does not depend on the choice of $Y$.

By the addition formula of Proposition \ref{propspher} (ii), for all $k$ and $\sigma=\exp_k\theta\phi$,
\[ P_\ell^{(d)}(k\cdot\sigma) = \frac{|\S^{d-1}|}{N(d,\ell)}
\sum_{m=1}^{N(d,\ell)} Y_{\ell,m}(k)\, Y_{\ell,m}(\sigma), \]
so upon integration $d\phi\,dk$ and substitution $k\cdot\sigma=\cos\theta$,
\[ P^{(d)}_\ell (\cos\theta)\,|\S^{d-2}|\,|\S^{d-1}| 
= \frac{|\S^{d-1}|}{N(d,\ell)}
\sum_{m=1}^{N(d,\ell)} \iint Y_{\ell,m}(k)\,Y_{\ell,m}(\exp_k\theta\phi)\,d\phi\,dk.\]
On the right hand side, all terms in the sum are equal to $|\S^{d-2}|-\nu_\ell(\theta)$. So 
\[ P^{(d)}_\ell (\cos\theta)\,|\S^{d-2}|\,|\S^{d-1}|
= |\S^{d-1}|\,\bigl(|\S^{d-2}| - \nu_\ell(\theta)\bigr).\]
All in all,
\begeq\label{nultheta}
\nu_\ell(\theta) = |\S^{d-2}|\, \bigl( 1-P^{(d)}_\ell(\cos\theta)\bigr).
\endeq
Then the conclusion follows from \eqref{Lbetatheta} and its corollary $\nu_\ell(\beta) = \int_0^\pi \nu_\ell(\theta)\,\beta(\cos\theta)\,\sin^{d-2}\theta\,d\theta$.
\end{proof}

\bibnotes

About spherical harmonics, among innumerable references (the majority of which insist on dimension $d=3$) I recommend the exceptionally clear lecture notes by Frye and Efthimiou \cite{frye-efthimiou:legendre:book} considering general dimensions.

The application of special functions and symmetries to the computation of the spectrum of the Boltzmann equation with Maxwellian kernels goes back at least to Wang Chang and Uhlenbeck in the sixties \cite{wchanguhl:spectrum}; general background is in the reference books like Cercignani \cite{cer:book:88}. More recently, noting that the relevant material is scattered in a number of places, Dolera \cite{dolera:spectrum} provides a self-contained presentation and review. The eigenvalues are also given by integral expressions involving the kernel and Legendre polynomials.

\section{Reduction to the sphere, reformulated}

Using the language of Section \ref{secBLsphere} it is possible to rewrite the criterion from Section \ref{seccrit} in a more synthetic way. For simplicity I will only consider product collision kernels:
$B(v-v_*,\sigma) = \Phi(|v-v_*|)\,\beta(\cos\theta)$; it is not difficult to adapt the statements to more general collision kernels. The associated Landau function $\Psi$ is proportional to $|v-v_*|^2 \Phi(|v-v_*|)$. Theorem \ref{thmcriterion}, in its simplest version, can now be rewritten

\begin{Thm}[Criterion for decay of the Fisher information] \label{thmcritsyn}
Consider the spatially homogeneous Boltzmann equation with kernel $B(v-v_*,\sigma) = \Phi(|v-v_*|)\,\beta(\cos\theta)$, or the Landau equation with function $\Psi(|v-v_*|)$ proportional to $|v-v_*|^2\Phi(|v-v_*|)$. Let
\[ \ov{\gamma} = \sup_{r>0} \frac{r\,|\Phi'(r)|}{\Phi(r)} = \sup_{r>0} \left| \frac{r\Psi'(r)}{\Psi(r)} -2 \right|.\]
Then, the Fisher information $I$ is nonincreasing along solutions of equation, as soon as
\[ \ov{\gamma}\leq \sqrt{K_*},\]
where $K_*$ is the best constant in the following inequality, required for all $F:\S^{d-1}/\{\pm I\}\to \R_+$,

\bul For Boltzmann:
\begeq\label{eqBcrit}
\int_{\S^{d-1}} \Gamma_\beta(\sqrt{F}) \leq \frac1{K_*} \int_{\S^{d-1}} F\,\Gamma_{1,\beta}(\log F),
\endeq
where the expressions of $\Gamma_\beta$ and $\Gamma_{1,\beta}$ are in Definition \ref{defcomLB};

\bul For Landau:
\begeq\label{eqLcrit}
\int_{\S^{d-1}} \Gamma_1(\sqrt{F}) \leq \frac1{K_*} \int_{\S^{d-1}} F\,\Gamma_2(\log F),
\endeq
where the expressions of $\Gamma_1$ and $\Gamma_2$ are  in Proposition \ref{propGammaDelta}.
\end{Thm}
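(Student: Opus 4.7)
The plan is to recognise Theorem \ref{thmcritsyn} as a clean restatement of Theorem \ref{thmcriterion} for product kernels, translated into the $\Gamma$-calculus language of Section \ref{secBLsphere}, and then to pass to the Landau case by the asymptotics of grazing collisions. I foresee two short arithmetic steps and one limiting argument; no new inequalities need to be produced.

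First I would handle the Boltzmann case. From the explicit expressions \eqref{Gammabeta} and \eqref{Gamma1beta}, integration over $\S^{d-1}$ gives
\begin{align*}
\int_{\S^{d-1}} \Gamma_\beta(\sqrt F) \,dk & = \tfrac{1}{2}\iint_{\S^{d-1}\times\S^{d-1}} \bigl(\sqrt F(\sigma)-\sqrt F(k)\bigr)^2\,\beta(k\cdot\sigma)\,dk\,d\sigma,\\
\int_{\S^{d-1}} F\,\Gamma_{1,\beta}(\log F)\,dk & = \tfrac{1}{2}\iint_{\S^{d-1}\times\S^{d-1}} F(k)\,\bigl|\nabla\log F(\sigma)-\nabla\log F(k)\bigr|_{k,\sigma}^2\,\beta(k\cdot\sigma)\,dk\,d\sigma,
\end{align*}
so that inequality \eqref{eqBcrit} is literally \eqref{critfinal3} (the factors $\tfrac{1}{2}$ cancel). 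Next, for the product kernel $B(r,\cos\theta)=\Phi(r)\beta(\cos\theta)$ one has $\partial_{|z|}B = \Phi'(r)\beta(\cos\theta)$, hence $(|z|/B)\,\partial_{|z|}B = r\Phi'(r)/\Phi(r)$, independent of $\cos\theta$. Therefore $\ov{\gamma}(B)=\sup_r|r\Phi'(r)/\Phi(r)|=\ov{\gamma}$. Since the sections of $B$ are positive multiples of $\beta$, Theorem \ref{thmcriterion} applies directly with the constant $K_*$ as defined in \eqref{eqBcrit}, and the hypothesis $\ov{\gamma}\le\sqrt{K_*}$ yields monotonicity of $I$ along the Boltzmann flow.

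For the Landau case I would proceed by the asymptotics of grazing collisions. Pick a sequence $\beta_n$ concentrating on grazing collisions with $\mu_n=\int(1-k\cdot\sigma)\beta_n\,d\sigma$ held at a fixed positive value (any normalisation will do, since the criterion is scale-invariant under $\beta\mapsto c\beta$), and consider the kernels $B_n=\Phi(r)\beta_n(\cos\theta)$. The associated Landau-type function is proportional to $r^2\Phi(r)$, i.e. to $\Psi$, and $\ov{\gamma}$ is unchanged along the sequence. Proposition \ref{propBD}(ii) gives $L_{\beta_n}\to (\mu_n/(d-1))\Delta$ on smooth enough data, and the commutator identities of Section \ref{secBLsphere} transport this convergence to the bilinear forms, yielding
$$ \int_{\S^{d-1}}\Gamma_{\beta_n}(\sqrt F)\xrightarrow[AGC]{}\frac{\mu_n}{d-1}\int_{\S^{d-1}}\Gamma_1(\sqrt F), \qquad \int_{\S^{d-1}} F\,\Gamma_{1,\beta_n}(\log F)\xrightarrow[AGC]{}\frac{\mu_n}{d-1}\int_{\S^{d-1}}F\,\Gamma_2(\log F),$$
by \eqref{Gamma1b2} and its analogue for $\Gamma_\beta$. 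The common factor $\mu_n/(d-1)$ cancels in the ratio defining the optimal constant, so the best constant $K_*(\beta_n)$ of \eqref{eqBcrit} converges to the best constant of \eqref{eqLcrit}. Combining with the Boltzmann case applied to each $B_n$, and passing to the AGC limit using Proposition \ref{propLandau}(ii) for the dissipation functional, we obtain monotonicity of $I$ for the Landau equation as soon as $\ov{\gamma}\le\sqrt{K_*}$ with $K_*$ the Landau constant.

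The main obstacle, as always when one passes from Boltzmann to Landau through the AGC, is the lack of a good abstract stability theory: one must check that the Boltzmann solutions $f_n$ associated with $B_n$ converge in a sense strong enough that both $I(f_n(t))$ and the dissipation identity pass to the limit. A cleaner alternative, bypassing this issue entirely, would be to replay the proof of Theorem \ref{thmcriterion} with $L_{\beta}$ replaced by $\Delta$: start from the Landau analogue of the decomposition $-I'(f)\cdot Q_L(f,f)=\cI+\cII_1+\cII_2+\cII_3+\cIII$ obtained in Proposition \ref{propAGCFI}, dominate the unsigned term $\cIII$ by Cauchy--Schwarz against $\cII_3$ and an extra factor controlled by $|\Psi'-2\Psi/|z||\le\ov{\gamma}\,\Psi/|z|$, and reduce via the change of variables $(v,v_*)\leftrightarrow (c,r,k)$ to precisely inequality \eqref{eqLcrit} on the sphere. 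This direct route avoids any approximation argument and makes clear that the same constant $K_*$ governs both cases, as reflected in the statement.
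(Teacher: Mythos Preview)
Your proposal is correct and matches the paper's approach: the paper presents Theorem \ref{thmcritsyn} simply as a synthetic rewriting of Theorem \ref{thmcriterion} for product kernels in the $\Gamma$-calculus language of Section \ref{secBLsphere}, and you have unpacked that rewriting accurately (the identification of \eqref{eqBcrit} with \eqref{critfinal3} via \eqref{Gammabeta}--\eqref{Gamma1beta}, and of $\ov\gamma(B)$ with $\ov\gamma$, is exactly right). For the Landau case the paper likewise only remarks that \eqref{eqLcrit} is the AGC of \eqref{eqBcrit}; your option (b)---replaying Proposition \ref{propcriterion} directly with the Landau expressions of Proposition \ref{propAGCFI}---is indeed the clean route and avoids the solution-convergence issue you flag, since monotonicity is really the pointwise-in-$f$ statement $-I'(f)\cdot Q_L(f,f)\ge 0$.
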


The constant $K_*$ a priori depends on whether we are considering Boltzmann or Landau, and a priori depends on $\beta$. Here $I$ is the identity on $\S^{d-1}$. Note that in the definition of $F$, taking quotient by $\{\pm I\}$ is the same as requiring $F$ to be even. In differential geometry, $\S^{d-1}/ \{\pm I\}$ is called the projective space in dimension $d$ (that is, the set of lines in $\R^d$) and denoted $\RP^{d-1}$.

Obviously these criteria can be refined as in Section \ref{seccrit}, considering non-factorised kernels $B$, but also replacing the square roots by the more precise nonlinearity there. The advantage of the formulation above is that it is compact and synthetic. Clearly \eqref{eqLcrit} is the AGC of \eqref{eqBcrit}. The remarkable achievement of Guillen--Silvestre is the explicit reduction of the decay of $I$ to an optimal functional inequality of logarithmic Sobolev type. Before going more deeply in this issue, let us consider the linearisation.

\section{Linearisation} \label{seclinearisation}

Let $M(v) = (2\pi)^{-d/2} e^{-|v|^2/2}$. Linearising the Boltzmann equation around $M$ consists in writing $f=M(1+\var\,h)$ and letting $\var\to 0$, then $Q(f,f) \simeq \var {\cal L}_B h$, where
\begin{align*} {\cal L}_B h & = \frac1{M} \bigl[ Q(M,Mh) + Q(Mh,M)\bigr]
\\ & = \int \bigl[ h(v')+ h(v'_*) - h(v) - h(v_*) \bigr] \, M(v_*)\, B(v-v_*,\sigma)\,dv_*\,d\sigma.
\end{align*}
Of course the linearised Boltzmann equation is $\pa_t h = {\cal L}_B h$.
Without loss of generality I assume $\int h(v)\,dv =0$, $\int h(v)\,v\,dv =0$, $\int h(v)\, |v|^2\,dv=0$.
Then as $\var\to 0$, 
\[
\begin{cases} \dps H(M(1+\var h)) \simeq H(M) + \frac{\var^2}2 \|h\|_{L^2(M)}^2, \\
\dps I(M(1+\var h)) \simeq I(M) + \var^2 \|\nabla h\|_{L^2(M)}^2, \\
\dps \Gamma_1((\log (1+\var M))
\simeq \var^2 \Gamma_1(h).
\end{cases}
\]
So Theorem \ref{thmcritsyn} admits the following linearised version involving the Dirichlet form
\[ {\cal I}(h) = \int_{\R^d} |\nabla h(v)|^2\,M(v)\,dv.\]

\begin{Thm}[Decay of the Dirichlet form along linearised Boltzmann] \label{thmcritlin}
With the same notation as in Theorem \ref{thmcritsyn}, for ${\cal I}$ to be nonincreasing along either the spatially homogeneous linearised Boltzmann or linearised Landau equation, it is sufficient that $\ov{\gamma}\leq \sqrt{4P_*}$, where $P_*$ is the best constant in the following inequality: for all even $h:\S^{d-1}/\{\pm I\}\to \R$ with $\int h\,dv =0$,

\bul For Boltzmann:
\[ \int_{\S^{d-1}} \Gamma_\beta(h) \leq \frac1{P_*} \int_{\S^{d-1}} \Gamma_{1,\beta}(h).\]

\bul For Landau:
\[ \int_{\S^{d-1}} \Gamma_1(h) \leq \frac1{P_*} \int_{\S^{d-1}} \Gamma_2(h).\]
\end{Thm}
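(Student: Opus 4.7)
The plan is to derive Theorem \ref{thmcritlin} by directly linearising the nonlinear argument of Sections \ref{secGamma}--\ref{seccrit}. Inserting $f = M(1+\varepsilon h)$, I would expand every quantity appearing in $-I'(f)\cdot Q(f,f)$ in powers of $\varepsilon$ and keep the $O(\varepsilon^2)$ contributions, which encode exactly the evolution of $\mathcal{I}(h)$ along $\partial_t h = \mathcal{L}_B h$. The whole scheme --- tensorisation, $\Gamma$-calculus, Cauchy--Schwarz to kill the sign of $\cIII$, and reduction to the sphere --- transplants verbatim, provided that the correct algebraic cancellations occur at orders $\varepsilon^0$ and $\varepsilon^1$, and that the resulting inequalities on $\S^{d-1}$ are identified with their linear versions.

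\textbf{Key expansions.} Using $\nabla\log M = -v$ and the conservation identity $M'M'_* = MM_*$ along collisions, I would establish
\begin{align*}
I(M(1+\varepsilon h)) &= I(M) + \varepsilon^2\,\mathcal{I}(h) + O(\varepsilon^3), \\
\xi = -\nabla\log f &= v - \varepsilon\nabla h + O(\varepsilon^2), \\
f'f'_* - ff_* &= \varepsilon\, MM_*\,\bigl(h' + h'_* - h - h_*\bigr) + O(\varepsilon^2),
\end{align*}
assuming the standard centering conditions $\int Mh\,dv = \int v_iMh\,dv = \int |v|^2Mh\,dv = 0$ to suppress the $O(\varepsilon)$ contribution to $I$. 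Substituting into \eqref{c1}--\eqref{c3}, the leading velocity pieces cancel in $\cI$ by conservation of momentum $v'+v'_* = v+v_*$; and they also cancel in $\cII$ at orders $\varepsilon^0$ and $\varepsilon^1$. The order-zero cancellation is just $|(v'-v'_*)-(v-v_*)|^2_{k,\sigma} = 0$, which follows from $P_{k\sigma}(|z|k) = |z|\sigma$ (Proposition \ref{propPsk}(ii)); the order-one cancellation is more subtle and relies on combining this identity with $(P_{k\sigma})^* = P_{\sigma k}$ (Proposition \ref{propPsk}(iii)) to verify that the cross terms $(P_{k\sigma}X_0)\cdot Y_1$ and $(P_{k\sigma}X_1)\cdot Y_0$ coincide with $Y_0\cdot Y_1$ and $X_0\cdot X_1$ respectively. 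Finally in $\cIII$ the $O(\varepsilon)$ piece $\varepsilon\iiint MM_*(h'+h'_*-h-h_*)|z|\partial_{|z|}B\,d\sigma\,dv\,dv_*$ vanishes by the pre-/post-collisional exchange $(v,v_*,\sigma)\leftrightarrow(v',v'_*,k)$, since $MM_*$ and $|z|\partial_{|z|}B$ are symmetric while $h'+h'_*-h-h_*$ is antisymmetric.

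\textbf{Reduction to the sphere and origin of the factor $4$.} Once the $O(\varepsilon^2)$ contributions are isolated, I would apply the change of variables $(v,v_*,\sigma)\mapsto(c,r,k,\sigma)$ of Section \ref{seccrit} and introduce the natural tensorised linearised perturbation
\[ G_{c,r}(\sigma) = h\!\left(c+\tfrac{r}{2}\sigma\right) + h\!\left(c-\tfrac{r}{2}\sigma\right), \]
which is an even function on $\S^{d-1}$. The essential point is the \emph{spherical symmetry of $M$}: the product $M(c+r\sigma/2)M(c-r\sigma/2) = (2\pi)^{-d}e^{-|c|^2-r^2/4}$ is independent of $\sigma$, so it factors out of the spherical integrals. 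The combined $\cI+\cII$ contribution reduces, for each fixed $(c,r)$, to a multiple of $\int_{\S^{d-1}}\Gamma_{1,\beta_r}(G_{c,r})$, whereas the bound on $\cIII$ produced by the Cauchy--Schwarz step of Proposition \ref{propcriterion} reduces to $\bar\gamma^2$ times a multiple of $\int_{\S^{d-1}}\Gamma_{\beta_r}(G_{c,r})$. The factor $4$ in $\sqrt{4P_*}$ appears because the linearisation of the nonlinearity $(f'f'_*-ff_*)^2/(ff_*+f'f'_*)$ used in Proposition \ref{propcriterion} produces $(\varepsilon^2/2)MM_*(h'+h'_*-h-h_*)^2$, whereas the quadratic forms $\Gamma_\beta$ and $\Gamma_{1,\beta}$ defined in \eqref{Gammabeta}--\eqref{Gamma1beta} are normalised by $1/2$; the mismatch yields exactly the prefactor $1/4$ in front of $\Gamma_\beta(G)$. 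Demanding the resulting inequality $(\bar\gamma^2/4)\int\Gamma_\beta(G)\leq\int\Gamma_{1,\beta}(G)$ for all even $G$ amounts to $\bar\gamma^2\leq 4P_*$. The Landau statement follows either by the same linearisation of \eqref{eqLcrit}, or by passing to the asymptotics of grazing collisions via \eqref{Gamma1b2}, replacing $(\Gamma_\beta,\Gamma_{1,\beta})$ by $(\Gamma_1,\Gamma_2)$.

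\textbf{Main obstacle.} The delicate step is the order-$\varepsilon$ cancellation inside $\cII$, which is where all the geometry of the collision sphere enters: the algebraic miracle is that the cross terms produced when linearising $|Y-X|^2_{k,\sigma}$ about $X_0=|z|k$, $Y_0=|z|\sigma$ exactly match up thanks to the identity $P_{k\sigma}(|z|k)=|z|\sigma$ together with $(P_{k\sigma})^*=P_{\sigma k}$, so that the $O(\varepsilon)$ quadratic-form cross terms telescope into an intrinsic linearisation $\varepsilon^2\,|(\nabla h'-\nabla h'_*)-(\nabla h-\nabla h_*)|^2_{k,\sigma}$. Once this cancellation is verified, the remaining work is bookkeeping of prefactors and a straightforward application of spherical symmetry; the rest of the proof is mechanical.
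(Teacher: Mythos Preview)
Your proposal is correct and follows the same route as the paper, which simply records the linearisation identities $I(M(1+\varepsilon h))\simeq I(M)+\varepsilon^2\mathcal{I}(h)$ and $\Gamma_1(\log(1+\varepsilon h))\simeq\varepsilon^2\Gamma_1(h)$ and declares that Theorem~\ref{thmcritsyn} linearises to Theorem~\ref{thmcritlin}; you supply the detailed $\varepsilon$-expansion and cancellation checks that the paper leaves implicit. One small imprecision: it is only $\cII_1+\cII_2$ (not the full $\cI+\cII$) that reduces to $\int\Gamma_{1,\beta_r}(G_{c,r})$ on the sphere---$\cI$ is nonnegative and simply discarded in Proposition~\ref{propcriterion}, while $\cII_3$ is used separately to absorb half of $|\cIII|$ in the Cauchy--Schwarz step---but this does not affect the validity of your argument.
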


\begin{Rk} The range of $\gamma$ may possibly be further improved by restricting to functions $h$ satisfying not only $\int h =0$, but also $\int h\,v\,dv =0$ and $\int h |v|^2\,dv =0$.
\end{Rk}

The constant $P_*$ a priori depends on whether we are considering Boltzmann or Landau, and a priori depends on $\beta$. For the Landau case, it cannot be larger than $\lambda_1$, which is thus the best uniform lower bound we may hope for. As we shall see later, that best possible bound does hold true: $P_*\geq\lambda_1$, and in general there is equality. Before that we need some reminders.

\bibnotes

The relation between entropic inequalities and Poincar\'e (spectral gap) inequalities is very well known, see e.g. \cite{BGL:book,vill:oldnew}. In the special case $\gamma=0$ (Maxwellian molecules), the monotonicity of ${\cal I}$ follows from the commutation property with $\Delta$; apart from that special case the general result of Theorem \ref{thmcritlin} is new as far as I know.

\section{Reminders on diffusion processes}

Some reminders from the theory of diffusion of processes. Since Bakry and \'Emery, the theory has been built upon the concept of {\bf curvature-dimension condition} $\CD(K,N)$: Ricci curvature bounded below by $K$, dimension bounded above by $N$. Expressed in terms of the Laplace--Beltrami operator, this condition, set on a Riemannian manifold $(M,g)$, reads
\[\forall f:M\to\R,\qquad  \Gamma_2 (f) \geq \frac{(\Delta f)^2}{N} + K \Gamma_1(f).\]
(Here $\Gamma_1(f)=\Gamma_1(f,f)$, etc.)
For instance the sphere satifies $\CD(d-2,d-1)$:
\begeq\label{CDdd}
\forall f:\S^{d-1}\to\R,\qquad
\Gamma_2(f) \geq \frac{(\Delta f)^2}{d-1} + (d-2)\, \Gamma_1(f).
\endeq
Keeping only the latter term, this obviously yields
\begeq\label{fGamma2}
\int f \Gamma_2(\log f) \geq (d-2) \int f \Gamma_1(\log f) = (d-2) I(f).
\endeq
Along the heat flow, $dH/dt=-I$, $dI/dt = -2 \int f\Gamma_2(\log f)$; so \eqref{fGamma2} is the same as
\[ -\frac{d}{dt} I(e^{t\Delta} f) \geq - 2 (d-2) \frac{d}{dt} H(e^{t\Delta} f)\geq 0. \]
Upon integrating from $0\to \infty$ {\em \`a la} Stam, we discover
\begeq\label{LSIsub} H(f)  \leq \frac1{2(d-2)} I(f), \endeq
that is, the log Sobolev inequality with constant $d-2$. This is good, but not optimal. On the one hand, in dimension 2 the unit sphere $\S^1$ has no curvature, but still satisfies log Sobolev, with constant 1. On the other hand, linearisation of \eqref{LSIsub} yields a Poincar\'e inequality with constant $d-2$, but we know from the discussion of spherical harmonics that the spectral gap for $-\Delta$ on $\S^{d-1}$ is $d-1$, not $d-2$. It turns out that $d-1$ is indeed optimal, not only for Poincar\'e but also for log Sobolev: For all probability densities $f$ on $\S^{d-1}$, for all functions $h$ on $\S^{d-1}$ with $\int h =0$,
\begeq\label{Lichne} \int h^2 \leq \frac1{d-1} \int \Gamma_1 (h), \endeq
\begeq\label{optBE} H(f) \leq \frac1{2(d-1)} I(f),\endeq
\begeq\label{diffBE} \int f \,\Gamma_1(\log f) \leq \frac1{d-1} \int f\,\Gamma_2(\log f).\endeq
\med

To summarise: There are three functional inequalities of interest for us: 

\bul Optimal Poincar\'e (spectral gap): For all functions $h$,
\begeq\label{optP} \int_{M} h^2 - \left(\int_M h\right)^2 
\leq \frac1{\lambda_1(M)} \int_{M}|\nabla h|^2 \endeq

\bul Optimal log Sobolev inequality: For all probability densities $f$,
\begeq\label{optLS}
\int_M f \log f \leq \frac1{2L(M)} \int_M f |\nabla\log f|^2
\endeq

\bul Optimal differential log Sobolev inequality, or optimal differential Bakry--\'Emery inequality: For all probability densities $f$,
\begeq\label{optDLS}
\int_M f |\nabla \log f|^2 \leq \frac1{L_*(M)} \int f\Gamma_2 (\log f);
\endeq
and always 
\[ L_* \leq L \leq \lambda_1,\]
while
\[ L_*(\S^{d-1}) = L(\S^{d-1}) = \lambda_1(\S^{d-1}) = d-1.\]

When this is applied to Theorem \ref{thmcritsyn} and Theorem \ref{thmcritlin}, taking into account $\int \Gamma_1(\sqrt{F}) = (1/4) I(F)$, this immediately yields a range of exponents for decay of the Fisher information along the Landau equation: $K_* = 4 L_* = 4(d-1)$, $P_* = \lambda_1 = d-1$, and in both cases $\ov{\gamma} \leq 2\sqrt{d-1}$. This is already good, and better than the crude bounds in Theorem \ref{thmGS0}. But it still fails to cover the most important case, physically speaking, $\gamma=-3$ in dimension $d=3$, in fact with this bound $\ov{\gamma}$ can only go up to $2\sqrt{2}$, which is $< 3$ (not by much, but that is life!).

However there is a crucial information which was not yet exploited: Since $F$ is assumed even in the criteria of Theorems \ref{thmcritsyn} and \ref{thmcritlin}, this can only improve the constants; in fact we are looking for optimal constants on $\RP^{d-1}$, rather than $\S^{d-1}$.
For the spectral gap, we already know the answer: considering even functions amounts to rule out spaces $H_\ell$ with odd $\ell$, so
\begeq\label{lambda1RP} \lambda_1(\RP^{d-1}) = \lambda_2 (\S^{d-1}) = 2d  \endeq
(much better than $d-1$). Also, for $d=2$ one can work out completely
\[ L_*(\RP^1) = L(\RP^1) = 4 \quad (= \lambda_1(\RP^1) ). \]

For higher dimensions the constants $L$ and $L_*$ are not as good: In fact
\[ d\geq 3\Longrightarrow\qquad  L_*(\RP^{d-1}) \leq L(\RP^{d-1}) < 2d. \]
However, the excellent spectral gap already implies at least an improved bound on $L$: Indeed, a theorem by Rothaus shows that if $M$ satisfies $\CD(K,N)$ and $\lambda_1> KN/(N-1)$ then $L(M) \geq [K(1-1/N)+ (4/N)\lambda_1]/(1+1/N)^2$; in our case this yields
\begeq\label{LRPd} L(\RP^{d-1}) \geq (d-1) \left(\frac{d+2}{d}\right)^2. \endeq
(Note that it does capture the optimal constant $4$ when $d=2$.) In dimension~3 it yields an excellent bound $60/9\simeq 5.55$ for $L$. While this gives no lower bound for $L_*$, it suggests at least that the latter is also improved. In fact, one can establish
\begeq\label{boundLstar2} 5.5 \leq L_*(\RP^2) < 5.74. \endeq
In higher dimensions, the bound gets worse and worse, but still one has at least that $2d = \lambda_1(\RP^{d-1})\geq L_*(\RP^{d-1})\geq L_*(\S^{d-1})= d-1$.
\medskip

Now we are ready to rework Theorems \ref{thmcritsyn} and \ref{thmcritlin}, and obtain

\begin{Thm}[Guillen--Silvestre theorem, full version] \label{thmfullGS}
Assume that the function $\Psi$ appearing in Landau's collision operator satisfies $\ov{\gamma} = \sup |r\Psi'(r)/\Psi(r) -2 | \leq 4$. Then

(i) Fisher's information can only go down with $t$ along the spatially homogeneous Landau equation; 

(ii) The Dirichlet form $\int |\nabla h|^2\,M$ can only go down with $t$ along the spatially homogeneous linearised Landau equation.
\end{Thm}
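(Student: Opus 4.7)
The plan is to apply Theorems~\ref{thmcritsyn} and~\ref{thmcritlin} directly, reducing the two monotonicity claims to sharp functional inequalities on $\RP^{d-1}$, the sphere quotiented by $\{\pm I\}$ (i.e., the setting of even test functions). Under the hypothesis $\ov{\gamma}\le 4$, statement (i) will follow as soon as the best constant $K_\ast$ in \eqref{eqLcrit} satisfies $K_\ast\ge 16$, and statement (ii) as soon as $P_\ast\ge 4$ in the corresponding linearised criterion.

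The linearised claim (ii) is the easier one, and I would dispatch it first. As noted at the end of Section~\ref{seclinearisation}, for the Landau case $P_\ast$ coincides with the spectral gap of $-\Delta$ on $\RP^{d-1}$, which by~\eqref{lambda1RP} equals $\lambda_1(\RP^{d-1})=2d$. Since $d\ge 2$, we obtain $4P_\ast=8d\ge 16$, i.e.\ $\sqrt{4P_\ast}\ge 4\ge\ov{\gamma}$, so Theorem~\ref{thmcritlin} applies.

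For the nonlinear claim (i), since $K_\ast=4L_\ast(\RP^{d-1})$, I must establish $L_\ast(\RP^{d-1})\ge 4$ uniformly in $d\ge 2$. I would split by dimension. For $d=2$, the explicit computation gives $L_\ast(\RP^1)=4$ with equality. For $d=3$, the quantitative bound $L_\ast(\RP^2)\ge 5.5$ from~\eqref{boundLstar2} is enough. For $d\ge 5$, the trivial monotonicity $L_\ast(\RP^{d-1})\ge L_\ast(\S^{d-1})=d-1\ge 4$ (restricting to even test functions can only raise the optimal constant) closes the case.

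The genuine obstacle is the borderline dimension $d=4$, where the bare $\CD(d-2,d-1)$ bound $L_\ast(\S^3)=3$ falls \emph{below} the required threshold $4$, so the projective improvement of the spectral gap must really be activated. My strategy here is a Rothaus-type refinement of Bakry--\'Emery: start from~\eqref{CDdd},
\[
\Gamma_2(f)\ge \frac{(\Delta f)^2}{d-1}+(d-2)\,\Gamma_1(f),
\]
and reinject the projective Poincar\'e inequality, which on even mean-zero functions reads $\int(\Delta f)^2\ge 2d\int|\nabla f|^2$ by~\eqref{lambda1RP}, through a weighted combination of the two lower bounds. A careful optimisation of the weights should upgrade the effective curvature above $d-2=2$ by enough to cross $L_\ast=4$ in $d=4$. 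Combined with the case analysis above, this yields $L_\ast(\RP^{d-1})\ge 4$ uniformly in $d\ge 2$, hence $K_\ast\ge 16$, and (i) follows from Theorem~\ref{thmcritsyn}.
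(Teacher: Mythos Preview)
Your treatment of (ii) matches the paper exactly: $P_\ast=\lambda_1(\RP^{d-1})=2d$, so $\sqrt{4P_\ast}=\sqrt{8d}\ge 4$ and Theorem~\ref{thmcritlin} applies.

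For (i), your case-split handles $d=2$, $d=3$, and $d\ge 5$ correctly, but the argument at $d=4$ is a genuine gap. You propose to ``reinject the projective Poincar\'e inequality'' into the $\CD(d-2,d-1)$ bound, but the differential Bakry--\'Emery inequality concerns the weighted, nonlinear functional $\int f\,\Gamma_2(\log f)$, not $\int\Gamma_2(h)$ for some mean-zero $h$. A linear Poincar\'e inequality like $\int(\Delta h)^2\ge 2d\int|\nabla h|^2$ does not apply directly to $\int f(\Delta\log f)^2$ because of the weight $f$; and the Rothaus theorem you allude to (\eqref{LRPd}) bounds $L$, not $L_\ast$. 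As written, your $d=4$ step is a hope rather than a proof.

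The paper avoids this case-split entirely. In Section~\ref{secdiffusive} it factors the inequality as $\int\Gamma_1(\sqrt f)\le\lambda_1^{-1}\int\Gamma_2(\sqrt f)\le(\tilde K\lambda_1)^{-1}\int f\,\Gamma_2(\log f)$, applies the projective Poincar\'e to $\sqrt f$ (so $\lambda_1=2d$), and then uses the nonlinear change-of-variable identity~\eqref{intGamma2sqrt} together with $\CD(d-2,d-1)$ to obtain $\tilde K\ge 12/(d+1)$. This yields the uniform bound $K_\ast\ge 24d/(d+1)\ge 16$ for all $d\ge 2$, in particular $K_\ast\ge 96/5>16$ at $d=4$. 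The key ingredient you are missing is the integration-by-parts formula~\eqref{fmlnl4}, which lets one trade $\int f(\Delta\log f)^2$ for controllable pieces of $\int f\,\Gamma_2(\log f)$ and lower-order terms---this is what makes the nonlinear-to-linear passage work.
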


\begin{Rk} The choice of the constant 4 in Theorem \ref{thmfullGS} is because the Landau equation theory assumes at least $\gamma\geq -4$. \end{Rk}

\begin{proof}[Proof of Theorem \ref{thmfullGS}]
For (i): In view of Theorem \ref{thmcritsyn}, suffices to check $L_*(\RP^{d-1})\geq 4$. This is postponed to Section \ref{secdiffusive}: There I shall prove that $L_*(\RP^1) = 4$, $L_*(\RP^{2})\geq 4.5$ (a bit short of the better bounds $4.75$ and $5.5$ obtained respectively by Guillen--Silvestre and by Ji, but quite sufficient for this theorem), and for higher dimensions $L_*(\RP^{d-1}) \geq 6 d/(d+1)$ (quite bad in large dimension since actually $L_*\geq d-1$, but also quite sufficient for this theorem).

Then, (ii) is simpler: It follows directly from \eqref{lambda1RP} and Theorem \ref{thmcritlin}; actually any $\ov{\gamma}\leq \sqrt{8d}$ will do.
\end{proof}

Now what about the Boltzmann equation, that is, what to do of the inequalities appearing in the first half of Theorems \ref{thmcritsyn} and \ref{thmcritlin}? These ones do not fall within the range of usual functional inequalities, but they are related as the preceding chapters demonstrate. The next chapter will be devoted to the linearised case: a spectral gap problem. The subsequent four chapters will consider the nonlinear inequality.

\bibnotes

There are two lines of thought behind curvature-dimension bounds: operators and geodesics. The first one goes via functional inequalities involving $\Delta$, as in the works of Bakry, Ledoux and others. The other one involves the behaviour of geodesics and the way they distort measure. This dichotomy can be seen as an Eulerian/Lagrangian duality; see \cite[Chapter~14]{vill:oldnew}. The second approach has been at the core of the synthetic theory of curvature-dimension bounds \cite[Part III]{vill:oldnew}. In the smooth setting, the Bochner formula relates the two approaches, which are therefore equivalent in a smooth context.

For functional inequalities in log Sobolev style, Bakry--Gentil--Ledoux \cite{BGL:book} is a standard rich introduction to the subject; the differential Bakry--\'Emery inequality with optimal constants is Eq. (5.7.5) in that book. Another excellent introduction is the collective work \cite{toulouse:sobolog}, in french. 

Inequality \eqref{Lichne} for $\CD(d-2,d-1)$ manifolds is due to Lich\'erowicz, while \eqref{optBE} and \eqref{diffBE} are part of the Bakry--\'Emery theory. The optimal inequality \eqref{optBE} is discussed in Ledoux \cite{ledoux:hyperc:92}, who develops a unified approach to Poincar\'e and log Sobolev inequalities.

Michel Ledoux has kindly provided references on the problem of constants on $\RP^{d-1}$, with the help of Fabrice Baudoin and Dominique Bakry.  The theorem by Rothaus is from \cite{rothaus:hyperc:86}. Saloff-Coste \cite{lsc:rate:94} considers the log Sobolev and Poincar\'e constants on $\RP^{d-1}$ and notices that the log Sobolev and Poincar\'e inequality differ (Theorem 9.2 therein), and that the ratio should go down to $1/2$ as $d\to\infty$ (Remark 3), so that in large dimension the sphere and projective space would have nearly similar $L$ and $L_*$ constants. Details do not seem to have been written down before. A lecture by Fontenas \cite{fontenas:Jacobi:98} addresses this and related topics. Sehyun Ji worked on this issue and presented preliminary results in the summer school Mathemata (July 2024), then announced \cite{ji:BE} a lower bound $L_*(\RP^{d-1})\geq d+3-1/(d-1)$.

\section{Spectral gap}

This section addresses the problem of monotonicity for the Dirichlet form ${\cal I}$ along the spatially homogeneous linearised Boltzmann equation; that is, the linearised version of the initial quest.

Consider the following commutative diagram, where $h$ is a function on $\S^{d-1}$ with zero average.

\hspace{50mm}\begin{tikzcd}
\dps \frac12 \int_{\S^{d-1}} h^2 \arrow[r,"{-\ddt{L_\beta}}"]
\arrow[d,"-\ddt{\Delta}"] 
& {\dps \int_{\S^{d-1}} \Gamma_\beta(h)}  \arrow[d, "-\ddt{\Delta}"]\\
{\dps \int_{\S^{d-1}}\Gamma_1(h)} \arrow[r,"-\ddt{\Delta}"] 
& {\dps 2\int_{\S^{d-1}} \Gamma_{1,\beta}(h)}
\end{tikzcd}

Knowing that $-\ddt{\Delta} \int h^2 \geq \lambda_1 \int \Gamma_1(h)$ does not imply in itself that $\ddt{L_\beta} \ddt{\Delta} \frac12\int h^2 \geq -\lambda_1 \ddt{L_\beta}\int \Gamma_1(h)$; it is actually the other way round. But these are symmetric operators with a common basis of eigenfunctions (spherical harmonics $e_k$), so we may write $(\lambda_k)$ and $(\nu_k)$ for the eigenvalues of $-\Delta$ and $-L_\beta$, with repetition, and if $h=\sum x_k\,e_k$, assuming $\int h =0$,
\[ \frac12 \int h^2 = \frac12 \sum_{k\geq 1} x_k^2,\qquad
\int \Gamma_1(h) = \sum_{k\geq 1} \lambda_k x_k^2 \]
\[ \int \Gamma_\beta(h) = \sum_{k\geq 1} \nu_k x_k^2\qquad
2 \int \Gamma_{1,\beta}(h) = 2 \sum_{k\geq 1} \lambda_k \nu_k x_k^2.\]
Then 
\[ 2 \int \Gamma_{1,\beta}(h) \geq 2 \lambda_1 \sum_{k\geq 1}\nu_k x_k^2 = 2 \lambda_1 \int\Gamma_\beta(h).\]
Let $P_*$ be as in Theorem \ref{thmcritlin} (first forget about the parity requirement); then the above computation shows that $P_* \geq \lambda_1$, whatever $\beta$, and also in the AGC. It is also immediate from those formulae that $\lambda_1$ is achieved for $x=e_1$, except if $\nu_1=0$ (which could arise from a peculiar choice of $\beta$; recall formula \eqref{nuell}). When only even functions are considered, as in the statement of Theorem \ref{thmcritlin}, the discussion is the same with $\lambda_1$ and $\nu_1$ replaced by $\lambda_2$ and $\nu_2$.

The above discussion is summarised into the

\begin{Prop} \label{propPstar}
Whatever the collision kernel $\beta$, it holds $P_* \geq \lambda_1(\RP^{d-1}) = \lambda_2(\S^{d-1})$, with equality as soon as $\nu_2(\beta)\neq 0$. \end{Prop}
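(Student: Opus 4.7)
The plan is to exploit the commutation $[L_\beta,\Delta]=0$ established in Proposition \ref{propBD}(iii): since both $-L_\beta$ and $-\Delta$ are nonnegative self-adjoint operators on $L^2(\S^{d-1})$ that commute, they admit a common orthonormal eigenbasis, which we can take to be the basis of spherical harmonics $(Y_{\ell,m})$ from Proposition \ref{propspher}. By Theorem \ref{thmspectrum}, on the eigenspace $H_\ell$ the operator $-L_\beta$ acts by $\nu_\ell(\beta)$, while $-\Delta$ acts by $\lambda_\ell = \ell(\ell+d-2)$.

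First I would expand any $h$ on $\S^{d-1}/\{\pm I\}$ with zero mean into spherical harmonics $h = \sum_{\ell\geq 1,m} x_{\ell,m}\,Y_{\ell,m}$; evenness forces $x_{\ell,m}=0$ for odd $\ell$, so only even $\ell\geq 2$ contribute. Using the self-adjointness identity $\int \Gamma_\beta(h) = -\int h\,L_\beta h$ and the analogous $\int \Gamma_{1,\beta}(h) = -\int (\nabla h)\cdot L_\beta(\nabla h) = -\int h\,\Delta L_\beta h$ (the latter using $[\Delta,L_\beta]=0$ and integration by parts), one obtains the diagonal expressions
\begin{equation*}
\int_{\S^{d-1}}\Gamma_\beta(h) = \sum_{\ell\;\mathrm{even},\,m} \nu_\ell\, x_{\ell,m}^2,
\qquad
2\int_{\S^{d-1}}\Gamma_{1,\beta}(h) = 2\sum_{\ell\;\mathrm{even},\,m} \lambda_\ell\,\nu_\ell\, x_{\ell,m}^2.
\end{equation*}
The linearised criterion of Theorem \ref{thmcritlin} therefore becomes the pure weighted-eigenvalue inequality $\sum \nu_\ell x_{\ell,m}^2 \leq P_*^{-1} \sum \lambda_\ell\nu_\ell x_{\ell,m}^2$, and $P_*$ is simply the infimum of $\lambda_\ell$ over those even $\ell\geq 2$ for which $\nu_\ell\neq 0$.

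The lower bound then comes immediately: since $\lambda_\ell$ is increasing in $\ell$, the infimum is at least $\lambda_2 = 2d = \lambda_1(\RP^{d-1})$, regardless of the distribution of $\nu_\ell$'s, giving $P_*\geq \lambda_1(\RP^{d-1})$. For equality, note that if $\nu_2(\beta)\neq 0$, then testing with $h = Y_{2,m}$ (a nontrivial even spherical harmonic of degree $2$) gives ratio exactly $\lambda_2$, so $P_* = \lambda_2$. This step is entirely mechanical given the commutation; the only point requiring care is the legitimacy of the term-by-term manipulation for generic $h$, but this is a standard density/truncation argument in $L^2(\S^{d-1})$ using the spectral decomposition.

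The main (very small) obstacle is therefore not in the analysis but in being clean about the parity reduction: I want to make sure that restricting to even $h$ corresponds precisely to discarding the $H_\ell$ with odd $\ell$, and that the identification $\lambda_1(\RP^{d-1}) = \lambda_2(\S^{d-1})$ is justified — this follows from viewing functions on $\RP^{d-1}$ as even functions on $\S^{d-1}$, whose Laplace spectrum is exactly $\{\lambda_{2k}\}_{k\geq 0}$, with first nontrivial eigenvalue $\lambda_2 = 2d$.
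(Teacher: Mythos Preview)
Your proof is correct and follows essentially the same route as the paper: exploit the commutation of $L_\beta$ and $\Delta$ to simultaneously diagonalize in the spherical-harmonic basis, write $\int\Gamma_\beta(h)=\sum\nu_\ell x_{\ell,m}^2$ and $\int\Gamma_{1,\beta}(h)=\sum\lambda_\ell\nu_\ell x_{\ell,m}^2$, and read off $P_*$ as the infimum of $\lambda_\ell$ over even $\ell\geq 2$ with $\nu_\ell\neq 0$. The only wrinkle is a sign slip in your intermediate identity (it should read $\int\Gamma_{1,\beta}(h)=\int h\,\Delta L_\beta h$, not $-\int h\,\Delta L_\beta h$), but your final diagonal formula and conclusion are correct.
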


\begin{Rk} It is enlightening to work out directly the one-dimensional case. When $d=2$, identify $\S^1$ with $\R/(2\pi\Z)$, write $\dot{g}$ for the usual derivative of $g$, then the problem becomes
\begeq\label{Pstar1d}
\iint
\bigl(g(\alpha')-g(\alpha)\bigr)^2\,\beta(\cos(\alpha-\alpha'))\,d\alpha\,d\alpha'
\leq \frac1{P_*} 
\iint \bigl(\dot{g} (\alpha')-\dot{g}(\alpha)\bigr)^2\,\beta(\cos(\alpha-\alpha'))\,d\alpha\,d\alpha'.
\endeq
Let us prove \eqref{Pstar1d} by elementary means. Upon first sight, the optimal constant $P_*$ will depend on $\beta$. But rewrite everything in Fourier series. Writing by abuse of notation $\hat{\beta}$ for $\hat{\beta\circ\cos}$, 
\[ \beta(\alpha) = \sum_{k\in\Z} \hat{\beta}(k)\,\cos(k\alpha),\qquad
g(\alpha) = \sum_{k\in\Z} \hat{g}(k) \,e^{i k\alpha}, \]
where
\[ g(\alpha) = \frac1{2\pi} \int_0^{2\pi} g(t)\,e^{-i\, k t}\,dt.\]
By approximation we may assume that $\beta$ is integrable. Then, using standard formulas of Fourier transform,
\begin{align*}
\frac12 \iint & \bigl( g(\alpha') - g(\alpha)\bigr)^2\,\beta(\cos(\alpha'-\alpha))\,d\alpha\,d\alpha'\\
& = \int g(\alpha)^2\,d\alpha \int \beta(\cos\alpha')) \,d\alpha''
- 2 \int g(\alpha)\,g(\alpha')\,\beta(\cos(\alpha'-\alpha))\,d\alpha\,d\alpha'\\
& = \|g\|_{L^2}^2 \left(\int\beta\right) - \int g (g\ast\beta)\\
& = 2\pi^2 \Bigl[ \bigl( \sum_k |\hat{g}(k)|^2\bigr)\hat{\beta}(0) - \Re \sum_k \hat{g}(k) (\hat{g}(k)\hat{\beta}(k))^* \Bigr] \\
& = 2\pi^2 \Bigl[ \sum_k |\hat{g}(k)|^2 \bigl( \hat{\beta}(0) - \Re \hat{\beta}(k)\bigr)\Bigr]\\
& = 2\pi^2 \Bigl[ \sum_k |\hat{g}(k)|^2 \bigl( \hat{\beta}(0) - \hat{\beta}(k)\bigr)\Bigr].
\end{align*}
So \eqref{Pstar1d} is equivalent to
\begeq\label{Pstar1deq}
\sum_{k\in\Z} |\hat{g}(k)|^2 \bigl(\hat{\beta}(0)-\hat{\beta}(k)\bigr)
\leq \frac1{P_*} \sum_{k\in\Z} |k|^2 |\hat{g}(k)|^2 \bigl(\hat{\beta}(0)-\hat{\beta}(k)\bigr).
\endeq
The coefficients $\hat{\beta}(0)-\hat{\beta}(k)$ are nonnegative since $\beta$ is, and vanish for $k=0$. So \eqref{Pstar1deq} holds with $P_*=1$. Now if we recall the additional condition that $g$ is even, i.e. $g(\alpha+\pi) = g(\alpha)$, only even coefficients $k$ remain, and $\hat{g}(\pm 1)=0$, so the constant $P_*$ improves from 1 to $4$. This handles the one-dimensional case. \qed
\end{Rk}

The Boltzmann analogue of Theorem \ref{thmfullGS} (ii) now comes as an easy consequence of the previous estimates:

\begin{Thm}[Decay of the Dirichlet form along linearised Boltzmann] \label{thmdecayDLB}
With the notation \eqref{ovgammaB}, let $B=B(|v-v_*|,\cos\theta)$ be a Boltzmann kernel satisfying $\ov{\gamma}(B)\leq 4$. Then the Dirichlet form $\int |\nabla h|^2 M$ can only go down with $t$ along the spatially homogeneous linearised Boltzmann equation.
\end{Thm}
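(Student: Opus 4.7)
The plan is to invoke the general reduction from Theorem \ref{thmcritlin} (applied to the Boltzmann side of that criterion) and combine it with the spectral gap estimate of Proposition \ref{propPstar}. Theorem \ref{thmcritlin} says that for the Dirichlet form to be nonincreasing along the spatially homogeneous linearised Boltzmann equation with kernel $B(|v-v_*|,\cos\theta)$, it suffices that $\ov{\gamma}(B) \leq \sqrt{4 P_*}$, where $P_*$ is the largest constant such that
\[
\int_{\S^{d-1}} \Gamma_\beta(h)\,dk \leq \frac{1}{P_*} \int_{\S^{d-1}} \Gamma_{1,\beta}(h)\,dk
\]
holds for every even zero-mean $h$ on $\S^{d-1}$ and every section $\beta$ of $B$. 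So the entire task reduces to getting a uniform (in $\beta$) lower bound for $P_*$.

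First I would recall that $L_\beta$ and $\Delta$ commute (Proposition \ref{propBD}(iii)), hence share the orthonormal basis of spherical harmonics, with eigenvalues $(\lambda_\ell,\nu_\ell)$. As explained in the discussion preceding Proposition \ref{propPstar}, for $h=\sum_{k\geq 1} x_k e_k$ with zero mean, one has simultaneously
\[
\int \Gamma_\beta(h) = \sum_{k\geq 1} \nu_k x_k^2, \qquad 2\int \Gamma_{1,\beta}(h) = 2\sum_{k\geq 1} \lambda_k \nu_k x_k^2,
\]
and restricting to even $h$ cuts away all odd spherical harmonics, leaving only indices $k$ with $\lambda_k \geq \lambda_2(\S^{d-1}) = 2d$. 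Thus $2\int \Gamma_{1,\beta}(h) \geq 2\lambda_2(\S^{d-1}) \int \Gamma_\beta(h)$, which is exactly Proposition \ref{propPstar}: $P_* \geq \lambda_1(\RP^{d-1}) = \lambda_2(\S^{d-1}) = 2d$, uniformly in the choice of $\beta$.

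Plugging this into the criterion of Theorem \ref{thmcritlin} gives
\[
\sqrt{4 P_*} \geq \sqrt{8 d} \geq 4,
\]
the last inequality holding for every $d \geq 2$. Since the hypothesis $\ov{\gamma}(B) \leq 4$ is then automatically $\leq \sqrt{4 P_*}$, the decay of the Dirichlet form along the linearised Boltzmann flow follows. This is exactly parallel to the proof of part (ii) of Theorem \ref{thmfullGS} (the Landau case), with Proposition \ref{propPstar} playing the role of the identity \eqref{lambda1RP}; in fact the same argument would give the stronger statement $\ov{\gamma}(B) \leq \sqrt{8d}$, and the number $4$ in the statement is retained only for coherence with the Landau admissibility threshold.

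There is essentially no hard step here beyond setting the machinery in place: both the reduction (Theorem \ref{thmcritlin}) and the key spectral input (Proposition \ref{propPstar}) have already been proven in the earlier sections, so the proof is just a syllogism. The one point to verify carefully is that the spectral computation of Proposition \ref{propPstar} holds without any integrability or regularity restriction on $\beta$ that would not be satisfied by a generic section of $B$; but this is ensured by the density of smooth angular kernels and the fact that only the spectral inequality $\nu_\ell \geq 0$ and the ordering $\lambda_k \geq \lambda_2$ on even modes are used, both of which are preserved under limits.
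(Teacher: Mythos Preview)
Your proof is correct and follows exactly the same route as the paper: combine Theorem \ref{thmcritlin} with Proposition \ref{propPstar} to get decay under $\ov{\gamma}(B)\leq \sqrt{8d}$, then observe $\sqrt{8d}\geq 4$ for all $d\geq 2$. The paper's proof is a single sentence making the same syllogism; your additional remarks on the parallel with Theorem \ref{thmfullGS}(ii) and on the robustness of Proposition \ref{propPstar} under limits of $\beta$ are reasonable but not needed.
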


\begin{proof}[Proof of Theorem \ref{thmdecayDLB}]
Combining Theorem \ref{thmcritlin} with Proposition \ref{propPstar} yields the decay for $\ov{\gamma}(B)\leq \sqrt{4\lambda_2(\S^{d-1})} = \sqrt{8d}$, which is always at least 4.
\end{proof}

\section{Local monotonicity criterion via curvature} \label{seccurv}

Now we are back to the study of Criterion \ref{thmcritsyn} for the nonlinear Boltzmann equation. 
Recall the expression of $\Gamma_\beta$ and $\Gamma_{1,\beta}$ from \eqref{Gammabeta}--\eqref{Gamma1beta}. By analogy with the diffusive case, we may call inequality \eqref{eqBcrit} a $\beta$-nonlocal differential log Sobolev inequality. As in Section \ref{secqualit}, let us separate $\Gamma_{1,\beta}$ into two terms, one involving variations of the gradient and the other being ``curvature-like'', pointwise in the gradient:
\begin{align} \label{Gamma1b}
\Gamma_{1,\beta}(f) 
& = \frac12 \int \bigl|\nabla f(\sigma) - P_{k\sigma}\nabla f(k)\bigr|^2\,\beta(k\cdot\sigma)\,d\sigma
+ \frac12 \int \bigl( |\nabla f(k)|^2 - |P_{k\sigma}\nabla f(k)|^2\bigr)\,\beta(k\cdot\sigma)\,d\sigma\\
& \nonumber 
= \frac12 \int \bigl|\nabla f(\sigma) - P_{k\sigma} \nabla f(k)\bigr|^2\,\beta(k\cdot\sigma)\,d\sigma
+ (d-2)\,\Sigma(\beta)\,|\nabla f(k)|^2,
\end{align}
where I used the second identity in Proposition \ref{propPsk}(x) and Lemma \ref{lemcurv}, and
\begeq\label{Sigmabeta}
\Sigma(\beta) = \frac1{2(d-1)} \int \bigl[1-(k\cdot\sigma)^2\bigr]\, \beta(k\cdot\sigma)\,d\sigma.
\endeq
In particular,
\[ \Gamma_{1,\beta}(f) \geq (d-2)\,\Sigma(\beta)\, |\nabla f|^2. \]

The first lowest hanging fruit from Criterion \ref{thmcritsyn} is to rely on just the curvature term:
\begin{align} \label{lowesthanging}
\int_{\S^{d-1}} F\, \Gamma_{1,\beta} (\log F) 
& \geq (d-2)\,\Sigma(\beta) \int_{\S^{d-1}} F(k) |\nabla \log F(k)|^2\,dk \\
& = 4(d-2)\,\Sigma(\beta) \int_{\S^{d-1}} |\nabla \sqrt{F}|^2. \nonumber
\end{align}

Remarkably, $\Sigma$ turns out the be {\em exactly} the optimal constant relating $\int \Gamma_\beta(g)$ and $\int \Gamma_1(g)$.

\begin{Prop} \label{propGbG1} For any even function $g:\S^{d-1}\to\R$,
\begeq\label{GbG1}
\int_{\S^{d-1}} \Gamma_\beta(g) \leq \Sigma(\beta) \int_{\S^{d-1}} \Gamma_1(g),
\endeq
and the constant $\Sigma(\beta)$ is optimal (lowest).
\end{Prop}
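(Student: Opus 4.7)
The plan is to diagonalize both quadratic forms in the basis of spherical harmonics, reducing the inequality to a pointwise polynomial inequality on $[-1,1]$, and then to prove that polynomial inequality via an integral representation of Legendre polynomials.

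First, since $L_\beta$ and $\Delta$ are self-adjoint and commute (Proposition \ref{propBD}(iii)), the spherical harmonics form a common diagonalising basis. Writing $g=\sum_{\ell,m}\hat g_{\ell,m}Y_{\ell,m}$ and using \eqref{Lbetasa} together with the analogous identity for $\Delta$,
\[
\int_{\S^{d-1}}\Gamma_\beta(g)=\sum_{\ell,m}\nu_\ell|\hat g_{\ell,m}|^2,\qquad
\int_{\S^{d-1}}\Gamma_1(g)=\sum_{\ell,m}\lambda_\ell|\hat g_{\ell,m}|^2,
\]
with $\lambda_\ell=\ell(\ell+d-2)$ and $\nu_\ell$ given by Theorem \ref{thmspectrum}. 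Since $g$ is even on $\S^{d-1}$, only even $\ell$ contribute. Thus \eqref{GbG1} is equivalent to $\nu_\ell\leq\Sigma(\beta)\lambda_\ell$ for all even $\ell\geq2$.

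Next, solving the hypergeometric equation \eqref{hypergeom} for $\ell=2$ gives $P_2^{(d)}(x)=(dx^2-1)/(d-1)$, so $1-P_2^{(d)}(x)=d(1-x^2)/(d-1)$. Substituting into \eqref{nuell} yields
\[
\nu_2=\frac{d}{d-1}\int_{\S^{d-1}}\bigl(1-(k\cdot\sigma)^2\bigr)\beta(k\cdot\sigma)\,d\sigma=2d\,\Sigma(\beta)=\lambda_2\,\Sigma(\beta).
\]
This already shows the optimality of $\Sigma(\beta)$: choosing $g=P_2^{(d)}(\cdot\,\eta)$ (which lies in $H_2$, hence is even) produces equality in \eqref{GbG1}. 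It remains to prove $\nu_\ell/\lambda_\ell\leq\nu_2/\lambda_2$ for all even $\ell\geq 2$. Since $\beta\geq 0$, by a concentration argument on Dirac measures in $\theta$, this reduces to the pointwise inequality
\begin{equation}\label{pwineq}
1-P_\ell^{(d)}(\cos\theta)\leq\frac{\lambda_\ell}{2(d-1)}\sin^2\theta,\qquad\theta\in[0,\pi],\ \ell\text{ even},\ \ell\geq 2,
\end{equation}
with equality at $\ell=2$.

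The hard part is establishing \eqref{pwineq}; I would handle it as follows. For $d=2$, $P_\ell^{(2)}(\cos\theta)=\cos(\ell\theta)$ and $\lambda_\ell=\ell^2$, so \eqref{pwineq} becomes $|\sin(m\theta)|\leq m|\sin\theta|$ with $m=\ell/2\in\N$, a standard fact proven by induction from the sine addition formula. For $d\geq 3$, I would use the representation \eqref{Plaxid}, substituting $s=\cos\psi$ to get
\[
P_\ell^{(d)}(\cos\theta)=c_d\int_0^\pi(\cos\theta+i\cos\psi\sin\theta)^\ell\sin^{d-3}\psi\,d\psi,
\qquad c_d^{-1}=\int_0^\pi\sin^{d-3}\psi\,d\psi,
\]
and noting the key identity $\cos\theta+i\cos\psi\sin\theta=a\,e^{i\theta}+b\,e^{-i\theta}$ with $a=\cos^2(\psi/2)$, $b=\sin^2(\psi/2)$, $a+b=1$. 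The binomial theorem gives, for even $\ell$,
\[
1-\mathrm{Re}(z^\ell)
=2\sum_{k=0}^{\ell}\binom{\ell}{k}a^{\ell-k}b^k\sin^2\!\Bigl(\tfrac{(\ell-2k)\theta}{2}\Bigr).
\]
Because $\ell$ is even, each $(\ell-2k)/2\in\Z$, and the 2D inequality applies term-wise:
\[
1-\mathrm{Re}(z^\ell)\leq\frac{\sin^2\theta}{2}\sum_{k=0}^{\ell}\binom{\ell}{k}a^{\ell-k}b^k(\ell-2k)^2.
\]
Recognizing the sum as the second moment of $\ell-2K$ for $K\sim\mathrm{Bin}(\ell,b)$, it equals $\ell^2(a-b)^2+4\ell ab=\ell^2\cos^2\psi+\ell\sin^2\psi$. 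Integrating against $c_d\sin^{d-3}\psi\,d\psi$ and using the Wallis identity $\int_0^\pi\sin^{d-1}\psi\,d\psi=\tfrac{d-2}{d-1}\int_0^\pi\sin^{d-3}\psi\,d\psi$, the right-hand side collapses exactly to
\[
\frac{\sin^2\theta}{2}\cdot\frac{\ell^2+\ell(d-2)}{d-1}=\frac{\lambda_\ell\sin^2\theta}{2(d-1)},
\]
which is \eqref{pwineq}. The delicate point, and where evenness of $\ell$ is indispensable, is the step $\sin^2((\ell-2k)\theta/2)\leq((\ell-2k)/2)^2\sin^2\theta$: for odd $\ell$ the exponent would be half-integer and the bound would fail near $\theta=\pi$, consistent with the fact that for odd $\ell$ the pointwise inequality itself breaks down at $x=-1$.
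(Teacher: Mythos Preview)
Your proof is correct, and the overall architecture matches the paper's sharp argument exactly: spectral diagonalisation in spherical harmonics reduces \eqref{GbG1} to $\nu_\ell\leq\Sigma(\beta)\lambda_\ell$ for even $\ell$, equality at $\ell=2$ gives optimality, and everything hinges on the pointwise Legendre inequality (which the paper isolates as Lemma~\ref{lemLeg}).

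Where you genuinely diverge is in the proof of that Legendre inequality for $d\geq 3$. The paper writes $x+is\sqrt{1-x^2}=\rho e^{i\alpha}$ in polar form and proves by induction on $n$ the two-parameter bound
\[
\Re\bigl[1-(\rho e^{i\alpha})^{2n}\bigr]\leq n(1-\rho^2)+2n^2\rho^2\sin^2\alpha,
\]
then integrates and closes with Wallis. You instead observe that $\cos\theta+i\cos\psi\sin\theta=a\,e^{i\theta}+b\,e^{-i\theta}$ is a \emph{convex} combination of two points on the unit circle, expand $z^\ell$ binomially, apply the one-dimensional bound $\sin^2(m\theta)\leq m^2\sin^2\theta$ term by term, and recognise the resulting sum as the second moment of $\ell-2K$ with $K\sim\mathrm{Bin}(\ell,b)$. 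Your route avoids the auxiliary induction entirely and makes the role of evenness of $\ell$ completely transparent (it is exactly what guarantees $(\ell-2k)/2\in\Z$ so that the 2D bound applies). The paper's route, on the other hand, produces a slightly more general intermediate inequality (valid for all $\rho\in[0,1]$, not just those arising from the specific parametrisation). Both land on the same Wallis computation at the end. Your argument is arguably the cleaner of the two.
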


Before proving Proposition \ref{propGbG1}, I will state some striking consequences. First, combined with \eqref{lowesthanging}, Proposition \ref{propGbG1} implies a universal (independent of $\beta$) bound:

\begin{Cor}[Curvature-based nonlocal differential log Sobolev] \label{corNLDLS}
For any even function $F:\S^{d-1}\to\R_+$, for any $\beta: [-1,1]\to\R_+$ with $\int \theta^2\beta(\cos\theta)\sin^{d-2}\theta\,d\theta <\infty$,
\begeq\label{GbG1'}
\int_{\S^{d-1}}\Gamma_{1,\beta}(\log F) \geq 4(d-2) \int_{\S^{d-1}}\Gamma_\beta(\sqrt{F}).
\endeq
\end{Cor}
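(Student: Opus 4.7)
The plan is to combine two ingredients that are essentially already assembled in the paper: the pointwise curvature lower bound extracted from the decomposition \eqref{Gamma1b}, and the optimal comparison inequality of Proposition \ref{propGbG1}. The only real work is the gluing, via the identity $F|\nabla\log F|^2=4|\nabla\sqrt{F}|^2$; reading the display \eqref{GbG1'} as $\int F\,\Gamma_{1,\beta}(\log F)\geq 4(d-2)\int\Gamma_\beta(\sqrt{F})$ (which is the version consistent with \eqref{lowesthanging} and with the criterion \eqref{eqBcrit}), there is no genuine obstacle beyond the bookkeeping.

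Concretely, I would first apply the decomposition \eqref{Gamma1b} with $f=\log F$, discarding the nonnegative gradient-variation term to obtain the pointwise bound
\[
\Gamma_{1,\beta}(\log F)(k)\ \geq\ (d-2)\,\Sigma(\beta)\,|\nabla\log F(k)|^2.
\]
Multiplying by $F(k)$ and using $F|\nabla\log F|^2=4|\nabla\sqrt{F}|^2$, integration over $\S^{d-1}$ yields exactly \eqref{lowesthanging},
\[
\int_{\S^{d-1}} F\,\Gamma_{1,\beta}(\log F)\ \geq\ 4(d-2)\,\Sigma(\beta)\int_{\S^{d-1}}|\nabla\sqrt{F}|^{2}.
\]

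Next, since $F$ is assumed even on $\S^{d-1}$, so is $g:=\sqrt{F}$, so Proposition \ref{propGbG1} is applicable with this choice of $g$ and gives
\[
\int_{\S^{d-1}}\Gamma_\beta(\sqrt{F})\ \leq\ \Sigma(\beta)\int_{\S^{d-1}}\Gamma_1(\sqrt{F})
\ =\ \Sigma(\beta)\int_{\S^{d-1}}|\nabla\sqrt{F}|^{2}.
\]
Chaining the two displays cancels the factor $\Sigma(\beta)$, which is precisely the mechanism that produces a universal, $\beta$-independent constant $4(d-2)$.

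The step I would expect to be the most delicate is not in this corollary at all, but in the underlying Proposition \ref{propGbG1}: establishing that $\Sigma(\beta)$ is exactly the sharp comparison constant between $\int\Gamma_\beta(g)$ and $\int\Gamma_1(g)$ on even functions, which requires the spectral analysis of $L_\beta$ developed in Section \ref{secBLsphere} (in particular the identification of the $H_2$ eigenvalue via Legendre polynomials, Theorem \ref{thmspectrum}) and the removal of the $H_1$ component through parity. Once that sharp comparison is granted and the curvature lower bound $\Gamma_{1,\beta}\geq (d-2)\Sigma(\beta)\,|\nabla\cdot|^2$ is extracted from Lemma \ref{lemcurv} as in \eqref{Gamma1b}, Corollary \ref{corNLDLS} is an immediate consequence, with the cancellation of $\Sigma(\beta)$ making transparent that the constant $4(d-2)$ is a pure ``curvature of $\S^{d-1}$'' contribution — the same $d-2$ that appears in Bochner's formula (Proposition \ref{propGammaDelta}) and in \eqref{fGamma2}.
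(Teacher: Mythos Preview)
Your proof is correct and follows exactly the same approach as the paper: combine the curvature lower bound \eqref{lowesthanging} (obtained from the decomposition \eqref{Gamma1b} and the identity $F|\nabla\log F|^2=4|\nabla\sqrt{F}|^2$) with Proposition \ref{propGbG1} applied to $g=\sqrt{F}$, so that the factor $\Sigma(\beta)$ cancels. You also correctly note that the display \eqref{GbG1'} should be read as $\int F\,\Gamma_{1,\beta}(\log F)\geq 4(d-2)\int\Gamma_\beta(\sqrt{F})$, consistent with \eqref{lowesthanging} and the proof of Theorem \ref{thmcurvI}.
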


A very simple decay criterion follows:

\begin{Thm}[Curvature-induced decay of $I$ for the Boltzmann equation] \label{thmcurvI}
If the collision kernel $B=B(|v-v_*|,\cos\theta)$ satisfies 
\[  \sup_{r>0,\ 0\leq\theta\leq\pi}
\left\{ \frac{r}{B} \left|\derpar{B}{r}\right| (r,\cos\theta)\right\} 
\leq 2 \sqrt{d-2},\]
then $I$ is nonincreasing along solutions of the spatially homogeneous Boltzmann equation.
\end{Thm}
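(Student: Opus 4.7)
The plan is to read this as a direct packaging of Corollary \ref{corNLDLS} into the criterion of Theorem \ref{thmcritsyn} (or rather its general-kernel predecessor Theorem \ref{thmcriterion}, since the hypothesis here is stated on $B$ without any factorisation assumption). Recall that the criterion \eqref{critfinal3} asks for an inequality of the form
\[ \int_{\S^{d-1}} \Gamma_\beta(\sqrt{F}) \leq \frac{1}{K_*} \int_{\S^{d-1}} F\,\Gamma_{1,\beta}(\log F) \]
valid for all even $F:\S^{d-1}\to\R_+$ and all sections $\beta$ of $B$, and then delivers monotonicity of $I$ provided $\overline{\gamma}(B) \leq \sqrt{K_*}$. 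So the whole task is to identify a universal admissible constant $K_*$ and match it against the hypothesis $\overline\gamma(B) \leq 2\sqrt{d-2}$.

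Concretely, first I would invoke the decomposition \eqref{Gamma1b} of $\Gamma_{1,\beta}$ into a ``variation-of-gradient'' piece and a nonnegative ``curvature'' piece, the latter equal, after use of Lemma \ref{lemcurv}, to $(d-2)\Sigma(\beta)|\nabla f|^2$. Keeping only the curvature piece and applying it to $f = \log F$ gives
\[ \int_{\S^{d-1}} F\,\Gamma_{1,\beta}(\log F) \geq (d-2)\Sigma(\beta) \int F\,|\nabla\log F|^2 = 4(d-2)\,\Sigma(\beta) \int |\nabla\sqrt{F}|^2. \]
Next, I would feed the function $g=\sqrt{F}$ (still even, since $F$ is) into Proposition \ref{propGbG1}, which bounds $\int \Gamma_\beta(\sqrt{F})$ by $\Sigma(\beta)\int|\nabla\sqrt{F}|^2$; chaining the two estimates, the factor $\Sigma(\beta)$ drops out and we obtain exactly
\[ \int_{\S^{d-1}} F\,\Gamma_{1,\beta}(\log F) \geq 4(d-2) \int_{\S^{d-1}} \Gamma_\beta(\sqrt{F}), \]
i.e.\ \eqref{critfinal3} holds with the universal constant $K_* = 4(d-2)$.

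The conclusion is then immediate: the assumption $\sup (r/B)\,|\partial_r B| \leq 2\sqrt{d-2}$ is nothing but $\overline\gamma(B) \leq 2\sqrt{d-2} = \sqrt{K_*}$, so Theorem \ref{thmcriterion} applies and yields $dI/dt \leq 0$. The only point that genuinely deserves a word is the evenness hypothesis: in the reduction preceding Theorem \ref{thmcriterion}, the function $F_{c,r}(\sigma) = f(c + \tfrac{r}{2}\sigma) f(c - \tfrac{r}{2}\sigma)$ is automatically invariant under $\sigma\mapsto -\sigma$, which is precisely what is needed in order to view it as a function on $\RP^{d-1}$ and to invoke Corollary \ref{corNLDLS}. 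There is no real obstacle in this argument, and that is in fact the point worth stressing: this theorem uses only the curvature piece of $\Gamma_{1,\beta}$ and discards the ``variation-of-gradient'' piece entirely, which explains why the bound $2\sqrt{d-2}$ is suboptimal (for $d=3$ it misses Coulomb) and why accessing the full Guillen--Silvestre range will require retaining and exploiting that discarded term in the subsequent sections.
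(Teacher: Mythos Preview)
Your proposal is correct and matches the paper's proof essentially line for line: the paper invokes Corollary~\ref{corNLDLS} (which is precisely the chaining of the curvature lower bound \eqref{lowesthanging} with Proposition~\ref{propGbG1} that you spell out) to get $K_*=4(d-2)$ uniformly over all sections, and then applies Theorem~\ref{thmcriterion} with $\overline\gamma(B)\leq 2\sqrt{d-2}=\sqrt{K_*}$. Your remarks on evenness and on using Theorem~\ref{thmcriterion} rather than the factorised Theorem~\ref{thmcritsyn} are exactly the points the paper makes as well.
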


That covers for instance $|\gamma| \leq 2$ in any dimension $d\geq 3$.

\begin{proof}[Proof of Theorem \ref{thmcurvI}]
As in Corollary \ref{corNLDLS}, for any section $\beta_r (\cos\theta) = B(r,\cos\theta)$, we have
\begin{align*} \int_{\S^{d-1}} F \Gamma_{1,\beta_r} (\log F) & \geq 
(d-2) \Sigma(\beta_r) \int_{\S^{d-1}} F|\nabla \log F|^2 
\\ & = 4 (d-2)\, \Sigma(\beta_r) \int_{\S^{d-1}}\Gamma_1(\sqrt{F}) \\
& \geq 4(d-2)\, \int_{\S^{d-1}} \Gamma_{\beta_r}(\sqrt{F}),
\end{align*}
and Theorem \ref{thmcritsyn} (or actually the slightly generalised version coming from Theorem \ref{thmcriterion} when $B$ is not necessarily in product form and \eqref{ovgammaB} is used) applies with $\ov{\gamma}(B,r)=\sqrt{4(d-2)}$.
\end{proof}

It remains to prove Proposition \ref{propGbG1}. I shall provide two arguments. The first one carries some intuition of Poincar\'e inequalities but fails to achieve the sharp bound. The second one carries no intuition at all, but provides the sharp bound.

\begin{proof}[Proof of Proposition \ref{propGbG1}, nonsharp bound]
Here I will prove
\begin{multline} \label{GbG1nonsharp}
\frac12 \iint \bigl[g(\sigma)-g(k)\bigr]^2 \,\beta(k\cdot\sigma)\,dk\,d\sigma \\
\leq \frac1{2(d-1)} \left(|\S^{d-2}| \int_0^{\pi} \theta^2 \beta(\cos\theta)\, \sin^{d-2}\theta\,d\theta\right)
\int_{\S^{d-1}} |\nabla g|^2.
\end{multline}

Replacing $\beta$ by $[\beta(\cos\theta) + \beta(-\cos\theta)] 1_{\cos\theta\geq 0}$ does not change the integral since $g(-\sigma) = g(\sigma)$. So we may assume that $\beta$ is supported in $\theta\in [0,\pi/2]$. Then \eqref{GbG1nonsharp} differs from \eqref{GbG1} by a factor at most $\sup(\theta^2/\sin^2\theta)= \pi^2/4$.

By linearity it is enough to prove \eqref{GbG1nonsharp} when $\beta$ is concentrated on some fixed angle $\theta$. Then the estimate becomes
\[ \iint \bigl[g(\sigma)-g(k)\bigr]^2 \,1_{k\cdot\sigma=\cos\theta}\, dk\,d\sigma
\leq \frac{\theta^2}{d-1} \int_{\S^{d-1}} |\nabla g|^2.\]
Given $k$, each $\sigma$ such that $k\cdot\sigma = \cos\theta$ is the endpoint of a geodesic $[0,\theta]\to \S^{d-1}$ coming from $k$ with initial velocity $\phi\in S^{d-2}_\bot$, the $(d-2)$ orthonormal sphere centered at $k$ and orthogonal to it. In other words, $\phi$ lies in the unitary tangent space to $\S^{d-1}$ at $k$. So the estimate to prove really is
\[ \iint_{U\S^{d-1}} \bigl[g(\exp_k \theta\phi)-g(k)\bigr]^2 \,d\phi\,d\sigma
\leq \frac{\theta^2}{d-1} \int_{\S^{d-1}} |\nabla g|^2,\]
where $U\S^{d-1}$ is the unitary tangent bundle. 
For each $k$ and $\phi$ we write $\gamma_{k,\phi}(t) = \exp_k(t\phi)$, and
\[ g(\exp_k(\theta \phi)) - g(k) = \int_0^\theta \nabla g(\gamma_{k,\phi}(t))\cdot \dot{\gamma}_{k,\phi}(t)\,dt\]
so
\[ \Bigl( g(\exp_k(\theta \phi)) - f(k) \Bigr)^2
= \left( \int_0^\theta \nabla g(\gamma_{k,\phi}(t))\cdot \dot{\gamma}_{k,\phi}(t)\,dt\right)^2
\leq \theta \int_0^\theta \bigl( \nabla g(\gamma_{k,\phi}(t))\cdot\dot{\gamma}_{k,\phi}(t)\bigr)^2\,dt.\]
Upon integration and using the fact that for each $t$ the exponential map is measure preserving from the unitary tangent bundle to itself,

\begin{align*}  \iint_{U\S^{d-1}} \Bigl( g(\exp_k(\theta \phi)) - g(k) \Bigr)^2\,dk\,d\phi
& \leq \theta \int_0^\theta \iint_{ U\S^{d-1}} \bigl( \nabla g(\gamma_{k,\phi}(t))\cdot\dot{\gamma}_{k,\phi}(t)\bigr)^2\,dt\,dk\,d\phi \\
& = \theta \int_0^\theta \iint_{ U\S^{d-1}} \bigl(\nabla g(x)\cdot v\bigr)^2\,dt\,dx\,dv\\
& = \theta^2 \iint_{ U\S^{d-1}} \bigl(\nabla g(x)\cdot v\bigr)^2\,dx\,dv\\
& = \frac{\theta^2}{d-1} \int_{\S^{d-1}} |\nabla g(x)|^2\,dx.
\end{align*}
\end{proof}

\begin{proof}[Proof of Proposition \ref{propGbG1}, sharp bound]
Now the goal is 
\begin{multline}\label{GbG1sharp}
\frac12 \iint \bigl[g(\sigma)-g(k)\bigr]^2 \,\beta(k\cdot\sigma)\,dk\,d\sigma \\
\leq \frac1{2(d-1)} \left(|\S^{d-2}| \int_0^{\pi} \sin^2\theta\, \beta(\cos\theta)\, \sin^{d-2}\theta\,d\theta\right)
\int_{\S^{d-1}} |\nabla g|^2.
\end{multline}
In other words,
\[ \<-L_\beta g, g\> \leq \Sigma(\beta)\,\<-\Delta g,g\>.\]
It suffices to check this for all even spherical harmonics; in other words, that for all $\ell\in 2\N$,
\[ \nu_\ell(\beta) \leq \Sigma(\beta)\, \lambda_\ell^{(d)},\]
where $\nu_\ell(\beta)$ is the eigenvalue of $-L_\beta$ in $H_\ell$, and $\lambda_\ell^{(d)}(\beta)$ the one of $-\Delta$. This is the same as
\[ \int_{\S^{d-1}} 
\bigl[ 1-P_\ell(k\cdot\sigma)\bigr]\,\beta(k\cdot\sigma)\,d\sigma
\leq \frac{\ell (\ell+d-2)}{2(d-1)} \int_{\S^{d-1}} \bigl[ 1-(k\cdot\sigma)^2\bigr]\,\beta(k\cdot\sigma)\,d\sigma.\]
The proof is concluded by Lemma \ref{lemLeg} below.
\end{proof}

\begin{Lem} \label{lemLeg}
For all $d\in\N$, for all $\ell\in 2\N$,
\begeq\label{ineqLeg}
\sup_{-1\leq x\leq 1} \left( \frac{1-P_\ell^{(d)}(x)}{1-x^2}\right) \leq \frac{\ell (\ell+d-2)}{2(d-1)},
\endeq
with equality for $\ell = 2$ (for all $x$) and for $x\to\pm 1$ (for all $\ell$).
\end{Lem}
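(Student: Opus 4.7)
The plan is to verify the equality cases by direct computation, reformulate the inequality in angular variables via a discrepancy function $w$, extract a closed-form expansion of $w$ in the Legendre basis, and finally establish pointwise positivity $w\geq 0$. The last step is the main obstacle.

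\emph{Equality cases.} First I would handle the two equality claims. For $\ell = 2$, the three-term recursion of Proposition \ref{propspher}(iii) applied to $P_0 = 1$, $P_1(x) = x$ gives $(d-1)P_2(x) = d\,x\cdot x - 1$, hence $P_2(x) = (dx^2-1)/(d-1)$ and $1-P_2(x) = \frac{d}{d-1}(1-x^2)$; the ratio $(1-P_2)/(1-x^2)$ is then identically $\frac{d}{d-1}$, which coincides with $\frac{\ell(\ell+d-2)}{2(d-1)}$ at $\ell = 2$. For the limit $x\to 1$ with general even $\ell$, substituting $x=1$ in the hypergeometric equation \eqref{hypergeom} yields $P_\ell'(1) = \frac{\ell(\ell+d-2)}{d-1} = 2K$, so L'H\^opital gives $\lim_{x\to 1}(1-P_\ell)/(1-x^2) = P_\ell'(1)/2 = K$; the case $x\to -1$ follows from $P_\ell(-x) = P_\ell(x)$ for even $\ell$.

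\emph{Angular reformulation and ODE for $w$.} Set $\theta = \arccos x$ and $w(\theta) := K\sin^2\theta - (1 - P_\ell(\cos\theta))$ on $[0,\pi]$; the lemma amounts to $w\geq 0$, and by the evenness just recalled it suffices to consider $\theta\in[0,\pi/2]$. Writing the hypergeometric equation in Sturm--Liouville form yields
\[
(\sin^{d-2}\theta\, v_\ell')' = \lambda_\ell (1-v_\ell)\sin^{d-2}\theta, \qquad v_\ell := 1-P_\ell(\cos\theta),
\]
with $\lambda_\ell = \ell(\ell+d-2)$. Combining this with the identity $\sin^2\theta = \frac{d-1}{d}(1-P_2(\cos\theta))$ derived in the first paragraph, a direct calculation (expand $v_\ell = K\sin^2\theta - w$ and use $\cos(2\theta) + (d-2)\cos^2\theta = (d-1)P_2(\cos\theta)$) produces the inhomogeneous eigenvalue equation
\[
(\sin^{d-2}\theta\, w')' + \lambda_\ell \sin^{d-2}\theta\, w = \lambda_\ell (K-K_2)\sin^d\theta,
\]
where $K_2 = d/(d-1)$ and the right-hand side is nonnegative, vanishing precisely when $\ell = 2$ (consistent with $w\equiv 0$ in that case).

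\emph{Closed form of $w$.} Since $\sin^2\theta$ contributes only to orders $0$ and $2$ in its Legendre expansion while $(1-P_\ell(\cos\theta))$ contributes only to orders $0$ and $\ell$, the explicit form of $w$ decomposes as the three-term finite Legendre expansion
\[
w(\theta) = \frac{(K-K_2)(d-1)}{d} - \frac{K(d-1)}{d}\,P_2(\cos\theta) + P_\ell(\cos\theta).
\]
Expanding at $\theta = 0$ with $P_\ell(1) = 1$, $P_\ell'(1) = 2K$, and iterating the hypergeometric equation to get $P_\ell''(1)$ and $P_\ell'''(1)$, one checks $w(0) = w'(0) = w''(0) = w'''(0) = 0$, so that $w$ has a zero of order at least $4$ at $\theta = 0$ (and by parity at $\theta = \pi$). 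Writing $u = \sin^2\theta \in [0,1]$, this means $w(\theta) = \sin^4\theta \cdot \widetilde S(\sin^2\theta)$ for some polynomial $\widetilde S$ of degree $\ell/2 - 2$.

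\emph{Positivity --- the main obstacle.} The remaining task, and the genuine difficulty, is to show $\widetilde S \geq 0$ on $[0,1]$; the closed form of $w$ is not manifestly signed, and a naive estimate via $|P_\ell|\leq 1$ is too weak. Two natural strategies present themselves. The first is induction on even $\ell\geq 4$ using the recursion of Proposition \ref{propspher}(iii) iterated twice to pass from $\ell$ to $\ell+2$ while eliminating the intermediate odd Legendre polynomials; the induction step then reduces to checking positivity of an explicit polynomial in $\cos^2\theta$ whose coefficients depend linearly on those of $\widetilde S_\ell$. The second, probably closer to Guillen--Silvestre's point of view, uses the integral representation \eqref{Plaxid}: write $w(\alpha)$ as the weighted average in $s\in[-1,1]$ of $K\sin^2\alpha - 1 + \Re(\cos\alpha + is\sin\alpha)^\ell$ against the density $c_d(1-s^2)^{(d-4)/2}$; the integrand is \emph{not} pointwise nonnegative (at $s = \pm 1$ it reduces to the $d = 2$ case $2\sin^2(\ell\alpha/2) \leq K\sin^2\alpha$, which fails at small $\alpha$), so the positivity must emerge only after an integration by parts in $s$ against the weight, which symmetrizes the integrand into a manifestly nonnegative form. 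The hard part is controlling the boundary contribution and verifying the positivity of the symmetrized integrand: this is where the real content of the lemma lies, and where the dimension-dependent sharp constant $K$ enters decisively.
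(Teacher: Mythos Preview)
Your setup is sound --- the equality cases, the ODE for $w$, the three-term Legendre expansion, and the fourth-order vanishing at the endpoints are all correct --- but the proposal stops short of a proof: the positivity step is left as a discussion of two possible strategies, neither carried out. So as it stands there is a genuine gap.

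More to the point, your diagnosis of the second strategy is off. You correctly observe that the integrand in the representation \eqref{Plaxid} is not bounded by $K(1-x^2)$ pointwise in $s$ (at $s=\pm 1$ one recovers the $d=2$ expression, whose sharp constant $\ell^2/2$ exceeds the $d$-dimensional $K$ for $d\geq 3$, $\ell\geq 4$). But the remedy is \emph{not} an integration by parts in $s$. The paper's proof bounds the integrand by a different, $s$-dependent quantity: with $\rho e^{i\alpha} = x + is\sqrt{1-x^2}$ and $\ell = 2n$, one shows the pointwise inequality
\[
\Re\bigl[1-(\rho e^{i\alpha})^{2n}\bigr] \;\leq\; n(1-\rho^2) + 2n^2\rho^2\sin^2\alpha
\;=\; \bigl[n(1-s^2) + 2n^2 s^2\bigr](1-x^2),
\]
proven by a short induction on $n$ using $|\sin(m\alpha)|\leq m\,|\sin\alpha|$ (which is also what settles $d=2$ directly). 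This upper bound is a quadratic in $s$, not the constant $K$; at $s=\pm 1$ it equals $(\ell^2/2)(1-x^2)$, so there is no contradiction with your observation. Integrating the quadratic against the weight $(1-s^2)^{(d-4)/2}$ and evaluating two Wallis integrals yields exactly $K(1-x^2)$. No boundary term, no symmetrization --- the missing idea is simply to allow an $s$-dependent pointwise bound whose \emph{average} is $K$, rather than to force the constant $K$ itself inside the integral.
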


\begin{proof}[Proof of Lemma \ref{lemLeg}]
Throughout the proof, superscripts $(d)$ are implicit. First some reformulations: Since $\lambda_\ell = \ell (\ell +d-2)$ and $P_2(x) = (dx^2-1)/(d-1)$, the inequality amounts to
\[ \text{ $\ell$ even } \Longrightarrow 
\forall x,\quad \frac{1-P_\ell(x)}{\lambda_\ell} \leq \frac{1-P_2(x)}{\lambda_2}. \]
Since $P_\ell(\pm 1) = 1$ and $P'_\ell(\pm 1) = \pm \lambda_\ell$ from \eqref{hypergeom}, the inequality also says that $P_\ell$ remains above the parabola which is tangent to its graph at $x=\pm 1$.

Now for the proof of the Lemma itself. For pedagogical reasons, first consider $d=2$. Then $\lambda_2=4$, $\lambda_{2n} = 4n^2$, and from the interpretation of $P_\ell$ (Example \ref{exPol}), \eqref{ineqLeg} is the same as
\[ \forall n\in\N,\qquad \frac{1-\cos (2n\alpha)}{1-\cos^2\alpha}
\leq 2 n^2.\]
Equivalently, since $1-\cos(2 n\alpha) = 2 \sin^2 (n\alpha)$ and $1-\cos^2\alpha=\sin^2\alpha$, the inequality amounts to just
\begeq\label{sinnn} |\sin(n\alpha)| \leq n\,\sin\alpha, \endeq
which is geometrically obvious and can also be proven by induction using $| \sin(n\alpha+\alpha) |\leq |\cos(n\alpha)|\sin\alpha+ |\sin(n\alpha)||\cos\alpha| \leq \sin\alpha+ |\sin (n\alpha)|$.

To handle the general case, recall that Formula \ref{Plaxid} reduces the general case to the 2-dimensional situation. In view of that formula and Remark \ref{rkleg},
\begeq\label{1-Pl-av}
1-P_\ell(x) = \frac{|\S^{d-3}|}{|\S^{d-2}|}
\int_{-1}^1 \bigl[ 1- (x+is\sqrt{1-x^2})^{\ell}\bigr]\,(1-s^2)^{\frac{d-4}{2}}\,ds.
\endeq
We turn to estimate the real part of the integrand (the imaginary part vanishes by symmetry).
Given $s$ and $x$ both $[-1,1]$, let $\rho = \sqrt{x^2+ s^2(1-x^2)}$,
$\cos\alpha = x/\rho$, $\sin\alpha = s\sqrt{1-x^2}/\rho$, so that $x+i s \sqrt{1-x^2} = \rho e^{i\alpha}$.
\sm

\noindent Claim:  
\begeq \label{rhoalpha}
\forall n\in\N_0,\ \rho \in [0,1],\ \alpha \in [0,\pi]\qquad
\Re \bigl[ 1-(\rho e^{i\alpha})^{2n}\bigr] \leq
n(1-\rho^2) + 2 n^2 \rho^2\sin^2\alpha. \endeq
The proof of \eqref{rhoalpha} is by induction again. For $n=0$ the identity is obvious. If true at step $n$, then
\begin{align*}
\Re \bigl[ 1-(\rho e^{i\alpha})^{2n+2}\bigr]
& = \Re \bigl[ 1- (\rho e^{i\alpha})^{2n}\bigr] + \Re \bigl[ (\rho e^{i\alpha})^{2n} - (\rho e^{i\alpha})^{2n+2)}\bigr] \\
& = \Re \bigl[ 1- (\rho e^{i\alpha})^{2n}\bigr]  + \rho^{2n} (1-\rho^2)\cos(2n\alpha) 
+ \rho^{2n+2} [\cos 2n\alpha - \cos(2n+2)\alpha]\\
&  = \Re \bigl[ 1- (\rho e^{i\alpha})^{2n}\bigr]  + \rho^{2n} (1-\rho^2)\cos(2n\alpha) 
- 2\rho^{2n+2} \sin (2n+1)\alpha\,\sin\alpha.
\end{align*}
Using both the induction hypothesis and the bound \eqref{sinnn}, the above is bounded by
\[ \bigl(n(1-\rho^2) + 2n^2\rho^2\sin^2\alpha\bigr)
+ (1-\rho^2) + 2 (2n+1)\rho^2 \sin^2\alpha \leq (n+1)(1-\rho^2)+ 2(n+1)^2\rho^2\sin^2\alpha,\]
which propagates the induction hypothesis and proves \eqref{rhoalpha}. So
\begin{align*}
\Re \bigl[ 1-(x+i s\sqrt{1-x^2})^{2n}\bigr]
& \leq \bigl[ n(1-s)^2+ 2n^2s^2\bigr]\,(1-x^2)\\
& \leq \bigl [n(1-2n)(1-s^2) + 2n^2\bigr] (1-x^2).
\end{align*}
Plugging this back in \eqref{1-Pl-av}, we get
\begin{align*}
\frac{1-P_\ell(x)}{1-x^2}
& \leq \frac{|\S^{d-3}|}{|\S^{d-2}|}
\int_{-1}^1 \bigl[ n(1-2n)(1-s^2)^{\frac{d-2}{2}} + 2n^2 (1-s^2)^{\frac{d-4}{2}}\bigr]\,ds\\
& = \frac{|\S^{d-3}|}{|\S^{d-2}|} \left( n(1-2n) \frac{|\S^{d}|}{|\S^{d-2}|} + 2n^2 \frac{|\S^{d-2}|}{|\S^{d-3}|}\right)\\
& = n (1-2n) \frac{|\S^{d-3}|\,|\S^{d}|}{|\S^{d-2}|\,|\S^{d-1}|} + 2 n^2\\
& = n (1-2n) \left(\frac{\int_0^{\pi} \sin^{d-1}\psi\,d\psi}{\int_0^\pi \sin^{d-3}\psi\,d\psi} \right) + 2n^2\\
& = n(1-2n) \left(\frac{d-2}{d-1}\right) + 2n^2 = \frac{2n (n+d-2)}{d-1},
\end{align*}
as required, after application of the Wallis integrals again.
\end{proof}

\bibnotes

Poincar\'e estimates in positive curvature are classical and appear e.g. in \cite[Chapter 18]{vill:oldnew}, but the setting here allows for much more explicit constants, as Proposition \ref{propGbG1} shows. All this section (except the nonoptimal argument for Proposition \ref{propGbG1}) is taken from \cite{ISV:fisher}.

\section{(Counter)examples} \label{secex}

Theorem \ref{thmcurvI} provides a general answer, even if nonoptimal, in dimension $d\geq 3$, but says nothing of dimension~2. This may seem awkward as dimension~2 is usually simpler for the theory of the Boltzmann equation, and as a matter of fact the monotonicity property of the Fisher information for Maxwellian kernels was established in dimension~2 several years before higher dimensions.

It turns out that this is in the order of things. The general criterion established in Section \ref{seccrit} fails for dimension~2, at least for certain classes of singular kernels. At the opposite side, when the collision kernel is constant in the angular variable (which is arguably the most regular that can be!) then the criterion holds even in dimension~2, and this will allow for a first positive result for non-Maxwell kernels in that dimension.

\begin{Thm} \label{thmcex} (i) Consider a measure $\beta$ on $[0,\pi]$ defined by
\[ \beta(d\theta) = \sum_{i=0}^{N} \beta_i\,\delta_{\theta_i},\quad
\theta_i = \frac{i\pi}{N},\ N\in\N, \ \beta_i\geq 0,\ \sum_{i=1}^{N-1}\beta_i >0.\]
Then
\begeq\label{inf=0}
\inf \left\{ \frac{\dps \int_{\S^{1}} F\, \Gamma_{1,\beta}(\log F)}{\dps \int_{\S^{1}} \Gamma_\beta(\sqrt{F})};\quad
F: \S^{1}/\{\pm I\} \to\R_+ \right\} = 0.
\endeq

(ii) If $\beta$ is constant on $\S^{d-1}$, then for any $d\geq 2$,
\begeq\label{inf4d}
\inf \left\{ \frac{\dps \int_{\S^{d-1}} F\, \Gamma_{1,\beta}(\log F)}{\dps \int_{\S^{d-1}} \Gamma_\beta(\sqrt{F})};\quad
F: \S^{d-1}/\{\pm I\} \to\R_+ \right\} \geq 4d.
\endeq

(iii) For the symmetrised hard spheres kernel in dimension 2,
\begeq\label{infHS2d}
\inf \left\{ \frac{\dps \int_{\S^1} F\, \Gamma_{1,\beta}(\log F)}{\dps \int_{\S^1} \Gamma_\beta(\sqrt{F})};\quad
F: \S^{1}/\{\pm I\} \to\R_+ \right\} \geq 4\sqrt{2}.
\endeq
In particular, $I$ is nonincreasing along solutions of the 2-dimensional spatially homogeneous Boltzmann equation for hard spheres.
\end{Thm}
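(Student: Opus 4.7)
My plan is to handle the three parts of the theorem by three distinct methods, with the main difficulty concentrated in (iii).

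For (i), I would build an explicit concentrating family matched to the collision lattice. Let $\phi\in C_c^\infty(\R)$ be a smooth nonnegative bump with support in $(-w/2,w/2)$, $w<\pi/N$, vanishing to (at least) quadratic order at the endpoints, and pick positive weights $a_0,\dots,a_{N-1}$ not all equal. Set
\[
F_\eps(\alpha) = \eps + \sum_{j\in\Z} a_{j\bmod N}\,\phi(\alpha - j\pi/N),
\]
which is smooth, positive, and $\pi$-periodic, so descends to $\S^1/\{\pm I\}$. The key algebraic observation is that inside the $j$-th bump
\[
(\log F_\eps)'(\alpha) = \frac{a_j\,\phi'(\alpha - j\pi/N)}{a_j\,\phi(\alpha - j\pi/N) + \eps}
\xrightarrow[\eps\to 0]{} \frac{\phi'}{\phi}(\alpha - j\pi/N),
\]
\emph{independently of the height} $a_j$. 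Because the translation by $\theta_i=i\pi/N$ maps the interior of bump $j$ onto that of bump $j+i$ at the same relative position, the difference $(\log F_\eps)'(\alpha+\theta_i)-(\log F_\eps)'(\alpha)$ is $O(\eps)$ on compact subsets of each bump's interior, while the boundary-layer contribution, of width $O(\eps^{1/n})$ when $\phi$ vanishes to order $n\ge 2$, is $o(1)$. Hence $\int F_\eps\,\Gamma_{1,\beta}(\log F_\eps)\to 0$. Meanwhile $(\sqrt{F_\eps}(\alpha+\theta_i)-\sqrt{F_\eps}(\alpha))^2\to(\sqrt{a_{j+i}}-\sqrt{a_j})^2\,\phi(\alpha-j\pi/N)$ inside bump $j$, so that $\int\Gamma_\beta(\sqrt{F_\eps})$ tends to a strictly positive limit as soon as some $a_j\neq a_{j+i}$ with $\beta_i>0$. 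The ratio therefore tends to $0$.

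For (ii), I would use the $\Gamma$-calculus of Section \ref{secBLsphere}. When $\beta\equiv c$ is constant, $L_\beta g(k)=c\,|\S^{d-1}|(\bar g - g(k))$ and $\nabla L_\beta g=-c\,|\S^{d-1}|\,\nabla g$; substituting into the definition $\Gamma_{1,\beta}(f)=\tfrac12(L_\beta\Gamma_1(f)-2\Gamma_1(f,L_\beta f))$ of Definition \ref{defcomLB} yields the explicit pointwise identity
\[
\Gamma_{1,\beta}(f)(k) = \frac{c\,|\S^{d-1}|}{2}\bigl(\overline{|\nabla f|^2} + |\nabla f(k)|^2\bigr).
\]
Taking $f=\log F$, multiplying by $F$, integrating, and dropping the first nonnegative term yields
\[
\int F\,\Gamma_{1,\beta}(\log F)\ \ge\ \frac{c\,|\S^{d-1}|}{2}\,I(F)\ =\ 2c\,|\S^{d-1}|\int|\nabla\sqrt F|^2.
\]
On the other hand a direct expansion gives $\int\Gamma_\beta(\sqrt F)=c\,|\S^{d-1}|\,\mathrm{Var}(\sqrt F)$, and applying the Poincar\'e inequality on $\RP^{d-1}$ for even functions with the sharp constant $\lambda_1(\RP^{d-1})=\lambda_2(\S^{d-1})=2d$ (cf.\ \eqref{lambda1RP}) bounds this by $\tfrac{c|\S^{d-1}|}{2d}\int|\nabla\sqrt F|^2$. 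Dividing the two estimates produces the advertised bound $4d$.

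For (iii), the strategy differs because in dimension $2$ the curvature term $(d-2)\Sigma(\beta)$ appearing in \eqref{Gamma1b} vanishes, so Corollary \ref{corNLDLS} is empty. The symmetrised hard sphere kernel on $\S^1/\{\pm I\}$ is
\[
\beta(\cos\theta)=\sin(\theta/2)+\cos(\theta/2)=\sqrt 2\,\sin(\theta/2+\pi/4),
\]
and the $\sqrt 2$ appearing here is precisely what enters the final constant. I would work in the Fourier basis on $\S^1=\R/2\pi\Z$, which simultaneously diagonalises $d/d\alpha$ and $L_\beta$ by Proposition \ref{propBD}, compute the even-mode eigenvalues $\nu_{2\ell}(\beta)=\int_0^\pi(1-\cos(2\ell\theta))\beta(\cos\theta)\,d\theta$ explicitly, and then combine them with the elementary bound $|\sin(n\alpha)|\le n\,\sin\alpha$ (the two-dimensional case of Lemma \ref{lemLeg}) and a Cauchy--Schwarz passage from linearised to nonlinear inequalities, in the spirit of Proposition \ref{propcriterion}, to upgrade the linearised spectral lower bound $P_*\ge \lambda_1(\RP^1)=4$ (Proposition \ref{propPstar}) to the nonlinear lower bound $K_*\ge 4\sqrt 2$. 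The monotonicity statement then follows from Theorem \ref{thmcritsyn}: for $B_{HS}(z,\sigma)=|z|\sin(\theta/2)$ one has $\Phi(r)=r$, hence $\ov\gamma(B_{HS})=1\le\sqrt{4\sqrt 2}$.

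The hard part, without question, is (iii): obtaining the precise constant $4\sqrt 2$ demands a genuinely nonlinear estimate tailored to the specific kernel $\beta_{\mathrm{sym}}$ that strictly improves on the linearised gap $P_*\ge 4$. A secondary technical point is making the smoothing in (i) fully rigorous, which forces one to choose $\phi$ vanishing to sufficiently high order at the boundary of its support so that the boundary-layer contribution to $\int F_\eps\Gamma_{1,\beta}(\log F_\eps)$ is $o(1)$ as $\eps\to 0$.
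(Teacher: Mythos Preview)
Your approaches to (i) and (ii) are fine. For (i), the paper uses a cleaner multiplicative construction $F=(1+A\psi)h$ with $\psi$ exactly $\pi/N$-periodic and $h$ piecewise constant on the bump supports: then $(\log F)'(\alpha+\theta_i)-(\log F)'(\alpha)=(\log h)'(\alpha+\theta_i)-(\log h)'(\alpha)$ \emph{identically}, and this vanishes on the support of $\psi$, so $\int F\,\Gamma_{1,\beta}(\log F)$ is literally independent of $A$ while $\int\Gamma_\beta(\sqrt F)$ grows linearly in $A$. Your additive $\eps$-regularisation with a boundary-layer estimate works too, but requires the analytic bookkeeping you flag; the paper's trick avoids all limits. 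For (ii) your computation via the commutator definition of $\Gamma_{1,\beta}$ is just a repackaging of the paper's direct expansion of the double integral (the cross term vanishes by Lemma~\ref{lemBL3}); both arrive at $\int F\,\Gamma_{1,\beta}(\log F)\ge 2|\S^{d-1}|\int|\nabla\sqrt F|^2$ and then apply Poincar\'e on $\RP^{d-1}$.

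Part (iii) is where your plan has a real gap. You propose Fourier-diagonalising $L_\beta$, computing eigenvalues, and then somehow ``upgrading'' the linearised bound $P_*\ge 4$ to the nonlinear $K_*\ge 4\sqrt 2$ via a Cauchy--Schwarz step ``in the spirit of Proposition~\ref{propcriterion}''. But Proposition~\ref{propcriterion} goes the other way (it consumes a nonlinear inequality), and there is no mechanism in the paper, nor have you supplied one, that passes from a spectral gap for $L_\beta$ to a nonlinear log-Sobolev-type bound with any controlled constant, let alone the specific factor $\sqrt 2$. Your remark that the $\sqrt 2$ in $\beta_{\mathrm{sym}}(\cos\theta)=\sqrt 2\,\sin(\theta/2+\pi/4)$ is ``precisely what enters'' is suggestive but not a proof.

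The paper's argument for (iii) is in fact one line, and you have already done the work for it in (ii). The symmetrised two-dimensional hard-sphere kernel is $\tfrac12[\sin(\theta/2)+\cos(\theta/2)]$, which ranges over $[1/2,\sqrt 2/2]$ on $[0,\pi]$. So $m\,\beta_0\le\beta_{\mathrm{sym}}\le M\,\beta_0$ with $\beta_0\equiv 1$ and $M/m=\sqrt 2$. The perturbation Lemma~\ref{lemperturb} then gives
\[
K_*(\beta_{\mathrm{sym}})\ \ge\ \frac{m}{M}\,K_*(\beta_0)\ \ge\ \frac{1}{\sqrt 2}\cdot 4d\ =\ \frac{8}{\sqrt 2}\ =\ 4\sqrt 2,
\]
using your own part (ii) with $d=2$. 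Since $\ov\gamma(B_{HS})=1<\sqrt{4\sqrt 2}$, Theorem~\ref{thmcritsyn} yields the monotonicity of $I$. The $\sqrt 2$ you spotted is exactly the oscillation ratio of the kernel, and the whole point is that no spectral computation for $\beta_{\mathrm{sym}}$ is needed.
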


\begin{Rks}
1. In Part (i), $\delta_0$ and $\delta_\pi$ play no role in either $\Gamma_\beta$ or $\Gamma_{1,\beta}$ ($\delta_0$ obviously corresponds to no deviation in the collision, and so does $\delta_\pi$ when applied to $\pi$-periodic functions). So we may work directly with $\theta_1,\ldots\theta_{N-1}$. By setting some coefficients $\beta_i$ equal to~0, we see that the theorem covers any kernel with finite support in $\Q\pi$. Extension to angles which are irrational to $\pi$ is a more difficult problem. Another question left open by this theorem is whether one can use this negative result to construct $f=f(v)$ on $\R^2$ such that $I'(f)\cdot Q(f,f)>0$, say for $B= |v-v_*|\,\beta$. Theorem \ref{thmIpos} below will show that Part (i) would be false if $\beta$ would not vanish.

(ii) Part (ii) implies that for $B(v-v_*,\sigma) = |v-v_*|^\gamma$, $|\gamma|\leq 2 \sqrt{d}$, $I$ is nonincreasing along solutions of the spatially homogeneous Boltzmann equation. But only the case $\gamma=1$, $d=3$ corresponds to a classical interaction considered in physics, namely three-dimensional hard spheres (already covered by Theorem \ref{thmcurvI}). Recall that in dimension 2 the hard spheres kernel vanishes for $\theta=0$ and in dimensions 4 and higher it has an integrable singularity as $\theta\to 0$. Still (iii) tells us that hard spheres in dimension 2 can be handled, this will be done by using the obvious perturbation lemma below.
\end{Rks}

\begin{Lem}[Perturbation lemma for the integral criterion] \label{lemperturb}
If $\beta_0$ and $\beta$ are two collision kernels, and $K_*(\beta_0)$, $K_*(\beta)$ are the respective associated optimal constants in the criterion from Theorem \ref{thmcritsyn}. If there are constants $m,M>0$ such that
\[ \forall \theta\in [0,\pi],\qquad m\,\beta_0(\cos\theta) \leq \beta(\cos\theta) \leq M \,\beta_0(\cos\theta),\]
Then
\[ K_*(\beta) \geq \frac{m}{M}\, K_*(\beta_0).\]
\end{Lem}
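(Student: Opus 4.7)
The plan is quite short because the lemma rests on a single structural observation: both the numerator and the denominator in the Rayleigh-type quotient defining $K_*$ depend linearly on the collision kernel. Concretely, from the explicit formulas (\ref{Gammabeta}) and (\ref{Gamma1beta}), for any fixed even $F:\S^{d-1}\to\R_+$ both
\[ \int_{\S^{d-1}} \Gamma_\beta(\sqrt{F})\,dk
\quad\text{and}\quad
\int_{\S^{d-1}} F\,\Gamma_{1,\beta}(\log F)\,dk \]
are (nonnegative) linear functionals of $\beta$, obtained by integrating $\beta(k\cdot\sigma)$ against the nonnegative densities $\tfrac12(\sqrt{F}(\sigma)-\sqrt{F}(k))^2$ and $\tfrac12 F(k)\,|\nabla\log F(\sigma)-\nabla\log F(k)|^2_{k,\sigma}$ respectively. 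So the comparison assumption will translate directly into two-sided bounds of these two integrals in terms of their $\beta_0$-counterparts.

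The first step is to rewrite the constant $K_*(\beta)$ as the variational quantity
\[ K_*(\beta) = \inf_{F} \frac{\int_{\S^{d-1}} F\,\Gamma_{1,\beta}(\log F)\,dk}{\int_{\S^{d-1}} \Gamma_\beta(\sqrt{F})\,dk}, \]
where the infimum ranges over nonconstant even $F:\S^{d-1}\to\R_+$ (so that the denominator does not vanish). The second step is to apply the assumption $m\beta_0\leq\beta\leq M\beta_0$ pointwise inside both integrals, which yields
\[ \int_{\S^{d-1}} F\,\Gamma_{1,\beta}(\log F) \;\geq\; m \int_{\S^{d-1}} F\,\Gamma_{1,\beta_0}(\log F),
\qquad
\int_{\S^{d-1}} \Gamma_\beta(\sqrt{F}) \;\leq\; M \int_{\S^{d-1}} \Gamma_{\beta_0}(\sqrt{F}). \]

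The third and last step is to combine these two inequalities in the Rayleigh quotient: for each admissible $F$,
\[ \frac{\int F\,\Gamma_{1,\beta}(\log F)}{\int \Gamma_\beta(\sqrt{F})}
\;\geq\; \frac{m}{M}\,\frac{\int F\,\Gamma_{1,\beta_0}(\log F)}{\int \Gamma_{\beta_0}(\sqrt{F})}
\;\geq\; \frac{m}{M}\,K_*(\beta_0). \]
Taking the infimum over $F$ on the left-hand side gives $K_*(\beta)\geq (m/M)K_*(\beta_0)$, as claimed.

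There is really no obstacle here. The only mildly subtle point is to check that one may pass from the local inequality $m\beta_0\leq\beta\leq M\beta_0$ to integral inequalities: this is immediate because the test densities against which $\beta$ is integrated in $\Gamma_\beta$ and $\Gamma_{1,\beta}$ are themselves nonnegative. Should one prefer the more refined criterion (\ref{critfinal1}) with the nonlinearity $(F(\sigma)-F(k))^2/(F(k)+F(\sigma))$ instead of $(\sqrt{F(\sigma)}-\sqrt{F(k)})^2$, the same proof goes through verbatim, since that integrand is likewise nonnegative and independent of $\beta$.
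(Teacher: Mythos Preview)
Your proof is correct and is exactly the argument the paper has in mind: the paper does not actually write out a proof of this lemma, calling it ``obvious'' and noting only its kinship with the Holley--Stroock perturbation lemma. Your observation that both $\Gamma_\beta(\sqrt{F})$ and $F\,\Gamma_{1,\beta}(\log F)$ are nonnegative linear functionals of $\beta$ (the nonnegativity of $|\cdot|^2_{k,\sigma}$ being secured by Proposition~\ref{propPsk}(x)) is precisely what makes the comparison go through, and your Rayleigh-quotient argument is the natural way to conclude.
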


\begin{proof}[Proof of Theorem \ref{thmcex} (i)]
Fix $0<\delta< 1/2$ and in $\R/\pi\Z$ let $I_i = [(i-\delta)\pi/N, (i+\delta)\pi/N]$, $1\leq i\leq N$, so that the intervals $I_i$ are disjoint neighbourhoods of $\theta_i$. Pick up two smooth functions $\R/\pi\Z\to\R_+$, $h$ and $\psi$, such that

\bul $h$ is constant on each $I_i$ and takes distinct integer values on all these intervals;

\bul $\psi$ is $\pi/N$-periodic, nonzero and supported over the intervals $I_i$.

Then for $A>0$ let $F = (1+A\psi)h$. Since $\psi$ is $\pi/N$-periodic, so is $\log (1+A\psi)^\cdot$, so
\[ (\log F)^\cdot (\alpha+\theta_i) - (\log F)^\cdot (\alpha)  = (\log h)^\cdot (\alpha+\theta_i) - (\log h)^\cdot (\alpha)\]
and thus
\[ \int F \Gamma_{1,\beta} (\log F) = 
\frac12 \sum_i \beta_i \int (1+A\psi(\alpha)) h(\alpha) \bigl[ (\log h)^\cdot(\alpha+\theta_i) - (\log h)^\cdot (\alpha)\bigr]^2\,d\alpha.\]
On the support of $\psi$, $h$ is constant, and so is $h(\cdot + \theta_i)$; thus $\psi [(\log h)^\cdot (\alpha+\theta_i) - (\log h)^\cdot (\alpha)]^2=0$. In the end,
\begin{align*} \int F \Gamma_{1,\beta}(\log F) 
& = \frac12 \sum_i \beta_i 
\int h(\alpha) \bigl[ (\log h)^\cdot(\alpha+\theta_i) - (\log h)^\cdot (\alpha)\bigr]^2\,d\alpha \\
& = \int h \Gamma_{1,\beta}(h).
\end{align*}
This is independent of $A$ and $\psi$.

On the other hand,
\begin{align*}
\Gamma_\beta(\sqrt{F}) 
& \geq \sum_{i,j} \beta_i \int_{I_j} \bigl[ \sqrt{F(\alpha+\theta_i)} - \sqrt{F(\alpha)}\bigr]^2\,d\alpha\\
& = A \left(\sum_{i} \beta_i \right) \left(\sqrt{\psi(I_j)}-\sqrt{\psi(I_i)}\right)^2 \left(\sum_j \int_{I_j} h\right)^2.
\end{align*}
\end{proof}

\begin{proof}[Proof of Theorem \ref{thmcex} (ii)]
Take $\beta\equiv 1$. Then
\begin{align*}
\int F \Gamma_{1,\beta} (\log F)
& = \frac12 \iint F(k) \bigl|\nabla \log F(\sigma)-\nabla\log F(k)\bigr|_{k,\sigma}^2\,dk\,d\sigma\\
& = \frac12 \iint F(k) |\nabla \log F(k)|^2\,dk\,d\sigma
+ \frac12 \iint F(k) |\nabla \log F(\sigma)|^2\,dk\,d\sigma\\
& \qquad\qquad\qquad
- \iint \nabla F(k) P_{\sigma k} \nabla \log F(\sigma)\,dk\,d\sigma\\
& = \frac{|\S^{d-1}|}2 \int F|\nabla \log F|^2 + \left(\int_{\S^{d-1}} F\right) \left(\int_{\S^{d-1}} |\nabla \sqrt{F}|^2\right) + 0,
\end{align*}
since $\int P_{\sigma k}\nabla\log F(\sigma)\,d\sigma =0$ by Lemma \ref{lemBL3} (recall $M_{\sigma k}\nabla f = P_{\sigma k}\nabla f$ if $\nabla f(\sigma) \cdot \sigma =0$).
So
\begeq\label{FGammageq} 
\int F \Gamma_{1,\beta} (\log F) \geq 2 |\S^{d-1}| \int |\nabla \sqrt{F}|^2.
\endeq

On the other hand,
\begin{align*}
\int \Gamma_\beta(\sqrt{F})
& = \frac12 \iint \bigl( \sqrt{F(\sigma)}-\sqrt{F(k)}\bigr)^2\,dk\,d\sigma\\
& = |\S^{d-1}| \int F - \left(\int\sqrt{F}\right)^2\\
& = |\S^{d-1}|^2 \left[ \frac{1}{|\S^{d-1}|} \int F - \left(\frac1{|\S^{d-1}|} \int\sqrt{F}\right)^2\right]\\
& \leq \frac{|\S^{d-1}|^2}{\lambda_1} \left( \frac1{|\S^{d-1}|} \int_{\S^{d-1}} |\nabla\sqrt{F}|^2\right),
\end{align*}
where $\lambda_1 = \lambda_1(\RP^{d-1}) = 2d$. The conclusion follows.
\end{proof}

\begin{proof}[Proof of Theorem \ref{thmcex} (iii)]
For $d=2$ the hard spheres kernel $b(\cos\theta)$ varies like $\sin(\theta/2)$. Its symmetric version $[b(\cos\theta) + b(-\cos\theta)]/2$ like $[\sin(\theta/2)+\cos(\theta/2)]/2$, which takes values in $[1/2,\sqrt{2}/2]$. So we may apply Lemma \ref{lemperturb} with $\beta_0=1$ and $M/m=\sqrt{2}$, leading to \eqref{infHS2d}.
\end{proof}

\bibnotes 

Counterexample (i) was shown to me by Luis Silvestre (for $N=2$). Estimates (i)-(ii)-(iii) are published in \cite{ISV:fisher}.

The perturbation lemma \ref{lemperturb} is obvious, but it is natural, in this context of log Sobolev type inequalities, to link it to the Holley--Stroock perturbation lemma \cite{holstroo:logsob:87}.

\section{Local-integral monotonicity criterion via positivity}

Combining the example of constant kernel in Section \ref{secex}, a comparison argument and the Poincar\'e inequality from Section \ref{seccurv}, is enough to show that $K_*>0$, thereby ruling out the situation of counterexample \ref{thmcex}(i) and proving the following monotonicity theorem.

\begin{Thm}[Positivity-induced decay of $I$ for the Boltzmann equation] \label{thmIpos}
If the collision kernel $B=B(|v-v_*|,\cos\theta)$ is such that for all $r>0$ and (any) $k\in\S^{d-1}$,
\[  \sup_{0\leq\theta\leq\pi}
\left\{ \frac{r}{B} \left|\derpar{B}{r}\right| (r,\cos\theta)\right\}
\leq \left(\frac{4(d-1) |\S^{d-1}| \, \left(\dps\min_{0\leq \theta\leq\pi} B(r,\cos\theta)\right)}{\dps\int_{\S^{d-1}} \bigl[1-(k\cdot\sigma)^2\bigr] B(r,k\cdot\sigma)\,d\sigma}\right)^{1/2}, \]
then $I$ is nonincreasing along solutions of the spatially homogeneous Boltzmann equation. Such is the case if $B = |v-v_*|^\gamma \beta(\cos\theta)$ with
\begeq\label{inegthmposit} 
|\gamma| \leq \left(\frac{4(d-1) |\S^{d-1}| (\min\beta)}{|\S^{d-2}| \dps \int_0^\pi \sin^d\theta\, \beta(\cos\theta)\,d\theta}\right)^{1/2}.
\endeq
\end{Thm}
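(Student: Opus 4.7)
The strategy is to apply the criterion of Theorem \ref{thmcritsyn} (in the slightly refined form where $\bar\gamma$ may depend on $r$), and to bound $K_*(\beta_r)$ from below for every section $\beta_r(\cos\theta) = B(r,\cos\theta)$. The main point is that the nonlocal differential log Sobolev inequality on $\RP^{d-1}$ for the kernel $\beta_r$ can be reduced, up to explicit constants, to the same inequality for the flat kernel $\beta \equiv 1$, whose optimal constant was computed in Theorem \ref{thmcex}(ii). Unlike in the curvature-based Theorem \ref{thmcurvI}, this approach will work in every dimension $d\geq 2$, at the price of a constant that deteriorates when $\beta_r$ becomes very peaked.

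First I would fix $r>0$ and set $m_r = \min_{\theta}\beta_r(\cos\theta)$. The trivial pointwise bound $\beta_r(\cos\theta) \geq m_r$ yields, directly from the expression \eqref{Gamma1beta},
\[
\Gamma_{1,\beta_r}(\log F)(k) \geq m_r\,\Gamma_{1,1}(\log F)(k),
\]
so upon integration
\[
\int_{\S^{d-1}} F\,\Gamma_{1,\beta_r}(\log F) \;\geq\; m_r \int_{\S^{d-1}} F\,\Gamma_{1,1}(\log F).
\]
Next, the computation made in the proof of Theorem \ref{thmcex}(ii) gives (using $\int P_{\sigma k}\nabla\log F(\sigma)\,d\sigma = 0$ and then $\int F|\nabla\log F|^2 = 4\int|\nabla\sqrt F|^2$)
\[
\int_{\S^{d-1}} F\,\Gamma_{1,1}(\log F) \;\geq\; 2\,|\S^{d-1}|\int_{\S^{d-1}}\bigl|\nabla\sqrt{F}\bigr|^2,
\]
which combines with the previous line into the clean lower bound
\[
\int_{\S^{d-1}} F\,\Gamma_{1,\beta_r}(\log F) \;\geq\; 2\,m_r\,|\S^{d-1}|\int_{\S^{d-1}}\bigl|\nabla\sqrt{F}\bigr|^2.
\]

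The second step is to bound $\int \Gamma_{\beta_r}(\sqrt F)$ from above. Here the sharp Poincaré-type inequality of Proposition \ref{propGbG1} applied to the even function $g=\sqrt F$ gives
\[
\int_{\S^{d-1}} \Gamma_{\beta_r}(\sqrt{F}) \;\leq\; \Sigma(\beta_r)\,\int_{\S^{d-1}}\bigl|\nabla\sqrt{F}\bigr|^2,
\qquad
\Sigma(\beta_r) = \frac{1}{2(d-1)}\int_{\S^{d-1}}\!\bigl[1-(k\cdot\sigma)^2\bigr]\beta_r(k\cdot\sigma)\,d\sigma.
\]
Dividing the two bounds, one obtains
\[
K_*(\beta_r) \;\geq\; \frac{2\,m_r\,|\S^{d-1}|}{\Sigma(\beta_r)} \;=\; \frac{4(d-1)\,|\S^{d-1}|\,m_r}{\int_{\S^{d-1}}\bigl[1-(k\cdot\sigma)^2\bigr]\beta_r(k\cdot\sigma)\,d\sigma},
\]
which is exactly the square of the bound on $\bar\gamma(B,r)$ assumed in the hypothesis. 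Theorem \ref{thmcritsyn} (applied section by section via Theorem \ref{thmcriterion}) then yields the monotonicity of $I$ along the spatially homogeneous Boltzmann equation.

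For the inverse power law case $B(v-v_*,\sigma) = |v-v_*|^\gamma \beta(\cos\theta)$, a direct differentiation gives $r\,\partial_r B/B \equiv \gamma$, hence $\bar\gamma(B) = |\gamma|$. Passing to polar coordinates around $k$, $d\sigma = |\S^{d-2}|\sin^{d-2}\theta\,d\theta$ and $1-(k\cdot\sigma)^2 = \sin^2\theta$, so the denominator becomes $|\S^{d-2}|\int_0^\pi \sin^d\theta\,\beta(\cos\theta)\,d\theta$, reproducing \eqref{inegthmposit}. The only step where something could go wrong is the interplay between the pointwise minorisation $\beta_r \geq m_r$ and the sharp constant in Proposition \ref{propGbG1}; but since both are one-sided inequalities applied in the same direction, no incompatibility arises, and the main technical concern is simply checking that $m_r>0$ (which is implicit in any nontrivial use of the theorem, since the right-hand side vanishes if $\beta_r$ has a zero).
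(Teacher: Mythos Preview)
Your proof is correct and follows essentially the same approach as the paper: the pointwise minorisation $\beta_r\geq m_r$ combined with the constant-kernel estimate \eqref{FGammageq} from Theorem~\ref{thmcex}(ii) for the lower bound on $\int F\,\Gamma_{1,\beta_r}(\log F)$, and the sharp Poincar\'e-type inequality of Proposition~\ref{propGbG1} for the upper bound on $\int\Gamma_{\beta_r}(\sqrt F)$, then conclusion via Theorem~\ref{thmcriterion}. The only cosmetic difference is that you spell out the product-kernel specialisation in a bit more detail.
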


\begin{proof}[Proof of Theorem \ref{thmIpos}]
Let $\beta_r(\cos\theta) = \beta(r,\cos\theta)$ and $m_r = \min (\beta_r)$. On the one hand, by \eqref{FGammageq},
\[
\int F \Gamma_{1,\beta_r}(\log F) \geq
\int F \Gamma_{1,m_r}(\log F) \geq 2 |\S^{d-1}| m_r \int |\nabla\sqrt{F}|^2,
\]
on the other hand, from Proposition \ref{propGbG1},
\[ \int \Gamma_{\beta_r}(\sqrt{F}) \leq  \Sigma(\beta_r) \int |\nabla\sqrt{F}|^2.\]
So for all even $F:\S^{d-1}\to\R_+$,
\[ K_* (\beta_r)  \geq \frac{2 |\S^{d-1}|\, m_r}{\Sigma(\beta_r)}\]
and the conclusion follows from Theorem \ref{thmcriterion} and the definitions of $m_r, \Sigma(\beta_r)$.
\end{proof}

While this approach provides a considerable list of kernels, even in dimension 2, for which the Fisher information is decaying, it is not powerful enough to handle all classical inverse-law potentials.

\begin{Ex} Assume $d=2$ and inverse power law forces like $1/r^s$. The collision kernel takes the form $B = |v-v_*|^\gamma b(\cos \theta)$ with $\gamma= (s-3)/(s-1)$. Maxwellian molecules correspond to $s=3$. This kernel is bounded from below, so the right-hand side of \eqref{inegthmposit} is a strictly positive number. As $s$ varies, $\gamma=\gamma(s)$ varies continuously, but also the right-hand side of \eqref{inegthmposit}. By continuity there is an interval around $s=3$ in which the inequality \eqref{inegthmposit} remains satisfied. Numerics can tell us exactly which is this range, but we already see that this method covers some of the classical inverse power law forces in dimension 2, which the curvature-based approach was unable to tackle.
\end{Ex}

\begin{Ex} Let us now consider the case $s=2$, still in dimension $d=2$. Then $\gamma=-1$ and $b(\cos\theta)$ is proportional to $1/\sin^2(\theta/2)$. (This is the 2-dimensional version of Rutherford's cross-section formula.) An explicit calculation shows that the right-hand side in \eqref{inegthmposit} is $1/\sqrt{2}$, which is not enough to squeeze $|\gamma|=1$ in the left-hand side. So Theorem \ref{thmIpos} is not general enough to cover $s=2$ in dimension $d=2$.
\end{Ex}

\section{Exploiting gradient variations I: the diffusive case} \label{secdiffusive}

In this and the next two sections the question is how to exploit the variation of gradients in $\Gamma_{1,\beta}$, that is mainly the term $\cII_1$ in \eqref{cII1}. In the diffusive case ($\nu=2$) this reduces to a Hessian term. Exploiting this Hessian is a classical topic: it corresponds to the difference between $\CD(K,\infty)$ and $\CD(K,N)$ bounds. It will be the subject of the present section. Unfortunately (or fortunately for later work), most if not all the recipes of the diffusive case will turn out difficult to adapt to the Boltzmann situation, as will be reviewed in Section \ref{secfailed}. However, in Section \ref{secheatk} a strategy will be presented, which is based on the reduction to the diffusive case and suffices to handle all the classical situations which are not covered by Theorem \ref{thmcurvI}.

\subsection{Dimension~2: The circle} Let us start with $d=2$, then there is no curvature, and Theorem \ref{thmcriterion} is about functions of one variable $\theta$ in $\S^1$ or $\S^1/\{\pm I\}$. In this diffusive context there is no risk of confusion with post-collisional velocities, so I shall use primes for regular derivatives in $\R$. The problem is the differential Bakry--\'Emery inequality
\begeq\label{diffBE2}
\int_{\S^1} [(\sqrt{f})']^2 \leq \frac1{K_*} \int_{\S^1} f \bigl[ (\log f)''\bigr ]^2.
\endeq
At this point there is a fascinating calculus combining nonlinearities and derivatives in dimension~1, which was discovered and rediscovered several times, in particular by McKean:
\begin{align*}
\int f [(\log f)'']^2 
& = \int f \left( \frac{f''}{f} - \frac{f'^2}{f^2}\right)^2\\
& = \int \frac{(f'')^2}{f} - 2 \int \frac{f''f'^2}{f^2} + \int \frac{f'^4}{f^3},
\end{align*}
and by integration by parts
\begeq\label{IPPmckean}
\int \frac{f''f'^2}{f^2} = \frac23 \int \frac{f'^4}{f^3}.
\endeq
So that
\begeq\label{flogf2}
\int f[(\log f)'']^2 
= \int \frac{f''^2}{f} - \frac13 \int \frac{f'^4}{f^3},
\endeq
an algebraic relation involving three quantities which all are natural ``higher order'' generalisations of the Fisher information.

\begin{Rk}
Choosing $\alpha\in (0,1)$ we find
\[ \frac1{\alpha} \frac{((f^\alpha)'')^2}{f^{2\alpha -1}}
= \frac{f''^2}{f} + (\alpha-1)^2 \frac{f'^4}{f^3} + 2 (\alpha-1) \frac{f''f'^2}{f^2},\]
so by \eqref{IPPmckean}
\[ \frac1{\alpha} \int \frac{(f^\alpha)''^2}{f^{2\alpha-1}} = 
\int \frac{f''^2}{ff} + \left[ (\alpha-1)^2 + \frac43 (\alpha-1) \right] \int \frac{f'^4}{f^3}.\]
In particular, the nonnegativity of the left hand side implies
\[ \int \frac{f'^4}{f^3} 
\leq \frac1{(1-\alpha)(\alpha+\frac13)} \int\frac{f''^2}{f}, \]
which is optimal for $\alpha=1/3$ and yields
\begeq\label{optimalf'4}
\int_{\S^1} \frac{f'^4}{f^3} \leq \frac94 \int_{\S^1} \frac{f''^2}{f}.
\endeq
Plugging this back in \eqref{flogf2} further yields
\[ \int f[(\log f)'']^2  \geq \frac14 \int \frac{f''^2}{f} \geq \frac19 \int \frac{f'^4}{f^3}.\]
\end{Rk}

Back to the proof of \eqref{diffBE2}, write $g=\sqrt{f}$, the goal is
\begeq\label{diffBE2ref}
\int g'^2 \leq \frac1{K_*} \int g^2 (\log g^2)''^2.
\endeq
But
\begin{align*}
\int g^2 (\log g^2)''^2
& = 4 \int g^2 \left(\frac{g''}{g} - \frac{g'^2}{g^2}\right)^2\\
& = 4 \int g''^2 - 8 \int \frac{g'' g'^2}{g} + 4 \int \frac{g'^4}{g^2}.
\end{align*}
Now the relevant integration by parts reads
\[ \int \frac{g'' g'^2}{g} = \frac13 \int \frac{g'^4}{g^2}, \]
so
\begeq\label{ineqflogf2}
\int f (\log f)''^2 = 4 \int g''^2 + \frac43 \int \frac{g'^4}{g^2} 
=  4 \int (\sqrt{f})''^2 + \frac1{12} \int \frac{f'^4}{f^3}.
\endeq
Combining this with the usual Poincar\'e inequality on $\S^1$ yields
\[ \int f \Gamma_2 (\log f) = \int f(\log f)''^2 \geq 4 \int (\sqrt{f})''^2
\geq 4 \int (\sqrt{f})'^2 = I(f),\]
which yields $K_* \geq 1$ and also implies the log Sobolev inequality with constant~1.
Combining this, instead, with the better Poincar\'e inequality for even functions (that is, when $f$ is $\pi$-periodic) yields
\[ \int f \Gamma_2(\log f) \geq 4 \int (\sqrt{f})''^2 \geq 16 \int (\sqrt{f})'^2 = 4 I(f), \]
which also implies log Sobolev with constant 4.

To summarise: 
\[ L_*(\RP^1) = L(\RP^1) = \lambda_1(\RP^1) = 4,\] 
and this implies the bound announced in the proof of Theorem \ref{thmfullGS}.

\subsection{Dimensions 3 and higher} \label{secdiff3d}

For $d\geq 3$ there are various ways to combine the Hessian and curvature estimates in order to estimate the best constant $K_*$ in the inequality
\begeq\label{optK*d3} \int \Gamma_1(\sqrt{f}) \leq \frac1{K_*} \int f \Gamma_2(\log f). \endeq
Here I will combine the optimal spectral gap with a change of nonlinearity. So the chain of inequalities will be
\begeq\label{lambdaK}
\int \Gamma_1(\sqrt{f}) \leq \frac1{\lambda_1} \int \Gamma_2(\sqrt{f})
\leq \frac1{\tilde{K} \lambda_1} \int f \Gamma_2(\log f).
\endeq
Note that $\int \Gamma_1(\sqrt{f}) = (1/4) I(f)$, so this implies log Sobolev with constant $\tilde{K}\lambda_1/4$. Since $L_*\leq \lambda_1$, $\tilde{K}$ can be at most 4.

Now
\[ \int \Gamma_2 (\log f) = \int f \|\nabla^2 \log f\|^2_{\HS} + (d-2) \int f |\nabla \log f|^2,\]
\[ \int \Gamma_2 (\sqrt{f}) = \int \|\nabla^2 \sqrt{f}\|^2_{\HS} + (d-2) \int |\nabla\sqrt{f}|^2,\]
and since $\int f|\nabla\log f|^2 = 4 \int |\nabla\sqrt{f}|^2$, it will be sufficient to show
\begeq\label{suffKtilde} 
\int \|\nabla^2 \sqrt{f}\|^2_{\HS} \leq \frac1{\tilde{K}} \int f \|\nabla^2\log f\|_{\HS}^2
\endeq
for some $\tilde{K}\leq 4$.

For this let us play with nonlinear changes of variables: writing $\vphi\circ f=\vphi(f)$, the three basic such formulas are 
\begeq\label{fmlnl1}
\chi'(f) \Delta f = \Delta \chi (f) - \chi''(f)\Gamma_1(f),
\endeq
\begeq\label{fmlnl2}
\Gamma_1(\vphi(f)) = (\vphi')^2(f)\, \Gamma_1(f),
\endeq
\begeq\label{fmlnl3}
\Gamma_2(\vphi(f)) = (\vphi')^2(f)\, \Gamma_2(f) + 2 \vphi'(f)\, \vphi''(f) \GGamma (f) + (\vphi'')^2(f)\, \Gamma_1(f)^2,
\endeq
where I write indifferently $\Gamma_i(f) = \Gamma_i(f,f)$, $\Gamma_1=\Gamma$, and
\[ \GGamma(f) = \frac12 \Gamma\bigl(f,\Gamma(f,f)\bigr) \]
(morally this is $\<\nabla^2 f\cdot \nabla f,\nabla f\>$).

Dominique Bakry has been virtuose in showing the power of these local identities, combined with a fourth, integral, formula:
\begeq\label{fmlnl4}
\int \psi(f) (\Delta f)^2 =
\int \Bigl[ \psi(f)\, \Gamma_2(f) + 3 \psi'(f)\,\GGamma(f) + \psi''(f)\, \Gamma(f)^2 \Bigr]^2.
\endeq
(Notice that $\psi(f) = 1/f$ in dimension~1 reduces to \eqref{IPPmckean}.) Let us apply the above formula \eqref{fmlnl3} with $f=g^2$ and $\vphi(g) = \log g$:
\[ \Gamma_2(\log g) = \frac1{g^2} \Gamma_2(g) - \frac2{g^3} \GGamma (g) + \frac1{g^4}\Gamma(g)^2;\]
or equivalently
\[ f \Gamma_2 (\log f) = 
4 g^2 \Gamma_2(\log g) = 4 \Gamma_2(g) - 8 \frac{\GGamma(g)}{g} + 4 \frac{\Gamma(g)^2}{g^2};\]
using $\GGamma(\log f) = 8\, \GGamma(\log g) = 8\,(\GGamma(g)/g^3 - \Gamma(g)^2/g^4)$ and $\Gamma(\log f)^2 =16\, \Gamma(\log g)^2$, we obtain
\begeq\label{Gamma2g} \int \Gamma_2(g) = \frac14 \int f \Gamma_2 (\log f)
+ \frac14 \int f \GGamma(\log f) + \frac1{16} \int f\Gamma(\log f)^2.
\endeq

From \eqref{fmlnl4},
\[ \int f (\Delta \log f)^2 =
\int f \Gamma_2(\log f) + 3 \int f \GGamma(\log f) + \int f \Gamma(\log f)^2,\]
so
\[ \int f \GGamma(\log f) = \frac13 \left( \int f \bigl[ (\Delta \log f)^2 - \Gamma_2(\log f) \bigr] \right)
- \frac13 \int f \Gamma(\log f)^2.\]
Plugging this back in \eqref{Gamma2g},
\begin{multline*}  \int \Gamma_2(\sqrt{f}) = 
\frac14 \int f \Gamma_2(\log f) + \frac1{12} \left( 
\int f \bigl [(\Delta \log f)^2 - \Gamma_2(\log f) \bigr] \right) - \frac1{12} \int f \Gamma(\log f)^2\\
+ \frac1{16} \int f \Gamma(\log f)^2;
\end{multline*}
that is
\begeq \label{intGamma2sqrt}
\int \Gamma_2(\sqrt{f}) = 
\frac14 \int f \Gamma_2(\log f) + \frac1{12} \left( 
\int f \bigl [(\Delta \log f)^2 - \Gamma_2(\log f) \bigr] \right) - \frac1{48} \int f \Gamma(\log f)^2.
\endeq

The $\CD(d-2,d-1)$ criterion yields
\[ \Gamma_2(u) \geq \frac{(\Delta u)^2}{d-1} + (d-2)\, \Gamma(u),\]
or
\[ (\Delta u)^2 - \Gamma_2(u) \leq (d-2) [ \Gamma_2(u) - (d-1)\Gamma(u)].\]
Plugging this back in \eqref{intGamma2sqrt} yields
\begin{align*}
\int \Gamma_2(\sqrt{f})
& \leq \left(\frac14 + \frac{d-2}{12}\right)
\int f \Gamma_2(\log f) - \frac{(d-2)(d-1)}{12}\int f\Gamma(\log f) 
- \frac1{48} \int f \Gamma(\log f)^2\\
& = \left(\frac{d+1}{12}\right)
\int f \Gamma_2(\log f) - \frac{(d-2)(d-1)}{12} \int f \Gamma(\log f) - \frac1{48} \int f \Gamma(\log f)^2.
\end{align*}
This shows that
\[ \tilde{K} \geq \frac{12}{d+1}, \qquad \forall d\geq 2.\]
Combining this in \eqref{lambdaK} with $\lambda_1(\RP^{d-1}) = 2d$ yields for the optimal constant in \eqref{optK*d3}
\begeq\label{myKstar}  K_* \geq \frac{24\, d}{d+1}  \endeq
As $d$ becomes large this is a poor estimate but for $d\leq 6$ it improves on the obvious bound $L_*(\RP^{d-1}) \geq L_*(\S^{d-1})$, and for $d=2$ it yields again $K_*=16$ (optimal), for $d=3$, $K_*\geq 18$ (not bad) and it is anyway bigger than 16 for any $d$, completing the proof of Theorem \ref{thmfullGS}.

\bibnotes

For $d=2$ the key integration by parts \eqref{IPPmckean} and its use appear in many places but the oldest reference that I am aware of is McKean \cite{mck:kac:65}. 

Formulas \eqref{fmlnl2}--\eqref{fmlnl4} can be found in various places of the literature on $\Gamma_2$ and logarithmic Sobolev inequalities; e.g. Bakry \cite{bakry:lnm:94}. These are also Lemmas 5.4.3 and 5.5.5 in \cite{toulouse:sobolog}.

For higher dimensions there are many works about the estimate of the log Sobolev and differential Bakry--\'Emery constants \cite{BGL:book} on $\S^{d-1}$, but estimates about best constants on $\RP^{d-1}$ are much more rare: Fore increasingly precise results, see Saloff-Coste \cite{lsc:rate:94}, Fontenas \cite{fontenas:Jacobi:98}, Guillen--Silvestre \cite{GS:landaufisher} and Ji \cite{ji:BE}.

\section{Exploiting gradient variations II: failed attempts} \label{secfailed}

How to adapt or extend the good theory and tools from the diffusive case, to more general Boltzmann equation, possibly of fractional diffusion nature? Reviewed here are several attempts which did not succeed, but may possibly be partly saved.

\subsection{Decomposing the inequality}

The desired inequality
\[ \int \Gamma_\beta(\sqrt{F}) \leq \frac1{K_*} \int F \Gamma_{1,\beta}(\log F)\]
involves at the same time a change of derivation order (1 more on the right hand side) and of nonlinearity ($\sqrt{\ }$ vs $\log$). If a direct approach is elusive, one can try and handle one change at a time, decomposing in any of the following two ways:

(a) $K_*\geq \ov{K} \tilde{K}$, where $\ov{K}$, $\tilde{K}$ are best in
\begeq\label{Kbar}
\int \Gamma_\beta(\sqrt{F}) \leq \frac1{\ov{K}} \int \Gamma_{1,\beta}(\sqrt{F})
\endeq
\begeq\label{Ktilde}
\int \Gamma_{1,\beta}(\sqrt{F}) \leq \frac1{\tilde{K}} \int F\, \Gamma_{1,\beta}(\log F).
\endeq

(b) $K_*\geq \dot{K} \hat{K}$, where $\dot{K}$, $\hat{K}$ are best in
\begeq\label{Kdot}
\int \Gamma_\beta(\sqrt{F}) \leq \frac1{\dot{K}} D_\beta(F)
\endeq
\begeq\label{Khat}
D_\beta(F) \leq \frac1{\hat{K}} \int F\,\Gamma_{1,\beta}(\log F).
\endeq
Here
\[ D_\beta(F) = \frac12 \iint\bigl(F(\sigma)-F(k)\bigr) \bigl(\log F(\sigma)-\log F(k)\bigr)\,\beta(k\cdot\sigma)\,dk\,d\sigma\]
is the dissipation of Boltzmann's $H$ functional along the operator $L_\beta$.
In all these inequalities, $F$ is assumed even.

It turns out that two of the above constants are easy to estimate. From the linear discussion we recall that $\ov{K}\geq \lambda_1 (\RP^{d-1}) = 2d$ (with equality in general). And from the elementary inequality $4(\sqrt{x}-\sqrt{y})^2 \leq (x-y)\log(x/y)$ we have $\dot{K}\geq 4$ (with equality in general). This reduces the problem to the estimates of either $\tilde{K}$ (as I have done in the diffusive case) or $\hat{K}$ (which in the diffusive case is just as tricky as estimating $K_*$).

\subsection{Nonlocal $\Gamma_2$ changes of variables}

Nonlinear changes of variables, both for $\Gamma$ and $\Gamma_2$, proved powerful in the previous section and it is natural to try and repeat them in the nonlocal context of the Boltzmann collision operator. A seemingly good news is that all four formulas \eqref{fmlnl1}--\eqref{fmlnl4} have nonlocal analogues. The analogue of \eqref{fmlnl1} is
\begeq\label{fmlnl1'} \chi'(f) L_\beta f = 
L_\beta (\chi(f)) - \frac12 \int \Bigl( \chi(f(\sigma)) - \chi(f(k)) - \chi'(f(k))\bigl[f(\sigma)-f(k)\bigr]\Bigr) \beta(k\cdot\sigma)\,d\sigma,
\endeq
note that inside the large brackets the expression looks like $\chi''(f) [f(\sigma)-f(k)]^2$. The analogue of \eqref{fmlnl2} is the rather tautological
\begeq\label{fmlnl2'} \Bigl( \vphi(f(\sigma)) - \vphi(f(k))\Bigr)^2
= \left( \frac{\vphi (f(\sigma)) - \vphi(f(k))}{f(\sigma)-f(k)}\right)^2
\bigl( f(\sigma)-f(k)\bigr)^2.
\endeq
The analogue of \eqref{fmlnl3} is the less obvious
\begin{multline} \label{fmlnl3'}
\Bigl| \Phi'(f(\sigma))\nabla f(\sigma) - \Phi'(f(k))\nabla f(k)\Bigr|^2_{k,\sigma}
 = \Phi'(f(k))^2\, \bigl| \nabla f(\sigma)-\nabla f(k)\bigr|^2_{k,\sigma}\\
 + 2 \Phi'(f(k)) \bigl[ \Phi'(f(\sigma)) -\Phi'(f(k))\bigr] \nabla f(\sigma)\cdot \bigl[ \nabla f(\sigma) - P_{k\sigma}\nabla f(k)\bigr]\\
 + \bigl[\Phi'(f(\sigma)) - \Phi'(f(k))\bigr]^2\, |\nabla f(\sigma)|^2.\end{multline}

Finally the analogue of \eqref{fmlnl4} is
\begin{multline}\label{fmlnl4'}
\int \psi(f)\,L_\beta f \Delta f =
\frac12 \iint \psi(f(k)) \bigl|\nabla f(\sigma) -\nabla f(k)\bigr|^2_{k,\sigma} \beta(k\cdot\sigma)\,dk\,d\sigma 
\\ 
+ \frac12 \iint \Bigl [ 2\psi'(f(\sigma)) \bigl [ f(\sigma)-f(k)\bigr ]^2
+ \bigl(\psi(f(\sigma))-\psi(f(k))\bigr) \Bigr] \\
\left( \frac{|\nabla f(\sigma)|^2}2 - \frac{|\nabla f(k)|^2}2 \right)\,\beta(k\cdot\sigma)\,dk\,d\sigma\\
+ \frac12 \iint \bigl[ \psi'(f(\sigma))-\psi'(f(k))\bigr]\,
\bigl(f(\sigma)-f(k)\bigr)\, |\nabla f(k)|^2\,\beta(k\cdot\sigma)\,dk\,d\sigma.
\end{multline}

At first sight this looks promising, and also one can hope to exploit 
\[ \int (\Delta g) (L_\beta g) = \int \Gamma_{1,\beta}(g)^2. \]
But notice that expressions which were similar in the local case, in particular in \eqref{fmlnl3} and \eqref{fmlnl4}, are now different, as in \eqref{fmlnl3'} and \eqref{fmlnl4'}. So arranging the nonlocal formulas together necessarily comes at a cost in the constants. For a classical problem of partial differential equation estimate this might not be a big issue, but here the value of constants does matter, and the problem is rather tight! In the end I did not manage to put this strategy up and alive, but who knows.

\subsection{Semigroup one step beyond}

What about going for a semigroup argument to estimate
\[ I'_\beta(F) = \int F \bigl|\nabla \log F(\sigma)-\nabla\log F(k)\bigr|^2_{k,\sigma}\,\beta(k\cdot\sigma)\,dk\,d\sigma\]
from its dissipation via the heat equation on $\S^1$?

By commutation of semigroups, this amounts to estimate the dissipation of
\[ J(F) = \int F (\log F)''^2 \]
via the Boltzmann semigroup on $\S^1$. (The Boltzmann-dissipation of the heat-dissipation of Fisher's information is the same as the heat-dissipation of the Boltzmann-dissipation of Fisher's information.)

Let us start with $d=2$. Similarly to what was done in Section \ref{secGamma}, let us define
\[ \Gamma_{J,\beta}
(F) = \bigl( F (\log F)''^2\bigr) - (Lf) (\log f)''^2 - 2 f (\log f)''\left(\frac{Lf}{f}\right)''\]
in the hope to find a good leading nonnegative term. It turns out that there is a good algebra and the result is neat:
\[ \Gamma_{J,\beta}(F) (x)
= \int F(y) \Bigl( \bigl[\ell''(y) - \ell''(x) \bigr]^2 - 2 \ell''(x) \bigl( \ell'(y)-\ell'(x)\bigr)^2\Bigr)\,\beta(\cos(x-y))\,dy,\]
where $\ell = \log f$. But then, what to do? It is normal that log concavity improves the estimates, but on $\S^1$ functions cannot be log concave everywhere. Is there a nice inequality involving this functional? It seems, we have reduced a tricky problem to another one. Furthermore, in view of the counterexample in Section \ref{secex} this can turn out nicely only if $\beta$ has some regularity, and so far nothing has been used. Should one look for a clever integration by parts involving $\beta$? Again, who knows.

\subsection{Interpolation}

Consider the family of inverse power laws interactions, as in Section \ref{sectax}. In dimension 3, we are able to handle the ``right'' end $\nu=2$ (diffusive case, Section \ref{secdiffusive}); and also the ``left'' end $\nu=0$ (hard spheres) thanks to either Theorem \ref{thmcurvI} or Theorem \ref{thmcex}(ii). It would be nice if we could interpolate between both ends in the inequality
\[ \int \Gamma_\beta(\sqrt{F}) \leq \frac{1}{K_*} \int F \Gamma_{1,\beta}(\log F).\]
and cover all power laws with exponents $s \in (2,\infty)$. More generally, once the covered area comprises $\gamma=0$, $\nu=0$, $\nu=2$ (at least part of a ``H'' in the diagram), it is tempting to hope that interpolation fills the whole region.

But obstacles are not easy to overcome. The functionals involved are homogeneous, but not convex. If working with inverse power laws, the formulas for the collision kernels are certainly analytic, but implicit and indirect. Estimating the complex parts in an attempt of the three line lemma (complex interpolation) is not obvious. So I do not know if the idea can be saved.

\bibnotes

A pedagogical introduction to the formulas of changes of variables in diffusion processes can be found in the collective work \cite{toulouse:sobolog}. A pedagogical introduction to classical interpolation theory, together with many references, can be found in Lunardi \cite{lunardi:interpolation}.

In the context of the Boltzmann equation, nonlocal formulas for $\Gamma_1$ under change of functions were introduced by Alexandre \cite{alex:renorm:99} to solve the problem of ``renormalising'' the Boltzmann equation without cutoff; then the two of us simplified and improved the formulas and estimates \cite{ADVW,AV:boltz:02,AV:landau:04}. See also Section \ref{subsechigher} below.

\section{Exploiting gradient variations III: combinations of heat kernels} \label{secheatk}

A powerful principle in harmonic analysis consists in representing functions and operators as combinations of heat kernels. In this way one may perform regularisation and interpolation while take advantage of the good constants and algebra coming from diffusion. This is obviously a core principle of harmonic analysis. Here is a basic example:
\begeq\label{Deltaalpha} 
\int_0^\infty (e^{t\Delta} -1)\,\frac{dt}{t^{1+\alpha}} = - c_\alpha (-\Delta)^{\alpha}
\endeq
for some positive constant $c_\alpha = c(\alpha,d)>0$, $0<\alpha<1$; so nice estimates on $e^{t\Delta}$ may imply nice estimates for the fractional diffusion $(-\Delta)^\alpha$. This point of view will lead to a more precise estimate than Theorem \ref{thmIpos}, which was by comparison to a constant kernel.

To motivate the method in our Boltzmann context, assume that $\beta={\cal K}_t$ is the heat kernel at some time $t>0$ on $\S^{d-1}$: that is
\[ e^{t\Delta} F(k) = \int_{\S^{d-1}} F(\sigma) {\cal K}_t(k\cdot\sigma)\,d\sigma. \]
(Do not be mistaken: $t$ here is no longer the time variable in the Boltzmann equation, it is just a parameter.) The regularising property of the heat equation implies that ${\cal K}_t$ is smooth for any $t>0$. Obviously $\int_{\S^{d-1}} {\cal K}_t(k\cdot\sigma)\,d\sigma =1$ and the linear Boltzmann operator associated with $\beta$ is
\[ L_{{\cal K}_t} F = e^{t\Delta}F - F.\]

Applying successively \eqref{-I'Lf}, \eqref{GILBF}, Fubini, the last bit of Proposition \ref{propPsk}(x), and \eqref{Gamma1beta},
\begin{align}
 \label{-I'FKt} -I'(F) \cdot L_{{\cal K}_t} F 
 & = \int_{\S^{d-1}} \Gamma_{I,L_\beta}(F) \\ \nonumber
 & = \iint _{\S^{d-1}\times\S^{d-1}} 
 F(\sigma)\, \bigl| \nabla \log F(\sigma) -\nabla \log F(k)\bigr|_{k,\sigma}^2\,{\cal K}_t(k\cdot\sigma)\,d\sigma\,dk \\ \nonumber
 & = \int_{\S^{d-1}} F(\sigma) \left( \int_{\S^{d-1}} \bigl| \nabla \log F(\sigma) - \nabla\log F(k)\bigr|_{k,\sigma}^2 {\cal K}_t(k\cdot\sigma)\, dk \right)\,d\sigma\\ \nonumber
 & = \int_{\S^{d-1}} F(\sigma) \left( \int_{\S^{d-1}} \bigl| \nabla \log F(k) - \nabla\log F(\sigma)\bigr|_{\sigma,k}^2 {\cal K}_t(k\cdot\sigma)\, dk \right)\,d\sigma\\ \nonumber
 & = 2 \int F\, \Gamma_{1,{\cal K}_t}(\log F).
 \end{align}
But by convexity of $I$,
\begeq\label{convexI}
-I'(F) \cdot L_{{\cal K}_t}F = -I'(F)\cdot \bigl( e^{t\Delta} F - F\bigr)
\geq I(F) - I(e^{t\Delta}F).
\endeq
Also
\[ -I'(F)\cdot \Delta F = \int F \Gamma_2(\log F) \geq 2 L_* I(F),\]
where $L_*$ is the optimal constant in the differential Bakry--\'Emery inequality, as in \eqref{optDLS}, so by Gronwall (along the heat equation),
\[ I(e^{t\Delta} F) \leq e^{-2L_* t} I(F).\]
Plugging this back in \eqref{convexI} yields
\begeq\label{FG1Ktgeq}
\int_{\S^{d-1}} F\,\Gamma_{1,{\cal K}_t}(\log F) \geq
\left(\frac{1-e^{-2L_*t}}{2}\right) I(F).
\endeq

On the other hand, using successively the definition of $\Gamma_\beta$ (Definition \ref{defcomLB}), the symmetry of ${\cal K}_t$ and the dissipation of $L^2$ norm,
\begin{align}
\int \Gamma_{{\cal K}_t} (\sqrt{F})
& = - \int \sqrt{F} L_{{\cal K}_t} \sqrt{F}  \nonumber \\
& = \int F - \int \sqrt{F} e^{t\Delta} \sqrt{F} \nonumber \\
& = \int F - \int \bigl( e^{(t/2)\Delta} \sqrt{F}\bigr)^2 \nonumber \\
& = \int_0^t \left(- \frac{d}{ds} \int \bigl(e^{(s/2)\Delta}\sqrt{F}\bigr)^2\right)\,ds \nonumber \\
& = \frac12 \int_0^t 2\int \bigl| \nabla e^{(s/2)\Delta} \sqrt{F}\bigr|^2\,ds. \label{intermGKtsup}
\end{align}
From 
\[ \ddt{\pa_t u =\Delta u} \int |\nabla u|^2 = - 2 \int |\nabla^2 u|^2 \leq -2\lambda_1 \int |\nabla u|^2\]
(no confusion: $\lambda_1$ here is the spectral gap for $-\Delta$, not for $L_\beta$), we have
\[ \int  \bigl| \nabla e^{(s/2)\Delta} \sqrt{F}\bigr|^2 \leq e^{-\lambda_1 s} \int |\nabla\sqrt{F}|^2
= \frac{e^{-\lambda_1 s}}{4} I(F).\]
Plugging this back in \eqref{intermGKtsup}, 
\begeq\label{GKtleq}
\int_{\S^{d-1}} \Gamma_{{\cal K}_t}(\sqrt{F}) \leq \left(\frac{1-e^{-\lambda_1 t}}{4\lambda_1}\right)\, I(F).
\endeq
The combination of \eqref{FG1Ktgeq} and \eqref{GKtleq} yields
\[ \int_{\S^{d-1}} \Gamma_{{\cal K}_t}(\sqrt{F}) \leq
\frac{1}{2\lambda_1} \left( \frac{1-e^{-\lambda_1 t}}{1-e^{-2L_*t}}\right)\,
\int_{\S^{d-1}} F\,\Gamma_{1,{\cal K}_t}(\log F),\]
in other words
\[ \beta = {\cal K}_t \Longrightarrow \quad
K_* \geq 2\lambda_1 \left(\frac{1-e^{-2L_*t}}{1-e^{-\lambda_1 t}}\right). \]
As $t$ varies from 0 to $\infty$ this interpolates in a monotone way between $4 L_*(\RP^{d-1})$ (consistent with the fact that $L_{{\cal K}_t}$ for small $t$ is approximately $t$ times $\Delta$) and $2\lambda_1(\RP^{d-1})= 4d$ (consistent with \eqref{inf4d} and the fact that $L_{{\cal K}_t}$ for large $t$ is approximately the averaging operation on the sphere). It follows that the constant $K_*$ is never below the minimum of those two numbers.

\begin{Rk} At the moment it is not known exactly which is the optimal value $L_*$. In these notes I have proven that $L_*\geq 6d/(d+1)$. Sehyun Ji has shown that
\begeq\label{L*RPd} L_*(\RP^d)\geq  d+ 3 - \frac1{d-1}. \endeq
(Compare with the bound by Rothaus \eqref{LRPd}, which is $L \geq d+3-4/d^2$.) In any case it shows that
\[ \min (4L_*,2\lambda_1) = 2\lambda_1 = 4d,\]
and ironically it is now $\lambda_1$ the limiting factor in our inequalities.
\end{Rk}

Let us summarise with the following statement:

\begin{Prop}[Functional inequality for heat kernels]
For any $t>0$, for all even functions $F:\S^{d-1}\to\R_+$,
\[ \int_{\S^{d-1}} F \Gamma_{1,{\cal K}_t} (\log F) \geq 4 d \int_{\S^{d-1}} \Gamma_{{\cal K}_t} (\sqrt{F}).\]
\end{Prop}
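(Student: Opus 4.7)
The proof follows by chaining the two one-sided bounds that have already been derived just above the statement, and then invoking a lower bound on $L_*(\RP^{d-1})$ to determine which of the two is binding. Concretely, my plan has three steps.

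First, I would establish the lower bound on the left-hand side. Using the Fisher-information identity \eqref{-I'FKt},
\[
\int_{\S^{d-1}} F\,\Gamma_{1,{\cal K}_t}(\log F) = -\tfrac{1}{2} I'(F)\cdot L_{{\cal K}_t}F
= \tfrac{1}{2}\bigl(I(F) - I(e^{t\Delta}F)\bigr) + \tfrac{1}{2}\mathrm{(convexity\ slack)},
\]
where convexity of $I$ (recall $I$ is a convex functional) gives $-I'(F)\cdot(e^{t\Delta}F - F) \geq I(F) - I(e^{t\Delta}F)$. Then I would use the optimal differential Bakry--\'Emery inequality on $\RP^{d-1}$, $I(e^{s\Delta}F)' \leq -2L_*(\RP^{d-1})\,I(e^{s\Delta}F)$, restricted to even functions (which is the relevant symmetry class since $F$ is even and the heat flow preserves parity). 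Gronwall then yields $I(e^{t\Delta}F)\leq e^{-2L_*t}I(F)$, whence
\[
\int_{\S^{d-1}} F\,\Gamma_{1,{\cal K}_t}(\log F) \ \geq\ \frac{1-e^{-2L_*t}}{2}\,I(F).
\]

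Second, I would prove the upper bound on the right-hand side by standard heat-flow computation. Since ${\cal K}_t$ is symmetric, $\int \Gamma_{{\cal K}_t}(\sqrt{F}) = \int F - \int \sqrt{F}\,e^{t\Delta}\sqrt{F} = \int_0^t 2\int|\nabla e^{(s/2)\Delta}\sqrt{F}|^2\,ds$ (after differentiating $\|e^{(s/2)\Delta}\sqrt F\|_{L^2}^2$). The first eigenvalue of $-\Delta$ on $\RP^{d-1}$ is $\lambda_1 = 2d$, so the Dirichlet energy decays as $\int|\nabla e^{(s/2)\Delta}\sqrt F|^2 \leq e^{-\lambda_1 s}\int|\nabla\sqrt F|^2 = (e^{-\lambda_1 s}/4)\,I(F)$, yielding
\[
\int_{\S^{d-1}} \Gamma_{{\cal K}_t}(\sqrt F) \ \leq\ \frac{1-e^{-\lambda_1 t}}{4\lambda_1}\,I(F).
\]

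Finally, combining the two estimates gives
\[
\int_{\S^{d-1}} F\,\Gamma_{1,{\cal K}_t}(\log F) \ \geq\ 2\lambda_1\cdot \frac{1-e^{-2L_*t}}{1-e^{-\lambda_1 t}}\,\int_{\S^{d-1}} \Gamma_{{\cal K}_t}(\sqrt F).
\]
The ratio $\frac{1-e^{-2L_*t}}{1-e^{-\lambda_1 t}}$ is monotone in $t$ and tends to $2L_*/\lambda_1$ as $t\to 0^+$ and to $1$ as $t\to\infty$, so the uniform-in-$t$ lower bound is $\min(4L_*,\,2\lambda_1) = \min(4L_*,\,4d)$. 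The main conceptual hurdle (the only non-routine ingredient) is to know that $2L_*(\RP^{d-1})\geq \lambda_1(\RP^{d-1})= 2d$, i.e.\ $L_*(\RP^{d-1})\geq d$. This is supplied by Ji's bound $L_*(\RP^{d-1})\geq d+3-\tfrac{1}{d-1}\geq d$ (for $d\geq 2$), recalled in the remark immediately preceding the proposition, so the binding constant is $2\lambda_1 = 4d$, which is exactly the claim. I would emphasize that this monotone interpolation in $t$ is precisely what makes ${\cal K}_t$ convenient: it dovetails the diffusive ($t\to 0$) and averaging ($t\to\infty$) regimes while never leaving the range already covered by the earlier pointwise and Poincar\'e-type estimates.
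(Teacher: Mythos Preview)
Your proof is correct and follows essentially the same route as the paper: bound the left-hand side below via convexity of $I$ and Gronwall with constant $L_*(\RP^{d-1})$, bound the right-hand side above via the $L^2$ heat-flow computation and Poincar\'e with constant $\lambda_1(\RP^{d-1})=2d$, then combine and invoke Ji's bound $L_*\geq d$ to conclude that $\min(4L_*,2\lambda_1)=4d$. One minor slip: in your Step~2 the intermediate identity should read $\int_0^t \int|\nabla e^{(s/2)\Delta}\sqrt F|^2\,ds$ (no extra factor of $2$), since $\frac{d}{ds}\|e^{(s/2)\Delta}\sqrt F\|_{L^2}^2 = -\int|\nabla e^{(s/2)\Delta}\sqrt F|^2$; your final bound $\frac{1-e^{-\lambda_1 t}}{4\lambda_1}I(F)$ is nonetheless correct.
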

 
Once that striking property $K_*(\beta)\geq 4d$ has been established for any $\beta = {\cal K}_t$, it readily extends to any linear combination, discrete or continuous, of such kernels, say
\[\beta(k\cdot\sigma) = \int_0^\infty {\cal K}_t(k\cdot\sigma)\,\lambda(dt)\]
for some (nonnegative) measure $\lambda$ on $\R_+$.
This means that the Boltzmann operator defined by the kernel $\beta$ is 
\[ L_\beta f (k) = -\int_0^{\infty}  \int_{\S^{d-1}} \bigl[ f(k) -{\cal K}_t(k\cdot\sigma) f(\sigma)\bigr]\,\lambda(dt)\,d\sigma  \]
or
\begeq\label{LbetaDelta}
L_\beta = -\int_0^\infty (I-e^{t\Delta})\,\lambda(dt).
\endeq
Warning: A possible source of confusion is that the kernel of an operator $L$ is usually defined via the formula $Lf (x) = \int k(x,y) f(y)\,dy$; but in the context of Boltzmann equation, the formula is $Lf(x) = \int k(x,y) [f(y)-f(x)]\,dy$. This is just a terminology issue. For a kernel of infinite mass, the second convention still makes sense. In any case, I will write informally \eqref{LbetaDelta} as
\begeq\label{LbetaDelta2} \beta = \int_0^\infty {\cal K}_t\,\lambda(dt). \endeq
The above discussion is summarised by the following estimate (Compare Corollary \ref{corNLDLS}):

\begin{Cor}[Regularity-improved nonlocal differential log Sobolev] \label{propheat}
If $\beta:[-1,1]\to\R_+$ satisfies $\int [1-k\cdot\sigma]\, \beta(k\cdot\sigma)\,d\sigma<\infty$ on $\S^{d-1}$ and admits the integral heat kernel representation
\[ \beta(k\cdot\sigma) = \int_0^\infty {\cal K}_t(k\cdot\sigma)\,\lambda(dt) \]
for some measure $\lambda$ on $\R_+$, then for all even functions $F:\S^{d-1}\to\R_+$,
\[ \int_{\S^{d-1}} F \Gamma_{1,\beta} (\log F) \geq 4 d \int_{\S^{d-1}} \Gamma_\beta (\sqrt{F}).\]
In other words, \eqref{eqBcrit} holds with $K_*\geq 4d$.
\end{Cor}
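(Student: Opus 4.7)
The plan is very direct: observe that both integrals in the claimed inequality depend linearly (and nonnegatively) on $\beta$, then invoke the heat-kernel bound $K_*({\cal K}_t)\geq 4d$ already established for a single heat kernel in the preceding proposition, and integrate against the representing measure $\lambda$.

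First, by formula \eqref{Gammabeta},
\[ \int_{\S^{d-1}} \Gamma_\beta(\sqrt{F}) = \frac12 \iint_{\S^{d-1}\times\S^{d-1}} \bigl(\sqrt{F(\sigma)}-\sqrt{F(k)}\bigr)^2\,\beta(k\cdot\sigma)\,dk\,d\sigma, \]
which is plainly linear and nonnegative in $\beta$. Likewise, combining \eqref{GILBF} with the identity $\int F\,\Gamma_{1,\beta}(\log F) = \frac12 \int \Gamma_{I,L_\beta}(F)$ that appears in \eqref{-I'FKt},
\[ \int_{\S^{d-1}} F\,\Gamma_{1,\beta}(\log F) = \frac12 \iint F(\sigma)\,\bigl|\nabla\log F(\sigma)-\nabla\log F(k)\bigr|^2_{k,\sigma}\,\beta(k\cdot\sigma)\,dk\,d\sigma, \]
again nonnegative and linear in $\beta$. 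Substituting $\beta(k\cdot\sigma) = \int_0^\infty {\cal K}_t(k\cdot\sigma)\,\lambda(dt)$ into both identities, Tonelli's theorem (justified by nonnegativity of all integrands) allows pulling the $\lambda(dt)$ integration outside. Applying the single-heat-kernel inequality pointwise in $t$ and integrating against the nonnegative measure $\lambda$ yields the claim.

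The argument is essentially as short as can be, and I do not foresee any real obstacle beyond a measure-theoretic sanity check: that both sides of the inequality are finite for the relevant class of $F$ and $\beta$. This is the case whenever $F$ is smooth and bounded away from zero on $\S^{d-1}$ and $\beta$ satisfies the stated integrability assumption $\mu_\beta<\infty$, since the integrand $|\nabla\log F(\sigma)-\nabla\log F(k)|^2_{k,\sigma}$ vanishes to order $\theta^2$ as $k\cdot\sigma\to 1$, matching precisely the order of the singularity $\beta$ is allowed to develop through $\int(1-k\cdot\sigma)\,\beta(k\cdot\sigma)\,d\sigma<\infty$. The general case then follows by an $\varepsilon$-regularisation (replacing $F$ by $F+\varepsilon$ and letting $\varepsilon\to 0$) together with a standard truncation of $\beta$ near $\theta=0$, exactly in the spirit of the approximation arguments used throughout the broader programme.
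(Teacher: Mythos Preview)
Your proposal is correct and matches the paper's own argument: the paper notes that ``once that striking property $K_*(\beta)\geq 4d$ has been established for any $\beta = {\cal K}_t$, it readily extends to any linear combination, discrete or continuous, of such kernels,'' which is precisely the linearity-plus-Tonelli argument you spell out. Your added remarks on finiteness and approximation are reasonable embellishments not made explicit in the paper.
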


Asking for the heat kernel representation is the same as asking for $L_\beta$ to be of the form $-g(-\Delta)$, where $g(\omega) = \int_0^\infty (I-e^{-\omega t}) \,\lambda(dt)$. So $g$ is the Laplace transform of $\lambda$, up to a constant term. A famous criterion by Bernstein says that this amounts for $g$ to be nonnegative and completely monotonous, that is, $(-1)^n g^{(n)}\geq 0$ for all $n\in\N$. Such functions $g$ are called {\em Bernstein functions}, and Bernstein's representation theorem precisely says that a completely monotonous function $g:\R_+\to\R_+$ can be written
\[ g(\omega) = a \omega + b + \int_0^\infty (1-e^{-\omega t})\,\lambda(dt)\]
for some Borel measure $\lambda$ with $\int \min(1,t)\,\lambda(dt) <\infty$. Note that accordingly
\[ - g(-\Delta) = a \Delta - b I + \int_0^\infty (e^{t\Delta}-I)\,\lambda(dt) \]
is the combination of the usual diffusion, a constant, and a possibly non-cutoff linear Boltzmann operator. In the case under discussion here, only the latter part is relevant, so we may impose $g(0)=0$, and $g(\omega)/\omega \to 0$ (sublinearity) to impose $a=b=0$.

Now the following theorem is a direct consequence of Corollary \ref{propheat} and Theorem \ref{thmcritsyn}:

\begin{Thm} \label{thmFheat}
Let $B(v-v_*,\sigma) = |v-v_*|^\gamma \beta(k\cdot\sigma)$ be a collision kernel in dimension $d$. Assume that the angular kernel $\beta$ is associated with $-g(-\Delta)$ for some $g$ which is nonnegative, completely monotone, sublinear, such that $g(0)=0$. Further assume that $|\gamma| \leq 2\sqrt{d}$. Then $I$ is nonincreasing along solutions of the spatially homogeneous Boltzmann equation with kernel $B$.
\end{Thm}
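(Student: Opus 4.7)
The plan is to combine Bernstein's representation theorem with Corollary \ref{propheat} and Theorem \ref{thmcritsyn}; the three pieces slot together almost mechanically once the spectral dictionary between $g$ and the angular kernel $\beta$ is spelled out.

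First I would invoke Bernstein's theorem. Under the hypotheses on $g$ (nonnegative, completely monotone, sublinear, $g(0)=0$) the general Bernstein--Lévy--Khintchine decomposition simplifies to
\[
g(\omega) = \int_0^\infty (1 - e^{-\omega t})\,\lambda(dt),
\]
for a Borel measure $\lambda$ on $(0,\infty)$ with $\int_0^\infty \min(1,t)\,\lambda(dt) < \infty$: the condition $g(0)=0$ kills the constant part $b$ and sublinearity at infinity kills the linear part $a\omega$. By joint diagonalisation of $-\Delta$ and $L_\beta$ on spherical harmonics (Theorem \ref{thmspectrum}), the identity $L_\beta = -g(-\Delta)$ becomes
\[
L_\beta = \int_0^\infty \bigl(e^{t\Delta} - I\bigr)\,\lambda(dt),
\]
which is exactly the integral representation $\beta = \int_0^\infty {\cal K}_t\,\lambda(dt)$ assumed in the statement of Corollary \ref{propheat}. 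The integrability of $\lambda$ near $t=0$ provided by Bernstein matches the minimum integrability \eqref{minintb} on $\beta$ required to make $L_\beta$ well-defined.

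Second, Corollary \ref{propheat} directly yields, for every even $F:\S^{d-1}\to\R_+$,
\[
\int_{\S^{d-1}} F\,\Gamma_{1,\beta}(\log F) \;\geq\; 4d \int_{\S^{d-1}} \Gamma_\beta(\sqrt{F}),
\]
that is $K_*(\beta)\geq 4d$ in the nonlocal differential log Sobolev inequality \eqref{eqBcrit}. It remains to feed this into Theorem \ref{thmcritsyn}. For $B(v-v_*,\sigma)=|v-v_*|^\gamma\beta(\cos\theta)$ the function $\Phi(r)=r^\gamma$ gives $r\Phi'(r)/\Phi(r)=\gamma$, hence $\overline{\gamma}=|\gamma|$. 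The criterion $\overline{\gamma}\leq\sqrt{K_*}$ therefore holds as soon as $|\gamma|\leq \sqrt{4d}=2\sqrt{d}$, which is the standing hypothesis. The conclusion follows.

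The bulk of the substantive work is of course hidden upstream, inside Corollary \ref{propheat}; the only obstacle here is ensuring that the constant $4d$ genuinely transfers from each single heat kernel ${\cal K}_t$ to the superposition. This is painless because both numerator and denominator in the quotient defining $K_*$ are linear in the kernel: once $K_*({\cal K}_t)\geq 4d$ holds for every $t>0$ (proven by the two Gronwall estimates along the heat flow, controlled respectively by $L_*(\RP^{d-1})$ and $\lambda_1(\RP^{d-1})=2d$, with $\lambda_1$ the binding constraint), Fubini and the nonnegativity of $\lambda$ propagate the same bound to $\beta=\int {\cal K}_t\,\lambda(dt)$. A mild technical point is to handle the possible nonintegrable singularity of $\beta$ at $\theta=0$ coming from mass of $\lambda$ near $t=0$; this is exactly absorbed by the Bernstein integrability $\int\min(1,t)\,\lambda(dt)<\infty$, which translates into the minimum Landau-type integrability $\int(1-k\cdot\sigma)\beta(k\cdot\sigma)\,d\sigma<\infty$ and allows us to pass to the limit in a standard truncation argument.
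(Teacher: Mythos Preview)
Your proposal is correct and follows exactly the route the paper intends: the text immediately preceding the theorem states that it is ``a direct consequence of Corollary \ref{propheat} and Theorem \ref{thmcritsyn}'', and your proof spells out precisely this chain (Bernstein representation $\Rightarrow$ Corollary \ref{propheat} gives $K_*\geq 4d$ $\Rightarrow$ Theorem \ref{thmcritsyn} applies since $\overline{\gamma}=|\gamma|\leq 2\sqrt{d}=\sqrt{4d}$). You have in fact supplied more detail than the paper itself, which leaves the deduction implicit.
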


\begin{Ex} The main cases of application for this theorem are fractional powers of the Laplacian, choosing $g(\omega) = \omega^{\nu/2}$ for some $\nu \in (0,2)$. It is known that such a kernel has the same angular singularity as the Boltzmann kernel for inverse power forces like $r^{-s}$ if $s= 1+ (d-1)/\nu$. Since $2\sqrt{d}\geq \min(d,4)$ for all $d\geq 2$, in this way we have the monotonicity property for a natural family of collision kernels covering all values of $\gamma,\nu$ of interest for the Boltzmann equation (recall Figure \ref{figtaxonomy}).
\end{Ex}

The latter example, as well as motivation from physics, naturally lead to ask whether these power-law induced kernels fall in the range of the theorem. But as recalled in Section \ref{sectax}, these kernels are tricky and defined in an indirect, implicit way. They are in general distinct from fractional heat kernels, and so far nobody knows if they can be represented by heat kernels. But we may use Lemma \ref{lemperturb} to compare such a kernel to a proxy, for instance the kernel of the fractional diffusion with the same singularity, or any other combination of heat kernels.

The following theorem, stated here for collision kernels which are not necessarily in product form, summarises all this.

\begin{Thm}[Curvature-dimension induced decay estimates via heat kernel representation]
Let $B=B(|v-v_*|,\cos\theta)$ be a collision kernel in dimension $d$. Assume that for each $r>0$ there are $m_r,M_r>0$, an angular collision kernel $\beta_{0,r}=\beta_{0,r}(\cos\theta)$ and a measure $\lambda_r$ on $\R_+$ such that
\[ \beta_{0,r} = \int {\cal K}_t\,\lambda_r(dt),\qquad \int_0^\infty (1-e^{-t\omega})\,\lambda_r(dt) <\infty,\qquad
\int \beta_{0,r}(k\cdot\sigma) (1-k\cdot\sigma) <+\infty,\]
and for all $r>0$,
\[ m_r \leq \frac{\beta_r}{\beta_{0,r}} \leq M_r,\]
and
\[  \sup_{0\leq\theta\leq\pi}
\left\{ \frac{r}{B} \left|\derpar{B}{r}\right| (r,\cos\theta)\right\}  \leq 
2 \sqrt{d\, \frac{m_r}{M_r}}.\]
Then $I$ is nonincreasing along solutions of the spatially homogeneous Boltzmann equation with kernel $B$.
\end{Thm}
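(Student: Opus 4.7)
The plan is to assemble three ingredients already proven in the paper: the heat-kernel-based nonlocal differential log Sobolev bound (Corollary \ref{propheat}), the perturbation lemma for the integral criterion (Lemma \ref{lemperturb}), and the section-by-section monotonicity criterion (Theorem \ref{thmcriterion}). The statement to prove is essentially the synthesis of these three tools, so the work consists in checking that the hypotheses match and that the constants propagate correctly.

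First I would fix $r>0$ and apply Corollary \ref{propheat} to the auxiliary angular kernel $\beta_{0,r}$. The representation $\beta_{0,r}=\int {\cal K}_t\,\lambda_r(dt)$, combined with the integrability assumptions $\int(1-e^{-\omega t})\,\lambda_r(dt)<\infty$ and $\int\beta_{0,r}(k\cdot\sigma)(1-k\cdot\sigma)\,d\sigma<\infty$, are precisely what is needed to invoke that corollary. It yields, for every even $F:\S^{d-1}\to\R_+$,
\[ \int_{\S^{d-1}} F\,\Gamma_{1,\beta_{0,r}}(\log F)\;\geq\; 4d\,\int_{\S^{d-1}}\Gamma_{\beta_{0,r}}(\sqrt{F}),\]
which is exactly $K_*(\beta_{0,r})\geq 4d$ in the notation of Theorem \ref{thmcritsyn}.

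Next I would transfer this bound to the actual section $\beta_r(\cos\theta)=B(r,\cos\theta)$ by a direct application of Lemma \ref{lemperturb}. Both $\Gamma_\beta$ and $\Gamma_{1,\beta}$ are linear in $\beta$, so the pointwise bracketing $m_r\beta_{0,r}\leq\beta_r\leq M_r\beta_{0,r}$ upgrades immediately to
\[ \int\Gamma_{\beta_r}(\sqrt{F})\;\leq\; M_r\int\Gamma_{\beta_{0,r}}(\sqrt{F}),\qquad \int F\,\Gamma_{1,\beta_r}(\log F)\;\geq\; m_r\int F\,\Gamma_{1,\beta_{0,r}}(\log F),\]
whence $K_*(\beta_r)\geq (m_r/M_r)\,K_*(\beta_{0,r})\geq 4d\,(m_r/M_r)$.

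The final step is to feed this into Theorem \ref{thmcriterion}. The hypothesis on $B$ reads $\ov{\gamma}(B,r)\leq 2\sqrt{d\,m_r/M_r}$, i.e.\ $\ov{\gamma}(B,r)^2\leq 4d\,(m_r/M_r)\leq K_*(\beta_r)$, which is exactly the section-by-section monotonicity criterion of Theorem \ref{thmcriterion} holding uniformly in $r>0$. Monotonicity of the Fisher information along the spatially homogeneous Boltzmann equation with kernel $B$ follows. Since all the substantive work -- the heat-kernel semigroup argument based on $\CD(d-2,d-1)$ and $\lambda_1(\RP^{d-1})=2d$, and the reduction of $I'(f)\cdot Q(f,f)$ to a spherical inequality -- has already been carried out in Sections \ref{seccrit} and \ref{secheatk}, there is no genuine obstacle remaining; the only point deserving care is the linearity-in-$\beta$ observation that legitimises the perturbation step with the multiplicative factor $m_r/M_r$ rather than an additive one.
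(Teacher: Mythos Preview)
Your proof is correct and follows exactly the route the paper intends: the theorem is stated immediately after Corollary \ref{propheat}, Lemma \ref{lemperturb}, and Theorem \ref{thmFheat} as a summary combining them, and your three-step assembly (heat-kernel bound $K_*(\beta_{0,r})\geq 4d$, perturbation to $K_*(\beta_r)\geq 4d\,m_r/M_r$, then the section-wise criterion $\ov\gamma(B,r)^2\leq K_*(\beta_r)$ from Theorem \ref{thmcriterion}) is precisely that synthesis. Your remark that the perturbation step rests on the linearity and pointwise nonnegativity in $\beta$ of both $\Gamma_\beta$ and $\Gamma_{1,\beta}$ is the right justification for Lemma \ref{lemperturb} here.
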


\begin{Rk}
This theorem uses the rather sharp bound by Ji on $L_*(\RP^{d-1})$, namely \eqref{L*RPd}. But if one applies the cruder bound proven in Subsection \ref{secdiff3d}, namely \eqref{myKstar}, one obtains in the end a similar result only with $d$ replaced by $\min (d,4)$ and as of today that is sufficient for applications to the Boltzmann equation we may think of.
\end{Rk}

Let us see how to apply this Theorem to inverse $s$-power laws in the most important cases left out by Theorem \ref{thmcurvI}, namely 

\bul $d=2$ and $-2\leq \gamma <1$ ($0< \nu\leq1$, or $2\leq s<\infty$);

\bul $d=3$ and $-3\leq \gamma < -2$ ($3/2\leq\nu<2$, or $2<s<7/3$).

There is also a little bit which is outside the ``natural'' mathematical range but still makes physical sense, in principle:

\bul $d=2$ and $-3<\gamma<-2$ ($1<\nu<2$, or $3/2<s<2$).
\sm

In each case, we may work out numerically the angular kernel of the Boltzmann equation for various values of $s$ and compare them to the angular kernel of the fractional diffusion with the same singularity, hoping that the two remain mutually bounded. While this is not a purely mathematical proof, the procedure is quite safe: we are here numerically comparing functions which are well-defined through special functions and other classical tools of numerical analysis, so errors remain under control. Dimension 2 is handled by Fourier series, dimension 3 by classical spherical harmonics, which are numerically addressed in available software. Working along these lines and trying empirically some variations, one obtains:

\bul In dimension $d=2$, the symmetrised Boltzmann kernel for $s$-power law forces stays close to the associated fractional Laplace kernel, with ratio $M/m$ always less than $\sqrt{2}$ (which is obtained for the limit of hard spheres). This yields $\ov{\gamma}\geq 2\cdot 2^{1/4}>2$: already enough to get the range $[-2,2]$. 

\bul Still in dimension 2, refining the analysis, one can see that the criterion applies up to get $\gamma > -2.7$, but fails to get all the way to the physical limit $\gamma=-3$. But by empirically changing the weight function to
\[ \lambda(dt) = \Bigl[ 1 + 2(\nu-1)^2 (1-\exp(-2t))\Bigr]\,t^{-(1+2\nu)} \, dt, \]
one obtains $\ov{\gamma}>3.3$ all throughout the range $s\in [3/2,2]$. So this covers all cases which make physical sense.

\bul In dimension $d=3$, the numerically observed $M/m$ ratio approaches $1.6$ for $\nu=1$. This yields $\ov{\gamma}> \sqrt{7.5}$, not quite sufficient for very singular kernels.  But by empirically changing the weight function to
\[ \lambda(dt) = \frac{t^{-(1+2\nu)}\,dt}{\sqrt{1+(2-\nu)t}} \]
one obtains ratios not larger than 1.1; then this yields $\ov{\gamma}\geq \sqrt{12/1.1}$ which is $>3$ (not by much!). Another more sophisticated weighting, also obtained by trial and error, is
\[ \lambda(dt) = \Bigl[ 1- \min \left(\frac{13}{8} - \frac32 s, \frac25\right) \bigl(1-\exp(-2t)\bigr) \Bigr]\, t^{1-2\nu}\,dt, \]
which yields estimates $\ov{\gamma}\geq 4.3$ all throughout the range $\gamma \in (-3,-2]$, thus a more comfortable margin, robust to numeric errors.
\med

This concludes our investigation of the monotonicity of the Fisher information. All in all, we discovered that the three basic methods presented here (curvature, positivity, Hessian variations), taken together, are enough to establish the monotonicity of the Fisher information for the vast majority of collision kernels of interest, including all power law interactions between Coulomb and hard spheres, in all dimensions. 

\begin{Rk} If one does not want to restrict to potentials decaying at least as fast than Coulomb, then there is actually one little bit still not covered: $\gamma\in (-3,-2\sqrt{2}]$ in dimension $d=4$. There is no particular motivation for that range, except mathematical consistency.
\end{Rk}

\bibnotes

Heat kernel approximation is developed at large in harmonic analysis \cite{FJW:book,stein:harmonic:book}, and combinations of simple elementary kernels are used in the theory of stochastic processes with the concept of ``subordination''. Apart from these elements of context, I am not aware of any precedent in the context of Boltzmann equation of the technique developed in this section, whose basic principle was discovered in a discussion between Luis Silvestre and Cyril Imbert in the Mathemata summer school in Crete (July 2024), explored later by the three of us in \cite{ISV:fisher}.

Bernstein proved his representation theorem in \cite{bernstein:absmon} and there is a modern textbook by Schilling, Song and Vondra\v{c}ek \cite{SSV:bernstein:book}.

The proximity of the heat kernel for power law forces and the fractional Laplace operator was noticed by a number of authors, starting at least with Desvillettes, and quantitatively exploited for regularity issues by Alexandre, Desvillettes, Wennberg and myself \cite{ADVW}. Using the ingredients in the latter work, one easily proves, for instance, that $-\<L_\beta h,h\> \geq K \<(-\Delta)^{\nu/2}h,h\>$ when $\beta$ has the form \eqref{bcostheta}.

Numerics quoted in this section was done by Silvestre \cite{silvestre:numerics} and the results published in \cite{ISV:fisher}.

\section{Regularity of solutions for singular collision kernels} \label{secreg}

Regularity theory for the spatially homogeneous Boltzmann equation classically rests on four types of estimates:

\bul moments

\bul integrability

\bul smoothness

\bul positivity (lower bounds)

\sm

While this is more or less the natural order for the four estimates, they may be combined or related in many ways, and also intertwined with the equilibration problem. It turns out that the Fisher information estimate unlocks all the remaining blanks in the theory of soft potentials, at least in the region of ``conditional regularity'' $\gamma+\nu\geq -2$, $\gamma\geq -d$.

I shall go through all four types of estimates, not searching for exhaustivity or optimality -- a complete exposition of the state of the art would easily fill up a 500-page book. I will focus on the particular case of factorised collision kernels \eqref{collkfact}; this is for simplicity, as more general forms could be handled, and only for very soft potentials (with a high singularity in the relative velocity, like $\gamma < -2$), whose regularity is most tricky, and which is precisely the compartment of the theory which was unlocked by the new Fisher information estimates. I will systematically consider first the Landau equation and then the Boltzmann equation, and most of the time only sketch the proofs, pointing to the existing literature for more information. I will also skip the uniqueness issue, which in this field always follows in practice from good a priori estimates.

Besides the assumption of very soft potential, I shall limit myself to the regime of ``conditional regularity''. So the precise bounds on the singularities in relative velocity and angular variables will be

\bul For Landau's equation,
\begeq\label{VSPLandau}
-4 <\gamma<-2, \qquad \gamma\geq -d;
\endeq

\bul For Boltzmann's equation,
\begeq\label{VSPBoltzmann}
-2 < \gamma+\nu < 0, \qquad \nu < 2, \qquad \gamma \geq -d
\endeq
(of which \eqref{VSPLandau} is obviously a limit case)

The initial datum will be denoted by $f_0$: by assumption it is a probability density; and its initial energy will be $E_0= (1/2)\int f_0(v)|v|^2\,dv$.

\subsection{Integrability} \label{subsecint}

Assume that the initial datum $f_0$ has finite Fisher information and the monotonicity property holds. Then solutions $f(t)= f(t,\cdot)$ satisfy the a priori estimate $I(f(t)) \leq I(f_0)$ for all $t\geq 0$. By Sobolev embedding
\[ I(f) =4 \|\nabla\sqrt{f}\|_{L^2(\R^d)}^2
\geq K(d) \|\sqrt{f}\|_{L^{p^\star}(\R^d)}^2 = K \|f\|_{L^{p^\star/2}(\R^d)}\]
with $p^\star = 2d/(d-2)$ for $d\geq 3$ and any $p^\star<\infty$ for $d=2$. So,
\begin{multline} \label{integr1} 
\sup_{t\geq 0} \|f(t)\|_{L^{p_0}(\R^d)} \leq C(f_0,d)\qquad
p_0 = \frac{d}{d-2} \ \text{if $d\geq 3$},\quad \forall p_0 <\infty \ \text{if $d=2$}.
\end{multline}

This is already a major information, as it rules out, in dimension $d=3$, a behaviour of the type of the quadratic heat equation $\pa_t f = \Delta f + f^2$ (which generically blows up in $L^p$ for all $p>d/2$). Furthermore, by Hardy--Littlewood--Sobolev inequality,
\[ \iint_{\R^d\times\R^d} \frac{f(v) f(v_*)}{|v-v_*|^{\alpha}}\,dv\,dv_* \leq
C(d,\alpha) \|f\|_{L^q(\R^d)}^2 \]
as soon as $\alpha = 2 d(1-1/q)$ and $0<\alpha<d$. Choosing $q$ as close to $q_0$ as these conditions allow, yields the second key a priori estimate
\begeq\label{integr2} \sup_{t\geq 0} 
\int_{\R^d\times\R^d} \frac{f(t,v)\,f(t,v_*)}{|v-v_*|^{\alpha_0}}\,dv\,dv_* \leq C (d,\alpha_0,f_0)\qquad
\forall\alpha_0,\ 0<\alpha_0<\min(4,d).
\endeq

\subsection{Weak solutions}

Various notions of weak solutions have been introduced and used in the context of the Boltzmann and Landau equations. But the above integrability bounds allow to use the most natural one, already considered by Maxwell seventy years before Sobolev and Schwartz would formalize the notion of distributions: For any smooth test function $\varphi=\varphi(v)$,
\[ \frac{d}{dt} \int_{\R^d} f\vphi = \int_{\R^d} Q(f,f)\,\vphi.\]
Let us check that indeed the right-hand side makes perfect sense under the integrability bounds of Subsection \ref{subsecint}.

For Landau, as already noted in Section \ref{secgrazing} (Proposition \ref{propLandau}),
\begin{align}
\int Q_L(f,f)\,\vphi
& = \iint ff_*\, a(v-v_*):\nabla^2 \vphi\,dv\,dv_* + 2 \iint ff_*\, b(v-v_*)\cdot\nabla\vphi \,dv\,dv_*  \nonumber \\
& = \iint ff_* \Bigl( a(v-v_*):\Bigl(\frac{\nabla^2\vphi +(\nabla^2\vphi)_*}2\Bigr)+ b(v-v_*)\cdot \bigl(\nabla\vphi- (\nabla\vphi)_*\bigr)\Bigr)\,dv\,dv_*, \label{weakLandau}
\end{align}
where symmetrisation $v\leftrightarrow v_*$ was used. Now $a = O(|v-v_*|^{\gamma+2})$ and $b= O(|v-v_*|^{\gamma+1})$ so if $\vphi$ is smooth the integrand in the above integral is $O(|v-v_*|^{\gamma+2})$. If $0\geq \gamma\geq-2 $, the integral would converge just by the mass and energy estimates, but for $\gamma<-2$ this is no longer true. (This is where ``very soft'' potentials begin.) But the regularity estimate \eqref{integr2} ensures the convergence of the integral as soon as $-d < \gamma+2< 0$.

Now for Boltzmann: 
\[ \int Q(f,f)\vphi = 
\frac14 \iiint ff_* \bigl( \vphi'+\vphi'_* -\vphi-\vphi_*\bigr)\,B\,d\sigma\, dv\,dv_*.\]
Then if $\vphi$ is smooth, for any $k\in\S^{d-1}$
\[ \int_{\S^{d-2}_{k^\bot}} \bigl(\vphi'+\vphi'_* -\vphi-\vphi_*\bigr)\,d\phi = 
O \bigl( |v-v_*|^2\theta^2 \bigr),\]
so
\[  \int_{\S^{d-1}} \bigl(\vphi'+\vphi'_* -\vphi-\vphi_*\bigr)\,B\,d\sigma
= O\left( |v-v_*|^2 \int B(1-k\cdot\sigma)\,d\sigma\right) = O(|v-v_*|^{\gamma+2}),\]
and again the integral converges thanks to \eqref{integr2}.

\subsection{Moments} \label{submoments}

Moments are the first concern in any mathematical implementation of kinetic theory, so it is worth spending some energy on them.

Let $\<v\> = \sqrt{1+|v|^2}$, so that
\[ \nabla \<v\> = \frac{v}{\<v\>},\qquad
\pa_{ij}\<v\> = \frac1{\<v\>} \left(\delta_{ij} - \frac{v_i v_j}{\<v\>^2}\right), \]
and for $s>0$ let $\vphi_s = \<v\>^s$, so that
\begin{multline} \label{vvphis} \nabla\vphi_s = s\,v\<v\>^{s-2},\qquad
\pa_{ij} \vphi_s = s \<v\>^{s-2}\left(\delta_{ij} - \frac{v_i\,v_j}{\<v\>^2}\right)
+ s(s-1) \<v\>^{s-4} v_i v_j.
\end{multline}
Let $s> 2$ be a real number: Bounding the moment of order $s$, $M_s = \int f |v|^s$, is the same as bounding $\int f \vphi_s$. For that we may use the weak formulation. 
Recall that the weighted Lebesgue spaces, $L^p_s$, are in interpolation: $L^p_s = [L^{p_0}_{s_0}, L^{p_1}_{s_1}]_{\theta}$ if $1/p = (1-\theta)/p_0 + \theta/p_1$, $s= (1-\theta)s_0 + \theta s_1$. I will denote by $C$ various constants changing from time to time and only depending on the parameters involved. 

First attempt for Landau: Using \eqref{weakLandau} and replacing $a,b$ by their expression in terms of $\Psi$,
\[ \frac{d}{dt} \int f\vphi_s
= \iint ff_* A_s(v,v_*)\,dv\,dv_*,\]
where 
\[ A_s(v,v_*) = 
\Psi(|v-v_*|) \left( \Pi_{k^\bot}: \left(\frac{\nabla^2\vphi_s + (\nabla^2\vphi_s)_*}{2}\right)
- (d-1) \frac{v-v_*}{|v-v_*|^2}\cdot \bigl( \nabla\vphi_s-(\nabla\vphi_s)_*\bigr)\right). \]
To fix ideas, assume that $\Psi(|z|) = |z|^{\gamma+2}$.
Let us separate four cases:

(a) $|v| = O(1)$, $|v_*| = O(1)$. Noting that $|\nabla\vphi_s(v) - \nabla\vphi_s(v_*)| = O(|v-v_*|)$, one easily has $|A_s(v,v_*)| = O(|v-v_*|^{\gamma+2})$.

(b) $|v-v_*|\gg 1$ and either $|v|/|v_*| \gg 1$ or $|v|/|v_*| \ll 1$. By symmetry, consider for instance the first situation. Then
\begin{align*} 
A_s(v,v_*)  & = |v|^{\gamma+2} \Bigl( s \frac{(d-1)}{2} |v|^{s-2} - s(d-1) |v|^{s-2}
+ o(|v|^{s-2}) \Bigr)\\
& = |v|^{\gamma+2} \left( -\frac{s (d-1)}{2} |v|^{s-2} + o( |v|^{s-2}) \right)\\
& \leq - \frac{(d-1)}{4} |v|^{s+\gamma} \qquad \text{ for $|v|$ large enough}
\end{align*}

(c) $|v-v_*|\gg 1$ and $|v|, |v_*|$ comparable (and thus both large). Then, assuming for instance $|v-v_*| \geq |v|/2$,
\[ |A_s(v,v_*)| \leq C |v|^{\gamma+2} \<v\>^{s-2}
\leq C \<v\>^{s+\gamma} \leq C \<v\>^{s+\gamma-2}\<v_*\>^2.\]

(d) $|v-v_*| = O(1)$ and $|v|, |v_*|$ both very large. Then $|v|, |v_*|$ are of course comparable and
\[ A_s(v,v_*) \leq C |v-v_*|^{\gamma+2} (|v|^{s-2}+|v_*|^{s-2}) \leq
C |v-v_*|^{\gamma+2} \<v\>^{\frac{s}2-1} \<v_*\>^{\frac{s}2-1}.\]
We end up with 
\begin{multline*} A_s(v,v_*) \leq -K_s (\<v\>^{s+\gamma} + \<v_*\>^{s+\gamma})
+ C (\<v\>^{s+\gamma-2}\<v_*\>^{2} + \<v\>^2\<v_*\>^{s+\gamma-2})\\
+ C |v-v_*|^{\gamma+2} \bigl(1+\<v\>^{\frac{s}2-1} \<v_*\>^{\frac{s}2-1}\bigr).
\end{multline*}

Integrating this against $ff_*\,dv\,dv_*$ yields (possibly changing the constants), after using again the Hardy--Littlewood--Sobolev inequality, and the Young inequality for the last term,
\begeq\label{intAs} \iint ff_*\, A_s(v,v_*)\,dv\,dv_* \leq
- K_s \|f\|_{L^1_{s+\gamma}} + C_s \|f\|_{L^1_{s+\gamma-2}} (1+2E_0)
+ C_s \bigl\| f \<v\>^{\frac{s-2}{2}} \bigr\|^{2}_{L^q} ,\endeq
where
\[ E_0 = \frac12 \int f |v|^2\,dv,\qquad 
\frac1{q} = 1+ \frac{\gamma+2}{2d}.\]
The best we can hope is that the first term in the right-hand side of \eqref{intAs} be dominant in the sense of controlling the contribution of high velocities. Not only will this imply that $M_s$ remains locally bounded, but also, since that first term comes with a negative sign, it will show that actually $dM_s/dt$ remains $O(1)$, hence a linear bound in time, for any $s$. After that, by interpolation with higher order moments it is possible to lower the exponent at leisure. But for that one first has to control the tricky last term in \eqref{intAs}. Let us interpolate to bound that term, which is actually
\[ \|f\|^2_{L^1_{\frac{s-2}{2}}}
\leq \|f\|^{2a}_{L^{p_0}} \|f\|^{2b}_{L^1_2} \|f\|^{2c}_{L^1_\sigma}, \]
where $1/p_0=1-2/d$ (if $d\geq 3$), and $a,b,c \in [0,1]$, $\sigma>0$ satisfy
\[ a+b+c = 1,\quad \frac1{q} = \frac{a}{p_0}+b_c,\quad
\frac{s-2}{2} = 2b+\sigma c.\]
The idea here is that both $\|f\|_{L^{p_0}}$ and $\|f\|_{L^1_2}$ are bounded uniformly, so the last bit of \eqref{intAs} will be bounded by a multiple of $\|f\|_{L^1_\sigma}$ and we wish $\sigma$ to be as small as possible.
A bit of playing shows that our best choice is $a=-(\gamma+2)/4$, $c=1/2$, $b = (\gamma+4)/4$, then $\sigma = s-\gamma-6$. Grand conclusion:
\[ \frac{dM_s}{dt} \leq -K M_{s+\gamma} + C (1+ M_{s-\gamma-6}). \]
But $s-\gamma-6<s+\gamma$ only for $\gamma>-3$, so this misses an important case. Try again!

The way out will come from another weak formulation, and the use of the entropy estimate. First note that
\begin{align*}
\frac{d}{dt}\int f\vphi\,dv
& = \int \nabla\cdot \left( \int a (\nabla-\nabla_*) ff_*\,dv_*\right)\,\vphi\,dv \\
& = - \iint a (\nabla-\nabla_*) ff_* \nabla\vphi\,dv\,dv_*\\
& = - \frac12 \iint a (\nabla-\nabla_*) ff_* \, \bigl(\nabla\vphi -(\nabla\vphi)_*\bigr)\,dv\,dv_*.
\end{align*}
Here $a=a(v-v_*) = \Psi(|v-v_*|) \Pi_{(v-v_*)^\bot}$. On the other hand Landau's dissipation functional (or entropy production functional) is
\begin{align*}
D_L(f) & = \frac12 \iint a ff_* (\nabla-\nabla_*) \log ff_* (\nabla-\nabla_*) \log ff_*\,dv\,dv_* \\
& = 2 \iint a (\nabla-\nabla_*) \sqrt{ff_*} (\nabla-\nabla_*) \sqrt{ff_*}\,dv\,dv_*.
\end{align*}
The point is that here the estimate, contrary to the Fisher information bound, becomes stronger when $\gamma$ becomes more negative and $a$ more singular. There is a cost: along the flow, $D_L(f)$ is bounded only after time-integration. But for the control of moments this will not be a problem. Now, by Cauchy--Schwarz,
\begin{align*}
\frac{d}{dt} \int f\vphi 
& = - \iint a \sqrt{ff_*}\, (\nabla-\nabla_*) \sqrt{ff_*}
\bigl(\nabla\vphi - (\nabla\vphi)_*\bigr)\,dv\,dv_*\\
& \leq C\, D_L(f)^{1/2} \left( \iint ff_* a \bigl(\nabla\vphi - (\nabla\vphi)_*\bigr) \bigl(\nabla\vphi - (\nabla\vphi)_*\bigr)\,dv\,dv_*\right)^{1/2}\\
& = C \, D_L(f)^{1/2} \iint ff_* \Psi(v-v_*) \, 
\Bigl | \Pi_{(v-v_*)^\bot} \bigl[ \nabla\vphi - (\nabla\vphi)_*\bigr] \Bigr|^2\,dv\,dv_*.
\end{align*}
Observe that if $\vphi$ is smooth and in $W^{2,\infty}$ (second derivatives in $L^\infty$), then the integrand in the last integral is $ff_*$ multiplied by $O(|v-v_*|^{\gamma+4})$ and so bounded. Let us do the estimate now for $\vphi_s$ in place of $\vphi$. Let us decompose $\Psi(z)$ into $\Psi(z) 1_{|z|\leq \lambda}+ \Psi(z) 1_{|z|>\lambda}$ (small relative velocities, large relative velocities) where $\lambda$ should be fixed large enough. Then with obvious notation
\[ \frac{d}{dt}\int f\vphi_s 
= \int Q_L^{>\lambda} \vphi_s + \int Q_L^{<\lambda} \vphi_s.
\]
For the first one, the estimate of the beginning of this section is now more favourable, since the last bit of \eqref{intAs}, coming from (a), will not be present:
\begeq\label{QLsup} \int Q_L^{>\lambda} \vphi_s  \leq - K_s M_{s+\gamma} + C_s .
\endeq
For the second one, the estimate just above yields
\[ \int Q_L^{<\lambda} \vphi \leq C \lambda^{\gamma+2} D_L(f)^{1/2} M_{s-2}.\]
By interpolation and using $\gamma\geq -4$,
\begin{align*} 
\int Q_L^{<\lambda} \vphi 
& \leq C \lambda^{\gamma+2} D_L(f)^{1/2} M_{s}^{1/2} M_{s+\gamma}^{1/2}\\
& \leq \var M_{s+\gamma} + C_\var D_L(f) M_s + C.
\end{align*}
Choosing $\var$ small enough and combining this with \eqref{QLsup} yields
\begeq\label{CCL} \frac{d}{dt} M_s \leq -K M_{s+\gamma} + C D_L(f) M_s + C. \endeq
This inequality, first obtained by Carlen, Carvalho and Lu, implies an excellent bound on $M_s$. Indeed, by time-integration, it yields
\[ M_s(t) + K  \int_0^t M_{s+\gamma}(\tau)\,d\tau 
\leq M_s(0) + C(1+t) + \int_0^t D_L(f(\tau)) M_s(\tau)\,d\tau.\]
And then by Gronwall's lemma
\begeq\label{gronwall} M_s(t) + K \int_0^t M_{s+\gamma}(\tau)\,d\tau 
\leq C (1+t) \exp \left( \int_0^\infty D_L(f(\tau))\,d\tau \right).
\endeq
This proves in particular
\[ M_s(t) \leq C(s,\gamma,f_0) (1+t), \]
where $C(s,f_0)$ depends on $f_0$ only through $E_0$, $H(f_0)$ and $M_s(f_0)$.

Now the same strategy for Boltzmann. For this we need to use two weak formulations:
\[ \int Q (f,f)\vphi = 
\frac12 \iint ff_* \bigl(\vphi' + \vphi'_* - \vphi-\vphi_*\bigr)\,dv\,dv_*\, B(v-v_*,\sigma)\, d\sigma\]
(useful for large relative velocities) and 
\[ \int Q (f,f)\vphi = 
-\frac14 \iint (f'f'_* - ff_*) \bigl(\vphi' + \vphi'_* - \vphi-\vphi_*\bigr)\,dv\,dv_*\, B(v-v_*,\sigma)\, d\sigma\]
(useful for small relative velocities).
For large relative velocities we need a good estimate of $\int (\vphi'+\vphi'_*-\vphi-\vphi_*)\,d\sigma$; for small relative velocities it will be a good estimate of $(\vphi+\vphi'_*-\vphi-\vphi_*)$, when $\vphi(v)=\vphi_s(v) = \<v\>^s$, with $s>2$.

Let's go. Write
\[ z = v-v_*,\qquad
c = \frac{v+v_*}{2}, \qquad y = \frac{z}{|c|}, \qquad k = \frac{z}{|z|} = \frac{y}{|y|}, \qquad
e = \frac{c}{|c|}.\]
Then
\[ v = c + \frac{z}{2}, \qquad v_* = c - \frac{z}2, \qquad
v'= c+ \frac{|z|}2 \sigma, \qquad v'_* = c - \frac{|z|}2\sigma.\]
Then
\[ |v|^2 = |c|^2 \left( 1 + e\cdot y + \frac{|y|^2}{4}\right)\]
\[ |v_*|^2 = |c|^2 \left( 1 - e\cdot y + \frac{|y|^2}4 \right) \]
\[ |v'|^2 = |c|^2 \left( 1 + (e\cdot \sigma) |y| + \frac{|y|^2}4\right) \]
\[ |v'_*|^2 = |c|^2 \left( 1 - (e\cdot \sigma) |y| + \frac{|y|^2}4\right).\]
For $\var\ll 1$, 
\[ (1+\var)^{s/2} = 1 + \frac{s}2 \var + \frac{s}2 \left(\frac{s}2-1\right) \frac{\var^2}2
+ \frac{s}2 \left(\frac{s}2-1\right) \left(\frac{s}2-2\right) \frac{\var^3}6 + O(\var^4).\]
It folllows 
\begeq\label{4vs} |v'|^s + |v'_*|^s - |v|^s - |v_*|^s
= |c|^s \left( \frac{s}2 \Bigl(\frac{s}2-1\Bigr) \bigl[ (e\cdot\sigma)^2|y|^2 - (e\cdot y)^2\bigr]
+ O(|y|^4) \right).
\endeq

Again, let us separate small relative velocities $|v-v_*|^{\leq\lambda}$ and larger relative velocities $|v-v_*|\geq \lambda$, informally written $|v-v_*| = O(1)$ and $|v-v_*|\gg 1$, separate the collision kernel, with obvious notation, as $Q^{\leq \lambda} + Q^{>\lambda}$, and apply two different weak formulations for these two parts:
\begin{multline} \label{weak12} \int Q(f,f) \vphi_s
= \frac12 \iint  ff_* \left( \int \left( \vphi_s(v')+\vphi_s(v'_*) - \vphi_s(v)-\vphi_s(v_*) \right) B^{>\lambda} (v-v_*,\sigma) \,d\sigma \right)dv\,dv_*\\
- \frac14 \iiint (f'f'_* - ff_*) \left( \vphi_s(v')+\vphi_s(v'_*) - \vphi_s(v)-\vphi_s(v_*)\right) B^{\leq\lambda} (v-v_*,\sigma)\,d\sigma\,dv\,dv_*.
\end{multline}

\bul For $|v-v_*|\gg 1$ and $|v|, |v_*|$ incomparable, say $|v|\gg |v_*|$, one has
\[ v'\simeq \frac{v+|v|\sigma}{2} = |v| \left( \frac{k+\sigma}2\right),
\qquad v'_* \simeq |v| \left( \frac{k-\sigma}2\right), \]
so 
\begin{align*} & \int \left( \vphi_s(v')+\vphi_s(v'_*) - \vphi_s(v)-\vphi_s(v_*) \right) B^{>\lambda} (v-v_*,\sigma)\,d\sigma \\
& \qquad\qquad\qquad  \simeq - |v|^{s+\gamma} \int_{\S^{d-1}}
\left( 1 - \left|\frac{k+\sigma}2\right|^s - \left| \frac{k-\sigma}2\right|^s\right)\, b(k\cdot\sigma)\,d\sigma \\
&\qquad\qquad\qquad
 = -|v|^{s+\gamma} |\S^{d-2}| \int_0^\pi \left[ 1- \left(\frac{1+\cos\theta}2\right)^{s/2}
- \left(\frac{1-\cos\theta}2\right)^{s/2}\right]\,b(\cos\theta)\,\sin^{d-2}\theta\,d\theta\\
&\qquad\qquad\qquad
 = -|v|^{s+\gamma} |\S^{d-2}| \int_0^\pi \bigl[ 1-(\cos(\theta/2))^s - (\sin (\theta/2))^s\bigr]\,b(\cos\theta)\,\sin^{d-2}\theta\,d\theta\\
&\qquad\qquad\qquad
 \leq -K(s,d) |v|^{s+\gamma} \int_0^{\pi} \theta^2 b(\cos\theta)\,\sin^{d-2}\theta\,d\theta.
\end{align*}

\bul For $|v-v_*|\gg 1$ and $|v|, |v_*|$ comparable, from \eqref{4vs}
\begin{align*}
& \left| \int \bigl( \vphi_s(v')+\vphi_s(v'_*) - \vphi_s(v)-\vphi_s(v_*) \bigr)  B^{>\lambda} (v-v_*,\sigma) \,d\sigma \right| \\
& \qquad\qquad\qquad  \leq C |v-v_*|^{\gamma+2} (1+|v|^{s-2} + |v_*|^{s-2}) \left(\int \sin^2\theta\, b(\cos\theta)\,\sin^{d-2}\theta\,d\theta \right)\\
&\qquad\qquad\qquad  \leq C'(1+|v|)^{s+\gamma-2} (1+|v_*|^2).
\end{align*}

Combining both estimates,
\begin{multline*} 
 \iint ff_* \int \left( \vphi_s(v')+\vphi_s(v'_*) - \vphi_s(v)-\vphi_s(v_*) \right) B^{>\lambda} (v-v_*,\sigma) \,d\sigma\,dv\,dv_* \\  \leq -K(s,d,\gamma,b) \int f \vphi_{s+\gamma} + C(s,d,\gamma,b,E_0) \int f \vphi_{s+\gamma-2}.
 \end{multline*}

Now for smaller relative velocities:
\begin{align}
&  \label{2DBCB} \left| \int Q^{<\lambda} (f,f)\,\vphi_s \right|
 \leq \frac14 \left| 
\iiint (f'f'_*-ff_*) \Bigl[ (\vphi_s)' + (\vphi_s)'_* - \vphi - (\vphi)_* \Bigr]\, B\,dv\,dv_*\,d\sigma\right| \\ \nonumber
&\qquad\qquad \leq \frac14 \left( \iiint \bigl( \sqrt{f'f'_*} - \sqrt{ff_*} \bigr)^2 B\,dv\,dv_*\,d\sigma \right)^{1/2}\\ \nonumber
&\qquad\qquad\qquad\qquad\qquad
\left(\bigl(\sqrt{f'f'_*}+\sqrt{ff_*}\bigr)^2 \Bigl[ (\vphi_s)' + (\vphi_s)'_* - \vphi - (\vphi)_* \Bigr]^2\,B\,dv\,dv_*\,d\sigma\right)^{1/2}\\ \nonumber
&\qquad\qquad \leq 2 D_B(f)^{1/2} \left( \iint ff_*\,{\cal C}(B,\vphi_s)\,dv\,dv_*\right)^{1/2}
\end{align}
where
\[ {\cal C}(B,\vphi_s) = \int \Bigl[ (\vphi_s)' + (\vphi_s)'_* - \vphi_s - (\vphi_s)_* \Bigr]^2\,B\,d\sigma.\]
Playing with \eqref{4vs}, we see that the term within square brackets in the integrand above is $O(|v-v_*|^2\theta) (1+|v|^{s-2} + |v_*|^{s-2})$. When $|v|,|v_*|=O(1)$ the square is $O(|v-v_*|^4\theta^2)$.
When $|v|,|v_*|\gg 1$, then necessarily $|v|,|v_*|$ are comparable to each other and the square is bounded by
$C|v-v_*|^4\theta^2 |v|^{s-2} |v_*|^{s-2}$. All in all,
\begin{align*} & \left| \iint ff_*\,{\cal C}(B,\vphi_s)\,dv\,dv_*\right| \\
 & \leq C(s,d,\gamma) \left(\int \theta^2 b(\cos\theta)\,\sin^{d-2}\theta\,d\theta\right)
\iint ff_* |v-v_*|^{\gamma+4} 1_{|v-v_*|\leq\lambda} \<v\>^{s-2}\<v_*\>^{s-2}\,dv\,dv_*\\
 & \qquad\qquad \leq C \left( \int f \vphi_{s-2}\right)^{2}.
\end{align*}
Plugging back in \eqref{2DBCB} and interpolating $L^1_{s-2}$ between $L^1_{s-4}$ and $L^1_{s}$ yields
\begin{align*}
\left| \int Q^{<\lambda} (f,f)\,\vphi_s \right| 
& \leq C D_B(f)^{1/2} \|f\|_{L^1_s}^{1/2} \|f\|_{L^1_{s-4}}^{1/2}\\
& \leq \var \|f\|_{L^1_{s-4}} + C D_B(f) \|f\|_{L^1_s}.
\end{align*}

Putting the pieces together, we obtain \eqref{CCL} and it follows that $M_s(t)$ remains $O(1+t)$ globally in time.
\med

Once that conclusion is obtained, by interpolation it follows readily that: For any $s$ and any $\var>0$, there is $s_0 = s/\var$ such that if $M_{s_0}(f_0)<\infty$ then $M_s(t)$ remains $O((1+t)^\var)$ for all $t\geq 0$, and the constant can be made explicit in terms of $s,d,\gamma,b,\var, M_{s/\var}(f_0), H(f_0)$. To write it in a synthetic form:
\begeq\label{f0inf}
f_0 \in L^1_{\infty}(\R^d)\cap L\log L(\R^d) \Longrightarrow \qquad \|f\|_{L^1_\infty} = O((1+t)^0).
\endeq

This excellent control on the moments allows to truncate all large velocities, which is a key step in all subsequent developments. The assumption of finite moments of arbitrarily large order is usually quite reasonable in the context of kinetic theory.

\subsection{Higher integrability} \label{subsechigher}

At this stage large velocities are under control and the Landau equation is a quasilinear parabolic equation with diffusion matrix $\ov{a} = a\ast f$. The symmetric matrix $a(z)$ is degenerate in the direction $z$ and has eigenvalues roughly $|z|^{\gamma+2}$ and $0$. When averaged over $f$, which is not concentrated on a subspace (due to the $L^{p_0}$ bound or just the bound on $H(f)$), one finds
\begeq\label{azsup}
\ov{a}(v) \geq K_r |v|^{\gamma}\Pi_v + K_\bot |v|^{\gamma+2} \Pi_{v^\bot}, \qquad
\ov{a}(v) \leq C |v|^{\gamma+2}.
\endeq
The gap between exponents $\gamma$ and $\gamma+2$ could be a problem, but the moment estimates are enough to handle this. Very classically, let us look for energy-type estimates: if $\Phi_p(f) = f^p$, $p>1$, then from
$\pa_t f = \nabla\cdot (\ov{a} \nabla f - \ov{b} f)$, $\ov{b}=\nabla\cdot \ov{a}$, results 
\begin{align} \label{apriorienergy}
\pa_t \Phi_p(f) & = \Phi'_p(f) \nabla\cdot (\ov{a} \nabla f - \ov{b} f)\\
& \nonumber 
=  \nabla\cdot ( \ov{a} \nabla\Phi_p(f) - \ov{b}\Phi(f)) - \Phi''_p(f)\,\ov{a}\nabla f \nabla f 
+ (\nabla\cdot\ov{b}) \bigl[ f\Phi'_p(f) - \Phi_p(f)\bigr].
\end{align}
Writing $\ov{c} = \nabla\cdot\ov{b}$ and integrating, yields
the a priori estimate
\[ \frac{d}{dt} \int \Phi_p(f)
= - \int \Phi''_p(f)\, \ov{a} \nabla f\nabla f
+ \int  \ov{c} \bigl[ f\Phi'_p(f) - \Phi_p(f)\bigr].\]
Then
\[ \Phi''_p(f) = p(p-1) f^{p-2},\qquad
\Phi'_p(f) = p f^{p-1},\qquad f\Phi'_p(f) - \Phi_p(f) = (p-1) f^p.\]
So
\begin{align} \nonumber
 \frac{d}{dt} \int \Phi_p(f) & = - p(p-1) \int f^{p-2} \ov{a} \nabla f\nabla f - (p-1) \int \ov{c} f^p\\
 & \label{Phip/2} = - \frac{p(p-1)}{(p/2)^2}
 \int \ov{a} \nabla \Phi_{p/2}(f) \nabla\Phi_{p/2}(f) - (p-1) \int \ov{c} f^p.
 \end{align}
 The first term on the right-hand side is handled through the positivity estimate \eqref{azsup},
 an easy commutator and Sobolev embedding:
 \begin{align*} \int \bigl| \sqrt{\ov{a}} \nabla \Phi_{p/2}(f)\bigr|^2
 & \geq K(d,\gamma,E_0,H(f_0)) \int \<v\>^\gamma \, |\nabla \Phi_{p/2}(f)|^2\,dv\\
 & \geq K \bigl\| \nabla (\<v\>^{\gamma/2} f^{p/2})\bigr\|^2_{L^2}
 - C \int f^p \<v\>^{\gamma-2}\,dv\\
 & \geq K \|f^{p/2}\|_{L^{2^\star}_{\gamma/p}} - C \|f\|_{L^p_{(\gamma-2)/p}}^2 \qquad (2^\star = (2d)/(d-2))\\
 & = K \|f\|_{L^{\lambda p}_{\gamma/p}} - C \|f\|_{L^p_{(\gamma-2)/p}}^p, \qquad
 \lambda = \frac{d}{d-2}.
 \end{align*}
 (The case $d=2$ should be treated separately, any $\lambda$ will do.)
 
 Let us turn to the second term on the right hand side of \eqref{Phip/2}. First, if $a(z) = |z|^{\gamma+2} \Pi_{z^\bot}$, then $b(z) = -(d-1) |z|^\gamma z$, $c(z) = - (d-1)(\gamma+d)|z|^\gamma$, to be understood as a multiple of the Dirac mass $\delta_0$ in the limit case $d=-\gamma$ (for $d=2,3$). 
From these bounds and Hardy--Littlewood--Sobolev inequality, this second term is controlled by
\begeq\label{2ndtermsuite} C (\gamma+d) \iint |v-v_*|^\gamma f(v_*) f(v)^p\,dv\,dv_*
\leq C \|f\|_{L^{p_0}} \|f^p\|_{L^r},
\endeq
where
\[ 2 + \frac{\gamma}{d} = \frac1{p_0} + \frac1{r}, \]
 and it is required that $-(d+2)<\gamma<-2$, which is implied by \eqref{VSPLandau}. In the limit case $\gamma=-d$, the left-hand side of \eqref{2ndtermsuite} should be replaced by $C \int f^{p+1}$ and the bound still holds true by Lebesgue interpolation. Then, by interpolation again,
\[ \|f^p\|_{L^r}  = \|f\|_{L^{pr}}^p  \leq \bigl( \|f\|_{L^{\lambda p}_{\gamma/p}}^p\bigr)^{r/\lambda}
\|f\|_{L^1_s}^{p(1-(r/\lambda))}\]
for $s= - (\gamma r)/(\lambda -r)$ provided of course that $r<\lambda$, i.e. $1/r<(d-2)/d$, i.e. $\gamma>-4$ (as assumed). All in all, we have identified $s>0$ and $\theta \in (0,1)$ such that
\[ \frac{d}{dt} \|f\|_{L^p}^p \leq -K \|f\|_{L^{\lambda p}_{\gamma/p}}^p + C  \|f\|_{L^p_{(\gamma-2)/p}}^p
+ C (\|f\|^p_{L^{\lambda p}_{\gamma/p}})^\theta \|f\|_{L^1_s}^{1-\theta}.\]
It follows, through Young's inequality and interpolation again,
\begeq\label{ddtLp}
 \frac{d}{dt} \|f\|_{L^p}^p \leq -K \|f\|_{L^{\lambda p}_{\gamma/p}}^p + C \|f\|_{L^1_s}.
 \endeq
Recall that $\|f_0\|^{L^{p_0}}$ is uniformly bounded, that $\|f\|_{L^1_s}$ is $O((1+t)^\var)$ for any given $\var$ if $f$ has finite moments of sufficiently high order. Further note that, by interpolation again, \eqref{ddtLp} implies
\begeq\label{ddtLp'}
 \frac{d}{dt} \|f\|_{L^p}^p \leq -K (\|f\|_{L^p}^p)^{\lambda'} + C \|f\|_{L^1_{s'}}.
 \endeq
 for any $\lambda'<\lambda$, and $s'= s'(s,\lambda,p,\gamma)$.
 Choosing $\lambda'>1$ implies, by a classical reasoning, the short-time appearance of all $L^p$ norms of $f$ for $p = \lambda' p_0$, $(\lambda')^2p_0$, etc. up to any $(\lambda')^\ell p_0$, $\ell\in\N$, and the bounds will be controlled in time like powers of the moments, which in turn are controlled like small powers of $t$. To summarise:
 Given any $p\geq 1$ we have the a priori bound
 \[ \forall t\geq 0 \qquad \|f(t)\|_{L^p} \leq C \left ( \frac1{t^{\alpha_p}} + t^\var\right),\]
 where $\alpha_p$ depends on $p$, $d$, $\gamma$, $\var$ is as small as required, and $C$ depends on $p$, $d$, $\gamma$, $b$, $E_0$, $H(f_0)$, $I(f_0)$, $\var$ and $\|f_0\|_{L^1_s}$ for some $s=s(\var,\gamma,p,d)$.
In short, all $L^p$ norms appear instantly and grow slowly in large time.
\med

Let us repeat the same scheme for the Boltzmann equation. This will work out if the kernel is {\em singular enough} in the angular variable. First a $\Gamma$ formula. From the elementary identity
\[ -p f^{p-1} (f'f'_* - ff_*) + \bigl[ (f')^p f'_* - f^p f_* \bigr] 
= f'_* \bigl[ (f')^p - f^p - p f^{p-1} (f'-f)\bigr] - (p-1) (f'_* -f_*) f^p\]
(for any four numbers $f,f_*,f',f'_*\geq 0$) one finds
\begeq\label{Qffp} Q(f,f^p) - p f^{p-1} Q(f,f)  
= \iint f'_* \Gamma_{\Phi_p}(f,f')\, B\,dv_*\,d\sigma - (p-1) {\cal S}\ast f,
\endeq
where $Q(g,f) = \iint (g'_* f'- g_* f)\,B\,dv_*\,d\sigma$,
\[ \Gamma_{\Phi_p}(f,f') = (f')^p - f^p - p f^{p-1} (f'-f)\]
and
\[ {\cal S}\ast f  = \iint (f'_* - f_*) B(v-v_*,\sigma)\,dv_*\,d\sigma.\]
Changing variable $v'_*\to v_*$, for fixed $\sigma$, shows that ${\cal S}$ is well-defined and $|{\cal S}(z)| \leq C |v-v_*|^\gamma$ for $\gamma>-d$, while in the limit case $\gamma=-d$, ${\cal S}$ is proportional to the Dirac mass $\delta_0$. Also the elementary convexity inequality
\[ x^p - 1 - p(x-1) \geq K_p (x^{p/2}-1)^2\]
yields
\[ \Gamma_{\Phi_p}(f,f') \geq K_p \bigl[ (f')^{p/2} - f^{p/2}\bigr]^2.\]

So integrating \eqref{Qffp} in $\R^d$ yields the a priori estimate
\begin{align*}
& \frac{d}{dt} \int f^p 
 = - \iiint f'_* \Gamma_{\Phi_p} (f,f')\,B\,dv\,dv_*\,d\sigma
+ (p-1) \int f^p ({\cal S}\ast f)\,dv\\
& = - \iiint f_*\Gamma_{\Phi_p}(f',f)\, B\,dv\,dv_*\,d\sigma
+ (p-1) \int f^p ({\cal S}\ast f)\,dv\\
& \leq -K \iiint f_*  \bigl[ f^{p/2}(v') - f^{p/2}(v)\bigr]^2\,B(v-v_*,\sigma)\,dv\,dv_*\,d\sigma
+ C \iint f^p  f_* |v-v_*|^\gamma\,dv\,dv_* \\
& \leq -K \iiint f_* \<v-v_*\>^\gamma \bigl[ f^{p/2}(v') - f^{p/2}(v)\bigr]^2\, b(k\cdot\sigma)\,dv\,dv_*\,d\sigma 
+ C \iint f^p  f_* |v-v_*|^\gamma\,dv\,dv_*,
\end{align*}
where as usual $k=(v-v_*)/|v-v_*|$. 

Let us focus on
\[ {\cal D} (f,F) = \iiint f_* \<v-v_*\>^\gamma \bigl[ F(v') - F(v)\bigr]^2\, b(k\cdot\sigma)\,dv\,dv_*\,d\sigma.\]
The factor $\<v-v_*\>^\gamma$ may be very small compared to $\<v\>^\gamma$ as $|v_*|\to\infty$; so it will be convenient to truncate large velocities $v_*$ in the crudest possible way:
\[ \<v-v_*\>^\gamma \geq K R^\gamma \<v\>^\gamma 1_{|v_*|\leq R}.\]
It will also be convenient to truncate angles $\theta\geq \pi/2$; so let $b_0(\cos\theta) = b(\cos\theta) 1_{1\leq\theta\leq\pi/2}$. Then
\begin{align} \nonumber
{\cal D} (f,F) 
& \geq K(R) \iiint (f_* 1_{|v_*|\leq R}) \<v\>^\gamma \bigl[ F(v') - F(v)\bigr]^2\, b(k\cdot\sigma)\,dv\,dv_*\,d\sigma \\
\label{DfF1}
& \geq \frac{K(R)}2 \iiint (f_* 1_{|v_*|\leq R}) \bigl[ F(v') \<v'\>^\gamma - F(v) \<v\>^\gamma\bigr]^2\, b_0(k\cdot\sigma)\,dv\,dv_*\,d\sigma\\
\label{DfF2}
& \qquad\qquad\qquad
- K(R) \iiint \bigl[ (f_* 1_{|v_*|\leq R}) \bigl[ \<v'\>^{\gamma/2} - \<v\>^{\gamma/2}\bigr]^2 F(v')^2\,b_0(k\cdot\sigma)\,dv\,dv_*\,d\sigma.
\end{align}
Then $(\<v'\>^{\gamma/2} - \<v\>^{\gamma/2})^2 \leq C |v-v_*|^2 \theta^2\leq C |v'-v_*|^2\theta^2$ (because $\theta\leq \pi/2$), so the last integral is crudely bounded above by
\[ C \iiint f_* (1+ |v_*|^2) (1+|v'|^2) \theta^2 b_0(k\cdot\sigma) F(v')^2\,dv\,dv_*\,d\sigma.\]
For given $v_*,\sigma$ the change of variables $v\to v'$ has bounded Jacobian, and transforms $\theta$ into $\theta/2$; so the bound above may be controlled by a multiple of 
\[ \iiint f_* (1+ |v_*|^2) (1+|v|^2) \,b_0(\cos(2\theta))\,\theta^2 F(v)^2\,dv\,dv_*\,d\sigma,\]
which in turn is bounded above by
\[ C \left(\int f_* |v_*|^2\,dv_*\right) \left(\int_{\S^{d-1}} \theta^2\, b(\cos\theta)\,\sin^{d-2}\theta\,d\theta\right)\, \int F^2\<v\>^2\,dv.\]

Now for the other part, \eqref{DfF1}: Here we have reduced to a Maxwellian kernel. A Fourier integral representation going back to Bobylev shows that
\begin{multline*} 
\iiint h(v_*) \bigl[ G(v')  - G(v)\bigr]^2\, b_0(k\cdot\sigma)\,dv\,dv_*\,d\sigma\\
= \int_{\R^{d}} \int_{\S^{d-1}}
b(k\cdot\sigma) \Bigl[ \hat{h}(0) |\hat{G}(\xi)|^2
+ \hat{h} (0) |\hat{G}(\xi^+)|^2
- \hat{h}(\xi^-) \hat{G}(\xi^+) \ov{\hat{G}(\xi)}
- \ov{\hat{h}(\xi^-)}\ov{\hat{G}(\xi^+)} \hat{G}(\xi)\Bigr]\,d\xi\,d\sigma
\end{multline*}
where $\ov{z}$ stands for complex conjugate of $z$, and
\[ \hat{f}(\xi) = \int_{\R^d} e^{-2i\pi\xi\cdot v} f(v)\,dv,\qquad
\xi^+ = \frac{\xi + |\xi|\sigma}2, \qquad
\xi^- = \frac{\xi-|\xi|\sigma}2. \]
It follows, with $h(v)= f(v) 1_{|v|\leq R}$ and $G(v) = F(v) \<v\>^{\gamma/2}$, 
that \eqref{DfF1} is bounded below by
\[ K(R) \iint \bigl( \hat{h}(0) - |\hat{h}(\xi^-)|\bigr) \bigl( |\hat{G}(\xi^+)|^2 + |\hat{G}(\xi)|^2\bigr)\,d\xi\,d\sigma.\]
From the bound on $H(f)$ and $E_0$ follows a non-concentration estimate on $f$ and thus on $h$ (independently of $R$), so that
\[ \hat{h}(0) - |\hat{h}(\xi^-)| \geq K \min (|\xi^-|^2,1),\]
and for $R$ large enough, fixed, the constant $K$ will be bounded bellow.
Eventually \eqref{DfF1} is bounded below by
\[ K \int \min(|\xi^-|^2,1) b\left(\frac{\xi}{|\xi|}\cdot\sigma\right) |\hat{G}(\xi)|^2\,d\xi\,d\sigma
\geq K \int_{\R^d} |\hat{G}(\xi)|^2|\xi|^\nu\,d\xi = \|G\|_{\dot{H}^{\nu/2}}^2.\]

All in all, 
\[ {\cal D}(f,F) \geq K \|F\|_{H^{\nu/2}_{\gamma/2}}^2 - C \|F\|_{L^2_1}^2,\]
where $K,C$ only depend on $E(f_0)$ and $H(f_0)$.

With this at hand, we may reason as in the proof of higher integrability for the Landau equation,
using the Sobolev embedding
\[ \|f\|_{L^{2\lambda}} \leq C \|f\|_{H^{\nu/2}}, \]
where now
\[ \lambda = \frac{d}{d-\nu}.\]
Eventually,
\[ \frac{d}{dt} \int f^p \leq -K \|f^{p/2}\|^2_{H^{\nu/2}_{\gamma/p}} 
+ C \|f\|_{L^p}^{\theta p} + C \|f\|_{L^1_s} \]
for some $\theta\in (0,1)$ and $s$ large enough, as in \eqref{ddtLp}, but now the condition $r<\lambda$ becomes
\[ \gamma+\nu > -2,\]
as assumed in \eqref{VSPBoltzmann}. The end of the reasoning is similar: We find that all $L^p$ norms appear instantaneously like inverse power of $t$, and these bounds are all $O(t^\var)$ as $t\to\infty$, for $\var$ arbitrarily small, provided that $f_0$ has sufficiently many finite moments, and again the bounds and exponents are computable.

\subsection{Smoothness} \label{subsmooth}

The previous subsection was about improving integrability, this one is about improving regularity, working with $L^2$-type spaces now. Again, start with the Landau equation.

The first issue is to evaluate the regularity of the diffusion matrix $\ov{a} = a\ast f$. As $|a|\leq C |z|^{\gamma+2}$, 
\[ \|a\ast f\|_{L^\infty} \leq C\, \|f\|_{L^q}\]
with $\gamma+2+d = d/q$. So, as soon as $\gamma+2+d>0$ (which is implied by \eqref{VSPLandau}), chosing $q$ large enough yields an $L^\infty$ bound on $\ov{a}$, with a norm proportional to $\|f\|_{L^q}$, which is known to be $O(t^\var)$ with $\var$ arbitrarily small, if enough moments are finite. The same reasoning holds for $\nabla \ov{a}$, as soon as $\gamma+1+d>0$. As for $\nabla^2\ov{a}$, as $\gamma+d$ may be equal to 0, we only get a bound on $\|(\nabla^2 a)\ast f\|_{L^p}$, locally, for $p$ arbitrarily large. The conclusion is $\ov{a} \in W^{2,p}$, locally, for all $p\geq 1$, with a local bound that may grow polynomially, and in particular $\ov{a} \in C^{1,\alpha}$ for some $\alpha>0$ and $\ov{c} \in L^p$ locally, for all $p>1$. In the sequel, I shall systematically evaluate $\ov{a}$ in Sobolev norms weighted by a negative power of the velocity -- anyway large velocities are controlled by the moment bounds.

As for the lower bound, nonconcentration implies a lower bound like $\ov{a}(z) \geq K\<z\>^\gamma$.

Now one can go through higher regularity with the classical method of commuting the equation both with the quadratic nonlinearity and the derivation:

\bul Start from $\pa_t f = \nabla\cdot(\ov{a}\nabla f - \ov{b} f)$, $\ov{a} = a\ast f$, $\ov{b} = b\ast f = (\nabla\cdot a)\ast f$.

\bul Then pick up an index $k$, and differentiate with respect to $v_k$, it follows
\[ \pa_t (\pa_k f) = \nabla\cdot \bigl( \ov{a}\nabla \pa_k f - \ov{b}\, \pa_k f \bigr)
+ \pa_i \bigl( \pa_k \ov{a}_{ij} \,\pa_j f\bigr) - \pa_i \bigl(\pa_k \ov{b}_i f\bigr),\]
with the convention of summation over repeated indices.

\bul Then multiply by $\pa_k f$ and work out the commutator in $\Gamma$ style, to get
\begin{multline*} \pa_t \left( \frac{(\pa_k f)^2}2\right)
= \pa_k f \pa_t f \\
= \nabla\cdot \left( \ov{a} \nabla \frac{(\pa_kf)^2}{2} - \ov{b} \frac{(\pa_k f)^2}{2}\right)
- \ov{a} \nabla \pa_k f \nabla \pa_k f - \ov{c} \frac{(\pa_k f)^2}2 \\
+ \pa_k f\, \pa_i \bigl( \pa_k \ov{a}_{ij} \pa_j f\bigr) - \pa_k f \,\pa_i \bigl(\pa_k \ov{b}_i f\bigr).
\end{multline*}
The key term is of course $-\ov{a} \nabla \pa_k f \nabla \pa_k f$ which is like a $\Gamma_2 (f,f)$ term. After that, the last three terms will be treated as error terms. Upon integrating the previous expression over $\R^d$, summing over $k$ and integrating by parts, comes
\begin{multline}
\frac{d}{dt} \int \frac{|\nabla f|^2}2 
= - \int \ov{a} \nabla\nabla f : \nabla\nabla f - \int \ov{c} \frac{|\nabla f|^2}2
- \int \pa_{ik} f\, \pa_k \ov{a}_{ij}\, \pa_j f 
+ \int \pa_{ik} f\, \pa_k \ov{b}_i f.
\end{multline}
Then
\[ \int \ov{a}\nabla\nabla f\nabla\nabla f  \geq K \int \<v\>^\gamma |\nabla^2 f|^2.\]
Also
\begin{align*}
\left| \int \pa_{ik} f\, \pa_k \ov{a}_{ij}\, \pa_j f \right|
& \leq \left( \int \<v\>^\gamma |\nabla^2f|^2\right)^{1/2}
\left( \in \<v\>^{-\gamma} |\nabla \ov{a}|^2 |\nabla f|^2\right)^{1/2}\\
& \leq \var \int \<v\>^\gamma |\nabla^2 f|^2 + C \int \<v\>^{-\gamma} |\nabla\ov{a}|^2 |\nabla f|^2,
\end{align*}
where $\var$ is as small as desired. Similarly,
\[
\left| \int \pa_{ik} f \,\pa_k \ov{a}_{ij}\, \pa_j f \right|
\leq \var \int \<v\>^\gamma |\nabla^2 f|^2 + C \int \<v\>^{-\gamma} |\nabla\ov{b}|^2 |\nabla f|^2.\]
To summarise, at this stage
\begeq\label{wheel} \frac{d}{dt} \int |\nabla f|^2 \leq - K \int \<v\>^\gamma |\nabla^2f|^2
+ C \left( \int \<v\>^{-\gamma} |\nabla \ov{a}|^2 |\nabla f|^2
+ \int \<v\>^{-\gamma} |\nabla \ov{b}|^2 |\nabla f|^2 + \int |\ov{c}| |\nabla f|^2\right).
\endeq

To get the wheel of the estimate go round, one needs to dominate the sum of the three integrals within brackets, at the end of \eqref{wheel}, by a little bit of $\int \<v\>^\gamma |\nabla^2f|^2$. There are two issues to do this:

- powers of $\<v\>$ do not match (the diffusion is weak at large velocities), but this will be handled by moment estimates;

- $\ov{a}$, $\ov{b}$, $\ov{c}$ are just partly regular and their regularity has to rest partly on the regularity of $f$, not just $a,b,c$ (this will be particularly true for higher derivatives). For instance in the important case $d=3$, $\gamma=-3$, we have $|\ov{c}|= f$, which at this stage of the proof is estimated in all $L^p$ spaces but not in $L^\infty$. So the power of $f$ inside the integral will be higher than 2, and Young inequality will be needed to work with indices $2p, 2p'$ with $p\sim \infty$, $p'\sim 1$. For this strategy to work out, we shall use the information that $f$ lies in all weighted $L^p$ spaces. So this will be an interpolation between $W^{k,p}_s$ spaces ($k$ derivatives in $L^p$ and $s$ moments), of the type
\begeq\label{interpWsp}
\|f\|_{W^{1,p}_\sigma} 
\leq C \|f\|^\theta_{W^{2,2}_{\gamma/2}} \|f\|^{1-\theta}_{L^r_s}.
\endeq
Taking $r,s$ large enough will make the exponent $\theta$ lower. In fact the compatibility condition is
\[ -1 + \frac{d}{p} = \theta \left( -2 + \frac{d}{2}\right) + (1-\theta) \frac{d}{r}.\]
So the condition to find suitable $r,s$ (large enough) for \eqref{interpWsp} to hold is $0<\theta<1$, where
\[ \theta = \frac{1-\frac{d}{p}}{2-\frac{d}{2}}.\]

As for the dependence of $\ov{a},\ov{b},\ov{c}$ in $f$, it is as follows: One can find $\sigma$ such that for any $p$ there are $r,s$ such that for all $f$,
\begeq\label{abcf}
\|\ov{a}\|_{W^{2,p}_{-\sigma}} + \|\ov{b}\|_{W^{1,p}_{-\sigma}} + \|\ov{c}\|_{L^p_{-\sigma}}
\leq C \|f\|_{L^r_s}.
\endeq
So, if $p>1$ and $p'= p/(p-1)$,
\begin{align*}
\int \<v\>^{-\gamma} |\nabla\ov{a}|^2 |\nabla f|^2
& \leq C \|\nabla \ov{a}\|^2_{L^{2p}_m} \|\nabla f\|^2_{L^{2p'}_{-\gamma/2}} \\
& \leq C \|f\|^2_{L^{r_1}_{s_1}} \|f\|^{2\theta}_{W^{2,2}_{\gamma/2}} \|f\|^{2(1-\theta)}_{L^{r_2}_{s_2}}.
\end{align*}
and this works for $r_1,r_2,s_1,s_2$ large enough if
\[ 0 < \frac{1-\frac{d}{2p'}}{2-\frac{d}{2}} < 1.\]
Obviously this is true for $p$ large enough, so that $p'$ is close to 1.

In the same way, one shows that
\begin{align*}
\int \<v\>^{-\gamma} |\nabla\ov{b}|^2 |\nabla f|^2 + \int |\ov{c}| |\nabla f|^2 
\leq C \|f\|_{W^{2,2}_{\gamma/2}}^{2\theta} \|f\|^{2(2-\theta)}_{L^r_s}.
\end{align*}
Also note that
\[ \|f\|_{W^{2,2}_{\gamma/2}}^2 \leq \int \<v\>^\gamma |\nabla^2 f|^2 + C \|f\|_{L^2}^2.\]

Then fixing all parameters as above, \eqref{wheel} becomes
\begeq\label{wheel2}
\frac{d}{dt} \int \frac{|\nabla f|^2}2
\leq - K \|f\|_{W^{2,2}_{\gamma/2}}^2 + C \|f\|^4_{L^p_s}.
\endeq
This proves an a priori bound on $\int |\nabla f|^2$, which becomes immediately finite and remains so for all times, with a bound like $O(t^\var)$ in large time, for arbitrarily small $\var$.
\med

Then go for higher derivatives. Differentiating the equation at higher order, working out the commutators, the degenerate coervicity and the error terms, one arrives at
\begin{multline*} \frac{d}{dt}\int |\nabla^k f|^2 \leq -K \int \<v\>^\gamma |\nabla^{k+1}f|^2
+ C \sum_{|r|\leq k, |r|+|\ell|\leq k+1} \int \<v\>^{-\gamma} |\nabla^\ell \ov{a}|^2 |\nabla^r f|^2\\
+ C \sum_{|r|+|\ell|\leq k+1}  \int \<v\>^{-\gamma} |\nabla^\ell \ov{b}|^2 |\nabla^r f|^2
+ C \sum_{|r|+|\ell|\leq k} \<v\>^{-\gamma} |\nabla^\ell \ov{c}|^2 |\nabla^r f|^2,
\end{multline*}
and if $\sigma$ is given we may find $s$ such that
\[ \|\ov{a}\|_{W^{j+2,p}_{-\sigma}}
+ \|\ov{b}\|_{W^{j+1,p}_{-\sigma}}
+\|\ov{c}\|_{W^{j,p}_{-\sigma}} \leq C \|f\|_{W^{j,2}_s},\]
and the relevant interpolation now is
\[ \|f\|_{W^{\ell,p}_\sigma}
\leq C \|f\|^\theta_{W^{L,2}_{\gamma/2}} \|f\|^{1-\theta}_{L^r_s},\]
where
\[ -\ell + \frac{d}{p} = \theta \left( -L + \frac{d}{2}\right) + (1-\theta)\frac{d}{r}.\]
Then error terms are bounded like, for instance,
\begin{align*}
\|\nabla^\ell \ov{b}\|^2_{L^{2p}_m} \|\nabla^r f\|^2_{L^{2p'}_{-\gamma/2}}
& \leq C \|f\|^2_{W^{\ell-1,2p}_{m+\sigma}} \|f\|^2_{W^{r,2p'}_{-\gamma/2}}\\
& \leq C \bigl( \|f\|^{\theta_1}_{W^{k+1,2}_{-\gamma/2}} \|f\|^{1-\theta_1}_{L^m_\sigma}\bigr)^2
\bigl(\|f\|^{\theta_2}_{W^{k+1,2}_{-\gamma/2}} \|f\|^{1-\theta_2}_{L^m_\sigma}\bigr)^2,
\end{align*}
and
\[ \theta_1+\theta_2 = \frac{(\ell-1)- \frac{d}{2p}+ \left(r-\frac{d}{2p'}\right)}{k+1-\frac{d}{2}}
\leq \frac{k-\frac{d}2}{k+1-\frac{d}2}.\]
All in all,
\[ \frac{d}{dt} \int |\nabla^k f|^2 \leq -K \int \<v\>^\gamma |\nabla^{k+1}f|^2
+ C \|f\|_{L^m_s}^A,\]
for some $K>0$ and $m,s,A$ large enough, and there is $\theta \in (0,1)$ such that
\[\frac{d}{dt} \int |\nabla^k f|^2 \leq -K \left( \int |\nabla^k f|^2\right)^{1/\theta}
+ C \|f\|_{L^m_s}^A,\]
which proves both the immediate appearance of Sobolev norms and their good control in large time since all weighted $L^m$ norms are $O(t^{\var})$ for $t\to\infty$, with $\var$ as small as desired (if large enough moments are finite). By Sobolev embedding, this also implies a control of all derivatives like inverse power laws (rapid decay).

More precisely: For arbitrarily large $k, \kappa >0$, for arbitrarily small $\var>0$, there are $\alpha>0$ (controlled from below) and $s>0$ (controlled from above) such that
\begeq\label{nablafkt}
\forall t>0, \qquad |\nabla^k f(t,v) | \leq \frac{C \bigl( t^{-\alpha} + (1+t)^\var\bigr)}{\<v\>^\kappa},
\endeq
and $C$ depends in a computable way on $E_0$, $H(f_0)$, $I(f_0)$ and $\|f_0\|_{L^1_s}$.
\med

For Boltzmann, the strategy to get higher regularity is exactly the same, but now it rests on the properties of the bilinear Boltzmann operator $Q(g,f) = \iint (g'_*f'- g_*f)B\,dv_*\,d\sigma$, where (without loss of generality) $b$ is supported in $(0\leq\theta\leq \pi/2)$. Since $Q(g,\cdot)$ acts like a variant of a fractional diffusion of order $\nu$, one has the estimates
\[ \|Q(g,f) \|_{L^p_\sigma} \leq C \|g\|_{L^q_s} \|f\|_{W^{\ov{\nu},\ov{p}}},\]
where $\ov{\nu}$ is arbitrarily close to $\nu$ and $\ov{p}$ arbitrarily close to $p$. The coercivity estimate remains the same,
\[ \int Q(g,f)f \geq K \|f\|_{H^{\nu/2}_{-\gamma/2}}^2 - C \|f\|_{L^m_s}^2,\]
and going to higher derivatives is easy thanks to the bilinear identity
\[ \pa_\ell Q(g,f) = Q(\pa_\ell g, f) + Q(g,\pa_\ell f).\]
Then all ingredients are here to obtain exactly the same rapid decay estimates \eqref{nablafkt}.

\subsection{What if the initial Fisher information is infinite?}

Bounds and rapid decay of the distribution function are quite classical assumptions, as is the requirement of finite entropy. But the requirement of finite Fisher information, while statistically meaningful, may seem unsatisfactory. It turns out however that this can also be dispended with, thanks to the regularising property of the equation. The scheme of proof is the following:

\bul Control the solution in $L^p$ norms for a short time, as a quasilinear diffusive equation with quadratic nonlinearity, and deduce regularisation during that short time;

\bul Deduce that the Fisher information becomes finite in short time; then it will be nonincreasing, and we are back to the situation treated before.

As a warmup, let us consider the model case
\begeq\label{model}
\pa_t f = \<v\>^\gamma \Delta f + f^2
\endeq
which does not have the same nice conservation properties as the Landau equation, but (for $d=3, \gamma=-3$) satisfies the same estimates for diffusivity and nonlinearity.
The usual estimate yields
\begin{align}  \label{ddtflp}
\frac{d}{dt} \|f\|_{L^p}^p
& \leq -K \| f^{p/2}\|_{H^1_{\gamma/p}}^2 + C \|f\|_{L^{p+1}}^{p+1}\\
\label{-Kflambdap} & \leq 
-K \|f\|_{L^{\lambda p}_{\gamma/p}}^p + C \|f\|_{L^{p+1}}^{p+1},
\end{align}
with $\lambda = d/(d-2) > 1$. Assume $p+1<\lambda p$, ie $p> 1/(\lambda -1) = (d/2)-1$. Then one can interpolate $L^{p+1}$ between $L^p$ and $L^{\lambda p}$, and a little bit of high order moments to compensate for the negative weight in the $L^{\lambda p}$ norm.
This yields
\begeq\label{interpoplambda}
\|f\|_{L^{p+1}} \leq C \|f\|_{L^p}^\theta \|f\|_{L^{\lambda p}_{\gamma/p}}^{1-\ov{\theta}} \|f\|_{L^1_s}^{\ov{\theta}-\theta},
\endeq
where $\ov{\theta}>\theta$ (arbitrarily close to $\theta$) and $s>0$ (large enough), and
\[ \frac1{p+1} = \frac{\theta}{p} + \frac{1-\ov{\theta}}{\lambda p} + (\ov{\theta}-\theta).\]
This can be achieved for any
\[ 1 > \theta > \frac{\frac{p}{p+1}-\frac1{\lambda}}{1-\frac1{\lambda}}.\]
From \eqref{-Kflambdap} and \eqref{interpoplambda} follows
\begeq\label{ddtflpmodel}
\frac{d}{dt} \|f\|_{L^p}^p 
\leq - K \|f\|_{L^{\lambda p}_{\gamma/p}}^p + C \|f\|_{L^p}^r + C \|f\|_{L^1_s}^A,
\endeq
for any
\[ r > \frac{p}{1- \frac{\lambda -1}{(p+1)(\lambda -1)-1}},\]
provided that the exponent of $\|f\|_{L^{\lambda p}_{\gamma/p}}$ in the positive term is smaller than the exponent in the negative term, i.e. $(p+1)(1-\theta)<p$, which means $p> d/2$.
Obviously $r>p$, so \eqref{ddtflpmodel} does not prevent blowup in finite time -- as expected. But it does imply a uniform control of $\|f\|_{L^p}$ in short time. This in turn implies a time-integrated control of $\|f\|_{L^{\lambda p}_{\gamma/p}}$ in short time, and by interpolation with moment bounds, a time-integrated control of $\|f\|_{L^q}$ in short time for any $q<\lambda p$. Then we can repeat the reasoning and obtain a uniform control of $f$ in $L^q$, on a shorter time interval. And by induction, this works out for all $L^p$ norms, and thus for all $L^p_s$ norms, for arbitrary $p$ and $s$. (The control deteriorates as $p$ and $s$ become larger, and the time interval becomes shorter and shorter, but the procedure can be repeated an arbitrary number of times to get arbitrarily large smoothness in short time.) Then we can repeat the procedure for higher regularity. As a conclusion of this step, solutions of \eqref{model} satisfy bounds of rapid decay \eqref{nablafkt}, at least for short time $t\leq t_0 = t_0(k,\kappa, E_0,H(f_0),\|f_0\|_{L^1_s}, \|f_0\|_{L^{q_0}})$, where
$q_0$ is any Lebesgue exponent satisfying $q_0> d/2$.

As a second step, this implies the finiteness of the Fisher information for small $t$, thanks to an estimate of Toscani and myself (see Bibliographical Notes):
\begeq\label{FIH2}
I(f) \leq C(s,d) \|f\|_{H^2_s},\qquad \text{for any $s> \dps \frac{d}2$.}
\endeq
\med

Now working out the same scheme for the Boltzmann equation with very soft potentials yields
\begeq\label{equsualVSP}
\frac{d}{dt} \|f\|_{L^p}^p
\leq - K \|f^{p/2}\|_{H^{\nu/2}_{\gamma/p}}^2 + C \int | ({\cal S}\ast f)| f^p
\endeq
where $|{\cal S}(v)|\leq C |v|^\gamma$.
On the one hand,
\begeq\label{fpsur2}
\|f^{p/2}\|_{H^{\nu/2}_{\gamma/p}}^2 \geq \|f\|_{L^{\lambda p}_{\gamma/p}}^p,\qquad
\lambda = \frac{d}{d-\nu}.
\endeq

On the other hand, we can find $\sigma>0$ such that
\[ \|{\cal S}\ast f\|_{L^r_{-\sigma}} \leq C \|f\|_{L^q}^{\theta_1} \|f\|_{L^1_s}^{1-\theta_1},
\qquad \gamma+d+\frac{d}{r} = \frac{\theta_1 d}{q} + (1-\theta_1)d.\]
So, with $1/r+1/r'=1$,
\begin{align*}
\int |{\cal S}\ast f| f^p
& \leq \|{\cal S}\ast f\|_{L^r_{-\sigma}} \|f^p\|_{L^{r'}_\sigma} \\
& \leq C \|f\|_{L^q_s}^{\theta_1} \|f\|_{L^{p r'}_s}^p\\
& \leq C \|f\|_{L^{\lambda p}_{\gamma/p}}^\alpha \|f\|_{L^p}^\beta \|f\|_{L^1_s}^A,
\end{align*}
where
\[ \frac1{q} = \frac{\eta}{\lambda p} + \frac{1-\ov{\eta}}{p} + (\ov{\eta}-\eta) \qquad (0<\eta<\ov{\eta}<1),\]
\[ \frac{1}{p r'} = \frac{\zeta}{\lambda p} + \frac{1-\ov{\zeta}}{p} + (\ov{\zeta}-\zeta)\qquad
(0<\zeta<\ov{\zeta}<1,\]
\[ \alpha =\eta \theta_1 + \zeta p.\]
Using moments of high order we may choose 
\[ \begin{cases}
\theta_1\simeq 1\\
\frac{\gamma}{d}+1+\frac1{r} \simeq \frac1{q}\\
\frac1{q} \simeq \frac{\eta}{\lambda p} + \frac{1-\eta}{p}\\
\frac1{r'} \simeq \frac{\zeta}{\lambda} + (1-\eta)\\
\frac1{r'}+\frac1{r} = 1\\
\alpha\simeq \eta +\zeta p.
\end{cases} \]

This leads to
\[ \eta\simeq \frac{1- p \left(1 + \frac{\gamma}{d} + \frac1{r}\right)}{1-\frac1{\lambda}} \]
\[ \zeta\simeq \frac{1-\frac1{r'}}{1-\frac1{\lambda}}\]
\[ \alpha\simeq \eta + \zeta p \simeq \frac{1 - p\left(\frac{\gamma+d}{d}\right)}{1-\frac1{\lambda}}.\]
If $\alpha< p$ this leads again to a differential inequality of the form
\[ \frac{d}{dt} \|f\|_{L^p}^p \leq -K \|f\|_{L^p_{\gamma/p}}^p + C \|f\|_{L^p}^r + C \|f\|_{L^1_s}^A\]
for some $r>p$. The condition $\alpha<p$ can always be satisfied if
\[ p> \frac1{2+\frac{\gamma}{d} - \frac1{\lambda}} = \frac{d}{d+\gamma+\nu}.\]
We assumed $\gamma+\nu > -2$, so this estimate is at worse like $f_0 \in L^{d/(d-2)}$ (or $f_0$ in all $L^p$ for $d=2$). Note that this is precisely the integrability estimate implied by the Fisher information estimate! To summarise: 

Using short-time regularisation, we may weaken the assumption of finite Fisher information into the Lebesgue integrability assumption
\begeq\label{weakerLebesgue}
f_0 \in L^{q_0} \qquad \text{ for some $\dps q_0> \frac{d}{d+\gamma+\nu}$},
\endeq
an assumption which is always weaker than the assumption of finite Fisher information. This implies short-time regularisation, and then by \eqref{FIH2} again, Fisher information becomes instantly finite, and we are back into the regime of the previous subsection.
\sm

To summarise: Given any $k,\kappa>0$ and $\var>0$, and any $q_0$ as in \eqref{weakerLebesgue} we may find $\alpha>0$ and $s>0$ such that \eqref{nablafkt} holds true for all times, where the constant $C$ only depends on $k,\kappa,\var,q_0$, the energy $E_0$, the information $H(f_0)$, $\|f_0\|_{L^{q_0}}$ and $\|f_0\|_{L^1_s}$.

\subsection{Lower bound}

Already Carleman, almost hundred years ago, understood the interest of a lower bound estimate on solutions of the Boltzmann equation. This is interesting to control the omnipresent log of the distribution function, but also has intrinsic interest as in any diffusion process. Recall that lower bounds of Gaussian nature also appear in Nash's proof of the continuity of solutions of diffusion equations with discontinuous coefficients.

It turns out that the spatially homogeneous Boltzmann equation always leads to a Gaussian lower bound, as in diffusion processes:
\[ f(t,v) \geq K(t)\, e^{-A(t)|v|^2},\]
where $K,A$ are positive for $t>0$ and remain controlled as $t\to\infty$, as $O(t^\var)$ (through the smoothness bounds). Here I shall not provide such a strong bound but only sketch the proof of a weaker estimate
\[ f(t,v) \geq K(t) e^{-A(t)|v|^q},\]
where $\log K(t)$ and $A(t)$ are both $O(t^\var)$ as $t\to\infty$.
This will be the opportunity to touch the phenomenon of spreading which is the key in proving lower bounds. I shall use the following maximum principle:

If $f=f(t,v)$ and $\vphi=\vphi(t,v)$ are smooth and defined on $[0,t_0)\times\R^d$ with respective initial data $f_0$ and $\vphi_0$,
\[\begin{cases} \label{ifMP}
f_0(v)\geq \vphi_0(v) \qquad \text{ for all $v \in\R^d$} \\
\dps \derpar{f}{t} = Q(f,f)\\
\dps \derpar{\vphi}{t} \leq Q(f,\vphi)\qquad \text{for all $t \in (0,t_0),\ v\in\R^d$},
\end{cases} \]
then $f(t,v)\geq \vphi(t,v)$ for all $(t,v)\in (0,t_0)\times\R^d$.

First consider the Landau equation. By smoothness and energy bound, there is a ball $B(v_0,r)$ with $|v_0|\leq R$ and $r>0$ controlled from below, such that $f$ is bounded below by $K>0$ on $[0,t_0)\times B(v_0,r)$. Let's work only on that time interval: If we get a lower bound proportional to $\exp(-\alpha|v-v_0|^q)$ it will easily follow a lower bound independent of the choice of $v_0$. Let $\vphi(t) = \exp(-\alpha(t) |v-v_0|^q \chi(v-v_0) - B(t))$, where $\alpha(t)\geq 0$ and $B(t)$ large enough will be chosen later, $\chi(v)$ is a cutoff function with value $0$ in $|v|\leq r/2$ and $1$ for $|v|\geq r$. Let us further assume that $\alpha(0)= +\infty$. Then for $B(0)$ large enough, $\vphi_0\leq f_0$ in all of $\R^d$. Consider $f$ solution of the Landau equation, in the form
\[ \derpar{f}{t} = (a \ast f):\nabla^2 f - (c\ast f) f,\]
the goal is
\begeq\label{goalLB} \derpar{\vphi}{t} \geq  (a\ast f): \nabla^2 \vphi - (c\ast f) \vphi.
\endeq
But
\[ \derpar{\vphi}{t} = \Bigl( -\dot{\alpha}(t) |v-v_0|^q \chi(v-v_0) - \dot{B}(t) \Bigr)\vphi,\]
\[ (a\ast f): \nabla^2 \vphi \geq \Bigl( K \<v\>^\gamma \alpha(t)^2 |v-v_0|^{2(q-1)}\chi(v-v_0)^2 - C)\vphi,\]
\[ (c\ast f)\vphi \leq C \vphi,\]
where $C$ is chosen large enough. (There are also lower powers of $|v-v_0|$ and derivatives of $\chi$, but for large $|v|$ these terms are negligible and everything is controlled.) Choosing $B(t) = B t$ for $B$ large enough, $\alpha(t) = \alpha/t^2$ for $\alpha$ small enough, and $2(q-1)+\gamma = q$, i.e. $q=2-\gamma$ solves the problem and yields
\[ f(t,v) \geq K e^{-A|v|^{2-\gamma}}\]
for $t_0/2<t< t_0$ and for all $v\in\R^d$.

It is tempting to believe that the same strategy will work out for the noncutoff Boltzmann equation, but that is not so simple. As a distorted fractional diffusion of order $\nu$, one would hope that $Q(f, e^{-\alpha |v|^q})$ is like $|v|^r e^{-\alpha|v|^q}$ for some power $r$, but then that power would certainly go to 0 as $\nu\to 0$, and that term would never dominate the negative term proportional to $|v|^q e^{-\alpha |v|^q}$ coming from the time derivative of $\vphi$ when $\alpha$ depends on $t$.

Instead, we can go through spreading, with an iterative scheme which I will only sketch. Assume that $f$ is bounded below on $[0,t_0)$ by $K$ on $B(0,R)$ and prove that it will be bounded below by $\beta K$ on $B(0,\lambda R)$ for some $\lambda >1$, and keep track of the dependence of $\beta,\lambda$ on $R$. Choose
\[ \vphi(t,v) = e^{-Bt} \chi \left(\frac{|v|^2}{2 R(t)}\right).\]
Then
\[ \dot{\vphi}(t) = - B \vphi - e^{-Bt} \frac{v}{R(t)}\cdot\nabla \chi \left(\frac{|v|^2}{2 R(t)}\right),\]
\begin{align*}
Q(f,\vphi)  & = \iint f'_* (\vphi'-\vphi)\, B\,dv_*\,d\sigma - ({\cal S}\ast f)\vphi\\
& \geq K R^\gamma \iint f'_* (\vphi'-\vphi)\,dv_*\,d\sigma - C\vphi.
\end{align*}
And so on. Working out the dependencies of the constants will eventually lead to an inequality that can be repeated to get the appearance of a lower bound of stretched exponential form.

\bibnotes

The generic blowup of solutions of the quadratic heat equation in $L^p(\R^d)$ for $d>p/2$ was established by Weissler \cite{weissler:blowup}, see also Quittner and Souplet \cite[Remark 16.2 (iii)]{quittnersouplet:book}. (Thanks to Hatem Zaag for providing these references.)

``Very'' weak solutions for very soft potentials were introduced in my article \cite{vill:new:98} and improved in our joint work \cite{ADVW}, in the region of ``conditional regularity'', into a more classical notion of time-integrated weak solutions. It is only with the new estimates on Fisher information that one can strengthen the notion of weak solution, not having necessarily to time-integrate.

For interpolation I already mentioned the treatise by Lunardi \cite{lunardi:interpolation}. In  the context of the spatially homogeneous Boltzmann equation, interpolation techniques go back at least to Gustafsson \cite{gust:Lp:86} in $L^p$ spaces. From the mid-nineties on, interpolation between weighted Sobolev spaces was heavily used in papers by Desvillettes, Mouhot and myself, see e.g. \cite{DV:landau:1,DV:FP:01,DV:boltz:05,mouvill:04} and many references since then.

Moment estimates have been the first crucial step in the study of the Boltzmann equation, first for hard potentials with cutoff, by Povzner \cite{povz:65}, Arkeryd \cite{ark:I+II:72}, Elmroth \cite{elmr:mom:83}, Desvillettes \cite{desv:mom:93}, Wennberg \cite{wenn:mom:94} and others. A key observation by Desvillettes was that for hard potentials there is appearance of moments: Even if the initial datum is not well localised, suffices to have one finite moment of order higher than~2, to get all moments uniformly bounded. The ultimate results are those of Mischler and Wennberg \cite{mischwenn:hom:99} (see also Lu \cite{lu:unique:97,lu:stability:99}): Under the mere assumption of finite energy of the initial datum, there is instantaneous appearance and uniform boundedness of all moments, and there is also existence and uniqueness. But this ``instant localisation'' is a specific feature of hard potentials: In the regime of Maxwellian or soft potentials there are no such results and immediate appearance of moments does not hold; at best there is propagation. When the collision kernel is Maxwellian, moments equations can be closed in some sense and there is more structure, allowing precise results, see Ikenberry and Truesdell \cite{iktru:56}, Bobylev \cite{bob:theory:88,bob:moment:97}. Propagation estimates for soft potentials were studied by various authors, e.g. Arkeryd \cite{ark:asymptinfi:82} or Desvillettes \cite{desv:mom:93}. For very soft potentials the first estimates appeared in my PhD Thesis \cite{vill:new:98,vill:habil}; there I was already juggling between the two kinds of estimates (with two different symmetrisation formulas) according to the behaviour in $|v-v_*|$, as in Subsection \ref{submoments}. Then came the fine work by Carlen, Carvalho and Lu \cite{CCL:soft:09} who showed how to combine these ideas to get good large-time moment estimates; inequality \eqref{CCL} is established there (for the Boltzmann equation with very soft potentials).

Gronwall's lemma, in the precise form used for \eqref{gronwall}, states that if
\[ u(t) \leq \phi(t) + \int_0^t \lambda(\tau) u(\tau)\,d\tau,\]
then 
\[ u(t) \leq \phi(0) \exp \left(\int_0^t \lambda(\tau)\,d\tau\right)
+ \int_0^t \exp \left(\int_\tau^t \lambda(s)\,ds\right)\,\frac{d\phi}{d\tau}\,d\tau.\]
To prove it, check that 
\begin{multline*}
\vphi(t) \exp\left( -\int_0^t \lambda (\tau)\,d\tau\right)
+ \left(\int_0^t \lambda(\tau) u(\tau)\,d\tau \right) \exp\left( - \int_0^t \lambda(\tau)\,d\tau\right)\\
- \int_0^t \exp\left(-\int_0^\tau \lambda(s)\,ds\right)\,\frac{d\vphi}{d\tau}\,d\tau
\end{multline*}
is a nonincreasing function of $t$.

Integrability and regularity were studied in the context of the Boltzmann equation with hard potentials first, and mainly under a cutoff assumption \cite{gust:Lp:86}. Lions \cite{lions:kyoto1+2:94} first observed that the gain part of the collision operator should act like a dual Radon transform and that the regularity gained in this way would allow theorems of propagation of both regularity and singularity. This important result was revisited by various authors, e.g. Bouchut and Desvillettes \cite{bouchdesv:Q+:98}, or Mouhot and myself \cite{mouvill:04}; in this last work constructive ``hands-on'' estimates of propagation of regularity are deduced. As for propagation of singularities, see for instance Boudin and Desvillettes \cite{boudesv:reg:00} for inhomogeneous solutions of small mass.

But for the Boltzmann equation without cutoff, there is instant regularisation, whenever there is a good theory of Cauchy problem. This was discovered by Desvillettes for the Kac model \cite{desv:kac:95}, and then further generalised to several particular instances of the spatially homogeneous Boltzmann equation by Desvillettes \cite{desv:2D:94,desv:regMaxw:97} and his student Prouti\`ere  \cite{prout:reg:96}. Desvillettes and I also worked out the regularisation for the Landau equation with hard potentials \cite{DV:landau:1}. The study of Boltzmann's entropy production functional motivated the development of efficient tools: after some precursors \cite{lions:sanscutoff:98,vill:noncutoff:99} came the work by Alexandre, Desvillettes, Wennberg and myself \cite{ADVW}, which presented the truncation recipe, the Fourier representation in Bobylev style, the optimal fractional Sobolev estimate which were at the core of Subsections \ref{subsechigher} and \ref{subsmooth}. The combination of these tools in a Moser-type regularity iteration was sketched in my Peccot lecture notes \cite{villani:peccot} and provides a much more efficient approach to the regularisation for the Boltzmann equation without cutoff. From there a long list of works started to systematically cover situations of interest; see for instance Desvillettes--Mouhot \cite{desvmou:noncutoff} (also establishing for the first time uniqueness of the solution for the noncutoff Boltzmann with soft potentials), Alexandre--Morimoto--Ukai--Xu--Yang \cite{AMUXY}, Chen--He \cite{chenhe:noncutoff}, Chen--Desvillettes-He \cite{CDH}, Fournier--Gu\'erin \cite{fourguer:JSP:08}, Fournier--Mouhot \cite{fourmou:CMP:09}, He \cite{he:noncutoff}. Besides the complexity due to the structure of the Boltzmann operator, this method is very much in the flavour of the classical regularity theory by Nash, DeGiorgi and Moser on divergence form parabolic equations: See Mouhot's course in the same volume \cite{mouhot:crete}, yielding much more details and a whole historical perspective. 

By the way, it is interesting to note that Nash's original proof of his celebrated continuity theorem is based on Boltzmann's entropy, and there is a mathematical filiation with the study of the Boltzmann equation here: Indeed, after the death of Carleman, Lennard Carleson (future Abel Prize winner, like Nash) was in charge of completing and editing Carleman's last papers on the subject, and thus became acquainted with the underlying mathematical physics. So when Nash consulted him, Carleson pointed out to him the interest of Boltzmann's $H$ functional, which in those days was hardly known in the community of mathematical analysts (this was told to me by Carleson himself).

Regularity always follows, as a rule, from regularity estimates through usual tools; the main subtlety is if one wants to prove uniqueness of the weak solution in presence of smooth solutions; see \cite{DV:landau:1} for the spatially homogeneous Landau equation with hard potentials, and Fournier \cite{fournier:uniqueness}, Fournier--Gu\'erin \cite{fourguer:JFA:09} for soft or very soft potentials.

While the large majority of regularity estimates were based on time-derivatives of nonlinear functionals, as in the papers by Nash and Moser, Silvestre introduced a genuinely different method to prove regularity estimates, based on Harnack inequalities for nonlocal equations \cite{silvestre:newreg}. See also Chaker--Silvestre \cite{chakersilvestre} for another approach to the regularity induced by the entropy production functional.

All of these results have been, so far, unaccessible to very soft potentials, because of the lack of an adequate $L^p$ bound; there were only partial or conditional results \cite{GIS:partial,GGIV:landau,GIV:landau}. The new Fisher information estimate precisely fills this gap.

Short-time estimates for the spatially homogeneous Landau equation (with Coulomb potential in $d=3$) were first established by Arsen'ev and Peskov \cite{arsenpesk:landau:78} in the seventies. The inequality \eqref{FIH2} appears in \cite{TV:slow:00} and rests on another inequality by Lions and myself \cite{PLLV:95}, controlling (among other estimates) $\sqrt{a}$ in $W^{1,4}$ by $a$ in $W^{2,2}$. The strategy of short-time regularisation to get the finite Fisher information is due to Imbert, Silvestre and myself \cite{ISV:fisher}. Short-time estimates for the Boltzmann equation with very soft potentials were established with full details in weighted $L^\infty$ spaces by Henderson, Snelson and Tarfulea \cite{HST:boltz:irregular,HST:boltz:continuation}, while the same authors \cite{HST:landau:local} and Snelson and Solomon \cite{snelsol:landau} treated the Landau equation. Since the a priori estimates hold in $L^p$ spaces as soon as $p> d/(d+\gamma+\nu)$, it is natural to expect that this is a natural space to achieve short-time regularisation and continuation estimates, as well as uniqueness.

Carleman \cite{carleman} proved a lower bound like $K e^{-|v|^{2+\var}}$ for hard spheres, with arbitrarily small $\var>0$. Much later, unaware of that older work, A. Pulvirenti and Wennberg \cite{pulviwenn:CMPlower:97} refined this into a Maxwellian lower bound for all hard potentials. In the case of Maxwellian potentials, there are similar results by Bobylev \cite{bob:theory:88}. The lower bound through bump and maximum principle was used by Desvillettes and I \cite{DV:landau:1} to prove a Gaussian lower bound for the Landau equation with hard potentials. The proof presented here is just a generalisation, obviously suboptimal. Imbert, Mouhot and Silvestre \cite{IMS:boltzmann:20} obtain the fine Gaussian lower bound for the Boltzmann equation without cutoff, in great generality, as soon as regularity bounds are available; the key to their analysis is a weak Harnack inequality due to Silvestre \cite{silvestre:newreg}. The maximum principle mentioned above, taken from my lecture notes \cite{vill:handbook:02}, is obviously related to the weak Harnack inequality, and was explored at length in recent research. This maximum principle was also used, for instance, in another work, with Gamba and Panferov \cite{GPV:Maxwbound:09}, to get upper bounds for models of granular media. The general arguments of Pulvirenti--Wennberg and Imbert--Mouhot--Silvestre both use an iterative scheme of spreading type, as did Carleman's proof.

Nash's upper and lower Gaussian bounds are in his famous 1958 paper \cite{nash:58}; such bounds are usually called Aronson estimates. See Fabes and Stroock \cite{fabesstroock:86} or Bass \cite[Chapter 7]{bass:diffusionbook} for modern presentations.

Mouhot, Imbert and Silvestre, followed by Cameron, Henderson, Ouyang, Snelson, Turfulea, have worked out an ambitious program showing that the inhomogeneous Landau and Boltzmann equations, with or without cutoff, are regular and well-controlled as long as certain bounds on the macroscopic quantities hold true \cite{CSS:inhomlandau:18,CS:boltzthard:20,HST:lowerbound:20,IMS:boltzmann:20,IMS:lowerbound:20,IS:globalreg:22,mouhot:lowerfull:05,OS:conditional}. This is a whole world with the interplay of homogeneous and inhomogeneous features, in which hypoelliptic regularity and diffusion processes regularity are intertwined; Mouhot's lecture at the 2018 International Congress of Mathematicians \cite{mouhot:ICM:18} was surveying the field at a time when it was starting to achieve some success. The most complete results, putting together Imbert--Silvestre \cite{IS:globalreg:22} and Henderson--Snelson--Tarfulea \cite{HST:lowerbound:20}, roughly say that for moderately soft potentials there is global decay, regularity and positivity conditional to uniform upper bounds on two hydrodynamic fields: density $\rho(t,x)=\int f\,dv$ and energy $E(t,x)=(1/2)\int f |v|^2\,dv$. It is natural to ask how far these results can be pushed with very soft potentials: Existing papers prove the regularity conditional to some uniform local $L^p$ bounds, but it would be more satisfactory to assume $L^p$ or $L^\infty$ estimates on the initial datum and conditional bounds for all times on hydrodynamic quantities. In any case, in the past decade a major step has been achieved in our understanding of the Boltzmann equation as it was progressively proven that regularity is basically conditioned to the non-blowup of hydrodynamic fields.

\section{Equilibration} \label{seceq}

Convergence to equilibrium is one of the fundamental predictions of collisional kinetic theory -- and the word ``prediction'' here is not lightly used, since Maxwellian (Gaussian) equilibrium was put forward by Maxwell and Boltzmann even before the existence of atoms and molecules was mainstream. First I shall explain why the quantitative mathematical study of equilibration is closely connected to the problem of Fisher information estimate; this was already mentioned in Section \ref{secFIK}.

\subsection{From Fisher information monotonicity to entropy inequalities} \label{FItoD}

The main focus of this set of notes was the monotonicity of the Fisher information along solutions of the Boltzmann equation. This is not unrelated to the more classical problem of estimating Boltzmann's classical dissipation functional (entropy production functional). A first way to see the connexion is the following. Consider a collision kernel $\beta(\cos\theta)$ on the sphere. Since the heat and Boltzmann equation with kernel $\beta$ commute, the decay of Fisher information along the Boltzmann flow is the same as the decay of Boltzmann's entropy production along the heat flow. In other words, if $L_\beta$ stands for linear Boltzmann equation with kernel $\beta(\cos\theta)$, 
\begeq\label{commutedecay}
\left. -\frac{d}{dt}\right|_{t=0} D_\beta(e^{t\Delta}F) = \left.-\frac{d}{dt}\right|_{t=0} I(e^{tL_\beta}F).
\endeq
Here is a relation with the inequalities explored in these notes. Given $\beta$, let $\Lsharp$ be the largest constant such that for all even probability densities on $\S^{d-1}$,
\begeq\label{IbetaI}
\int_{\S^{d-1}} F\, \Gamma_{1,\beta}(\log F) \geq \Lsharp I(F).
\endeq
Note that the left hand side is $(-1/2)(d/dt)I(e^{tL_\beta}F)$.
Throughout these notes, we have seen various sufficient conditions for $\Lsharp$ to be positive, and recipes to estimate this constant. For instance,

\bul From \eqref{lowesthanging}, $\Lsharp\geq (d-2)\Sigma(\beta)>0$ if $d>2$;

\bul From \eqref{inegthmposit}, $\Lsharp>0$ if $\min\beta>0$;

\bul From \eqref{FG1Ktgeq}, if $\beta = \int {\cal K}_t \lambda(dt)$, $\Lsharp \geq (1/2) \int (1-e^{-2L_*t})\lambda(dt)>0$.

From \eqref{IbetaI} one may apply Proposition \ref{GbG1} to deduce Criterion \ref{eqBcrit} and the monotonicity of the Fisher information for the nonlinear Boltzmann equation. But we may also keep \eqref{IbetaI} and interpret both sides as time-derivatives along $S_t$: From \eqref{commutedecay} and \eqref{debruijn}, \eqref{IbetaI} is also
\[ \left. -\frac{d}{dt}\right|_{t=0} D_\beta(e^{t\Delta}F) \geq - 2\Lsharp \left.\frac{d}{dt}\right|_{t=0} H(e^{t\Delta}F). \]
Integrating from 0 to $+\infty$ as Stam, it follows the functional inequality
\begeq\label{DgeqH}
D_\beta(F) \geq 2\Lsharp H(F),
\endeq
which itself implies that $H(F(t))$ converges to 0 along the linear Boltzmann flow on the sphere, at least like $O(e^{-\Lsharp t})$. Actually \eqref{DgeqH} admits two notable limit cases:

\bul When $\beta$ is the constant kernel (which corresponds to $t\to\infty$ in \eqref{FG1Ktgeq}, thus $\Ksharp\geq 1/2$), this is the spherical linear version of the entropy -- entropy production inequality conjectured by Cercignani and proven by Toscani and myself for super hard spheres; and the latter inequality, in turn, is the mother of all quantitative equilibration estimates for the nonlinear Boltzmann equation (homogeneous or not) in the large, when the kernel is not Maxwellian;

\bul When $\beta$ concentrates on grazing collisions (which corresponds to the derivative as $t\to 0$ in \eqref{FG1Ktgeq}, thus $\Lsharp\geq L_*$), this reduces to the classical logarithmic Sobolev inequality for even functions on the sphere.
\sm

In other words, inequality \eqref{IbetaI} acts like a bridge between the theory of logarithmic Sobolev inequalities for diffusion processes, and the theory of entropy -- entropy production inequalities in the context of the Boltzmann equation. True, both theories differ significantly when going into details, but at least there is that common core.
To summarise and reformulate it: The central inequality is
\begeq\label{centralineq}
4\Lambda_\# \int_{\S^{d-1}} \Gamma_{\beta}(\sqrt{F}) \leq I(\sqrt{F})
\leq \frac1{L_\#} \int_{\S^{d-1}} F\Gamma_{1,\beta}(\log F).
\endeq
Keeping only the left inequality in \eqref{centralineq} is a Poincar\'e problem for $L_\beta$, or just the spectral domination of $-L_\beta$ by $-\Delta$. Keeping only the right inequality, and integrating along the heat semigroup, \`a la Stam, yields the entropic gap inequality $H(F)\leq D_\beta(F)/(2\Lsharp)$, \`a la Toscani-Villani (except it is on the sphere rather than the whole space). Keeping the whole of \eqref{centralineq} provides the criterion for decay for the Fisher information along the Boltzmann flow, \`a la Imbert--Silvestre--Villani. Finally, taking the AGC, the left inequality becomes an equality (with $\Lambda_\# =1$) and the right one becomes the differential log Sobolev inequality, \`a la Bakry--\'Emery, which thus appears as the diffusive limit case of \eqref{centralineq}.

\subsection{Rates of convergence}

Now I will try and sketch the overall picture for quantitative equilibration. Again, here I shall focus on the spatially homogeneous case. Without loss of generality, the initial probability density will be assumed to have zero mean ($\int f_0(v)\,v\,dv = 0$) and unit temperature (initial energy $E_0=d/2$). While the equilibrium is always the same for all collision kernels (the standard Maxwellian, or centered Gaussian with identity covariance matrix), the estimates and expected rates may depend significantly on the features of the collision kernel. In particular,

\bul In the spectral gap regime ($\gamma+\nu \geq 0$) one expects exponential convergence to equilibrium;

\bul In the regime with no spectral gap but conditional regularity ($-2<\gamma+\nu<0$), convergence should be at best like the exponential of a negative fractional power of time. Such is the case in particular for very soft potentials ($\gamma <-2$).

Whatever the situation, large velocities are one key to the estimates, and obstacles to fast convergence are most often related to velocity tails.

Here is a heuristics for the plausible rate of convergence. In the linearised case, spectral gap is related to the convergence of the nondiagonal part of the covariance matrix, or moments of order 2; so let us consider $Q(v) e^{-|v|^2/2}$, and imagine that there is a distinguished approximate solution of the form
\[ \vphi(t,v) = e^{-\lambda t^\alpha} Q(v) e^{-|v|^2/2}.\]
The action of the operator $Q$ or its linearised version will tend to multiply $\vphi$ by a multiple of $|v|^{\gamma+\nu}$ as $|v|\to\infty$ (each derivative acting on a Gaussian induces a multiplication by $v$, or kind of). But $\pa_t\vphi$ will be like $- t^{\alpha-1}\vphi$, and this should be homogeneous to $|v|^{(2\alpha)/(1-\alpha)}$ since in the definition of $\vphi$, $t$ is homogeneous to $|v|^{2/\alpha}$. So the condition for the good effect of $Q$ to beat the bad effect of the time-derivation, should be $\gamma+\nu\geq 2(\alpha-1)/\alpha$, or equivalently $\alpha \leq 2/(2-(\gamma+\nu))$. Writing $(\gamma+\nu)_-\geq 0$ for the negative part of $\gamma+\nu$, we just arrived at the

\begin{Guess}\label{guess} Solutions of the spatially homogeneous Boltzmann equation with fast enough decay at large velocities converge to equilibrium like $O(\exp(-\lambda t^\alpha))$, where
\[ \alpha = \frac{2}{2+(\gamma+\nu)_-}.\]
\end{Guess}
(So for the Landau--Coulomb equation in dimension 3, the expected rate is $O(\exp(-\lambda t^{2/3}))$.) In a close to equilibrium setting with Gaussian decay, such rate was indeed proven by Guo and Strain for certain cases when $\gamma+\nu<0$. Starting away from equilibrium, the situation is not as good yet, but with the new estimates for the Cauchy problem we can at least prove that there is convergence to equilibrium, and at least like $O(t^{-\infty})$, that is, faster than any inverse power of time. The main steps will be summarised in the next Subsection.

\subsection{Quantitative entropic convergence}

Start again with the spatially homogeneous Boltzmann equation. From Section \ref{secreg} we already have the control of moments and Sobolev norms of arbitrarily high order like $O(t^\var)$, with $\var$ as small as desired. And we also have a lower bound like $K(t) e^{-A(t)|v|^q}$ with $\log K(t)$ and $A(t)$ controlled like $O(t^\var)$. In such a situation there is an entropy -- entropy production estimate of the form
\begeq\label{DBf} D_B(f) \geq K t^\var H(f|M)^{1+\var}, \endeq
where $H(f|M) = H(f)-H(M)=H_M(f)$ is the relative information of $f$ with respect to its associated equilibrium, $\var>0$ is as small as desired, and $K$ depends on $d,B,\var$, and high order moments. This is a Gronwall inequality on $H(t)=H(f(t)|M)$, of the form $-dH/dt \geq K t^\var H^{1+\var}$, which integrates into
\[ H(f(t)|M) \leq \left( \frac1{H(f_0|M)^\var} + \left(\frac{\var K}{1+\var}\right) t^{1+\var}\right)^{-\frac1{\var}},\]
and this yields the announced $O(t^{-\infty})$ estimate.

Then, by the Csisz\'ar--Kullback--Pinsker estimate
\[ H(f|M) \geq \frac12 \|f-M\|_{L^1}^2,\]
actually
\[ \|f(t) -M \|_{L^1} = O(t^{-\infty}).\]
Interpolating this with the bound
\[ \|f(t)-M\|_{L^1_s} \leq \|f(t)\|_{L^1_s} + \|M\|_{L^1_s} = O((1+t)^\delta)\]
with $s$ as large and $\delta$ as small as desired, we further deduce
\[ \|f(t)-M\|_{L^1_\sigma} = O(t^{-\infty}) \]
for any $\sigma>0$. In particular, $\|f\|_{L^1_\sigma}$ remains uniformly bounded as $t\to\infty$, for any $\sigma$, and it follows that all decay and regularity bounds are uniform also. Ultimately, it holds

\begin{Thm} Let $B(v-v_*,\sigma) = |v-v_*|^\gamma b(k\cdot\sigma)$, where $\gamma\geq -d$, $b(\cos\theta)\sin^{d-2}\theta \geq K \theta^{1+\nu}$, $0<\nu<2$. Further assume $-2<\gamma+\nu<0$ and the monotonicity criterion for the Fisher information, as in Theorem \ref{thmcriterion} or \ref{thmcritsyn}. Then if $f_0$ is a probability distribution with zero mean and unit temperature, with finite moments of all orders and $f_0\in L^m(\R^d)$ for some $m> d/(d+(\gamma+\nu))$, then the solution $f=f(t,v)$ of the spatially homogeneous Boltzmann equation with collision kernel $B$ and initial datum $f_0$ becomes instantly smooth and satifies
\[ \forall k\in\N_0, \qquad \forall \kappa>0, \qquad \forall r>0,\qquad
\forall t\geq t_0, \qquad \left( \frac{|\nabla^k (f-M)|}{\<v\>^\kappa} \right) \leq \frac{C}{t^r},\]
where $M=M(v)$ is the centered Maxwellian (Gaussian) with unit temperature, $C$ only depends on $t_0,k,\kappa,r$, m, $\|f_0\|_{L^m}$ and $\|f_0\|_{L^1_s}$ for $s$ large enough. In short, $f$ is uniformly bounded in the Schwartz class of rapidly decaying functions, and converges to $M$ in this class with rate $O(t^{-\infty})$.
\end{Thm}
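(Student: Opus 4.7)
The plan is to combine three ingredients: the a priori regularity machinery of Section \ref{secreg}, an entropy--entropy-production inequality in the spirit of Toscani--Villani with explicit dependence on regularity constants, and an interpolation--bootstrap argument transferring entropic decay into Schwartz-class decay.

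First I would invoke the short-time regularisation from Section \ref{secreg} to upgrade the initial $L^m$ datum to an instantly smooth solution: for any $t_0>0$ and any $k,\kappa$, the norms $\|f(t_0)\|_{W^{k,\infty}_\kappa}$ and $I(f(t_0))$ are finite, with explicit bounds in terms of $E_0$, $H(f_0)$, $\|f_0\|_{L^m}$ and finitely many moments of $f_0$. The hypothesis $m>d/(d+(\gamma+\nu))$ is precisely what is needed to close that bootstrap. Once $I(f(t_0))<\infty$, the Fisher monotonicity (whose criterion is in the hypotheses) gives $I(f(t))\le I(f(t_0))$ for all $t\ge t_0$, and from Section \ref{secreg} the solution on $[t_0,\infty)$ further satisfies, with polynomial growth $O((1+t)^\varepsilon)$ for any $\varepsilon>0$: all weighted moments and Sobolev norms, and a stretched-exponential lower bound $f(t,v)\ge K(t)\exp(-A(t)|v|^q)$ with $\log K(t),A(t)=O((1+t)^\varepsilon)$.

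These controls are precisely the inputs feeding the entropy production inequality \eqref{DBf}, yielding
\begin{equation*}
D_B(f(t))\ge K(1+t)^{-\varepsilon}\,H(f(t)|M)^{1+\varepsilon}
\end{equation*}
for $\varepsilon>0$ arbitrarily small. Since $\tfrac{d}{dt}H(f(t)|M)=-D_B(f(t))$, this Gronwall inequality integrates to $H(f(t)|M)=O(t^{-r})$ for every $r>0$. Csisz\'ar--Kullback--Pinsker then gives $\|f(t)-M\|_{L^1}=O(t^{-\infty})$, and interpolation with the polynomial-growth bound $\|f(t)-M\|_{L^1_s}=O((1+t)^\varepsilon)$ upgrades this to $\|f(t)-M\|_{L^1_\sigma}=O(t^{-\infty})$ for every $\sigma>0$.

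This triggers a bootstrap: since $\|f(t)\|_{L^1_\sigma}\to\|M\|_{L^1_\sigma}$, it is uniformly bounded in $t$, so the moment, regularity and positivity estimates of Section \ref{secreg} are in fact uniform on $[t_0,\infty)$ rather than merely $O((1+t)^\varepsilon)$. Plugging uniform $W^{k,p}_s$ bounds into a standard Sobolev interpolation against the $O(t^{-\infty})$ decay in $L^1_\sigma$ yields $\sup_{v}\langle v\rangle^\kappa\,|\nabla^k(f(t,v)-M(v))|=O(t^{-r})$ for any prescribed $k,\kappa,r$, which is the Schwartz-class convergence claimed. The main obstacle is the entropy production inequality with constants depending explicitly and polynomially on these moment, Sobolev and lower-bound data in the very soft regime $-2<\gamma+\nu<0$ with possibly $\gamma<-2$; I would follow the Stam-style semigroup argument recalled in Section \ref{secFIK}(b), passing through the Landau dissipation and the logarithmic Sobolev inequality for the Fokker--Planck semigroup, inserting the pointwise lower bound and high-order moments to tame the $|v-v_*|^\gamma$ singularity while using the smoothness gain to absorb the angular one.
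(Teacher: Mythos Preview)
Your proposal is correct and follows essentially the same route as the paper's argument in the subsection ``Quantitative entropic convergence'': short-time regularisation from Section~\ref{secreg} yields finite Fisher information, the monotonicity criterion then gives the $O((1+t)^\varepsilon)$ control on all moments, Sobolev norms and the stretched-exponential lower bound, the entropy--entropy-production inequality \eqref{DBf} (taken as a black box from \cite{vill:cer:03}) integrates via Gronwall to $H(f|M)=O(t^{-\infty})$, Csisz\'ar--Kullback--Pinsker plus interpolation with the slowly growing moment bounds give $\|f-M\|_{L^1_\sigma}=O(t^{-\infty})$, which in turn makes all moments uniformly bounded and hence all regularity constants uniform, and a final interpolation yields Schwartz-class decay. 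Your closing remark about how to establish \eqref{DBf} via the Stam-type semigroup argument through the Landau dissipation is also the method of the cited reference, so there is nothing to add.
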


\subsection{Equilibration through Fisher information} \label{subeqFI}

The decay of $I$ is another approach for the convergence to equilibrium. Actually, the Stam--Gross logarithmic inequality
\[ I_M(f) \geq 2 H_M(f)\]
shows that convergence in the sense of Fisher information is stronger than in the sense of entropy.

Obviously, the decreasing property of Fisher information along the Boltzmann flow does a good part of the way towards a convergence rate. It turns out that we have all the tools for a good discussion of this. Just by working out again the calculations of Section \ref{seccrit}, we have

\begin{Prop}
Consider a kernel $B=B(|v-v_*|,\cos\theta)$, let $\ov{\gamma}=\ov{\gamma}(B,r)$ be defined as in \eqref{ovgammaB} and let $\Ksharp(\ov{\gamma},r)$ be the largest admissible constant in the functional inequality
\begeq\label{IbetaKI}
\int_{\S^{d-1}} F\, \Gamma_{1,\beta_r}(\log F) \geq \ov{\gamma}(B,r)^2 \int_{\S^{d-1}} \Gamma_{\beta_r}(\sqrt{F}) + \Ksharp(\ov{\gamma},r)\, I(F),
\endeq
required to hold for all even functions $F:\S^{d-1}\to\R_+$.
Then, along the spatially homogeneous Boltzmann equation with kernel $B$, one has
\begeq\label{ddtII}
-\frac{dI}{dt} \geq
2 \iint_{\R^d\times\R^d}
ff_*\, \Ksharp(\ov{\gamma},|v-v_*|)\, \left|
\Pi_{k^\bot} \left( \frac{\nabla f}{f} - \Bigl( \frac{\nabla f}{f}\Bigr)_* \right) \right|^2\,dv\,dv_*.
\endeq
\end{Prop}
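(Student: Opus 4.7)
The plan is to re-run the proof of Proposition~\ref{propcriterion} and Theorem~\ref{thmcriterion}, but instead of using the sphere inequality only to conclude nonnegativity, to keep track of the strictly positive residual that appears once the inequality is upgraded from \eqref{critfinal2} to the sharper \eqref{IbetaKI}.

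First, starting from the decomposition $-I'(f)Q(f,f)=\cI+\cII_1+\cII_2+\cII_3+\cIII$ of Section~\ref{secqualit}, I would apply the Cauchy--Schwarz bound $|\cIII|\le \cII_3+\tfrac12\,\mathrm{stuff}$ already established in the proof of Proposition~\ref{propcriterion}, where
\[
\mathrm{stuff}\;=\;\iiint \frac{(f'f'_*-ff_*)^2}{ff_*+f'f'_*}\,\frac{1}{B}\bigl(\partial_{|z|}B\bigr)^2\,d\sigma\,dv\,dv_*.
\]
Discarding $\cI\ge 0$, this leaves $-\dot I \ge (\cII_1+\cII_2)-\tfrac12\,\mathrm{stuff}$.

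Next, using $|\partial_{|z|}B(r,\cos\theta)|\le \ov\gamma(B,r)\,B/r$ and $(a-b)^2/(a+b)\le 2(\sqrt a-\sqrt b)^2$, and carrying out the change of variables $(v,v_*,\sigma)\to(c,r,k,\sigma)$ of Section~\ref{seccrit}, both pieces turn into weighted $(c,r)$--integrals of sphere functionals applied to the even function $F_{c,r}(\omega)=f(c+\tfrac r2\omega)\,f(c-\tfrac r2\omega)$. The matching of coefficients that makes Theorem~\ref{thmcriterion} sharp shows that, slicewise in $(c,r)$, $\cII_1+\cII_2-\tfrac12\,\mathrm{stuff}$ is controlled from below, up to a common factor depending only on $d$ and $r$, by
\[
\int_{\S^{d-1}} F_{c,r}\,\Gamma_{1,\beta_r}(\log F_{c,r})\,dk \;-\;\ov\gamma(B,r)^2\int_{\S^{d-1}} \Gamma_{\beta_r}(\sqrt{F_{c,r}})\,dk,
\]
and the refined inequality \eqref{IbetaKI} bounds this below by $\Ksharp(\ov\gamma(B,r),r)\,I(F_{c,r})$.

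Finally, the identity $\nabla_{\rm sphere}\log F_{c,r}(\omega)=-\tfrac r2\,\Pi_{\omega^\bot}(\xi(u)-\xi(u_*))$ with $u=c+\tfrac r2\omega$ and $u_*=c-\tfrac r2\omega$ gives
\[
I(F_{c,r})\;=\;\frac{r^2}{4}\int_{\S^{d-1}} f(u)\,f(u_*)\,\Bigl|\Pi_{\omega^\bot}\!\Bigl(\frac{\nabla f}{f}(u)-\frac{\nabla f}{f}(u_*)\Bigr)\Bigr|^2\,d\omega.
\]
The $r^{d-3}$ weight inherited from the passage to the sphere combines with the $r^2$ from $I(F_{c,r})$ to reconstitute exactly the Jacobian $r^{d-1}$ of the inverse map $(c,r,\omega)\to(u,u_*)$; undoing the change of variables then identifies the lower bound with the right-hand side of \eqref{ddtII}, since $\omega$ plays the role of the unit vector $k=(u-u_*)/|u-u_*|$ after substitution.

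The only thing that deserves care is the bookkeeping of the dimensionless prefactors: the $1/4$ of \eqref{c2}, the $1/2$ defining $\Gamma_{1,\beta}$ and $\Gamma_\beta$, the factor $2$ in $(a-b)^2/(a+b)\le 2(\sqrt a-\sqrt b)^2$, the Jacobian $r^{d-1}$ and the $r^2/4$ inside $I(F_{c,r})$ must line up exactly so that the $\ov\gamma^2$--terms cancel and the final coefficient in front of the $\Ksharp$--integrand matches the $2$ in \eqref{ddtII}. There is no conceptual difficulty beyond what is already in Sections~\ref{secqualit} and \ref{seccrit}; the point of the statement is that \eqref{IbetaKI} is precisely the right refinement of \eqref{critfinal2} to turn ``$-\dot I \ge 0$'' into an explicit Landau-type dissipation bound.
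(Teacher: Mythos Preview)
Your proposal is correct and follows exactly the approach the paper indicates (``Just by working out again the calculations of Section~\ref{seccrit}''): rerun the Cauchy--Schwarz bound on $\cIII$ from Proposition~\ref{propcriterion}, pass to spherical slices $(c,r,k,\sigma)$ as in Theorem~\ref{thmcriterion}, and apply \eqref{IbetaKI} slice by slice to extract the residual $\Ksharp\, I(F_{c,r})$, which the identity for $\nabla_{\S^{d-1}}\log F_{c,r}$ converts back into the Landau-type dissipation integrand. Your caveat about the constant bookkeeping is well placed---that is indeed the only thing to watch.
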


In other words, the dissipation of Fisher information is at least proportional to $D_L(f)$, Landau's dissipation functional, for a Landau equation whose kernel $\Psi$ is proportional to $\Ksharp (\ov{\gamma},|v-v_*|)$.

How can we exploit this? First, as in Subsection \ref{FItoD}, in all the situations discussed in these notes we can find a convenient function $\Ksharp>0$;  in particular this will be the case for all inverse power law interactions; and then $\Ksharp$ will be proportional to $|v-v_*|^\gamma$, where $\gamma = (s-(2d-1))/(s-1)$. In other words, there will be $K>0$ such that
\begeq\label{dIdL}
-\frac{dI}{dt} \geq K \,D_L^\gamma(f),
\endeq
where
\begeq\label{DLgamma}
D_L^\gamma(f) = \frac12 \iint ff_*\, |v-v_*|^\gamma \left|\Pi_{k^\bot} \left( \frac{\nabla f}{f} - \Bigl( \frac{\nabla f}{f}\Bigr)_* \right) \right|^2\,dv\,dv_*.
\endeq

Then we may estimate $D_L^\gamma$ from below, using non-concentration, localisation, smoothness, interpolation. The projector $\Pi_{k^\bot}$ in \eqref{DLgamma} implies a degradation of the estimates, so in general there is a loss of 2 in the exponent of the relative velocity. Assuming, without loss of generality, that $f_0$ has unit temperature ($E_0=d/2$), and writing $I(f|M) = I_M(f) = I(f)-I(M)$,

\bul There is a constant $K=K(d,\gamma,H(f_0))$ such that
\begeq\label{DL2}
D_L^2 (f) \geq K I(f|M)
\endeq

\bul For any $\var>0$ there is a constant $K_\var = K(d,\gamma,H(f_0),\var)$ and there are $s=s(d,\gamma,\var)$ and $k=k(d,\gamma,\var)$ such that
\begeq\label{DLgammageq}
D_L^\gamma(f) \geq K_\var I(f|M)^{1+\var} \bigl( \|f\|_{L^1_s} + \|f\|_{H^k}\bigr)^{-\var}.
\endeq

This and \eqref{dIdL}, and the moments and smoothness bounds like $O(t^0)$ imply

- for $\gamma=2$, exponential convergence of $f$ to $M$ in the sense of Fisher information, therefore in entropy and in $L^1$, and eventually by interpolation, exponential convergence in Schwartz class;

- for any $\gamma$ (positive or negative, keeping in mind that certain assumptions have to be made to achieve good regularity estimates), convergence of $I(f|M)$ to~0 like $O(t^{-\infty})$, and similarly convergence of $f$ to $M$ like $O(t^{-\infty})$ in Schwartz class.

\begin{Rk} One may conjecture that with a bit more work, it holds $-dI/dt \geq K I$ in the regime of entropic gap, that is, $\gamma+\nu\geq 2$. This may require more specific assumptions on the particular shape of $b(\cos\theta)$; for instance, that $b$ derives from the inverse power laws, or from the fractional Laplace operator on the sphere. It may also require comparison with an integral quantity involving fractional derivatives of higher order than~1.
\end{Rk}

\begin{Rk}  
In the end, under the regime considered here, the Fisher information approach yields basically the same rate $O(t^{-\infty})$ as the entropic approach. There are however two notable differences. The first is that the Fisher information approach may require more structure assumptions: to prove the decay we were led to some specific choices and relations between the angular and velocity dependence of $B$ -- e.g. power law, or perturbation of fractional diffusion, or high dimension... In the same vein, we cannot just be content with a comparison argument: if $B\geq B_0$ then the associated Boltzmann dissipation functionals can be compared, but for Fisher that is not necessarily true. The second difference is that the Fisher dissipation inequality \eqref{DLgammageq} uses moments and smoothness, but does not need any lower bound on $f$ -- contrary to \eqref{DBf}. Both the drawback and the advantage are relatively minor, since on the one hand we saw that the monotonicity for $I$ holds in all the main cases we could think of, and since on the other hand it is often in practice possible to get around the need of lower bound for the convergence to equilibrium, as shown by Carlen, Carvalho and Lu. Of course it might be that for other regimes, the Fisher information approach has some more pronounced advantages.
\end{Rk}

I will close this section on equilibration with two final, more general remarks. The first one is that even in the entropic approach, Fisher information plays a key role in the backstage, as for general collision kernels nobody knows how to prove a good inequality on the entropy production without going through a Fisher information estimate. The other one is that the possibility to prove and estimate equilibration through the rate of decay of Fisher information, even if it does not yet bring anything fundamentally new with respect to the entropic approach, at least reinforces the consistency of the whole area, and especially the relation between regularity theory, equilibration theory, and information theory.

\bibnotes

The discussion in Subsection \ref{FItoD} is taken from \cite{ISV:fisher}. The connexion between the Fisher information decay and the entropy production inequality is implicit in McKean \cite{mck:kac:65} and discussed explicitly by Toscani and I \cite{TV:entropy:99}.

Entropy--entropy production estimate \ref{DBf} is extracted from my paper \cite{vill:cer:03}, which is an improvement of my older work with Toscani \cite{TV:entropy:99}. The strategy of convergence in $O(t^{-\infty})$ from slowly increasing bounds is also from another work with Toscani \cite{TV:slow:00}. Carlo Cercignani played an important role in motivating research in this area.

Carlen, Carvalho and Lu \cite{CCL:soft:09} were the first to obtain convergence results, although in a weaker sense, for the spatially homogeneous Boltzmann equation with very soft potentials away from equilibrium, refining my strategy from \cite{vill:new:98}; their great paper is a model of clarity and efficiency to work out relevant conclusions even in a very rough situation. They also showed how to do without lower bound estimates, applying the entropy -- entropy production inequality not to the true distribution function but to a modified version like $(1-e^{-t}) f(t,v) + e^{-t}M(v)$. Further see Desvillettes \cite{desv:JFA:15} for the spatially homogeneous Landau equation with soft or very soft potentials.

In the 2000's Desvillettes and I worked out a program to extend the $O(t^{-\infty})$ convergence conditionally to regularity bounds, using entropy production bounds, differential inequalities, geometric/analytic functional inequalities, and a lot of interpolation \cite{DV:atlanta,DV:FP:01,DV:korn:02,DV:boltz:05,vill:hypoco}. Two main issues remained open after that: To refine the rate from $O(t^{-\infty})$ to exponential or fractional exponential; and to weaken the smoothness assumptions. In both directions, impressive results have been obtained in the following decade.

Begin with the exponential decay, starting with the regime of hard potentials. Of course it had first been proven for linearised versions of the equation, already back in the fifties, see for instance by Grad \cite{grad:58} or Wang Chang and Uhlenbeck \cite{wchanguhl:spectrum}. Then it was established for the nonlinear equation in a perturbative homogeneous setting, by Arkeryd with nonquantitative arguments \cite{ark:stab:88,arkeryd:str:92}, and in a completely quantitative way by Mouhot \cite{mouhot:sg} and Baranger and Mouhot \cite{barangermouhot:sg:05}. At that point it had all been for the cutoff case; Mouhot and Strain \cite{mouhotstrain:sg} treated noncutoff kernels. Mouhot \cite{mouhot:eqSHBE:06} used quantitative nonsymmetric spectral theory to prove quantitative exponential convergence for the spatially homogeneous nonlinear Boltzmann equation near equilibrium, under regularity assumptions which are compatible with the nonlinear theory in the large. Gualdani, Mischler and Mouhot \cite{GMM:nonsym:17} extended those results to the spatially inhomogeneous equation, both in a perturbative setting and conditional to global regularity bounds; this involved working out a complete new abstract linear theory -- it is actually one nice example of new theory of general interest motivated by problems arising in kinetic theory. Stitching the estimates of \cite{GMM:nonsym:17} (close to equilibrium with the optimal rate) with those that I proved with Desvillettes \cite{DV:boltz:05} (far from equilibrium, with $O(t^{-\infty})$ rate, conditional to global regularity) and the conditional regularity program of Imbert--Mouhot--Silvestre, eventually shows that a uniform control on the density and energy (say in the $d$-dimensional torus) are enough to achieve a complete theory of existence, decay, regularity and exponential convergence to equilibrium. Of course the spatially homogeneous theory is a very special particular case of that general conditional inhomogeneous theorem.

For soft potentials, Caflisch \cite{cafl:soft:80} was the first to obtain decay like $O(\exp(t^{-\alpha}))$ in the cutoff case. This was refined and extended by Guo and Strain \cite{guostrain:almostexp:06,guostrain:exp:08}, who indeed obtained the decay rate of Guess \ref{guess}. They also worked out some less stringent assumptions than Gaussian, e.g. they prove such a decay with a fractional exponential rate, for the Landau--Coulomb equation, if the decay of the distribution function is at least like $e^{-|v|^{1+\var}}$. Refined results were obtained for the inhomogeneous Landau and noncutoff Boltzmann equations with soft potentials by Cao, Carrapataso, Desvillettes, He, Ji, Mischler, Tristani, Wu \cite{CDH:landau:17,CHJ:noncutoff,CM:verysoft:17,CTW:landau:16}.  All this is in a close-to-equilibrium setting. For data away from equilibrium, in the spatially homogeneous setting the $O(t^{-\infty})$ decay rate for very soft potentials was proven in \cite{ISV:fisher} as a consequence of the new regularity estimates coming from the Fisher information monotonicity -- all other ingredients were in place. Stitching together these two theories may be reachable. By the way, the real issue is not so much hard vs soft potentials, than spectral gap vs no spectral gap.

In the special case of Maxwellian kernels ($\gamma=0$) there are several other ways towards the approach to equilibrium: contracting metrics as studied by Tanaka \cite{tanaka:boltz:78} \cite[Section 7.5]{vill:TOT:03}, Toscani and myself \cite{TV:metrics:99}, or Carlen--Gabetta--Toscani \cite{CGT:maxw:99}; central limit theorem, from McKean \cite{mck:kac:65} to Carlen--Gabetta--Toscani again to Dolera--Regazzini \cite{doleraregazzini:13}, or convergence of moments; once again see Bobylev \cite{bob:theory:88}. All those methods eventually lead to exponential convergence if the data is localised enough, see for instance \cite{doleraregazzini:13}, and conversely the convergence can be as slow as desired if the initial datum is not well localised \cite{CGT:maxw:99}.

The new estimates in Subsection \ref{subeqFI} constitute a natural outgrow of my joint work with Imbert and Silvestre \cite{ISV:fisher}. Inequality \eqref{DL2} is from my work with Desvillettes \cite{DV:landau:2} and \eqref{DLgammageq} from my work with Toscani \cite{TV:slow:00}.

\section{Conclusions}

Since Torsten Carleman started a sound mathematical study of the spatially homogeneous Boltzmann equation, one century ago, a number of additional ingredients have been progressively incorporated in that program. In addition to the tools of functional analysis and measure theory which Carleman knew so well, over the years the theory has been enriched by moment estimates, harmonic analysis, maximum principles, spectral analysis, information theory, Moser iteration, Harnack inequalities, and the theory of nonlocal diffusive equations. Fisher information was introduced as a useful tool, already sixty years ago by McKean; then in the nineties its impact in the field grew when it was identified as a central tool in the study of quantitative equilibration; and now its impact has grown even more as it has allowed to unlock the stubborn problem of well-posedness for very soft potentials. These recent developments constitute the last brick to date in the theory, and for the very  first time we see a consistent picture emerging which encompasses all cases of physical and mathematical interest.

This recent outgrow of the theory echoes and eventually answers most of the key questions on the spatially homogeneous Boltzmann equation which agitated me during my PhD: how to handle singularities in the collision kernel, how to quantify the convergence to equilibrium, and more generally how to sort out the global picture of moments, smoothness, lower bound and convergence to equilibrium. In this jungle I was walking in the footsteps of my senior collaborator Laurent Desvillettes, who had first initiated me to the mysteries of soft potentials.

The theory of the Boltzmann equation comes out from this episode much richer than before and with strengthened consistency. It was also during my PhD that the connexion between entropy -- entropy production estimates and information theory was made. This memory is very dear to me, as it was the main outcome of my first scientific stay outside France, in Pavia, on the invitation of Giuseppe Toscani. Our estimate and joint work on Cercignani inequalities stemmed from a shiny coincidence, certainly one of the great lucks of my career. What seemed like a beautiful anomaly or at least a singularity at the time, now seems quite natural and consistent with the way the theory has evolved; so it is a whole architecture which now surrounds this once isolated gem. Finally one may daresay that the collision kernel $Q$, with its complicated dissipative structure, seems well understood, and this was not the case even just  couple of years ago. Frustrating as it may too often be, mathematical physics is at times rewarding. 

Several types of further developments may emerge from the current theory.

The most straightforward is a more systematic development of the Cauchy theory for the spatially homogeneous equation in the large, and also for the conditional theory of the spatially inhomogeneous equation. All the ingredients are available, and I sketched most of the estimates, but some of the theorems have only been written under particular assumptions and it would be quite a task to systematically rework the theory with the best available assumptions.

As far as regularity is concerned, now that the spatially homogeneous theory seems to achieve maturity, and that the conditional regularity theory has achieved some impressive successes, the most famous challenges for the Boltzmann equation -- hydrodynamic limit, derivation from particle systems -- look even more motivating and daunting. Fisher's information may have its role to play there, also. Regularity for the full (inhomogeneous) equation still seems to be the most urgent topic to foray.

But one may also ask questions about the Fisher information itself. Can we be sure that our results are close to optimal, and can one construct a counterexample? that is, a specific collision kernel $B$ for which $I$ is not always decreasing. One may start from the specific kernels yielding a counterexample in Section \ref{secex}, yet things do not seem to come easily from there.

Even if there turn out to be counterexamples for certain kernels, the main striking lesson is the robustness of the Fisher information decay over a very large class of kernels, a striking property which had not been predicted by the community of mathematical physics.

Obviously, this success gives further motivation for the investigation of Fisher's information in the treatment of other equations, either through an integrability a priori estimate, or for the long-term behaviour, or from the point of view of large deviations in Donsker--Varadhan style, a theory which by the way deserves some more quantitative estimates. It also provides a renewed interest for the investigation of the other questions loosely formulated by McKean about higher order behaviour, mentioned at the end of Section \ref{secFIK}. 

One may further enquire whether some of the other functionals known to be monotone for the spatially homogeneous Boltzmann equation with Maxwellian kernel, are actually monotone in greater generality. A case in point is the contractivity property of the Wasserstein distance $W_2$, proven by Tanaka for Maxwell kernels: Various authors have shown that this distance is exponentially stable along the Boltzmann flow for various kernels; but could it actually be nonincreasing?

Finally, let us pause and reflect on that striking discovery: {\em The Fisher information is decaying under the action of Boltzmann collisions.} Lyapunov functionals come with a number of features and consequences, and their knowledge is often incorporated in model equations or numerical schemes. Already 25 years ago it was observed that the decay of Fisher's information seems to be associated with the stability of some deterministic numerical schemes, and now that the theory has matured it is time to re-examine that observation which most researchers (including me) had not taken seriously enough. Of course, the proof of decay for the Fisher information is so much more intricate than the proof of increase for the Boltzmann entropy, that it seems absurd to try and force the decrease within the formulation of the code. But still there is food for thought, thinking of the monotonicity property as a test or a guide.

Taking a more distant look, we may also investigate the physical meaning of the decay of the Fisher information. Think of the influence of the $H$-Theorem, which was mathematically corroborating the already identified Second Law of Thermodynamics, even if just for the specific case of a rarefied gas. Is there another thermodynamical principle which can be formulated with some accuracy and for which a mathematical translation would be precisely the monotonicity of $I$ along the homogeneous Boltzmann equation? In the physics literature there has been some speculation and treatises about the physical meaning of Fisher information, sometimes related to the so-called ``thermodynamical length'', and its potential role in some of the principles of classical or quantum statistical mechanics. This is consistent with the original meaning of Fisher information related to observation (measurement) and statistics. I do not master the physics literature on this topic well enough to recommend precise references, but at least I can mention the names of two physicists who contributed to those lines of thought: Gavin Crooks (University of Berkeley) and Roy Frieden (University of Arizona). Of course, for systems in which the Fisher information is related to the entropy production (recall that this is the case for the basic linear Fokker--Planck equation), there may be a connection to principles of minimisation of entropy production, explored by Ilya Prigogine (1977 Nobel Prize in chemistry). But the control of the Fisher information should hold more generally, as its good behaviour under the Boltzmann equation suggests. Nothing comes easily: a major difference between the variations of $H$ and the variations of $I$ is that the former is preserved under transport ($v\cdot\nabla_x$ in the Boltzmann equation), while the latter is not, as transport implies mixing and growing oscillations in phase space, thus a nontrivial behaviour for any quantity involving regularity. And this enormous difficulty is in the order of things: these growing oscillations are associated to a loss of accuracy in the observation and, for instance, the Landau damping phenomenon which has been the subject of a number of mathematical works since my paper with Cl\'ement Mouhot. All those remarks justify a study in depth in the hope of finding even richer fruits in the scientific continent discovered by Maxwell and Boltzmann.

\bibliographystyle{acm}

\bibliography{./fisher, ./mybiblio}


\bigskip

\signcv

\end{document}